\renewcommand{\hat}{\widehat}
\renewcommand{\hat}{\widehat}
\renewcommand{\bar}{\overline}
\newcommand{\pscal}[1]{\langle #1 \rangle}
\newcommand{\pfloor}[1]{\lfloor #1 \rfloor}
\newcommand{\0}{\mathbf{0}}
\newcommand{\A}{\mathbf{A}}
\newcommand{\C}{\mathbf{C}}
\newcommand{\D}{\mathbf{D}}
\newcommand{\F}{\mathbf{F}}
\newcommand{\Q}{\mathbf{Q}}
\newcommand{\Z}{\mathbf{Z}}
\newcommand{\R}{\mathbf{R}}
\newcommand{\G}{\mathbf{G}}
\newcommand{\M}{\mathbf{M}}
\newcommand{\N}{\mathbf{N}}
\newcommand{\T}{\mathbf{T}}
\newcommand{\cF}{\mathcal{F}}
\newcommand{\cG}{\mathcal{G}}
\newcommand{\cM}{\mathcal{M}}
\newcommand{\cU}{\mathcal{U}}
\newcommand{\cD}{\mathcal{D}}
\newcommand{\cA}{\mathscr{A}}
\newcommand{\ccB}{\mathscr{B}}
\newcommand{\cC}{\mathscr{C}}
\newcommand{\cT}{\mathscr{T}}
\newcommand{\ccN}{\mathscr{N}}
\newcommand{\Tr}{\mathrm{Tr}}
\newcommand{\ord}{\mathrm{ord}}
\newcommand{\diag}{\mathrm{diag}}
\newcommand{\NP }{\mathrm{NP}}
\newcommand{\HP }{\mathrm{HP}}
\newcommand{\GNP }{\mathrm{GNP}}
\newcommand{\sgn}{\mathrm{sgn}}
\newcommand{\lcm}{\mathrm{lcm}}
\newcommand{\Mat}{\mathrm{Mat}}
\newcommand{\Vol}{\mathrm{Vol}}
\newcommand{\Zeta}{\mathrm{Zeta}}
\newcommand{\SL}{\mathrm{SL}}
\newcommand{\disc}{\mathrm{disc}}
\newcommand{\Gal}{\mathrm{Gal}}
\renewcommand{\Vert}{\mathrm{Vert}}
\newcommand{\coeff}{\mathrm{coeff}}
\newcommand{\Norm}{\mathrm{Norm}}
\newcommand{\conv}{\mathrm{conv}}
\theoremstyle{plain}
\newtheorem{theorem}{Theorem}[section]
\newtheorem{proposition}[theorem]{Proposition}
\newtheorem{lemma}[theorem]{Lemma}
\newtheorem{corollary}[theorem]{Corollary}
\theoremstyle{definition}
\newtheorem{remark}[theorem]{Remark}
\newtheorem{examples}[theorem]{Examples}
\newtheorem{notations}[theorem]{Notations}
\begin{document}

\title[$L$-functions of exponential sums]
{Asymptotic variations of $L$-functions of exponential sums}
\keywords{$L$-functions of exponential sums, $p$-adic Dwork theory,
Artin-Schreier variety, Hasse varieties, toric hypersurfaces,
integral convex geometry, Hilbert basis,
asymptotic Hilbert basis, rigid transform, 
rigidly nuclear, Fredholm determinants}
\subjclass[2000]{11,14}

\author{Hui June Zhu}
\address{
Department of mathematics
SUNY at Buffalo,
Buffalo, NY 14260}
\email{hjzhu@buffalo.edu}
\date{\today}

\begin{abstract}
Let $\Delta$ be an integral convex polytope of dimension $n$ in $\R^n$ that
contains the origin.
For every Laurent polynomial $f$ with coefficients in $\bar\Q$
regular with respect to its Newton polytope $\Delta$,
let $L^*(\bar{f};T)$ be the $L$-function of exponential sums of
the reduction $\bar{f}$ of $f$ at a prime of $\bar\Q$.
Then $L^*(\bar{f};T)^{(-1)^{n-1}}$ is a polynomial, and we
denote by $\NP(\bar{f})$ the normalized Newton polygon of this polynomial.
It has an absolute combinatorial lower convex bound by $\HP(\Delta)$ depending
only on $\Delta$.
Suppose $\Delta$ is simplicial at all origin-less facets and it contains
$J\cup V$, where  $J$ is a subset of $\Delta\cap\Z^n$
not intersects with any origin-less facets of $\Delta$ and
$V$ is a disjoint subset of $\Delta\cap\Z^n$
containing all non-origin vertices of $\Delta$ such
that $J\cup \Vert(\Delta)$ generates the monoid $C(\Delta)\cap\Z^n$ up to finitely many points,
where $C(\Delta)$ is the cone of $\Delta$.
Let $\A_V^J$ be the space of all Laurent polynomials
$f=\sum_{j\in J}a_j x^j +\sum_{j\in V}c_j x^j$
with parameters $(a_j)_{j\in J}$ and with prescribed $(c_j)_{j\in V}$ in $\Q$,
and let $\GNP(\A_V^J,\bar\F_p):=\inf_{\bar{f}}\NP(\bar{f})$
where $\bar{f}$ ranges over all regular polynomials in $\A_V^J(\bar\F_p)$.
Then we prove that there exists a Zariski dense open subset $\cU$
defined over $\Q$ in $\A_V^J$ such that
for every $f\in \cU(\bar\Q)$ and for every prime $p$ large enough
we have
$\NP(\bar{f})=\GNP(\A_V^J,\bar\F_p)$ and
$
\lim_{p\rightarrow \infty} \NP(\bar{f}) = \HP(\Delta).
$
These results have immediate application to the zeta function of
the toric Artin-Schreier varieties defined by $y^p-y = \bar{f}$.

We also prove the following theorem:
Let $\Delta$ be an integral convex polytope of dimension $n$ in $\R^n$ containing 
the origin.
Let $\T_\Delta$ be the space of all affine toric hypersurfaces
$V(f)$ defined by $f=0$ where 
$f=\sum_{j\in \Delta\cap\Z^n}a_j x^j$ parametered by
$(a_j)_{j\in \Delta\cap\Z^n}$'s where $a_j\neq 0$ for vertices $j$.
Let $\HP(\T_\Delta)$ be the Hodge polygon of toric hypersurfaces in $\T_\Delta$.
For any regular $V(f) \in \T_\Delta(\bar\Q)$,
let $V(\bar{f})$ be the reduction of $V(f)$ at a prime of $\bar\Q$ over $p$, and
let $\NP(V(\bar{f}))$ denote the normalized $p$-adic Newton polygon of
the key polynomial component of zeta function of $V(\bar{f})$.
Let $\GNP(\T_\Delta,\bar\F_p):=\inf_{\bar{f}} \NP(V(\bar{f}))$
where $V(\bar{f})$ ranges over $\T_\Delta(\bar\F_p)$. 
If $\Delta\cap\Z^n$ has a unimodular triangulation, then we have
for all $p$ large enough that
$
\GNP(\T_\Delta,\bar\F_p) = \HP(\T_\Delta).
$
This paper proves two conjectures of Wan.
\end{abstract}

\maketitle

%\tableofcontents

\section{Introduction}
\label{S:intro}

In this paper we consider only integral convex polytope $\Delta$ in $\R^n$
for some $n\geq 1$ that contains the origin $\0$.
For any $j:=(j_1,\ldots,j_n)\in \Z^n$ in standard basis of $\Z^n$
we write $x^j := x_1^{j_1}\cdots x_n^{j_n}$.
A Laurent polynomial $f$ with {\em Newton polytope} $\Delta$
is a function $f(x) = \sum_{j\in \Sigma} a_j x^j$ with $a_j\neq 0$
for some finite set $\Sigma$ in $\Z^n$ such that the
convex hull of $\Sigma\cup \0$ in $\R^n$ is
$\Delta$.

The object of our study is the
family of all Laurent polynomials with a prescribed Newton polytope $\Delta$ with coefficients in
a global field (e.g., $\Q$) and
the $L$ function of exponential sums of this family at all special fibres
(e.g.,  at all primes $p$).
Solutions to families of polynomial equations with given Newton polytope
is a central object of study in toric geometry (see survey article \cite{Stu98} and \cite{Stu98}).
There have been two separate lines of developments.
One classical result in toric geometry is that
the Newton polytopes controls valuations of
roots of the system of equations $f_1=\ldots=f_k=0$ over Archimedean fields (e.g., $\R$)
(see \cite{Kou76} for example).
Recent development generalizes this to over non-Archimedean local fields
using method combining toric and rigid geometry.
Dwork's original method does not require our variety to be
regular (which we shall define below); but with certain regularity condition
Adolphson-Sperber \cite{AS89} was able to apply a method of Kouchnirenko \cite{Kou76}
and extended the above technique to $L$ function of expotential sums and hence
zeta functions of hypersurfaces over finite fields.
In summary, certain algebraic invariants of a system of
regular Laurent polynomials can be read from torical combinatorial
invariants. This paper concerns the $p$-adic valuation of
roots of $L$ functions of exponential sums of Laurent polynomials, and its
applications to long-standing questions in algebraic geometry.
See \cite{Maz72} for a beautiful historical account 
from algebraic geometric point of view, 
see \cite{Dw64} and \cite{Kat88} for earlier development.

We first fix notations in order to  recall some classical results in the area.
Let $p$ be a prime and $q$ a $p$-power, let $\F_q$ be the finite field of $q$ elements.
Fix a nontrivial additive character $\psi$ of $\F_p$, and we take
without loss of generality
that $\psi(x)=\zeta_p^{x}$ for a primitive $p$-th root of unity $\zeta_p$ here.
Let $\wp$ be a prime of $\bar\Q$ with residue field $\F_q$ say.
For a Laurent polynomial $f$ in $\bar\Q[x_1,\ldots,x_n]$,
let the $m$-th exponential sum of the Laurent polynomial $\bar{f}=(f\bmod \wp)$
over $\F_q$ be
\begin{eqnarray}
S_m^*(\bar{f})=\sum \psi(\Tr_{\F_{q^m}/\F_p}(\bar{f}(x_1,\ldots,x_n)))
\end{eqnarray}
where the sum is over all $(x_1,\ldots,x_n)$ in the $n$-torus $(\G_m)^n(\F_{q^m})$.
By a well-known theorem of Dwork-Bombieri-Grothendieck, the following
$L$ function is a rational function in $\Z[\zeta_p](T)$
\begin{eqnarray}
L^*(\bar{f},T) =\exp(\sum_{m=1}^{\infty} S_m^*(\bar{f})\frac{T^m}{m} )
=\frac{\prod_{i}(1-\alpha_i T)}{\prod_{j}(1-\beta_jT)}.
\end{eqnarray}
Equivalently
$
S_m^*(\bar{f})  = \beta_1^m+\beta_2^m +\ldots -\alpha_1^m -\alpha_2^m-\ldots,
$
where $\alpha_i, \beta_j$ are algebraic integers (Weil numbers) in $\bar\Q$
according to \cite{Dw64} and \cite{Del80}.

Write $\Vert(\Delta)$ for the set of all (non-origin) vertices on $\Delta$.
Let $\partial(\Delta)$ denote the set of integral points on origin-less faces of $\Delta$.
Let $\A(\Delta)$ denote the coefficient space of all Laurent
polynomials $f=\sum_{j\in (\Delta-\partial\Delta)\cap \Z^n\cup \Vert(\Delta)}a_j x^j$ 
parametrized by all $(a_j)$'s where
$a_j\neq 0$ for $j\in \Vert(\Delta)$.
We denote by $\A(\Delta^o)$ the subspace of $\A(\Delta)$
consisting of all Laurent polynomials $f=\sum_{j\in (\Delta-\partial\Delta)\cap\Z^n}a_j x^ j +\sum_{j\in \Vert(\Delta)}c_j x^j$ parametered by $(a_j)$'s and with prescribed nonzero $c_j$'s.

For every subset $\Sigma$ of $\Delta$ we define the restriction
$f_\Sigma:=\sum_{j\in \Sigma\cap \Z^n} a_j x^j$ of the Laurent polynomial $f$ to
$\Sigma$.
A Laurent polynomial $f$ over a commutative ring $R$ with Newton polytope $\Delta$ is
{\em regular} with respect to $\Delta$
if for every closed origin-less face $\Sigma$ (on the boundary) of the polytope $\Delta$
the system $\frac{\partial{f_\Sigma}}{\partial{x_1}}=\ldots=\frac{\partial{f_\Sigma}}{\partial{x_n}} =0$
has no common zero in  the $n$-torus $(\G_m)^n(R)$. It is known
that regularity of a Laurent polynomial is a generic condition.
By Grothendieck-Katz \cite{Gro64}, there is a scheme $\cM(\Delta)$ defined over $\Z$ whose
reduction mod $p$ for every $p$  is a Zariski dense open subset $\cM(\Delta,\bar\F_p)$
of $\A(\Delta,\bar\F_p)$  defined over
$\F_p$ consisting of regular $\bar{f}$'s.
In fact recent work of \cite{GKZ08} on resultant and discriminant
yields explicit described $\Z$-scheme $\cM(\Delta)$.
The scheme $\cM(\Delta^o)$ is defined analogously.
For every $\bar{f}\in \cM(\Delta,\bar\F_p)$
Adolphson-Sperber showed that $L^*(\bar{f})^{(-1)^{n-1}}$ is a polynomial
and hence we may define $\NP(\bar{f})$ as
the $p$-adic Newton polygon of $L^*(\bar{f})^{(-1)^{n-1}}$ normalized so that
the $p$-adic order of its residue field cardinality $q$ is $1$.
They  defined a combinatorial polygon $\HP(\Delta)$
for all regular Laurent polynomials $f$ with a given polytope $\Delta$
(see (\ref{E:HP}) for definition of $\HP(\Delta)$).
For ease of reference, we summarize these results in the following theorem.
We use $``\geq"$ to denote
the first Newton polygon lies above or equal to the second in $\R^2$
and their endpoints meet.

\begin{theorem}
\label{T:basic}
\begin{enumerate}
\item (Adolphson-Sperber \cite{AS89}; Denef-Loeser \cite{DL91})
Let $\bar{f}$ be regular with Newton polytope $\Delta(\bar{f})=\Delta$
of dimension $n$ in $\R^n$.
Then $L^*(\bar{f},T)^{(-1)^{n-1}}$ is a polynomial.
For general $\Delta$ it is of degree $n!\Vol(\Delta)$
where $\Vol(\Delta)$ is the volume of $\Delta$.
\item (Grothendieck-Katz \cite{Gro64}, Adolphson-Sperber \cite{AS89})
The following exists
\begin{eqnarray*}
\GNP(\Delta;\bar\F_p)
&:=& \inf_{\bar{f}\in\cM(\Delta,\bar\F_p)} \NP(\bar{f}).
\end{eqnarray*}
We have
$\NP(\bar{f})\geq \GNP(\Delta;\bar\F_p)\geq\HP(\Delta)$
and their endpoints meet.
\end{enumerate}
\end{theorem}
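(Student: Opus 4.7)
The plan is to work within Dwork's $p$-adic cohomological formalism as adapted to the toric setting by Adolphson--Sperber. Fix a lift $\hat{f}=\sum \hat{a}_j x^j$ of $\bar{f}$ with Teichmüller coefficients over an unramified extension $\cO$ of $\Z_p$, and let $\pi$ satisfy $\pi^{p-1}=-p$. Using Dwork's splitting function $\theta(t)$, form $F(x)=\prod_j \theta(\pi\hat{a}_j x^j)$ and regard it as a multiplier on the $p$-adic Banach space $L(\Delta)$ whose topological basis consists of monomials $x^u$ for $u\in C(\Delta)\cap\Z^n$, weighted by $\pi^{w(u)}$ for the cone weight function $w$. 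The operator $\alpha := \psi_p\circ F$, where $\psi_p$ divides exponents by $p$, is completely continuous on $L(\Delta)$, and Dwork's trace formula expresses $L^*(\bar{f},T)^{(-1)^{n-1}}$ as an alternating product of Fredholm determinants of $\alpha$ on the graded pieces of a Koszul complex.

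For part (1), following Adolphson--Sperber, regularity of $\bar{f}$ forces the Koszul complex built from the logarithmic derivations $D_i := x_i\partial/\partial x_i + \pi x_i\partial \hat{f}/\partial x_i$ acting on $L(\Delta)$ to be acyclic except in degree $n$. The alternating product of Fredholm determinants then collapses to the single factor coming from $H^n$, which is therefore polynomial; its degree equals $\dim H^n$, and by Kouchnirenko's Newton-number formula this dimension is $n!\Vol(\Delta)$. One verifies the dimension count by computing the Hilbert series of the coinvariant ring $L(\Delta)/(D_1,\ldots,D_n)L(\Delta)$ and invoking Macaulay's theorem for complete intersections, or alternatively via Denef--Loeser's $\ell$-adic route on the toric variety $\cX_\Delta$ using compactly supported cohomology and a resolution of the boundary.

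For part (2), existence of $\GNP(\Delta;\bar\F_p)$ follows from Grothendieck's specialization theorem together with finiteness: the coefficients of $\det(1-T\alpha)$ are regular functions on $\cM(\Delta,\bar\F_p)$, and their $p$-adic valuations at the breakpoints of the Newton polygon are lower semicontinuous; since $L^*(\bar{f})^{(-1)^{n-1}}$ has bounded degree $n!\Vol(\Delta)$, only finitely many polygon shapes occur, so the infimum is attained on a Zariski open subset, yielding also the tautological $\NP(\bar{f}) \geq \GNP(\Delta;\bar\F_p)$. The lower bound $\GNP(\Delta;\bar\F_p) \geq \HP(\Delta)$ is Adolphson--Sperber's Newton-over-Hodge theorem: choose a monomial basis of $L(\Delta)/\sum D_i L(\Delta)$ ordered by increasing cone weight $w$; in this basis the matrix of $\alpha$ is block lower-triangular with diagonal entries divisible by the power of $\pi$ dictated by $w$, and taking determinants shows that the slopes of $\det(1-T\alpha)$ dominate those of $\HP(\Delta)$. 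Coincidence of endpoints reflects the combinatorial fact that the extreme slopes of $\HP(\Delta)$ are contributed by the unique origin monomial and the unique top-weight monomial, whose associated Frobenius entries are $p$-adic units.

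The main obstacle in a self-contained treatment is acyclicity of the Koszul complex under regularity, the technical heart of Adolphson--Sperber. Its proof proceeds by induction on the face lattice of $\Delta$: at a face $\Sigma$ one uses the regularity hypothesis that $\partial f_\Sigma/\partial x_1, \ldots, \partial f_\Sigma/\partial x_n$ has no common torus zero to verify that the graded piece of the complex attached to $\Sigma$ is exact, and then assembles these strata while controlling the cone weight filtration on $L(\Delta)$. Once this is in place, the remainder of the argument is $p$-adic functional analysis (Serre's theory of completely continuous operators) and combinatorial bookkeeping on $\Delta \cap \Z^n$.
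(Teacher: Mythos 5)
The paper does not prove Theorem \ref{T:basic} at all: it is stated explicitly as a summary of known results, with the proofs delegated to the cited references \cite{AS89}, \cite{DL91}, \cite{Gro64}. Your sketch reproduces exactly the route of those references --- Dwork's trace formula, acyclicity of the Koszul complex under the regularity hypothesis, Kouchnirenko's volume formula for the dimension of the top cohomology, specialization plus finiteness for the existence of $\GNP(\Delta;\bar\F_p)$, and the weight filtration for the Newton-over-Hodge bound --- and this is also precisely the framework the paper itself recalls and builds on later (Theorem \ref{T:Dwork} in Section \ref{S:dwork}). Three points in your outline deserve tightening. First, the matrix of $\alpha$ in a weight-ordered monomial basis is \emph{not} block lower-triangular; the Newton-over-Hodge inequality comes from the row-wise valuation estimate $\ord_p(M_{pr-s})\geq w_\Delta(r)$ for the normalized entries (cf.\ Theorem \ref{T:mini-2} and Remark \ref{R:degree}), from which one bounds $\ord_p$ of every $k\times k$ minor by the sum of the $k$ smallest row weights; triangularity is neither true nor needed. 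Second, the coefficients of $\det(1-T\alpha)$ are $p$-adic analytic functions of the Teichm\"uller lifts of the coefficients of $\bar f$, not regular functions on $\cM(\Delta,\bar\F_p)$; the correct mechanism for the existence of the infimum is Grothendieck--Katz specialization (Newton polygons rise under specialization) together with the observation that only finitely many polygons of horizontal length $n!\Vol(\Delta)$ with slope denominators dividing $D(\Delta)$ lie on or above $\HP(\Delta)$, so the infimum is attained on a Zariski dense open subset. Third, the coincidence of right endpoints is not a statement about two individual monomials being units; it is the assertion that the total valuation of the leading coefficient equals the full area under $\HP(\Delta)$, which follows from the functional equation (duality) for the toric exponential sums rather than from inspecting extreme basis vectors. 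With these repairs your outline is a faithful account of the proofs in the cited literature.
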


The generic Newton polygon $\GNP(\Delta;\bar\F_p)$  at prime $p$ typically depends on
$\Delta$ and $p$, so in general we can not hope that $\GNP(\Delta;\bar\F_p)$
will coincide with the absolute lower bound $\HP(\Delta)$ that is independent of $p$.

\begin{theorem}[Wan \cite{Wan93}]
\label{T:wan}
For every prime $p$ large enough and
$p\equiv 1\bmod D^*(\Delta)$ for some computable positive
integer $D^*(\Delta)$ depending only on $\Delta$ we have
$\GNP(\Delta;\bar\F_p)=\HP(\Delta)$. For every prime $p$ there is a Zariski dense subset
$\cU_p$ in $\cM(\Delta,\bar\F_p)$ such that for every
$\bar{f}\in \cU_p(\bar\F_p)$ we have $\NP(\bar{f})=\GNP(\Delta;\bar\F_p)$.
\end{theorem}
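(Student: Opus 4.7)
The plan is to combine Dwork's $p$-adic cohomological framework, as refined by Adolphson--Sperber \cite{AS89}, with a semicontinuity argument for the second assertion and a congruence-controlled leading-term calculation for the first. For regular $\bar f$ with Newton polytope $\Delta$, one has the Fredholm representation
\[
L^*(\bar f, T)^{(-1)^{n-1}} = \det(I - T \alpha_F),
\]
where $\alpha_F$ is a Dwork-type operator acting on a finite-dimensional quotient $B/B_1$ of a weighted $p$-adic Banach space $B$ with orthonormal basis indexed by $C(\Delta)\cap\Z^n$ and weights $\pi^{w(j)}$ (here $w$ is the cone weight function and $\pi^{p-1}=-p$). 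The Hodge polygon $\HP(\Delta)$ is read off from the slopes $\{w(j)\}$ indexed by a fixed basis $W$ of $B/B_1$.

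I would first establish the Zariski density assertion. Each coefficient of $\det(I-T\alpha_F)$ is a polynomial in the parameters $(a_j)$ of $\bar f$, and its $p$-adic leading coefficient is itself a polynomial in $(a_j)$. The locus $\cU_p\subset\cM(\Delta,\bar\F_p)$ on which every such leading coefficient is nonvanishing modulo the chosen prime is Zariski open and, by the very definition of $\GNP$ as an infimum over the irreducible scheme $\cM(\Delta,\bar\F_p)$, is nonempty -- hence Zariski dense. On $\cU_p$ one has $\NP(\bar f)=\GNP(\Delta;\bar\F_p)$ by construction.

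For the first assertion, the plan is to take $D^*(\Delta)$ to be a common denominator of the cone weights $w(j)$ of a fixed finite generating set for $C(\Delta)\cap\Z^n$ that includes the basis $W$. Under the congruence $p\equiv 1\pmod{D^*(\Delta)}$, the products $(p-1)w(j)$ for $j\in W$ are integers and Frobenius preserves the weight grading on $W$. This eliminates fractional $\pi$-powers from the diagonal block of the matrix of $\alpha_F$ on $W\times W$, so the leading $\pi$-adic term of $\det_{W\times W}(\alpha_F)$ admits an explicit expansion coming from $\exp(\pi F)$. Showing this leading term is a nonzero polynomial in $(a_j)$ modulo $p$ produces a nonempty Zariski-open set on which $\NP(\bar f)=\HP(\Delta)$; combined with $\GNP(\Delta;\bar\F_p)\geq \HP(\Delta)$ from Theorem~\ref{T:basic}, this forces $\GNP(\Delta;\bar\F_p)=\HP(\Delta)$.

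The main obstacle will be demonstrating that this leading-term polynomial is not identically zero in $\F_p[a_j]$ for all sufficiently large $p$. The computation reduces to writing the determinant as a sum indexed by combinatorial matchings on $W$ that realize the prescribed Hodge slopes, each matching corresponding to a choice of monomial from each row of the expansion of $\exp(\pi F)$. One then identifies a distinguished matching whose monomial involves only the vertex parameters -- or, more generally, verifies that the signed sum of matchings has a specific nonzero reduction mod $p$, using a vertex-decomposition of the polytope. The bound $p\gg 0$ is needed precisely to preclude inadvertent $p$-divisibility of the finitely many integer coefficients appearing in the combinatorial sum, all of which are bounded purely in terms of $\Delta$. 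This combinatorial vertex-decomposition is, as is standard in the area, the technical heart of the argument.
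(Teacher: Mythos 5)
First, note that the paper does not prove Theorem~\ref{T:wan}: it is quoted from Wan \cite{Wan93} without proof, so there is no in-text argument to compare against line by line. Your overall architecture is the right one for the second assertion: the coefficients of the Fredholm determinant are elements of a Tate algebra in the parameters, their normalized leading terms reduce to polynomials mod $p$, and the common non-vanishing locus of the finitely many relevant leading terms is a nonempty Zariski open (hence dense) subset of the irreducible scheme $\cM(\Delta,\bar\F_p)$ on which the infimum $\GNP(\Delta;\bar\F_p)$ is attained. This is exactly the Grothendieck--Katz specialization argument the paper already invokes in Theorem~\ref{T:basic}(2), and it is sound.

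The first assertion, however, has a genuine gap, and it is not the one you flag as ``the technical heart.'' You propose taking $D^*(\Delta)$ to be a common denominator of the cone weights of a finite generating set; since every lattice point of $C(\Delta)$ has weight in $\tfrac{1}{D(\Delta)}\Z$, your modulus divides $D(\Delta)$, so your congruence condition is no stronger than $p\equiv 1\bmod D(\Delta)$. But the paper states explicitly (following Theorem~\ref{T:wan}, citing \cite{Wan04}) that $p\equiv 1 \bmod D(\Delta)$ forces $\GNP(\Delta;\bar\F_p)=\HP(\Delta)$ only for $n\leq 3$ and is \emph{false} for $n\geq 4$; see also Example~\ref{EX:counter-example}. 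In those counterexamples the leading-term (``Hasse'') polynomial you hope to show is nonzero is in fact identically zero for the residue classes allowed by your modulus, so the non-vanishing step is not a routine verification of $p$-indivisibility of bounded integer coefficients --- it is precisely where the naive argument breaks. Wan's actual proof requires a larger, more delicately constructed $D^*(\Delta)$ (forcing $p$ to act trivially on finer combinatorial data than the weight denominators alone) together with his facial decomposition theorem to reduce the non-vanishing to simplicial pieces where a distinguished diagonal term can be exhibited. As written, your choice of $D^*(\Delta)$ cannot yield the theorem in dimension $\geq 4$.
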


Suppose $\Delta$ is integral convex polytope of dimension $n$ in $\R^n$ containing
the origin and it is simplicial at each origin-less facets.
If $\bar{f}$ is a regular polynomial with Newton polytope $\Delta$ over
a finite field $\F_q$ such that the restriction $\bar{f}_\delta$ to each origin-less
facet $\delta$ is only supported on $\Vert(\delta)$,
then we have $\NP(\bar{f})=\HP(\Delta)$ if $p\equiv 1\bmod D(\Delta)$
where $D(\Delta)=\lcm_{\delta}\ell_\delta$ where $\ell_\delta$ is the maximal
invariant factor of the $n\times n$ matrix of $\Vert(\delta)$ (see Proposition \ref{P:disc}
for more details).
This is a result due to Wan (see \cite[Theorem 3.1, Corollary 3.2]{Wan04} and its proof
in his groundbreaking paper \cite{Wan93}.) 
In fact, Wan has proved in \cite{Wan04} that for any $\Delta$ and 
$n\leq 3$ we have
$\GNP(\Delta,\bar\F_p)=\HP(\Delta)$ if $p\equiv 1 \bmod D(\Delta)$;
for $n\geq 4$ this is false. 

In one variable case, a recent result \cite{BF07} gives a {\em Hasse variety} $H_p$
explicitly whose complement in $\cM(\Delta,\bar\F_p)$ defines $\cU_p$ as
predicted in Theorem \ref{T:wan}.
One of our main results of this paper is to show a global Hasse variety whose
fibre at each $p$ coincides with the local Hasse variety $H_p$.

\begin{theorem}\label{T:main}
Let $\Delta$ be an integral convex polytope of dimension $n$ in $\R^n$ containing the origin
and is simplicial at all origin-less
facets, and let $(\Delta-\partial\Delta)\cap\Z^n \cup \Vert(\Delta)$
generate the monoid $C(\Delta)\cap \Z^n$ (up to finitely many points).
Let $\A(\Delta^o)$ be the space of all Laurent polynomials
$f=\sum_{j\in(\Delta-\partial\Delta)\cap\Z^n}a_j x^j+\sum_{j\in\Vert(\Delta)}c_j x^j$
parametrized by $(a_j)$ with $j$ not on origin-less faces of $\Delta$ and
with prescribed $c_j\neq 0$.
Then there exists a Zariski dense open subset $\cU$ of  $\A(\Delta^o)$
defined over $\Q$
such that for every $f\in\cU(\bar\Q)$
and for any $p$ large enough we have
$$
\NP(\bar{f}) = \GNP(\Delta;\bar\F_p);
$$
Moreover, we have
$$
\lim_{p\rightarrow \infty} \NP(\bar{f}) = \lim_{p\rightarrow \infty} \GNP(\Delta,\bar\F_p)
=\HP(\Delta).
$$
\end{theorem}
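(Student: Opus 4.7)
The plan is to follow the Dwork-trace formula paradigm and construct the global Zariski open set $\cU$ via a single Hasse-type polynomial with rational coefficients in the parameters $(a_j)_{j\in(\Delta-\partial\Delta)\cap\Z^n}$. First, for each prime $p$, I lift $\bar f$ to a Laurent polynomial $f$ with coefficients in the Witt ring, and I use Teichm\"uller lifts of the $a_j$ together with Dwork's splitting function to realize $L^*(\bar f;T)^{(-1)^{n-1}}$ as the characteristic series of a completely continuous Frobenius operator $\alpha$ acting on the $p$-adic Banach space $L(C(\Delta))$ of overconvergent series supported on the cone $C(\Delta)$. After quotienting by the image of the Dwork operators $\partial_i = x_i\partial/\partial x_i + x_i\partial H/\partial x_i$, one lands on a finite-dimensional space of Adolphson--Sperber dimension $n!\Vol(\Delta)$; here is where the hypothesis that $(\Delta-\partial\Delta)\cap\Z^n\cup\Vert(\Delta)$ generates $C(\Delta)\cap\Z^n$ up to finitely many points enters, allowing one to pick a Hilbert-basis (respectively asymptotic Hilbert-basis) of monomials lying in $(\Delta-\partial\Delta)\cap\Z^n$, indexed so that the weight ordering matches the Hodge filtration giving $\HP(\Delta)$.

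Next I would write out the matrix of $\alpha$ in such a basis and group its entries by weight strata corresponding to the slopes of $\HP(\Delta)$. The decisive observation, which generalizes Wan's key-lemma from \cite{Wan93}, is that after a suitable rigid transform (normalizing each basis vector by the appropriate power of $\pi$, where $\pi$ is Dwork's uniformizer) each block on the diagonal of the transformed matrix becomes congruent mod higher $p$-powers to a matrix whose entries are \emph{polynomials} in the Teichm\"uller lifts $\hat a_j$, obtained from convolutions of the coefficients of $\exp(\pi f)$ indexed by lattice points in the cone. Since these polynomials are intrinsic to the combinatorics of $\Delta$ and to the $\Q$-structure of the splitting function on integral monomials, they can be patched across $p$ to give a single polynomial $h\in\Z[a_j: j\in(\Delta-\partial\Delta)\cap\Z^n]$ (the global Hasse polynomial), whose non-vanishing at a specialization forces the diagonal-block determinants to be $p$-adic units for every sufficiently large $p$. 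The simpliciality of $\Delta$ at origin-less facets is used here to ensure that the facet-level Hasse conditions (coming from the restrictions $f_\delta$ to facets $\delta$) are themselves governed by finite invariant-factor data $D(\Delta)$ and that the asymptotic Hilbert basis on each facet cone descends to a $\Q$-rational construction, so that the rigid transform is compatible across strata.

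Define $\cU\subset\A(\Delta^o)$ as the intersection of $\cM(\Delta^o)$ (regularity) with $\{h\neq 0\}$, which is Zariski open and dense defined over $\Q$. For $f\in\cU(\bar\Q)$ and $p$ large enough, the unit property of the diagonal blocks forces $\NP(\bar f)\leq\HP(\Delta)$, and combined with Theorem \ref{T:basic}(2) this gives $\NP(\bar f)=\GNP(\Delta;\bar\F_p)=\HP(\Delta)$ whenever $p\equiv 1\bmod D^*(\Delta)$, recovering Wan's $\cU_p$ from a single global $\cU$. For primes outside that congruence class, the same polynomial $h$ still controls the slopes through the rigid transform, but the resulting $\NP(\bar f)$ equals $\GNP(\Delta;\bar\F_p)$ rather than $\HP(\Delta)$; the limit statement $\lim_{p\to\infty}\NP(\bar f)=\HP(\Delta)$ then follows by combining the congruence case with Wan's asymptotic theorem $\lim_{p\to\infty}\GNP(\Delta;\bar\F_p)=\HP(\Delta)$. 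The main obstacle, and the place where the ``rigidly nuclear'' technology of the paper is essential, is Step~2--Step~3: proving that the diagonal-block polynomials produced by the rigid transform really are defined over $\Q$ uniformly in $p$, rather than being only $\Q_p$-defined Hasse invariants that happen to coincide fibrewise. This uniformity requires showing that the asymptotic Hilbert-basis choice, the weight filtration, and the leading coefficients of the Frobenius matrix entries are all intrinsic combinatorial data of $(\Delta, J, V)$, independent of the $p$-adic splitting function beyond its $\Q$-rational leading terms.
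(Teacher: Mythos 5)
Your outline tracks the paper's architecture (Dwork trace formula, weight-stratified matrix, rigid transform, a global Hasse polynomial cutting out $\cU$), but two essential steps are missing or wrong. First, the claim you yourself flag as ``the main obstacle'' --- that the diagonal-block leading polynomials can be patched across $p$ into a single $h\in\Q[a_j]$ --- is exactly the content that must be proved, and your proposal does not supply the mechanism. In the paper this rests on three specific ingredients: (i) Theorem \ref{T:approx}, which shows that in any near-minimal representation $pr-s=\sum_j u_{pr-s,j}\,j$ the exponents $u_{pr-s,j}$ of the \emph{interior} variables are bounded by $nD(\Delta)^2$ independently of $p$ (this is where simpliciality of the origin-less facets is genuinely used, cf.\ Remark \ref{R:simplicial}); (ii) the vertex representation of Lemma \ref{L:key}, whose residue part depends on $p$ only through the finitely many vertex residue classes $R\in\prod_i(\Z/d_i\Z)^*$, so the global Hasse polynomial must be a \emph{product over all $R$} of the class-wise polynomials $P^{(m_k)}_{N_{\delta,k},R,V}$ and $G^{(i(r,s))}_{Rr-s,V}$ (equation (\ref{E:globalhasse})); and (iii) specialization of the vertex variables at the prescribed rational $c_j$ (Theorem \ref{T:Zariski}), since the exponents of those variables grow like $\lfloor(pr_i-s_i)/d_i\rfloor$ and no single polynomial identity in them holds uniformly in $p$. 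Without (i)--(iii) the ``patching'' is an assertion, not a proof, and your $h$ is not well defined.

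Second, the endgame is incorrect as written. You claim the unit property of the diagonal blocks ``forces $\NP(\bar f)\le\HP(\Delta)$'' for all large $p$; combined with the universal lower bound of Theorem \ref{T:basic}(2) this would give $\NP(\bar f)=\HP(\Delta)$ for every large $p$, which is false outside the congruence class $p\equiv 1\bmod D^*(\Delta)$ (the diagonal minors are not units: their $p$-adic order is $\sum_{i\le k}W_{\Delta}(i)i/D(\Delta)+m_k/(p-1)$ with $0\le m_k\le N_\Delta N_k$, and the Hasse condition only pins down $m_k$ as the minimal attainable value, which is what yields $\NP(\bar f)=\GNP$). Moreover, your derivation of $\lim_{p\to\infty}\NP(\bar f)=\HP(\Delta)$ invokes ``Wan's asymptotic theorem $\lim_{p\to\infty}\GNP(\Delta;\bar\F_p)=\HP(\Delta)$''; but Theorem \ref{T:wan} only gives $\GNP=\HP$ along $p\equiv 1\bmod D^*(\Delta)$, and the unrestricted limit is precisely Wan's Conjecture 1.11, i.e.\ Theorem \ref{T:Wan1.11} of this paper --- so your argument is circular. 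The paper instead obtains the limit directly from the two-sided quantitative estimate $w_\Delta(r)\le b_{\cG,\Z}(pr-s)\le w_\Delta(r)+N_\Delta/(p-1)$ of Theorem \ref{T:key}, which sandwiches $\NP(\bar f)$ between $\HP(\Delta)$ and $\HP(\Delta)$ plus an error that is $O(1/p)$ uniformly in the finitely many relevant minors. You would also need the chain-level facial decomposition (Theorem \ref{T:blockrigid} / Theorem \ref{T:subdivision}) to reduce to the simplex cones $C(\Delta_\delta)$ before any of these estimates apply; the naive closed facial decomposition fails asymptotically (Remark \ref{R:no-closed-facial}).
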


Denote the special fibre at $p$ in $\cM(\Delta^o)/_{\Z}$ by
$\iota_p: \cM(\Delta^o,\bar\F_p) \hookrightarrow\cM(\Delta^o)/_{\Z}.$
Theorem \ref{T:main} establishes
the existence of a  {\em global Hasse scheme} $H$
such that the diagram in Fig.\ref{Fig:local-global} commutes for $\cU=\cM(\Delta^o)-H$
(in Theorem \ref{T:main}) and
$\cU_p=\cM(\Delta,\bar\F_p)-H_p$ (in Theorem \ref{T:wan}).
For the first time Theorem \ref{T:main} establishes the existence of 
global generic-ness which reflects generic-ness at all but finitely many 
special fibers.This line of investigation was partially inspired by
\cite{Elk87}.

\begin{figure}[!h]
\begin{tikzpicture}
\matrix (m) [matrix of math nodes, row sep=3em,
column sep=2em, text height=1.5ex, text depth=0.25ex]
{
\hspace{1.4cm}\bar{f}\in \cU_p(\bar\F_p)& \iota_p^{-1}(\cU) \subseteq \cU_p& \cM(\Delta^o,\bar\F_p)_{/\F_p} &\A(\Delta^o,\bar\F_p)_{/\F_p} \\
\hspace{1cm}f\in \cU(\bar\Q)& \cU_{/\Q} & \cM(\Delta^o)_{/\Z} & \A(\Delta^o)_{/\Q}\\
};

\path[|->]
(m-2-1)  edge node [right] {mod $\wp$} (m-1-1);

\path[right hook->]
(m-1-2) edge node [auto] {$ \iota_p$} (m-2-2)
(m-1-3) edge node [auto] {$ \iota_p$} (m-2-3)
(m-1-4) edge node[auto]  {$\iota_p$} (m-2-4)
(m-1-2) edge (m-1-3)
(m-2-2) edge (m-2-3)
(m-1-3) edge (m-1-4)
(m-2-3) edge (m-2-4);

\end{tikzpicture}
\caption{}
\label{Fig:local-global}
\end{figure}

\begin{remark}
\label{R:main}
\begin{enumerate}
\item
The one variable case of Theorem \ref{T:main} was proved in \cite{Zhu03}.
Special one variable cases were also explored in \cite{Hon01}\cite{Hon02}\cite{Yan03}
and \cite{BF07}.  A special case in multivariable case in which the Laurent polynomial is
a sum of one-variable polynomials is addressed  in \cite{Bla11}, one can reduce
this to one-variable case and conclude by the argument of \cite{Zhu03} or using
deformation theory of Wan \cite{Wan04}.

\item We observe that $\cU$ in the theorem is Zariski dense in $\A(\Delta^o)$
is equivalent to that in $\cM(\Delta^o)$ since $\cM(\Delta^o)$ is
Zariski dense open in $\A(\Delta^o)$.

\item
We observe that $\cU$ has to be a proper subset of $\cM(\Delta^o)$ for all but finitely many $\Delta$:
The existence of {\em global permutation polynomials} $f$
(in one variable case it means a polynomial mod $p$ is
a permutation for the set $\Z/p\Z$ for infinitely many prime $p$)
immediately shows that $\NP(\bar{f})\gneq \GNP(\Delta;\bar\F_p)$ for infinitely many $p$,
so one can not expect to have $\NP(\bar{f}) = \GNP(\Delta;\bar\F_p)$ for all $\bar{f}$
in $\A(\Delta^o,\bar\F_p)$.

\item 
The hypothesis of Theorem \ref{T:main} is sufficient but not necessary.
In this example below we shall find that its hypothesis is not satisfied but
its assertion holds:
In Figure \ref{Fig:non-smooth} we give an example of a simplex polytope that does not
satisfy the hypothesis of Theorem \ref{T:main}, namely,  $\Delta\cap\Z^3$ does not
generate the monoid $C(\Delta)\cap\Z^3$ (up to finitely many integral points).
Indeed, we have $\Delta\cap \Z^3 =\{\0, (1,1,0),(1,0,1), (0,1,1)\}$. It is clear that
the lattice points $(m,m,m)$ for odd $m\in\Z_{\geq 0}$ 
in $C(\Delta)$ are not generated.

\begin{figure}[!h]
\begin{tikzpicture}[scale=0.3]
%\draw[step = 1cm, gray,ultra thin] (0,0) grid (3.8,3.8); % draw grid
\draw[->] (0,0) -- (5,0);
\draw[->](0,0) --(0,6);
\draw[->](0,0) -- (-3,-1.5);
\draw[gray]
(2,-1)--(-2,-1)--(-2,3) -- (0,4) -- (4,4) -- (4,0) -- (2,-1) --(2,3)--(4,4);
\draw[gray] (2,3)--(-2,3);
\draw[gray] (-2,-1) -- (0,0) -- (4,0) --(0,0) -- (0,4);
\draw[line width = 2pt] (-2,3)--(4,4)--(2,-1)--(-2,3) -- (0,0) -- (4,4) -- (0,0) --(2,-1);

\node[below] at (5,0) {$x_1$};
\node[left] at (-3,-1.5) {$x_2$};
\node[right] at (0,5.9) {$x_3$};
\node[above right] at (4,0) {$1$};
\node[above right] at (0,4) {$1$};
\node[above left] at (-2,-1) {$1$};
%\node[] at (1,-3) {(b) $\Delta$ without smooth triangulation};
\end{tikzpicture}
\caption{}
\label{Fig:non-smooth}
\end{figure}

\end{enumerate}
\end{remark}

\begin{examples}
\label{EX:T-main}
The following are classical examples of $\Delta$ satisfying the hypothesis of
Theorem \ref{T:main}.
\begin{enumerate}
\item[(a)]
In the one variable case $n=1$,
if we consider all polynomial of degree $d$, then
the polytope $\Delta$ is the line segment
$[0,d]$ on the real line $\R$.
We have $\dim \A(\Delta^o) = d-1$.

\item[(b)]
The space of all polynomials in $n$ variables with prescribed max/min degree in each variable;
or max/min total degree. 
Figure \ref{Fig:dim-2} illustrate some families in two variable case $n=2$.
Consider (i) all polynomials $f$ in variables $x_1$ and $x_2$
of total $\deg(f)=d>1$ or
(ii) all polynomials with bounded degree on each variable $\deg_{x_1}(f) =\deg_{x_2}(f)=d$,
or (iii) all Laurent polynomials with bounded degree at each variable
$-d\leq \deg_{x_i}(f)\leq d$ for $i=1,2$.
%See Figure \ref{Fig:dim-2} for the Hilbert basis for $C(\Delta)$.

\begin{figure}[h]
\begin{tikzpicture}[scale=0.5]
\draw[->] (0,0) -- (4,0);
\draw[->](0,0) --(0,4);
\filldraw[fill=gray!50,line width = 2pt] (0,0) -- (3,0) -- (0,3) -- (0,0);

\node[below] at (0,0) {$0$};
\node[below] at (3,0) {$d$};
\node[left] at (0,3) {$d$};
\node[below] at (4,0) {$x_1$};
\node[left] at (0,4) {$x_2$};
\node[below] at (1,0) {$g_1$};
\node[left] at (0,1) {$g_2$};
%\node[below] at (2,-1) {(b) $\Delta$ for total degree fixed};
\filldraw[fill=black] (0,0) circle (3pt);
\filldraw[fill=black] (3,0) circle (3pt);
\filldraw[fill=black] (0,3) circle (3pt);
\filldraw[fill=blue] (1,0) circle (3pt);
\filldraw[fill=blue] (0,1) circle (3pt);
%  begin second picture
\draw[xshift=6cm,->] (0,0) -- (4,0);
\draw[xshift=6cm,->](0,0) --(0,4);
\filldraw[xshift=6cm,fill=gray!50,line width = 2pt] (0,0) -- (3,0) -- (3,3) -- (0,3) -- (0,0);
\draw[xshift=6cm,dashed] (0,0) -- (3,3);

\node[below] at (1.5,-4) {(i)};
\node[below] at (7.5,-4) {(ii)};
\node[below] at (16,-4) {(iii)};

\node[below] at (6,0) {$0$};
\node[below] at (9,0) {$d$};
\node[left] at (6,3) {$d$};
\node[right] at (3,3) {$$};
\node[below] at (10,0) {$x_1$};
\node[left] at (6,4) {$x_2$};
\node[below] at (7,0) {$g_1$};
\node[left] at (6,1) {$g_2$};
\node[right] at (7,1) {$g_3$};
%\node[below] at (2,-1) {(c) $\Delta$  for fixed degree for $x_1$ and $x_2$};

\filldraw[xshift=6cm,fill=black] (0,0) circle (3pt);
\filldraw[xshift=6cm,fill=black] (3,0) circle (3pt);
\filldraw[xshift=6cm,fill=black] (0,3) circle (3pt);
\filldraw[xshift=6cm,fill=black] (3,3) circle (3pt);
\filldraw[xshift=6cm,fill=blue] (1,0) circle (3pt);
\filldraw[xshift=6cm,fill=blue] (0,1) circle (3pt);
\filldraw[xshift=6cm,fill=blue] (1,1) circle (3pt);

%begin third picture

\filldraw[xshift=16cm,fill=gray!50,line width = 2pt] (-3,-3) -- (-3,3) -- (3,3) -- (3,-3) -- (-3,-3);
\draw[xshift=16cm,->] (-4,0) -- (4,0);
\draw[xshift=16cm,->](0,-4) --(0,4);
\draw[xshift=16cm,dashed] (-3,-3) -- (3,3);
\draw[xshift=16cm,dashed] (-3,3) -- (3,-3);

\node[below left] at (16,0) {$0$};
\node[above left] at (13,0) {$-d$};
\node[above left] at (19.9,0) {$d$};
\node[above right] at (16,-4) {$-d$};
\node[above right] at (16,3) {$d$};
\node[below] at (20,0) {$x_1$};
\node[left] at (16,4) {$x_2$};
\node[right] at (17,0) {$g_3$};
\node[right] at (17,-1) {$g_2$};
\node[right] at (17,1) {$g_1$};
%\node[below] at (2,-1) {(c) $\Delta$  for fixed degree for $x_1$ and $x_2$};

\filldraw[xshift=16cm,fill=black] (0,0) circle (3pt);
%\filldraw[xshift=16cm,fill=black] (3,0) circle (3pt);
%\filldraw[xshift=16cm,fill=black] (0,3) circle (3pt);
\filldraw[xshift=16cm,fill=black] (3,3) circle (3pt);
%\filldraw[xshift=16cm,fill=black] (-3,0) circle (3pt);
%\filldraw[xshift=16cm,fill=black] (0,-3) circle (3pt);
\filldraw[xshift=16cm,fill=black] (-3,-3) circle (3pt);
\filldraw[xshift=16cm,fill=black] (3,-3) circle (3pt);
\filldraw[xshift=16cm,fill=black] (-3,3) circle (3pt);
\filldraw[xshift=16cm,fill=black] (0,0) circle (3pt);
\filldraw[xshift=16cm,fill=black] (3,3) circle (3pt);

\filldraw[xshift=16cm,fill=blue] (1,0) circle (3pt);
\filldraw[xshift=16cm,fill=blue] (1,1) circle (3pt);
\filldraw[xshift=16cm,fill=blue] (1,-1) circle (3pt);
\filldraw[xshift=16cm,fill=blue] (-1,-1) circle (3pt);
\filldraw[xshift=16cm,fill=blue] (0,1) circle (3pt);
\filldraw[xshift=16cm,fill=blue] (-1,1) circle (3pt);
\filldraw[xshift=16cm,fill=blue] (-1,0) circle (3pt);
\filldraw[xshift=16cm,fill=blue] (0,-1) circle (3pt);

\end{tikzpicture}
\caption{}
\label{Fig:dim-2}
\end{figure}
\end{enumerate}
\end{examples}

This paper is partially motivated by conjectures proposed by Wan in
\cite{Wan04}. We shall prove a strengthened version of Conjecture 1.12 of \cite{Wan04}
in Theorem \ref{T:Wan1.12} and  Conjecture 1.11 of \cite{Wan04} in
Theorem \ref{T:Wan1.11}  (proofs of both lie in Section
\ref{S:proof}).
Notice that when $V=\Vert(\Delta)$ and $J=(\Delta-\partial\Delta)\cap\Z^n$,
Theorem \ref{T:main} follows from Theorem \ref{T:Wan1.12} as a special case.

\begin{theorem}
\label{T:Wan1.12}
Let $\Delta$ be an integral convex polytope of dimension  $n$ in $\R^n$ containing
the origin and it is simplicial at all origin-less facets.
Let $J$ be a set of integral points in $\Delta$ not on any origin-less faces
such that $J\cup \Vert(\Delta)$ generates the monoid $C(\Delta)\cap\Z^n$ 
up to finitely many points.
Let $V$ be a set of non-origin integral points in $\Delta$ disjoint from $J$ and
includes $\Vert(\Delta)$. Let
$
\A_V^J
$
be the family of Laurent polynomials
$
f(x)=\sum_{ j \in J} a_j x^j + \sum_{j\in V} c_j x^j
$
parameterized by $a_j$'s
and with prescribed $c_j$'s where $c_j\neq 0$ for all $j\in \Vert(\Delta)$.
Write $\GNP(\A_V^J,\bar\F_p):=\inf_{\bar{f}} \NP(\bar{f})$
where $\bar{f}$ ranges over all regular $\bar{f}\in \A_V^J(\bar\F_p)$.
Then there exists a Zariski dense open subset $\cU$
defined over $\Q$ in $\A_V^J$ such that for any
$f\in \cU(\bar\Q)$
and for $p$ large enough
we have
$\NP(\bar{f})=\GNP(\A_V^J,\bar{\F}_p)$
and
$$
\lim_{p\rightarrow \infty} \NP(\bar{f})
=\HP(\Delta).
$$
\end{theorem}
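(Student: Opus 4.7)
The plan is to express the polynomial $L^*(\bar f, T)^{(-1)^{n-1}}$ as the Fredholm determinant $\det(I - T\alpha)$ of a nuclear Frobenius endomorphism $\alpha$ on a $p$-adic Banach space $\cB$ of overconvergent power series supported on the cone $C(\Delta)$, following the Dwork-Adolphson-Sperber formalism. Choosing a monomial basis of $\cB$ indexed by the lattice points $\lambda \in C(\Delta) \cap \Z^n$ with cone weight $w(\lambda)$, the matrix of $\alpha$ has entries that are explicit $p$-adic power series in $(a_j)_{j \in J}$ and the prescribed $(c_j)_{j \in V}$. The slopes of $\NP(\bar f)$ are then read from the $p$-adic valuations of the coefficients of $\det(I - T\alpha)$, or equivalently from the $k \times k$ principal minors of $\alpha$ as $k$ traverses the Hodge filtration by heights $w(\lambda)$.

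Under the monoid generation hypothesis on $J \cup \Vert(\Delta)$, an asymptotic Hilbert basis of $C(\Delta) \cap \Z^n$ exists and expresses all but finitely many lattice points as $\N$-linear combinations of elements of $J \cup \Vert(\Delta)$. This allows one to extract a \emph{leading matrix} $\alpha_0$: the reduction of $\alpha$ modulo higher-order $p$-adic corrections, whose entries are polynomials in the $a_j$'s with coefficients in $\Q[(c_j)_{j \in V}]$. I would define the global Hasse scheme $H \subset \A_V^J$ as the common vanishing locus of a finite collection of polynomials $\{h_k\}$ in $\Q[(a_j)_{j \in J}]$, where each $h_k$ is the leading Hodge-vertex term of the $k$-th coefficient of $\det(I - T\alpha_0)$. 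Then $\cU := (\A_V^J)^{\mathrm{reg}} - H$ is a candidate for the sought Zariski dense open subscheme, provided each $h_k$ is nonzero. The nonvanishing of $h_k$ is forced by the simplicial condition at origin-less facets: Wan's facet decomposition (Proposition \ref{P:disc}) factors $\alpha_0$ through vertex-supported Frobenius matrices whose determinants are explicit nonzero polynomial expressions in the $c_j$'s and the $a_j$'s, and the coefficient $h_k$ inherits this nonvanishing.

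For $f \in \cU(\bar\Q)$ and all but finitely many primes $p$, I would show that the $p$-adic corrections $\alpha - \alpha_0$ have valuation strictly below the gaps in the valuations of the principal minors of $\alpha_0$, so that the $p$-adic order of the $k \times k$ principal minor of $\alpha$ agrees with that of $\alpha_0$, which in turn equals the $k$-th Hodge height. This yields $\NP(\bar f) = \HP(\Delta)$ for such $f$ and $p$; combined with the Adolphson-Sperber lower bound $\GNP(\A_V^J, \bar\F_p) \geq \HP(\Delta)$ from Theorem \ref{T:basic}, this forces $\NP(\bar f) = \GNP(\A_V^J, \bar\F_p) = \HP(\Delta)$ and hence the limit $\lim_{p \to \infty} \NP(\bar f) = \HP(\Delta)$.

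The main obstacle is the global coherence of the construction: the polynomials $h_k$ must be defined once and for all over $\Q$, yet their relevance to the slope data is verified prime by prime via $p$-adic analysis. Resolving this requires the rigidly nuclear framework suggested by the keywords: one interprets $\alpha$ as the specialization of a universal rigidly nuclear operator over the parameter ring $\Z[(a_j)_{j \in J}]$, so that the coefficients of $\det(I - T\alpha)$ become rigid-analytic functions on a bounded domain whose leading terms descend to the $h_k$'s in $\Q[(a_j)_{j \in J}]$. The asymptotic Hilbert basis controls precisely the finitely many exceptional monoid generators that could otherwise introduce $p$-dependent aberrations, so that for $p$ exceeding a bound depending only on $\Delta$, the mod-$p$ reduction of the global $h_k$ matches the local Hasse polynomial of \cite{BF07} (in one variable) or its multivariable analogue exactly. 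Verifying this uniformity, the nontriviality of $h_k$ over $\Q$, and the rigid-analytic continuity estimates bounding $\alpha - \alpha_0$ uniformly in $p$ constitute the technical heart of the argument.
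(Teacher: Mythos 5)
Your overall architecture (Dwork trace formula, leading terms of principal minors as Hasse polynomials, $\cU$ defined by their nonvanishing) matches the paper's, but the quantitative heart of your third paragraph is wrong, and the error propagates to the conclusion. You claim that for $f\in\cU(\bar\Q)$ and $p$ large the $p$-adic order of the $k\times k$ principal minor of $\alpha$ ``equals the $k$-th Hodge height,'' hence $\NP(\bar f)=\HP(\Delta)$, and you then deduce $\NP(\bar f)=\GNP(\A_V^J,\bar\F_p)$ by squeezing against the Adolphson--Sperber bound. This cannot work: the equality $\NP(\bar f)=\HP(\Delta)$ at a fixed large prime is false in general (Wan's Theorem \ref{T:wan} requires $p\equiv 1\bmod D^*(\Delta)$ for $\GNP=\HP$, and for other residue classes $\GNP(\A_V^J,\bar\F_p)$ typically lies strictly above $\HP(\Delta)$). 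The correct statement, which the paper proves, is that the order of the $N_{\delta,k}\times N_{\delta,k}$ minor equals $\sum_{i\le k} W_{\Delta_\delta}(i)i/D(\Delta)+m_k/(p-1)$ with $m_k\ge 0$ generally nonzero, depending only on the vertex residue class $R_{\Delta_\delta}(p)$ and not on $p$ itself (Theorems \ref{T:approx} and \ref{T:Zariski}). Genericity then means that every $f\in\cU$ attains this \emph{same} minimal $m_k$, which gives $\NP(\bar f)=\GNP(\A_V^J,\bar\F_p)$ directly; the identity with $\HP(\Delta)$ only appears in the limit $p\to\infty$ because $m_k/(p-1)\to 0$. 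Your argument, by contrast, would force $m_k=0$, proving something false. Relatedly, to get a $p$-independent Hasse polynomial you must take one polynomial \emph{per vertex residue class} $R\in\prod(\Z/d_i\Z)^*$ and multiply over all classes, as in (\ref{E:globalhasse}); a single $h_k$ cut out once over $\Q$ without this stratification does not control all large $p$.

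Two further gaps. First, you never address the semilinearity problem: $L^*(\bar f;T)$ over $\F_q$, $q=p^a$, is computed from the linear operator $\alpha_a=\alpha_1^{\circ a}$, whose Fredholm determinant is not simply related to that of the semilinear $\alpha_1$ whose matrix entries carry the Hasse-polynomial data. The paper's rigid transform (Theorem \ref{T:transform} and Proposition \ref{P:transform}) exists precisely to show $\ord_q C_{N_k}=\ord_p\det\M^{[N_k]}$ under the gap condition on the minors; you invoke ``rigidly nuclear'' only for descent of the $h_k$ to $\Q$, which is not the role it plays. Second, your appeal to ``Wan's facet decomposition (Proposition \ref{P:disc})'' to force nonvanishing of $h_k$ is misplaced: the closed facial decomposition is incompatible with Hilbert-basis generation (Remark \ref{R:no-closed-facial}), which is why the paper proves an asymptotic \emph{open} facial decomposition at chain level (Theorem \ref{T:subdivision}), and the nonvanishing of the leading coefficient is established not by reduction to vertex-supported matrices but by exhibiting a unique non-cancelling monomial of minimal graded lexicographic order in the determinant expansion (Lemma \ref{L:D-2}).
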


\begin{remark}
In 1-variable case $n=1$, let $\Delta$ be the line segment between $0$ and $d\in\Z_{\geq 1}$.   
Given a subset $J$ of $\Delta\cap\Z^n$ then $J\cup \{d\}$ generates the monoid $\Z_{\geq 0}$ up to 
finitely many points
if and only if it is a set of coprime integers. 
\end{remark}

Let $\cM_V^J(\bar\F_p):=\cM(\Delta^o,\bar\F_p)\cap \A_V^J(\bar\F_p)$ 
and by \cite{GKZ08} we have scheme $\cM_V^J$ over $\Z$ consisting of 
regular Laurent polynomials in $\A_V^J$. Then we arrive at a refinement 
of the commutative diagram in Figure \ref{Fig:local-global}.
In fact, the diagram in Fig. \ref{Fig:refinement} embeds naturally 
(component-wise) to that in Fig.\ref{Fig:local-global} as sub-objects
in the corresponding categories.

\begin{figure}[!h]
\begin{tikzpicture}
\matrix (m) [matrix of math nodes, row sep=3em,
column sep=2em, text height=1.5ex, text depth=0.25ex]
{
\hspace{1.4cm}\bar{f}\in \cU_p(\bar\F_p)& \iota_p^{-1}(\cU) \subseteq \cU_p& \cM_V^J(\bar\F_p)_{/\F_p} &\A_V^J(\bar\F_p)_{/\F_p} \\
\hspace{1cm}f\in \cU(\bar\Q)& \cU_{/\Q} & {\cM_V^J}_{/\Z} & {\A_V^J}_{/\Q}\\
};

\path[|->]
(m-2-1)  edge node [right] {mod $\wp$} (m-1-1);

\path[right hook->]
(m-1-2) edge node [auto] {$ \iota_p$} (m-2-2)
(m-1-3) edge node [auto] {$ \iota_p$} (m-2-3)
(m-1-4) edge node[auto]  {$\iota_p$} (m-2-4)
(m-1-2) edge (m-1-3)
(m-2-2) edge (m-2-3)
(m-1-3) edge (m-1-4)
(m-2-3) edge (m-2-4);

\end{tikzpicture}
\caption{}
\label{Fig:refinement}
\end{figure}

An immediate application of our main results is to Artin-Schreier varieties.
Given any Laurent polynomial $f$ over $\bar\Q$ we have
an  Artin-Schreier variety
$$
\bar{V_f}:  y^p-y=\bar{f}
$$
over a finite field
for each reduction $\bar{f}$ of $f$ a prime of $\bar\Q$ (with residue field $\F_q$).
Consider the toric affine hypersurfaces given via the embedding
$\bar{V_f}\hookrightarrow (\G_m)^n$,
its zeta function is
$$\Zeta(\bar{V_f};T)={\mathrm{Norm}}_{\Q(\zeta_p)/\Q}(L^*(\bar{f};T))/(1-T)(1-q^nT).$$
If $f$ is regular with respect to $\Delta$ then ${\mathrm{Norm}}_{\Q(\zeta_p)/\Q}(L^*(\bar{f};T))^{(-1)^{n-1}}$ is polynomial by Theorem \ref{T:basic}, and
we denote $\NP(\bar{V_f})$ for the $p$-adic Newton polygon of this polynomial
normalized by shrinking of a magnitude of $p-1$ vertically and horizontally.
So $\NP(\bar{V_f}) = \NP(\bar{f})$ and hence
the following corollary is an immediate application of Theorem \ref{T:main}.

\begin{corollary}\label{C:main}
Let $\Delta$ and $\A_V^J$ be as in Theorem \ref{T:Wan1.12}.
Then there exists a Zariski dense open subset $\cU$ of  $\A_V^J$
defined over $\Q$ such that for every $f$ in $\cU(\bar\Q)$ and
for $p$ large enough we have
$$
\NP(\bar{V_f}) =\GNP(\A_V^J;\bar\F_p),
$$
where $\bar{V_f}: y^p-y=\bar{f}$ is the Artin-Schreier variety defined above.
Moreover, we have
$$
\lim_{p\rightarrow \infty}\NP(\bar{V_f}) =
\lim_{p\rightarrow \infty}\GNP(\A_V^J,\bar\F_p) =
\HP(\Delta).
$$
\end{corollary}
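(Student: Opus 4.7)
The corollary is flagged by the authors as an immediate application, and accordingly my plan is to reduce it directly to Theorem \ref{T:Wan1.12} through the identification $\NP(\bar{V_f}) = \NP(\bar{f})$.

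The first step is to isolate the correct polynomial factor of the zeta function. Starting from
\[
\Zeta(\bar{V_f};T) = \frac{\Norm_{\Q(\zeta_p)/\Q}(L^*(\bar{f};T))}{(1-T)(1-q^n T)},
\]
Theorem \ref{T:basic}(1) gives that $L^*(\bar{f};T)^{(-1)^{n-1}}$ is a polynomial of degree $n!\Vol(\Delta)$ over $\Z[\zeta_p]$, so $\Norm_{\Q(\zeta_p)/\Q}(L^*(\bar{f};T))^{(-1)^{n-1}}$ is a polynomial of degree $(p-1)\,n!\Vol(\Delta)$ over $\Z$. This is the polynomial factor whose $p$-adic Newton polygon, after the prescribed normalization, defines $\NP(\bar{V_f})$.

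The second step is the normalization check. The group $\Gal(\Q(\zeta_p)/\Q)$ acts transitively on the conjugates of $\zeta_p - 1$, all of which have common $p$-adic valuation $1/(p-1)$. Consequently every reciprocal root of $L^*(\bar{f};T)^{(-1)^{n-1}}$ is replaced under the norm by $p-1$ conjugates of the same $p$-adic valuation, which dilates the resulting Newton polygon both horizontally and vertically by a factor of $p-1$. The shrinking by $p-1$ prescribed in the definition of $\NP(\bar{V_f})$ undoes exactly this dilation, so $\NP(\bar{V_f}) = \NP(\bar{f})$ as polygons in $\R^2$, and taking infima over the regular locus in $\A_V^J(\bar\F_p)$ on both sides yields $\inf_{\bar{f}}\NP(\bar{V_f}) = \GNP(\A_V^J,\bar\F_p)$.

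With this identification in hand, the final step is to invoke Theorem \ref{T:Wan1.12}: take $\cU \subseteq \A_V^J$ to be the Zariski dense open subset defined over $\Q$ produced there. For every $f \in \cU(\bar\Q)$ and every sufficiently large $p$, the conclusions $\NP(\bar{f}) = \GNP(\A_V^J,\bar\F_p)$ and $\lim_{p\to\infty}\NP(\bar{f}) = \HP(\Delta)$ transfer verbatim to $\NP(\bar{V_f})$. The only technical point is the normalization bookkeeping, which is standard Dwork-theoretic material, so there is no substantive obstacle beyond Theorem \ref{T:Wan1.12} itself; the main work has already been carried out there.
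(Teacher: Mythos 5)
Your proposal is correct and follows exactly the paper's route: the paper defines $\NP(\bar{V_f})$ via the norm of $L^*(\bar{f};T)$ normalized by shrinking by $p-1$ in both directions, observes $\NP(\bar{V_f})=\NP(\bar{f})$ (which your valuation argument justifies, using that $p$ is totally ramified in $\Q(\zeta_p)$ so conjugation preserves the $p$-adic valuation), and then cites Theorem \ref{T:Wan1.12}. No discrepancy to report.
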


In Theorem \ref{T:Wan1.12} above we considered the space of Laurent polynomials
with given $\Delta$ and with prescribed coefficients in $\Q$
at the vertices of $\Delta$. In the theorem below,
we consider the space of Laurent polynomials with given $\Delta$
that are parameterized by
{\em all} coefficients.

\begin{theorem}
\label{T:allQ}
Let $\Delta$ be an integral convex polytope of dimension $n$
in $\R^n$ containing the origin and it is simplicial at all origin-less facets.
Let $J$ be a subset of $(\Delta-\partial\Delta)\cap\Z^n\cup \Vert(\Delta)$
so that $J$ generates the monoid $C(\Delta)\cap\Z^n$ up to finitely many points.
Let $\A^J$ be the space of all
Laurent polynomials $f= \sum_{j\in J} a_j x^j$ parameterized by
all $(a_j)$'s where $a_j\neq 0$ for $j\in \Vert(\Delta)$.
Then there exists a Zariski dense open subset $\cU$
of $\A^J$ defined over $\Q$ such that
for every $f\in \cU(\Q)$ and for $p$ large enough
we have $\NP(\bar{f}) =\GNP(\A^J,\F_p)$ and
$$
\lim_{p\rightarrow \infty} \NP(\bar{f})
= \lim_{p\rightarrow\infty}\GNP(\A^J,\F_p)
= \HP(\Delta).
$$
\end{theorem}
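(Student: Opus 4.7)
The strategy is to reduce Theorem \ref{T:allQ} to Theorem \ref{T:Wan1.12} by allowing the vertex coefficients to vary as parameters. Set $V=\Vert(\Delta)$, so that $V\subseteq J$ by hypothesis, and let $J'=J\setminus V\subseteq (\Delta-\partial\Delta)\cap\Z^n$. Since $J=J'\sqcup V$ generates $C(\Delta)\cap\Z^n$ up to finitely many points and $V\cap J'=\emptyset$, the pair $(V,J')$ satisfies the hypotheses of Theorem \ref{T:Wan1.12} for every prescription $(c_j)_{j\in V}$ of nonzero rational vertex coefficients. The coordinate decomposition $\A^J=\A^{J'}\times \G_m^{V}$, with the first factor parameterized by $(a_j)_{j\in J'}$ and the second by $(c_j)_{j\in V}$, exhibits $\A^J$ as the total space of the family $\{\A_V^{J'}\}_{(c_j)}$ over $\G_m^{V}$.

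Next I would construct a global Hasse scheme on $\A^J$ by running the proof of Theorem \ref{T:Wan1.12} uniformly in the parameters $(c_j)$. In that proof, the fiber Hasse scheme $H_{(c_j)}\subset \A_V^{J'}$ is cut out by polynomial conditions in $(a_j)_{j\in J'}$ whose coefficients come from entries of the Dwork--Frobenius matrix and Wan's decomposition theorem; these entries are $p$-adic series in $(a_j,c_j)$ with coefficients in $\Z_p[\zeta_p]$, and the relevant Hasse-type specializations depend algebraically on $(c_j)$. Clearing denominators yields defining equations over $\Q$ for a closed subscheme $H\subset \A^J$ whose fiber over each rational $(c_j)\in \G_m^{V}(\Q)$ contains the corresponding $H_{(c_j)}$. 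Let $\cU_{(c_j)}=\A_V^{J'}\setminus H_{(c_j)}$ denote the fiber Zariski dense open subset supplied by Theorem \ref{T:Wan1.12}, and set $\cU=\A^J\setminus H$; then $\cU$ is nonempty (pick any rational $(c_j^0)$ and apply Theorem \ref{T:Wan1.12} to its fiber), hence Zariski dense open in $\A^J$ over $\Q$.

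Now take $f=\sum_{j\in J'}a_j x^j+\sum_{j\in V}c_j x^j\in \cU(\Q)$; its restriction to the fiber over its own vertex coefficients is a $\Q$-point of $\cU_{(c_j)}$, so Theorem \ref{T:Wan1.12} delivers $\NP(\bar f)=\GNP(\A_V^{J'},\bar\F_p)$ for all $p$ large and $\lim_{p\to\infty}\NP(\bar f)=\HP(\Delta)$. The inclusion of a single fiber $\A_V^{J'}(\bar\F_p)\hookrightarrow \A^J(\bar\F_p)$ gives $\GNP(\A^J,\bar\F_p)\leq\GNP(\A_V^{J'},\bar\F_p)$, while Theorem \ref{T:basic}(2) gives $\GNP(\A^J,\bar\F_p)\geq \HP(\Delta)$. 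Combining with $\lim_p\GNP(\A_V^{J'},\bar\F_p)=\HP(\Delta)$ and the fact that normalized Newton polygons take values in a discrete set bounded in terms of $\Delta$, we conclude $\GNP(\A^J,\bar\F_p)=\HP(\Delta)=\GNP(\A_V^{J'},\bar\F_p)$ for all $p$ sufficiently large, yielding $\NP(\bar f)=\GNP(\A^J,\bar\F_p)$ together with the two limit statements.

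The main obstacle is the uniformity in the second step: verifying that the fiberwise Hasse conditions of Theorem \ref{T:Wan1.12} assemble into a single closed $\Q$-subscheme of $\A^J$. This amounts to inspecting the construction of the Hasse polynomial in Wan's decomposition theorem and confirming that every ingredient depends polynomially on the full coefficient vector $(a_j)_{j\in J'\cup V}$ over $\Z_p[\zeta_p]$, rather than only on $(a_j)_{j\in J'}$ with $(c_j)$ treated as scalars; with this in hand, descent to $\Q$ follows from Galois invariance of the geometric conditions, and the relative construction over $\G_m^{V}$ goes through with no essential modification.
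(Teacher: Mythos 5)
There is a genuine gap, and it sits exactly where you flagged "the main obstacle": the fiberwise Hasse loci $H_{(c_j)}\subset\A_V^{J'}$ do \emph{not} assemble into a closed $\Q$-subscheme of $\A^J$ by "clearing denominators." In the Fredholm matrix entries $M_{pr-s}$ the vertex variables $A_j$, $j\in\Vert(\Delta)$, occur to exponents of the shape $\lfloor(pr_i-s_i)/d_i\rfloor$, which grow linearly in $p$; so the defining equation of $H_{(c_j)}$, viewed as a function of the vertex coefficients, is a polynomial of $p$-dependent degree in $(c_j)$, not a single $\Q$-polynomial on $\A^J$. In Theorem \ref{T:Wan1.12} this is invisible because the $c_j$ are \emph{prescribed rationals} and one uses $c_j^{\,p}\equiv c_j\bmod p$ pointwise; to make the vertex coefficients into honest parameters one must take a norm and reduce modulo the ideal $(A_j^p-A_j)_{j\in\Vert(\Delta)}$, which is precisely the content of the paper's Theorem \ref{T:nonzero} and the construction of $P^*_{\A^J}$ in (\ref{E:Hasse}). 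That reduction is only compatible with evaluation at Teichm\"uller lifts of $\F_p$-rational points --- which is why Theorem \ref{T:allQ} is stated for $f\in\cU(\Q)$ rather than $\cU(\bar\Q)$. Your proposal never identifies this mechanism, and the assertion that "every ingredient depends polynomially on the full coefficient vector" is false without it.

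The final step is also flawed. From $\NP(\bar f)=\GNP(\A_V^{J'},\bar\F_p)$ and $\lim_p\GNP(\A_V^{J'},\bar\F_p)=\HP(\Delta)$ you infer $\GNP(\A^J,\F_p)=\HP(\Delta)$ for all large $p$ by a discreteness argument, but the normalized Newton polygon has slopes with denominators involving $p-1$, so its value set is not discrete uniformly in $p$; generically $\GNP$ differs from $\HP(\Delta)$ by terms of size $O(1/(p-1))$ for every $p$ in a fixed nontrivial residue class (this is consistent with Wan's Theorem \ref{T:wan}, which requires $p\equiv 1\bmod D^*(\Delta)$ to force equality). Hence you cannot conclude $\NP(\bar f)=\GNP(\A^J,\F_p)$ this way: a priori the infimum over the whole family could be strictly below the infimum over the single fiber. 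The paper avoids this by working on the full space $\A^J$ from the start (Proposition \ref{P:simple}): the nonvanishing of $P^*_{\A^J}$ at $f$ forces each Fredholm coefficient $C_{N_k}$ to attain the minimal possible $p$-adic valuation over all of $\A^J(\F_p)$, which yields $\NP(\bar f)=\GNP(\A^J,\F_p)$ directly and gives the limit statement as a byproduct. Your overall fibration strategy is genuinely different from the paper's, but as written both the construction of $\cU$ and the identification of $\NP(\bar f)$ with the generic polygon of the full family are unproved.
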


We have seen in Wan's Theorem \ref{T:wan} that
$\GNP(\Delta,\bar\F_p)$ coincides with $\HP(\Delta)$
for a special congruence class of prime $p$ and for $p$ large enough.
The following proves a strengthened version of 
Conjecture 1.11 in \cite{Wan04}.

\begin{theorem}
\label{T:Wan1.11}
Let $\Delta$ be an integral convex polytope of dimension $n$ in $\R^n$
containing the origin.
We write $\A^J$ for the space of Laurent polynomials
$f= \sum_{j\in J} a_j x^j$ parameterized by
$(a_j)$'s with $\Delta(f)=\Delta$.
For any subset $J$ with $\Vert(\Delta)\subseteq J\subseteq \Delta\cap\Z^n$
that generates the monoid $C(\Delta)\cap\Z^n$ up to finitely many points,
we have
$$
\lim_{p\rightarrow \infty} \GNP(\A^J;\bar\F_p) = \HP(\Delta).
$$
In particular, if $\Delta\cap\Z^n$ generates the monoid $C(\Delta)\cap\Z^n$ up to finitely many
points then we have
$$
\lim_{p\rightarrow \infty} \GNP(\Delta;\bar\F_p) = \HP(\Delta).
$$
\end{theorem}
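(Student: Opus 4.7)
The lower bound $\GNP(\A^J;\bar\F_p)\ge \HP(\Delta)$ is valid for every prime $p$ by Theorem \ref{T:basic}(2), so the task reduces to producing, for each $\varepsilon>0$ and each sufficiently large $p$, a single regular $\bar{f}_p\in \A^J(\bar\F_p)$ whose Newton polygon $\NP(\bar{f}_p)$ lies within $\varepsilon$ of $\HP(\Delta)$ in the piecewise-linear polygon metric. The squeeze $\HP(\Delta)\le \GNP(\A^J;\bar\F_p)\le \NP(\bar{f}_p)$ would then yield the claimed limit.

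My plan is to reduce the non-simplicial polytope $\Delta$ to its simplicial pieces via a star subdivision from the origin: first triangulate each origin-less facet $\delta$ of $\Delta$ into lattice simplices---for instance by a pulling triangulation based at vertices in $\Vert(\Delta)\cap\delta$---and then cone each such simplex from $\0$. This yields a simplicial subdivision $\Delta=\bigcup_\ell \Delta_\ell$ in which every $\Delta_\ell$ is an $n$-simplex containing $\0$ as a vertex whose unique origin-less facet is a lattice simplex, so that the simplicial hypothesis of Theorem \ref{T:Wan1.12} holds on each piece. From the combinatorial definition of $\HP$ through weighted counts of lattice points in fundamental parallelepipeds of the cones $C(\Delta_\ell)$, one verifies the star-additivity
$$
\HP(\Delta)\;=\;\sum_\ell \HP(\Delta_\ell),
$$
where the sum on the right is in the sense of concatenation of slopes.

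Next, for each $\ell$, I would identify a subset $J_\ell$ of $J\cap\Delta_\ell$ containing $\Vert(\Delta_\ell)$ that generates $C(\Delta_\ell)\cap\Z^n$ up to finitely many points, and apply Theorem \ref{T:Wan1.12} to the pair $(\Delta_\ell, J_\ell)$. This produces, for $p$ large, a regular $\bar{f}_{\ell,p}\in\A^{J_\ell}(\bar\F_p)$ with $\NP(\bar{f}_{\ell,p})\to \HP(\Delta_\ell)$. Setting $\bar{f}_p:=\sum_\ell \bar{f}_{\ell,p}$ produces an element of $\A^J(\bar\F_p)$ with Newton polytope $\Delta$, and Wan's facial decomposition along the star subdivision---which block-triangularizes the Dwork-Fredholm operator under the piecewise-simplicial hypothesis on each $\Delta_\ell$---then gives $\NP(\bar{f}_p)=\sum_\ell \NP(\bar{f}_{\ell,p})\to \sum_\ell \HP(\Delta_\ell)=\HP(\Delta)$, closing the squeeze.

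The main obstacle I anticipate is the transfer of the Hilbert-basis condition from $\Delta$ to each piece $\Delta_\ell$: the global hypothesis that $J$ generates $C(\Delta)\cap\Z^n$ up to finitely many points does not automatically yield that $J\cap\Delta_\ell$ generates $C(\Delta_\ell)\cap\Z^n$ up to finitely many points, since intersecting a generating set with a subcone may destroy the generating property. Resolving this is precisely the role of the \emph{asymptotic Hilbert basis} and \emph{rigid transform} techniques advertised in the paper's keywords: one replaces $J\cap\Delta_\ell$ by a modest enlargement within $\Delta_\ell\cap\Z^n$ that is asymptotically generating, and argues that the extra monomials contribute only errors to the Newton polygon of $\bar{f}_p$ that vanish as $p\to\infty$ and can be absorbed into the $\varepsilon$-slack in the squeeze above.
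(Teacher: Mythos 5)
Your overall squeeze strategy (lower bound from Theorem \ref{T:basic}(2) plus exhibiting, for each large $p$, one $\bar{f}_p$ with $\NP(\bar{f}_p)$ close to $\HP(\Delta)$) is the right frame, and for this theorem one only needs an $\bar{f}_p$ over $\bar\F_p$ for each $p$ separately, which is weaker than what Theorem \ref{T:Wan1.12} delivers. However, the reduction to simplicial pieces via a star subdivision has two genuine gaps. First, the obstacle you flag at the end is not repairable the way you suggest: the hypothesis that $J$ generates $C(\Delta)\cap\Z^n$ up to finitely many points does not descend to the cones $C(\Delta_\ell)$, and no ``modest enlargement within $\Delta_\ell\cap\Z^n$'' need exist, because $\Delta_\ell\cap\Z^n$ itself can fail to generate $C(\Delta_\ell)\cap\Z^n$ up to finitely many points even when $\Delta$ is smooth --- this is exactly the phenomenon of Remark \ref{R:no-closed-facial} and Figure \ref{Fig:hilbert}, where the paper notes explicitly that ``a naive asymptotic version of Wan's closed facial decomposition does not exist.'' So Theorem \ref{T:Wan1.12} simply cannot be invoked on the pieces. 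Second, the identity $\NP(\bar{f}_p)=\sum_\ell\NP(\bar{f}_{\ell,p})$ over the simplices of a triangulation of a \emph{single} origin-less facet $\delta$ is not a consequence of Wan's facial decomposition: lattice points lying in different simplices $\delta_\ell,\delta_{\ell'}$ of the same facet are cofacial, the weight function is exactly additive across them, and hence the off-diagonal blocks of the Dwork matrix do not have strictly larger $p$-adic order. The block-triangularization you need fails at precisely this finer level; it only holds for the decomposition by the open faces of $\Delta$ itself.

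The paper's proof avoids both problems by never restricting the generating set to a subcone. For $pr-s\in S(\Delta_\delta)$ it uses representations $pr-s=\sum_{j\in J}u_{pr-s,j}\,j$ drawing on all of $J$, and controls them by the uniform bound $w_\Delta(v)\le w_{J,\Z}(v)\le w_\Delta(v)+N_\Delta$ of Theorem \ref{T:key} together with the $p$-independence of the interior exponents (Theorem \ref{T:approx}). The Fredholm determinant is then decomposed only along the open facial subdivision $S(\Delta)=\cup_{\delta\in\cF^o(\Delta)}S(\Delta_\delta)^o$ via the rigid transform in block form (Theorems \ref{T:transform}, \ref{T:blockrigid}, \ref{T:subdivision}); the triangulation of non-simplex facets enters only in choosing vertex representations that isolate a unique minimal monomial in $\det\M_\delta^{[N_{\delta,k}]}$ (Lemma \ref{L:D-2}, Theorem \ref{T:mini-2}), and the non-vanishing of the resulting local Hasse polynomial $P_{\A^J,p}G_{\A^J,p}$ defines the $\cU_p$ whose members realize $\GNP(\A^J,\bar\F_p)$ with the error term $m_k/(p-1)\to 0$ (Lemma \ref{L:main}(1)). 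If you want to salvage your outline, you must replace the per-simplex application of Theorem \ref{T:Wan1.12} by this chain-level estimate that keeps all of $J$ in play.
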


\begin{examples}[Wan]
\label{EX:counter-example}
If $J:=\Delta\cap \Z^n$ does not generate $C(\Delta)\cap \Z^n$ up to finitely many points
we do not expect Theorem \ref{T:main}
generally hold and we give a counterexample here:
Let $\Delta$ be given by the following vertices
$P_0=(0,0,0,0),
P_1=(1,1,1,0), P_2=(1,1,0,1), P_3=(1,0,1,1), P_4=(0,1,1,1)$.
Then we have 
$\NP(\bar{f})\geq
\GNP(\Delta;\bar\F_p)$ for any $\bar{f}$ with Newton polytope
$\Delta$ have
$\lim_{p\equiv 2\bmod 3, p\rightarrow\infty}\NP(\bar{f})\neq \HP(\Delta)$.
Namely,
$\lim_{p\rightarrow\infty}\NP(\bar{f})$ and $\lim_{p\rightarrow\infty}\GNP(\Delta,\bar\F_p)$
do not exist.
\end{examples}

A set $\cG$ of integral points in $\Delta$ containing $\Vert(\Delta)$
has a {\em unimodular triangulation}
if $\Delta$ has a triangulation of simplices with all vertices in $\cG$
and whose maximal dimension simplices can be translated to 
be pointed at the origin whose $m$ nonzero vertex vectors
generates the full lattice $\Z^m$. If $\Delta$ is of dimension $\leq 2$ then
$\Delta\cap\Z^n$ always has a unimodular triangulation.
However it is false if $\dim\Delta\geq 3$, e.g., see  Fig. \ref{Fig:non-smooth}.
Following Wan's terminology (see \cite{Wan04}), 
we say $\Delta$ is {\em ordinary} at $p$
if $\GNP(\Delta,\bar\F_p) = \HP(\Delta)$.
Theorem \ref{T:Wan1.11} implies that for large enough $p$
a generic $f$ over $\bar\F_p$ defines an ordinary affine toric 
hypersurface, which we shall state below in Corollary \ref{C:Wan1.11}, whose proof
lies in Section \ref{S:proof}.

\begin{corollary}
\label{C:Wan1.11}
Let $\Delta$ be an integral convex polytopes of dimension $n$ in $\R^n$.
Let $\T_\Delta$ be the space of all affine toric hypersurfaces
$V(f)$ where $f=\sum_{j\in \Delta\cap\Z^n}a_j x^j$ parametered by
$a_j$'s where $a_j\neq 0$ for vertices $j$ in $\Delta$.
Let $\HP(\T_\Delta)$ be the Hodge polygon of toric hypersurfaces in $\T_\Delta$
given by Hodge numbers.
For any regular $V(f) \in \T_\Delta(\bar\Q)$,
let $V(\bar{f})$ be the reduction of $V(f)$ at a prime of $\bar\Q$ over $p$, and
let $\NP(V(\bar{f}))$ denote the normalized $p$-adic Newton polygon of
the key polynomial component of zeta function of $V(\bar{f})$.
Let $\GNP(\T_\Delta,\bar\F_p):=\inf_{\bar{f}} \NP(V(\bar{f}))$
where $V(\bar{f})$ ranges over $\T_\Delta(\bar\F_p)$. 
If $\Delta\cap\Z^n$ has a unimodular triangulation, then we have
for $p$ large enough
$$
\GNP(\T_\Delta,\bar\F_p) = \HP(\T_\Delta).
$$
\end{corollary}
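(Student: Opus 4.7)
The plan is to deduce Corollary~\ref{C:Wan1.11} from Theorem~\ref{T:Wan1.11} applied to the ``cone'' polytope
$$
\Delta^{\ast}:=\conv\bigl(\{\0\}\cup(\{1\}\times\Delta)\bigr)\subset\R^{n+1},
$$
obtained by lifting $\Delta$ to height~$1$ and coning back to the origin. This $\Delta^{\ast}$ is an integral convex polytope of dimension $n+1$ containing $\0\in\R^{n+1}$, with $\Delta^{\ast}\cap\Z^{n+1}=\{\0\}\cup\bigl(\{1\}\times(\Delta\cap\Z^n)\bigr)$ and vertex set $\{\0\}\cup(\{1\}\times\Vert(\Delta))$. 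Laurent polynomials supported on $J:=\Delta^{\ast}\cap\Z^{n+1}$ take the form $F(y_0,x)=a_0+y_0 f(x)$ with $f$ ranging over $\T_\Delta$, so the parameter space $\A^{J}$ is essentially $\T_\Delta$ (the constant $a_0$ contributes only a harmless additive shift to the associated exponential sum).

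The first technical step is to verify the monoid hypothesis of Theorem~\ref{T:Wan1.11}: that $J$ generates $C(\Delta^{\ast})\cap\Z^{n+1}$. Given a unimodular triangulation of $\Delta$ with lattice vertices, each maximal simplex $S$ with vertices $v_0,\dots,v_n\in\Delta\cap\Z^n$ lifts to the simplicial cone $C_S=\R_{\geq 0}\langle(1,v_0),\dots,(1,v_n)\rangle\subset C(\Delta^{\ast})$. The row-operation identity $\det\bigl((1,v_i)_{i=0}^{n}\bigr)=\det(v_1-v_0,\dots,v_n-v_0)=\pm 1$, which uses unimodularity of $S$, shows the lifts $(1,v_0),\dots,(1,v_n)$ form a $\Z$-basis of $\Z^{n+1}$; hence every lattice point of $C_S$ is a non-negative integer combination of them. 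Gluing over the fan induced by the triangulation, $J$ generates $C(\Delta^{\ast})\cap\Z^{n+1}$ outright, with no exceptional points needed. Theorem~\ref{T:Wan1.11} then yields $\lim_{p\to\infty}\GNP(\A^{J};\bar\F_p)=\HP(\Delta^{\ast})$.

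The remaining step is the dictionary from exponential sums back to toric hypersurfaces. The Bombieri orthogonality $\sum_{y_0\in\F_{q^m}}\psi\!\circ\!\Tr(y_0 z)=q^m\cdot[z=0]$ gives the point-counting identity
$
S_m^{\ast}(y_0\bar{f})=q^m\bigl|V(\bar{f})\cap(\G_m)^n(\F_{q^m})\bigr|-(q^m-1)^n,
$
so passing to $L$-functions expresses $L^{\ast}(y_0\bar{f},T)$ as $\Zeta\bigl(V(\bar{f})\cap(\G_m)^n;qT\bigr)$ times explicit elementary factors $\prod_k(1-q^kT)^{\pm\binom{n}{k}}$. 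Normalizing $p$-adically, this identifies $\NP(V(\bar{f}))$ with $\NP(y_0\bar{f})$ after subtracting the trivial integer slopes contributed by the elementary factors, while the Danilov--Khovanskii/Denef--Loeser formula for Hodge numbers of toric hypersurfaces identifies $\HP(\T_\Delta)$ with the corresponding piece of $\HP(\Delta^{\ast})$. A standard discreteness argument (the polygons $\GNP(\A^{J};\bar\F_p)$ take values in a set of rational polygons with bounded denominators and are bounded below by $\HP$) then upgrades the limit identity to the equality $\GNP(\T_\Delta;\bar\F_p)=\HP(\T_\Delta)$ for all $p$ sufficiently large.

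I expect the main obstacle to be the clean bookkeeping in this dictionary step rather than the monoid computation (which is the standard toric-geometric consequence of unimodularity): one must pin down exactly which factor of $\Zeta(V(\bar{f});T)$ is the ``key polynomial component'' inside $L^{\ast}(y_0\bar{f},T)$ after the $T\mapsto qT$ reindexing and the introduction of the elementary pieces, and then match its slopes against the sub-polygon of $\HP(\Delta^{\ast})$ singled out by Danilov--Khovanskii. This comparison is classical but delicate, and doing it carefully is exactly what converts the limit formula of Theorem~\ref{T:Wan1.11} into the asymptotic equality claimed in Corollary~\ref{C:Wan1.11}.
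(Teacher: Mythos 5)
Your proposal follows the paper's own proof in all essentials: you cone $\Delta$ up to $\Delta'=\conv(\0,(1,\Delta))\subset\R^{n+1}$, observe that a unimodular triangulation of $\Delta$ lifts to a unimodular triangulation of $\Delta'$ (via the determinant identity $\det((1,v_i)_i)=\det(v_1-v_0,\dots,v_n-v_0)=\pm 1$), conclude that $(1,\Delta)\cap\Z^{n+1}$ generates the monoid $C(\Delta')\cap\Z^{n+1}$, and then invoke Theorem \ref{T:Wan1.11}; this is exactly the structure of the paper's proof. The one genuine difference is that you re-derive the zeta/$L$-function dictionary from scratch (the Bombieri orthogonality computation of $S_m^*(y_0\bar{f})$ and the ensuing factorization), whereas the paper simply cites equations (\ref{E:P_f}) and (\ref{E:NP}), which record the Adolphson--Sperber/Denef--Loeser factorization $\Zeta(V(\bar f),T)=\prod_{i}(1-q^iT)^{\pm\binom{n}{i+1}}P_f(T)^{(-1)^n}$ and the identification $\NP(V(\bar f))=\NP(x_{n+1}\bar f)$, together with the matching $\HP(\Delta')=\HP(\T_\Delta)$ from the Hodge-number definitions; your version makes this more self-contained but adds no new idea. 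Your closing ``bounded denominators'' discreteness argument is stated a bit loosely (the Newton-polygon vertex heights live in $\frac{1}{a(p-1)}\Z$, so the denominators do grow with $p$), but the paper itself passes over this ``limit implies eventual equality'' step with the same brevity, so the level of rigor matches the source; in the unimodular situation the stronger fact underlying it is that $w_{\cG,\Z}=w_{\Delta'}$ exactly, forcing the Hasse-polynomial index $m_k$ to be $0$ rather than merely bounded.
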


For hypersurface in dimension $\leq 3$ Wan proves  in
\cite[Section 3]{Wan04} that
$\GNP(\T_\Delta,\bar\F_p) = \HP(\T_\Delta)$ for all $p$ if $\dim\Delta\leq 3$.
See his Corollaries 3.8 through 3.14 in \cite{Wan04} for more details.
However, for dimension $n\geq 4$ Wan has demonstrated in \cite[Section 2.4]{Wan04}
(see also Example \ref{EX:counter-example}) 
that there are $\Delta$'s with $\GNP(\T_\Delta,\bar\F_p)\neq \HP(T_\Delta)$.
Our Corollary \ref{C:Wan1.11} generalizes \cite[Theorem 10.4]{Wan08}.

By applying Proposition \ref{P:unimodular} 
and Corollary \ref{C:Wan1.11} we have the following:

\begin{corollary}
\label{C:n-cube}
Let $\T_\Delta$ be the space of all $V(f)$ where $f$ is a Laurent polynomial in $n$ variables
with Newton polytope $\Delta$ equal to an $n$-rectangle (i.e., with prescribed maximal and minimal 
degrees in each variable) 
or $n$-diamond (i.e., with prescribed total degree) 
then for $p$ large enough we have
$\GNP(\T_\Delta,\bar\F_p)=\HP(\T_\Delta).$
\end{corollary}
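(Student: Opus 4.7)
The plan is to reduce directly to Corollary \ref{C:Wan1.11} and to invoke Proposition \ref{P:unimodular} for the combinatorial verification in each of the two cases. By Corollary \ref{C:Wan1.11}, to conclude $\GNP(\T_\Delta,\bar\F_p)=\HP(\T_\Delta)$ for all sufficiently large $p$, it is enough to exhibit a unimodular triangulation of $\Delta\cap\Z^n$ in the sense defined immediately before that corollary: a triangulation whose vertices lie in $\Delta\cap\Z^n$, all of whose maximal simplices can be translated to the origin so that the $n$ nonzero vertex-vectors generate $\Z^n$.

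For the $n$-rectangle $\Delta=\prod_{i=1}^n[a_i,b_i]$ with $a_i,b_i\in\Z$, I would first subdivide $\Delta$ by the integer coordinate hyperplanes $x_i=k$ into unit lattice cubes, and then apply the classical Freudenthal (staircase) triangulation inside each unit cube $[0,1]^n$. This yields $n!$ simplices $T_\sigma$ indexed by permutations $\sigma\in S_n$, with vertices $\mathbf{0},\ e_{\sigma(1)},\ e_{\sigma(1)}+e_{\sigma(2)},\ \ldots,\ e_{\sigma(1)}+\cdots+e_{\sigma(n)}$. After reordering, the $n$ edge-vectors emanating from $\mathbf{0}$ form a unit lower-triangular matrix in the standard basis, hence have determinant $\pm 1$ and generate $\Z^n$; each $T_\sigma$ is therefore unimodular. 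Gluing these local triangulations yields a global unimodular triangulation of $\Delta$ supported on $\Delta\cap\Z^n$.

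For the $n$-diamond — understood here as the scaled standard simplex $d\Delta_n=\{x\in\R^n_{\geq 0}:x_1+\cdots+x_n\leq d\}$ corresponding to polynomials of prescribed total degree — I would invoke the standard unimodular triangulation of the dilated simplex available in integer convex geometry (for example, the lexicographic pulling triangulation refined to lattice points, or the restriction of a staircase triangulation of $[0,d]^n$ to the half-space $\sum x_i\leq d$). This is the content Proposition \ref{P:unimodular} packages. Combining the two cases verifies the unimodularity hypothesis of Corollary \ref{C:Wan1.11}, and the corollary yields the claimed equality.

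The main (and mild) obstacle is the diamond case: unlike the rectangle, where the Freudenthal construction is completely transparent, one must ensure that the chosen triangulation of $d\Delta_n$ uses only points of $\Delta\cap\Z^n$ (rather than lattice refinements at a higher dilation) while all maximal simplices remain unimodular. This is precisely what Proposition \ref{P:unimodular} handles, after which the deduction of Corollary \ref{C:n-cube} is immediate.
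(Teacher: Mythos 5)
Your proposal is correct and follows the same route as the paper: the paper derives Corollary \ref{C:n-cube} precisely by combining Proposition \ref{P:unimodular} with Corollary \ref{C:Wan1.11}, and your Freudenthal/staircase construction is the "routine" verification the paper leaves implicit for Proposition \ref{P:unimodular}. The only caveat is that the paper's $n$-diamond is the Laurent polytope $-d\le j_1+\cdots+j_n\le d$ rather than the nonnegative dilated simplex $d\Delta_n$ you triangulate, but the same staircase-restriction argument applies there, so the deduction stands.
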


\begin{notations}
\label{N:notations}
In this paper $\Delta$ is an integral convex polytope in $\R^n$ of dimension $n$.
In fact we assume it to be general enough that
the polynomial $L^*(\bar{f})^{(-1)^{n-1}}$ is always of degree $n!\Vol(\Delta)$.
Let $S(\Delta)=C(\Delta)\cap \Z^n$ be the lattice cone of $\Delta$.
Let $D(\Delta)$ be the least positive integer such that
$w_\Delta(v)$ is of the form $\frac{\Z}{D(\Delta)}$ for all $v\in S(\Delta)$.
For any $i\in \Z_{\geq 0}$ let $S(\Delta)_i$ be consisted of
$r\in S(\Delta)$ with $w_\Delta(r)=\frac{i}{D(\Delta)}$.
Write $S(\Delta)_{\leq k}:=\bigcup_{i=0}^{k} S(\Delta)_i$.
Let $W_\Delta(i)=|S(\Delta)_i|$.
Let $k\in\Z_{\geq 0}$ and let $N_k=|S(\Delta)_{\leq k}|=\sum_{i=0}^{k}W_\Delta(i)$.
Let $H_\Delta(k) := \sum_{i=0}^{n}(-1)^i \binom{n}{i} W_\Delta(k-iD(\Delta))$.
Let $V_\Delta=n!\Vol(\Delta)$, and let $k_\Delta\in\Z_{\geq 0}$ be such that
$V_\Delta=|S(\Delta)_{\leq k_\Delta}|$. Let $N_\Delta=nD(\Delta)$.

Given a polynomial $P(T)=1+c_1T+\ldots + c_NT^N$ with coefficients in
a ring with $p$-adic valuation, then its Newton polygon (normalized with respect to a $p$-power $q$)
is the lower convex hull of the points $(i,\ord_q c_i)$ for all $i=0,1,\ldots,N$.
Denote it by $\NP_q(P(T))$. 
For any regular polynomial $\bar{f}\in \F_q[x_1,\ldots,x_n,1/(x_1\cdots x_n)]$
with Newton polytope $\Delta$ 
we write $\NP(\bar{f}):=\NP_q(L^*(\bar{f})^{(-1)^{n-1}})$.
We define a combinatorial Newton polygon below
\begin{eqnarray}
\label{E:HP}
\HP(\Delta)&:=& \NP_q(\prod_{i=0}^{n} (1-Tq^{\frac{i}{D(\Delta)}}))^{H_\Delta(i)}.
\end{eqnarray}

We write  $\lfloor R\rfloor$ for the biggest integer less than a given real number $R$,
and write $\{R\}$ for $R-\lfloor R\rfloor$.
For any positive integer $k$,
for any matrix $M$ we use $M^{[k]}$ to denote the first $k\times k$ submatrix of $M$;
for any polygon $\NP$ we use $\NP^{[k]}$ to denote the first horizontal length $k$ segment
of this polygon.

Fix positive integers $d_1,\ldots,d_n$ for any prime $p\nmid d_1\cdots d_n$
we let $R_{\Delta_\delta}(p):=(p\bmod d_1,\ldots,p\bmod d_n) \in \prod_{i=1}^{n}(\Z/d_i\Z)^*$
denoting a residue class.
\end{notations}

\noindent{\bf Plan of the article.}
This paper is organized as follows.
Our main results in Theorems \ref{T:Wan1.12}, \ref{T:allQ},
and \ref{T:Wan1.11}
are proved in Section \ref{S:proof}.
The technical part of the proof of Theorem \ref{T:Wan1.12}
consists of two crucial steps we described in Sections \ref{S:transform} and
\ref{S:proof}:
first the new facial decomposition
Theorem \ref{T:subdivision} and the transformation from
a linear  Dwork operator computation
(which depends on the prime in $\bar\Q$ lying over $p$)
to semi-linear Dwork operator (which only depends on the prime $p$)
as in Theorem \ref{T:transform}.
Both of these reduction steps are performed at chain-level.
On the other hand, in order to apply the results in Section \ref{S:transform}, one
needs strong estimates of the integral weight
function which we develop in Section \ref{S:convex} 
after providing a toolbox of basic notation in integral convex geometry;
and estimation of integral weight functions in Section \ref{S:estimate}.
Section \ref{S:dwork} recalls $p$-adic Dwork theory and for our $\Delta$
we obtain building blocks of {\em Hasse polynomials}.
Finally we prove Wan's Conjectures 1.11 and 1.12 of \cite{Wan04}
in Section \ref{S:proof}.
Our Theorem \ref{T:allQ} is proved in Section \ref{S:proof}.
\vspace{5mm}

\noindent{\bf Acknowledgment.}
We thank the extraordinary working environment at the MIT Mathematics
Department and Professor Bjorn Poonen for hospitality during our visit.
Part of this work is supported by an NSA grant 1094132-1-57192.

\section{Integral convex geometry: a toolbox}
\label{S:convex}

\subsection{Preliminaries}
\label{SS:prelim}

The interplay between integral convex geometry and
$p$-adic Dwork method will be apparent in Section \ref{S:dwork},
this is part of the dictionary between integral combinatorial convexity and
$p$-adic Dwork theory.
We recall and develop some auxilary results here on integral convex geometry.
We refer the reader to the books \cite{Ewa96}\cite{Ful93}\cite{Bar02}\cite{DRS10} for more information.
The convex hull of a finite set $\Sigma$ of points in $\R^n$ is called a {\em polytope}.
If $\Delta$ is the convex hull of a finite set $\Sigma$ consisting  of only integral points in $\Z^n$,
then it is an {\em integral polytope} (it is also called {\em lattice polytope} or
{\em integer polytope} in the literature).
A  polytope $\Delta$ is called {\em pointed} (or {\em pointed})
if it contains an extreme point, that is, a point that does not lie in any open line segmant joining
two points of $\Delta$. (A cone is pointed if and only if the origin is a vertex.)

Given a set of points $c_1,\ldots,c_m$ in $\R^n$, the set $\Delta$
of all points $x$ in $\R^n$ satisfying the inequalities (via scalar product) $\pscal{c_i, x} \leq 1$ for all $i=1,\ldots,
m$ is called a {\em polyhedron}.  If we can choose all $c_i$ in $\Q^n$  then this polyhedron is called
{\em rational}. A polytope is a bounded polyhedron.
Each hyperplane $\delta_i$ defined by $\pscal{c_i, x} = 1$ as above
is called a {\em supporting hyperplane}
of $\Delta$, and $\delta_i\cap \Delta$  is called a {\em face} of $\Delta$
(this is also called closed face).
An open face of $\Delta$ is a face minus its boundary.
The complement of interior points of $\Delta$ in $\Delta$ is called the {\em boundary} .
A {\em vertex, edge and facet} of $\Delta$ is a dimensional $0$, $1$ and $n-1$
face in $\Delta$, respectively.

All our polytopes in this paper are integral and all polyhedra (in particular cones) are
rational unless declared otherwise. For any non-empty set $\Delta$ in $\R^n$
the {\it cone} of $\Delta$ is defined by
$C(\Delta) := \{\lambda v |\lambda\in\R_{\geq 0}, v\in \Delta\}.$
Two points $v,v'$ in $C(\Delta)$ are called {\em cofacial} if
$v\R_{>0}$ and $v'\R_{>0}$ penetrate the
a same face of $\Delta$.

We define the {\it lattice cone}  of $\Delta$ as
$S(\Delta) := C(\Delta) \cap \Z^n.$
An integral point $v\in\Z^n$ is {\em primitive}
if all its coordinates in the standard basis of
$\Z^n$ are coprime to each other
(i.e., for $v=(v_1,\ldots,v_n)$ we have $\gcd(v_1,\ldots,v_n)=1$).
In fact, this property is independent of the choice of basis of $\Z^n$
since $\gcd$ is invariant under $\SL_n(\Z)$ transformation.
A {\em primitive generating set} $\cG_\Delta$ of a rational cone
$C(\Delta)$ is a minimal (set-inclusion) finite set of primitive integral points
in the cone that generates $C(\Delta)$ over $\R_{\geq 0}$.
It is clear that every rational cone $C(\Delta)$
contains a primitive generating set $\cG_\Delta$.
If the rational cone $C$ is pointed
then the minimal integral vectors of $C$ make up the
unique primitive generating set.

In integral convex geometry,
an (integral)  {\em simplex} $\Sigma$ is a convex hull of $d+1$ integral points
$v_1,\ldots,v_d,v_{d+1}$ in $\Z^n$
that is not a solution to the equation
$x=\sum_{i=1}^{d}c_i x_i$ for any $c_i\in\Q_{\geq 0}$ with
$\sum_{i=1}^{d}c_i =1$ for some $d\geq 1$.
Equivalently, it is an integral convex polytope
$\Sigma$ in $\R^n$  with $\dim\Sigma=|\Vert(\Sigma)|-1\leq n$.
A simplex is automatically pointed.
An integral polytope is {\em simplicial} if each  face is a simplex.
A rational cone $C$ is {\em simplex}  (or {\em simple}) if it is generated by $\dim C$
primitive integral points over $\R_{\geq 0}$;
it is {\em simplicial} if each proper face of $C$ is simplex.
A simplex cone of dimension $n$ has a unique
primitive generating set $\cG_\Delta=\{g_1,\ldots,g_n\}$ where $g_i$'s
are primitive vertex vectors. These lattice vectors are linearly independent over
$\R_{\geq 0}$.
If $\Delta$ is pointed at origin, then $\Delta$ is simplex if and only if $C(\Delta)$ is.
Note that $\Delta$ is simplex one can always translate it so that it is pointed at origin.

Let $S$ be an integral lattice cone in $\R^n$ whose cone is of dimension $n$
and let $\cG_\Delta=\{g_1,\ldots,g_n\}$ be a primitive generating set. Then
$Z^o(S):=\{\sum_{i=1}^{n} c_i g_i| 0\leq c_i<1\}$ is a
{\em fundamental parallelepiped} of $S$.
Define the {\em discriminant} of $S$ to be
$\disc(S):= |Z^o(S)\cap\Z^n|$.
Then it is well defined (independent of choice for $\cG_\Delta$).
Notice $\disc(\Z_{\geq 0}^n)=1$.
A lattice cone $S$ (or a cone $C(\Delta)$) is
{\em unimodular} in $\R^n$
if $\disc(S)=1$ ($\disc(S(\Delta))=1$). 

\begin{proposition}\label{P:disc}
\begin{enumerate}
\item Let $S\subset S'$ be any two lattice cones in $\Z^n$.
Then
$
|S'\Z/S\Z|=|Z^o(S)\cap S'|=\frac{\disc(S)}{\disc(S')}.
$
In particular, for any lattice cone $S$ in $\Z^n$ we have
$\disc(S)=\Vol(Z^o(S))$.

\item
Let the lattice cone $S$ be simplex with primitive generating set
$\cG_\Delta=\{g_1,\ldots,g_n\}$.
Let $\Delta$ is the convex hull of the set $\cG_\Delta$ and the origin.
Then $\disc(S) = |\Z^n/S\Z|= |\det(g_1,\ldots,g_n)|=n!\Vol(\Delta)$.
We have $\Mat(g_1,\ldots,g_n) \sim \diag(\ell_1,\ldots,\ell_n)$ where
$\ell_1|\cdots | \ell_n$ are invariant factors of the matrix of the
lattice cone of $\cG_\Delta$.
For every integral point $r=\sum_{i=1}^{n}r_ig_i$
in $\Z^n$ we have $r_i\in \frac{\Z}{\ell_n}$ for every $i$.

\item
Let $\Delta$ be
simplex then there is $\cG_\Delta=\{g_1,\ldots,g_n\}$ such that
$\Vert(\Delta)=\{d_1g_1,\ldots,d_ng_n\}$ for some $d_1,\ldots,d_n\in\Z_{\geq 1}$;
if $\Delta$ is simplex and pointed at origin, then such $\cG_\Delta$ is unique.
Furthermore, we have
$$V_\Delta=n!\Vol(\Delta)=d_1\cdots d_n |\det(g_1,\ldots,g_n)|.$$
Let $D(\Delta)$ be the least positive integer
such that $w_\Delta(v) \in \frac{\Z}{D(\Delta)}$
for every $v\in S(\Delta)$ where $w_\Delta(\cdot)$ is the weight function of \cite{AS89}, then
$D(\Delta)=\lcm(d_1,\cdots, d_n)\cdot \ell_n$.
\end{enumerate}
\end{proposition}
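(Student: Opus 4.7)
The plan is to handle the three parts in order, using Smith normal form as the main algebraic tool and the fundamental-domain interpretation of the discriminant. For part (1), I would note that $Z^o(S)$ is a fundamental domain for the translation action of $S\Z$ on the real span of $S$, so every coset of $S\Z$ in $S'\Z$ has a unique representative in $Z^o(S)\cap S'$, giving the first equality. The second equality is a consequence of multiplicativity of indices in the tower $S\Z \subseteq S'\Z \subseteq \Z^n$: by definition $[\Z^n:S\Z]=\disc(S)$ and $[\Z^n:S'\Z]=\disc(S')$, so $[S'\Z:S\Z]=\disc(S)/\disc(S')$. The volume assertion $\disc(S) = \Vol(Z^o(S))$ then follows from the $S'=\Z^n$ case, where $Z^o(\Z^n)=[0,1)^n$ has unit volume and contains exactly one lattice point.

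For part (2), let $G$ be the matrix with columns $g_1,\ldots,g_n$. Smith normal form yields $G = UDV$ with $U,V\in GL_n(\Z)$ and $D=\diag(\ell_1,\ldots,\ell_n)$, $\ell_1\mid\cdots\mid\ell_n$. Then $\Z^n/S\Z \cong \Z^n/D\Z^n \cong \bigoplus\Z/\ell_i\Z$, so $\disc(S) = \ell_1\cdots\ell_n = |\det G|$, and the classical simplex-volume formula $\Vol(\Delta) = |\det G|/n!$ yields $\disc(S)=n!\Vol(\Delta)$. For the coordinate claim, the exponent of $\Z^n/S\Z$ is $\ell_n$, so $\ell_n v\in S\Z$ for all $v\in\Z^n$; writing $v=\sum r_i g_i$ then forces $\ell_n r_i\in\Z$, i.e., $r_i\in\tfrac{1}{\ell_n}\Z$.

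For part (3), each ray from the origin through a non-origin vertex contains a unique primitive integral $g_i$, so $v_i = d_i g_i$ for some $d_i\in\Z_{\geq 1}$; uniqueness in the pointed case is immediate. The identity $V_\Delta = d_1\cdots d_n\,|\det G|$ is the simplex volume formula applied to the $v_i$. For the weight: every $v\in C(\Delta)$ has a unique expression $v=\sum\lambda_i v_i$ with $\lambda_i\geq 0$, and by the definition of $w_\Delta$ (smallest $c$ with $v\in c\Delta$) we get $w_\Delta(v)=\sum\lambda_i$; rewriting in the $g$-basis via $r_i = d_i\lambda_i$ yields $w_\Delta(v)=\sum r_i/d_i$. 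Combining $r_i\in\tfrac{1}{\ell_n}\Z$ from part (2) with the definition of $\lcm$ gives the upper bound $D(\Delta)\mid \lcm(d_1,\ldots,d_n)\cdot\ell_n$. For the matching lower bound, $w_\Delta(g_i)=1/d_i$ forces each $d_i\mid D(\Delta)$, hence $L:=\lcm(d_1,\ldots,d_n)\mid D(\Delta)$.

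The hard part will be the remaining divisibility $\ell_n\mid D(\Delta)$: I would exhibit, via Smith normal form, a lift $v_0\in\Z^n$ of an element of order exactly $\ell_n$ in $\Z^n/S\Z\cong\bigoplus\Z/\ell_i\Z$, then translate it into $C(\Delta)$ by adding a large integer multiple of $g_1+\cdots+g_n$ (whose contribution to the weight lies in $\tfrac{1}{L}\Z$ and so cannot disturb the $\ell_n$-part of the denominator). The technical subtlety is that the expression $\sum r_i/d_i$ can admit nontrivial cancellation among coordinates, so the coset of order $\ell_n$ and its lift must be chosen coherently with the $d_i$ to guarantee that a factor of $\ell_n$ survives in the denominator of $w_\Delta(v_0)$; verifying that such a choice is always available under the hypotheses of the proposition is the chief combinatorial step of the argument.
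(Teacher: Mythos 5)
Your treatment of parts (1) and (2) and of the first assertions of part (3) is correct and follows the standard route (fundamental parallelepiped, multiplicativity of indices in the tower $S\Z\subseteq S'\Z\subseteq\Z^n$, Smith normal form, the simplex volume formula); this is essentially the argument the paper delegates to its references, since the paper's own proof cites Theorem VII 2.5 of \cite{Bar02} for (1), declares (2) standard, and for (3) records only the observation that a primitive generator lies on each vertex ray before declaring the rest straightforward. Your write-up is therefore considerably more detailed than the paper's.

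The gap is exactly where you flagged it, and it cannot be closed. First, even granting the divisibility $\ell_n\mid D(\Delta)$ that you set out to prove, combining it with $\lcm(d_1,\ldots,d_n)\mid D(\Delta)$ yields only $\lcm(\lcm(d_i),\ell_n)\mid D(\Delta)$, not $\lcm(d_i)\cdot\ell_n\mid D(\Delta)$, so the stated equality would still not follow whenever $\gcd(\lcm(d_i),\ell_n)>1$. Second, and more seriously, the cancellation you worry about is genuinely unavoidable: take $n=2$ and $\Delta=\conv\{(0,0),(1,1),(1,-1)\}$. Here $g_1=(1,1)$ and $g_2=(1,-1)$ are primitive, $d_1=d_2=1$, and the invariant factors of $\Mat(g_1,g_2)$ are $\ell_1=1$, $\ell_2=2$, so the claimed value is $\lcm(1,1)\cdot 2=2$. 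But the origin-less facet lies on the line $x_1=1$, so $w_\Delta(v)=v_1\in\Z$ for every $v\in S(\Delta)$ and hence $D(\Delta)=1$: in your notation every integral $v=\tfrac{s_1}{2}g_1+\tfrac{s_2}{2}g_2$ forces $s_1\equiv s_2\pmod 2$, and $w_\Delta(v)=(s_1+s_2)/2$ is an integer. So not even $\ell_n\mid D(\Delta)$ holds, and no choice of coset of order $\ell_n$ can rescue the argument. What your proof does establish, correctly, is the divisibility $D(\Delta)\mid\lcm(d_1,\ldots,d_n)\cdot\ell_n$, i.e.\ that $\lcm(d_i)\cdot\ell_n$ is a valid common denominator for the weights; this upper bound is all that the rest of the paper actually uses, but it is strictly weaker than the asserted equality, which is false as written.
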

\begin{proof}
The statement in Part (2) is standard (see \cite{Ewa96}\cite{Bar02}) or
purely by definition.
Part (1) is \cite[Theorem VII 2.5]{Bar02}.
Consider $S'=\Z^n$ we arrive at $\disc(S) = \Vol(Z^o(S))$ immediately.
Part (3). Since $\Delta$ is a simplex (hence pointed) and
therefore a primitive generator $g$
must lie on vertex say $v$, that is $v=cg$ for some $c\in \Q_{>0}$,
and by its very definition one can see immediately that $c\in\Z_{>0}$. The rest of Part (3) is
straightforward.
\end{proof}

\subsection{Hilbert basis}

A {\it Hilbert basis} of $C(\Delta)$
is a minimal (set-inclusion) finite set of lattice vectors
that generate the monoid $S(\Delta)$
over $\Z_{\geq 0}$.
The well known Lemma \ref{L:finite}
(see \cite[Lemma V.3.4]{Ewa96} for a proof)
says that a Hilbert basis exists and we denote it by $\cG_{\Delta,\Z}$.
Notices that any Hilbert basis $\cG_{\Delta,\Z}$ a priori generates $C(\Delta)$ over $\R_{\geq 0}$,
and hence $\cG_{\Delta,\Z}$ always contains a primitive generating set $\cG_\Delta$.
In Fig. \ref{Fig:dim-2} we have (i) $\cG_\Delta=\cG_{\Delta,\Z}=\{g_1,g_2\}$,
(ii) $\cG_{\Delta,\Z}=\{g_1,g_2\}$ and $\cG_{\Delta_\delta}=\{g_1,g_2\}, \cG_{\Delta_{\delta'}}
=\{g_2,g_3\}$. (ii) Hilbert basis is not unique in this case (notice that $\Delta$ is not pointed).
%%%

\begin{lemma}[Gordan's Lemma]
\label{L:finite}
Let $\Delta$ be an integral polytope in $\R^n$ of dimension $n$.
The lattice cone $S(\Delta)$ is finitely generated monoid
in the sense that $S(\Delta) = \sum_{i=1}^{t}g_i \Z_{\geq 0}$
for some $g_1,\ldots, g_t$ in $\Z^n$.
\end{lemma}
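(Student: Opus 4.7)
The plan is to give the standard compactness-pigeonhole argument. First I would note that since $\Delta$ is an integral polytope containing the origin, its cone $C(\Delta)$ is a rational polyhedral cone, in particular finitely generated over $\R_{\geq 0}$ by the (non-origin) vertex vectors $v_1,\ldots,v_s$ of $\Delta$, which lie in $\Z^n$. So every $w\in S(\Delta)=C(\Delta)\cap\Z^n$ admits a representation
\[
w=\sum_{i=1}^{s} c_i v_i, \qquad c_i\in\R_{\geq 0}.
\]

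Next I would introduce the compact fundamental ``box''
\[
K:=\Bigl\{\sum_{i=1}^{s} t_i v_i \;\Bigm|\; 0\leq t_i\leq 1\Bigr\}\subseteq C(\Delta).
\]
Because $K$ is bounded and $\Z^n$ is discrete, the set $K\cap \Z^n$ is finite; call its elements $u_1,\ldots,u_r$. I claim the finite set
\[
\cG:=\{v_1,\ldots,v_s\}\cup\{u_1,\ldots,u_r\}\subseteq S(\Delta)
\]
generates $S(\Delta)$ as a monoid over $\Z_{\geq 0}$, which is exactly what is to be proved.

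For the claim, given $w\in S(\Delta)$ as above, write $c_i=\lfloor c_i\rfloor + \{c_i\}$ and split
\[
w=\underbrace{\sum_{i=1}^{s}\lfloor c_i\rfloor v_i}_{\in \sum v_i\,\Z_{\geq 0}} \;+\; \underbrace{\sum_{i=1}^{s}\{c_i\} v_i}_{\in K}.
\]
Since $w\in\Z^n$ and the first summand lies in $\Z^n$, the second summand is an element of $K\cap\Z^n$, hence equals some $u_j$. Thus $w=\sum \lfloor c_i\rfloor v_i + u_j$ is an $\Z_{\geq 0}$-combination of elements of $\cG$, proving that $\cG$ generates $S(\Delta)$ as a monoid.

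There is really no serious obstacle here: the only point that requires a whisper of care is confirming that $K\cap\Z^n$ is finite (by boundedness of $K$) and that the decomposition $c_i=\lfloor c_i\rfloor+\{c_i\}$ keeps the fractional remainder inside $K$; both are immediate. If one wishes to extract a \emph{minimal} such generating set (a Hilbert basis as defined just before the lemma), one then simply removes from $\cG$ any element that is itself a non-trivial $\Z_{\geq 0}$-combination of the others, and iterates; this terminates because $\cG$ is finite.
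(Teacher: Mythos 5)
Your proof is correct and is the standard fundamental-parallelepiped/pigeonhole argument; the paper itself defers the proof to \cite[Lemma V.3.4]{Ewa96}, but the identical decomposition $w=\sum\lfloor c_i\rfloor v_i + (\text{remainder in a bounded box})$ is exactly what the paper runs in the proof of Proposition \ref{P:Hilbert} immediately afterward. No gaps.
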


Part of the proof of the following result can be found in \cite[Lemma V.3.5]{Ewa96}.

\begin{proposition}\label{P:Hilbert}
Let $\Delta$ be a simplex integer polytope in $\R^n$
with $\cG_\Delta=\{g_1,\ldots,g_n\}$ a primitive generating set of $C(\Delta)$
over $\R_{\geq 0}$.
Then one can find $g_{n+1},\ldots,g_t$ in
$Z^o(S):=\{\sum_{i=1}^{n}c_i g_i | 0\leq c_i<1 \}$ of
the lattice cone $S=C(\Delta)\cap\Z^n$
such that
$\cG_{\Delta,\Z}=\{g_1,\ldots,g_n,g_{n+1},\ldots,g_t\}$
is a Hilbert basis of $C(\Delta)$.
If $C(\Delta)$ is pointed then its Hilbert basis is unique.
\end{proposition}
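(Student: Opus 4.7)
The plan is to prove existence first and then uniqueness, with the key tool being the fundamental-parallelepiped decomposition.

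For existence, by Gordan's Lemma (Lemma \ref{L:finite}) the monoid $S=C(\Delta)\cap\Z^n$ is finitely generated, so a Hilbert basis exists; we only need to show that after selecting the primitive generators $g_1,\ldots,g_n$ the remaining elements of a minimal generating set can be chosen inside $Z^o(S)$. First I would observe that, because $\Delta$ is simplex, the primitive generating set $\cG_\Delta=\{g_1,\ldots,g_n\}$ is an $\R$-basis of $\R^n$, so every $v\in S$ admits a unique expansion $v=\sum_{i=1}^n c_i g_i$ with $c_i\in\R_{\geq 0}$. Writing $c_i=\lfloor c_i\rfloor+\{c_i\}$ and setting $v':=\sum_{i=1}^n \{c_i\}g_i$, one has $v=\sum_{i=1}^n \lfloor c_i\rfloor g_i + v'$ with $v'\in Z^o(S)$. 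Since both $v$ and the $g_i$ lie in $\Z^n$, so does $v'$, hence $v'\in Z^o(S)\cap\Z^n$. This shows that $\cG_\Delta\cup(Z^o(S)\cap\Z^n)$ generates $S$ as a monoid.

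Next I would note that $Z^o(S)\cap\Z^n$ is finite, because $Z^o(S)$ is a bounded region in $\R^n$. Discarding those elements of $Z^o(S)\cap\Z^n$ that can still be expressed as non-negative integer combinations of the remaining generators, I obtain a minimal generating set $\{g_1,\ldots,g_n,g_{n+1},\ldots,g_t\}$ with $g_{n+1},\ldots,g_t\in Z^o(S)$. To see that the original $g_i$ cannot be removed, I would verify they are \emph{irreducible} in $S$: if $g_i=s+s'$ with $s,s'\in S\setminus\{0\}$, then since $g_i$ lies on an extreme ray $\R_{\geq 0}g_i$ of the simplex cone $C(\Delta)$, both $s$ and $s'$ must lie on this ray, forcing $s=ag_i$ and $s'=bg_i$ with $a,b\in\Z_{\geq 1}$ and $a+b=1$, a contradiction. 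Because $g_i$ is primitive, this also precludes replacing $g_i$ by a scalar multiple of itself. So any Hilbert basis of $C(\Delta)$ must include $\cG_\Delta$, and the construction above produces one of the asserted form.

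For uniqueness in the pointed case, I would argue that the Hilbert basis coincides with the set of irreducibles of $S$. When $C(\Delta)$ is pointed, one may choose a linear functional $\lambda$ on $\R^n$ that is strictly positive on $C(\Delta)\setminus\{0\}$; thus $\lambda(s_1),\lambda(s_2)<\lambda(h)$ whenever $h=s_1+s_2$ with $s_1,s_2\in S\setminus\{0\}$. A standard descent on $\lambda$-values shows that every element of $S$ is a $\Z_{\geq 0}$-combination of irreducibles, while irreducibles clearly belong to every generating set. Hence the set of irreducibles is the unique minimal generating set, proving uniqueness of $\cG_{\Delta,\Z}$.

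The main technical point is the compatibility between the two descriptions of $\cG_{\Delta,\Z}$: the generating set produced by the fundamental-parallelepiped algorithm in paragraph one need not a priori coincide with the irreducibles, but the irreducibility argument for $g_1,\ldots,g_n$ together with the pointedness-based uniqueness in the last paragraph bridges the two viewpoints.
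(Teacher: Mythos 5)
Your proof is correct and follows essentially the same route as the paper's: reduce modulo the fundamental parallelepiped $Z^o(S)$ to show $\cG_\Delta\cup(Z^o(S)\cap\Z^n)$ generates $S$, then obtain uniqueness in the pointed case via a strictly positive linear functional and descent, identifying the Hilbert basis with the irreducibles of $S$. The only difference is cosmetic: you spell out why each $g_i$ is irreducible (the extreme-ray argument), a point the paper leaves implicit.
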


\begin{proof}
Let $s$ be any integral point in $C(\Delta)$,
then we have $s=\sum_{i=1}^{t} c_i g_i$ for some $c_i\in \R_{\geq 0}$.
So $s-\sum \lfloor c_i\rfloor g_i$ lying in $Z^o\cap \Z^n$.
This proves that $Z^o\cap \Z^n$ generates $S(\Delta)$ over $\R_{\geq 0}$.
Let
$\cG_{\Delta,\Z}$ be the set of all nonzero $g\in Z^o\cap \Z^n$
such that  $g$ is not the sum of two other vectors
in $Z^o\cap \Z^n$.
We observe that every point in
$Z^o\cap \Z^n\backslash \cG_{\Delta,\Z}$ is generated by $\cG_\Delta$ over $\Z_{\geq 0}$
and that $\cG_{\Delta,\Z}$ lies in every generating set of the monoid $S(\Delta)$.
So $\cG_{\Delta,\Z}$ is a Hilbert basis.

It remains to show the uniqueness of Hilbert basis $\cG_{\Delta,\Z}$ in the pointed case.
Suppose there is an integral point $s$ there violating this property, and take such
a point minimizing $c^Ts$ where $c$ is
a vector such that $c^Tx >0$ for all nonzero $x\in C(\Delta)$.
The existence of $c$ is guaranteed because $C(\Delta)$
is pointed and one can prove that there exists a vector $c$ such that $c^Tx>0$ for
every nonzero $x\in C(\Delta)$.
Because $s$ is not in $\cG_{\Delta,\Z}$,
we have $s=s_i+s_j$ for some nonzero points $s_i,s_j$ in
$Z^o\cap \Z^n$. Now we have $c^Ts=c^T s_i+c^T s_j$
and all terms are positive. This means  $c^Ts_i < c^T s_j$
and $c^Ts_j < c^T s$.
By the assumption that $c^Ts$ is minimized under the condition
 that $s$ is not in $\cG_{\Delta,\Z}$, both $s_i$ and $s_j$ must belong to $\cG_{\Delta,\Z}$
contradicting $s$ is not a nonnegative integer combination of points in $\cG_{\Delta,\Z}$.
This proves that such points $s$ does not exist. So $\cG_{\Delta,\Z}$ is  unique.
\end{proof}

Let $\Delta$ be an integral convex polytope in $\R^n$ containing the origin
and let $V$ be a subset of $\Delta\cap\Z^n$.
If $V\subseteq \cG\subseteq\Delta\cap\Z^n$ where $\cG$ generates the
monoid $S(\Delta)$ (resp., up to finitely many points),
then we call a minimal subset of $\cG$
that generates $S(\Delta)$ an {\em (resp., asymptotic) Hilbert basis} containing $V$ in $\Delta$.
If $V$ is the empty set, then this is just an (resp., asymptotic) Hilbert basis.
An important example for this paper
is when $V=\Vert(\Delta)$:
if $\Delta$ is the line segment $[0,d]$ for some $d\geq 1$,
a Hilbert basis containing $V=\{d\}$ is
$\{1,d\}$, and an asymptotic
Hilbert basis containing $V=\{d\}$ of smallest cardinality is
$\{m,d\}$ with $1\leq m< d$ such that $\gcd(m,d)=1$.
For arbitrary dimension one observes that for a Hilbert basis $\cG_{\Delta,\Z}$ for
$S(\Delta)$, the set $\cG_{\Delta,\Z}\cup\Vert(\Delta)$ is always a Hilbert basis
containing $\Vert(\Delta)$.

\subsection{Smooth $\Delta$ and asymptotically smooth $\Delta$}
\label{S:smooth}

We recall triangulation from \cite{DRS10}.
All triangulation in this paper have vertices of simplices on integral points only.
Let $\Delta$ be an integral convex polytope containing the origin.
For every origin-less facet $\delta$ of $\Delta$
let $\Delta_\delta$ be the convex hull of $\delta$ and the origin. Notice
that
\begin{eqnarray}
\label{E:division}
\Delta &=& \cup_\delta \Delta_\delta
\end{eqnarray}
where $\delta$ ranges over all origin-less facets,
and it is called a {\em closed facial subdivision} of $\Delta$. These $\Delta_\delta$'s are not
necessarily disjoint.  If all origin-less facets of $\Delta$ are simplex then this is
a facial triangulation of $\Delta$.

Corresponding to the closed facial subdivision of $\Delta$ in (\ref{E:division}),
\begin{eqnarray}\label{E:closed-facial}
C(\Delta) &=& \cup_{\delta}C(\Delta_\delta)
\end{eqnarray}
is called {\em closed facial subdivision} of $C(\Delta)$.
Consider the interior of $C(\Delta_\delta)$'s for all origin-less faces $\delta$ of $\Delta$,
denote them by $C(\Delta_\delta)^o$'s. The set of these relative open subcones $C(\Delta_\delta)^o$
is called the {\em facial decomposition} in \cite{Wan92}.
The corresponding partitions
\begin{eqnarray}\label{E:open-facial}
C(\Delta) = \cup_{\delta} C(\Delta_\delta)^o;
&\qquad &
S(\Delta) =\cup_{\delta} S(\Delta_\delta)^o
\end{eqnarray}
are called the {\em open facial subdivision} of $C(\Delta)$
(and $S(\Delta)$, respectively) where
$\delta$ ranges in the set $\cF^o(\Delta)$
of all origin-less open faces of $\Delta$ plus the origin as a unique element.

We say $\Delta$ is {\em (resp., asymptotic) smooth}  with $\cG$ if there is
a subset $\cG$ with $\Vert(\Delta)\subseteq \cG \subseteq \Delta\cap \Z^n$
that generates the monoid $C(\Delta)\cap\Z^n$ (resp., up to finitely many points),
in other words, $\Delta\cap\Z^n$ contains an (resp., asymptotic) Hilbert basis for $C(\Delta)$
containing $\Vert(\Delta)$.
In 1-dimensional case where $\Delta$ is the line segment $[0,d]$ for some $d\geq 1$,
then $\Delta$ is asymptotic smooth with $\{m,d\}$ in $\Delta$ if and only if $\gcd(m,d)=1$.
If, furthermore, $\Delta$ has only one origin-less facet and is simplex, then
we say $\Delta$ is {\em smooth simplex with $\cG$}.
See Figure \ref{Fig:smooth} for an example of
a 2-dimensional smooth $\Delta$ with Hilbert basis $\cG_{\Delta,\Z}=\{g_1,g_2\}$
(not smooth simplex) with facial triangulation
$\Delta_\delta\cup\Delta_{\delta'}$ as in (\ref{E:division}).

\begin{figure}[h!]

\begin{tikzpicture}[scale=0.6]

\filldraw[fill=gray!30] (0,0) -- (3,0) -- (3,3) -- (0,3) -- (0,0);
\filldraw[fill=gray!50] (0,0) -- (3,0) -- (3,3);
%\draw[thick] (0,0) -- (3,0) -- (3,3) -- (0,3) -- (0,0);
\draw[->] (0,0) -- (4,0);
\draw[->](0,0) --(0,4);
\draw[dashed,thick] (0,0) -- (3,3);

\node[below left] at (0,0) {0};
\node[left] at (0,3) {3};
\node[below] at (3,0) {3} ;
\node[] at (1,2) {$\Delta_\delta$};
\node[] at (2.5,1) {$\Delta_{\delta'}$};
\node[above] at (1.5,3) {$\delta$};
\node[right] at (3,1.5) {$\delta'$};
\node[below] at (4,0) {$x_1$};
\node[left] at (0,4) {$x_2$};
\node[below] at (1,0) {$g_1$};
\node[left] at (0,1) {$g_2$};
\node[] at (1.1,0.6) {$g_3$};
%\node[] at (1.5,-2) {(a) Smooth triangulation $\Delta =\Delta_\delta \cup \Delta_{\delta'}$};

\filldraw[fill=black] (0,0) circle (3pt);
\filldraw[fill=black] (3,0) circle (3pt);
\filldraw[fill=black] (0,3) circle (3pt);
\filldraw[fill=black] (3,3) circle (3pt);
\filldraw[fill=blue] (1,0) circle (3pt);
\filldraw[fill=blue] (0,1) circle (3pt);
\filldraw[fill=blue] (1,1) circle (3pt);

\end{tikzpicture}

\caption{Closed facial subdivision: $\Delta=\Delta_\delta\cup \Delta_{\delta'}$}
\label{Fig:smooth}
\end{figure}
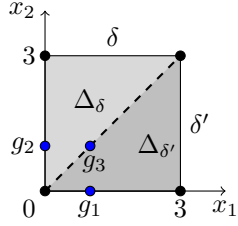

%Notice that $\Delta$ is smooth simplex if and only if its cone $C(\Delta)$ is unimodular
%whose lattice cone is generated by $\Delta\cap\Z^n$.
\begin{remark}
\label{R:no-closed-facial}
Suppose $\Delta=\cup_\delta \Delta_\delta$ is a subdivsion of $\Delta$
according to the origin-less facets $\delta's$. Then
$\Delta$ is smooth if all $\Delta_\delta$'s are smooth.
However, the converse is false in general.
In Figure \ref{Fig:hilbert}, we denote by $\delta$ the
face defined by the triangle $P_1P_2P_3$.
Then we have a smooth $\Delta$ (namely
$\Delta\cap\Z^n$ generates the monoid $C(\Delta)\cap \Z^n$),
but $\Delta_\delta$ is not smooth.
For any Hilbert basis $\cG_{\Delta,\Z}$,
the restriction $\cG_{\Delta,\Z}\cap \Delta_\delta$
does not generate the monoid $C(\Delta_\delta)\cap\Z^n$.
This phenomenon shows that a naive asymptotic version
of Wan's closed facial decomposition does not exist.
\end{remark}

\begin{figure}[h]
\begin{tikzpicture}[scale=0.3]
%\draw[step = 1cm, gray,ultra thin] (0,0) grid (3.8,3.8); % draw grid
\filldraw[gray!10,line width = 2pt] (-2,3)--(4,4)--(2,-1)--(-2,3);
\draw[->] (0,0) -- (6,0);
\draw[->] (0,0) --(0,7);
\draw[->] (0,0) -- (-3,-1.5);
\draw[gray] (2,-1)--(-2,-1)--(-2,3) -- (0,4) -- (4,4) -- (4,0) -- (2,-1) --(2,3)--(4,4);
\draw[gray] (2,3)--(-2,3);
\draw[gray] (-2,-1) -- (0,0) -- (4,0) --(0,0) -- (0,4);
\draw[line width = 2pt] (4,4)--(0,4)--(-2,3)-- (-2,-1)--(2,-1)--(4,0)--(4,4);
\draw[line width = 2pt] (-2,3)--(4,4)--(2,-1)--(-2,3);

\node[right] at (2,3) {$P$};
\node[left] at (2,2.5) {$\delta$};
\node[below] at (5.5,0) {$x_1$};
\node[left] at (-3,-1.5) {$x_2$};
\node[left] at (0,6.6) {$x_3$};
\node[above right] at (4,0) {$1$};
\node[above right] at (0,4) {$1$};
\node[above left] at (-2,-1) {$1$};
\node[left] at (-2,3) {$P_3$};
\node[right] at (4,4) {$P_2$};
\node[below] at (2,-1)  {$P_1$};
%\node[] at (1,-3) {(b) $\Delta$ without smooth triangulation};

\end{tikzpicture}
\caption{}
\label{Fig:hilbert}
\end{figure}

\subsection{Triangulations}
\label{S:triangulation}

All simplices in this paper are integral simplices, i.e., 
vertices are of integral coordinates in $\R^n$.
This paragraph follows notions in \cite[Chapter 5]{DRS10} closely. 
A {\em point configuration} in $\R^n$ is a finite set of 
(perhaps repeated) points with (non-repeated) labels.
We shall consider a point configuration $\cG$ 
with $\Vert(\Delta)\subseteq \cG\subseteq \Delta\cap\Z^n$,
so the convex hull of $\cG$ is $\Delta$.
In this paper we write $(\Delta,\cG)$ or $\cG$  
for a given integral point configuration
$\cG$.
A {\em triangulation} of $(\Delta,\cG)$  
is a collection $\cT$ of simplices whose union is $\Delta$, 
with vertices in $\cG$,
that satisfies the following properties
\begin{enumerate}
\item 
All faces of simplices of $\cT$ are in $\cT$; (Closure Property.)
\item
The intersection of any two simplices of $\cT$ is a 
(possibly empty) face of both; (Intersection Property)
\end{enumerate}
(Note that these two conditions are the definition of a simpicial complex.)
We remark that it is not necessary that every point in $\cG$ is a vertex 
of a simplex in $\cT$. However, if every point in $\cG$ is a vertex of a simplex in $\cT$ then 
we call $\cT$ a {\em complete triangulation}.
A triangulation $\cT$ of $(\Delta,\cG)$ in $\R^n$ is {\em regular}
if it can be obtained by projecting the lower envelope of a lifting of $\cG$ to $\R^{n+1}$.
More precisely, pick a height function $w: \cG\rightarrow \R$,
the lifting of $\cG$ to $\R^{n+1}$ is 
the set $\cG^w:=(v,w(v))_{v\in \cG}$ in $\R^{n+1}$. 
There is always a regular triangulation for $(\Delta,\cG)$ as proved in Proposition 2.2.4
of \cite{DRS10}. However we remark that there are
$(\Delta,\cG)$ which has no complete regular triangulation.

\begin{remark}
\label{R:comparison}
In many literature regular triangulization is the same 
as convex or  coherent triangulization. Our definition of 
regular triangulation is the same as \cite{DRS10}.
The `convex triangulization'
in \cite{Wan08} is different from ours, 
his `convex triangulization' 
is the same as our complete regular triangulization.  
\end{remark}

\begin{proposition}
\label{P:triangulation}
Let $\Delta$ be an integral convex polytope in $\R^n$ of dimension $n$
containing origin. 
Let $\cG$ be a set with $\Vert(\Delta)\subseteq \cG\subseteq \Delta\cap \Z^n$.
Let $\Delta=\cup_\delta\Delta_\delta$
be the closed facial subdivision in (\ref{E:division})
where $\delta$ ranges over all origin-less facets of $\Delta$.
Then there is a regular triangulation of $(\delta, \cG\cap\delta)$,
that is $\delta=\cup_\ell \delta_\ell$.
Then the convex hull $\Delta_{\delta_\ell}:=\conv(\delta_\ell,\0)$ is simplex, and
we have 
$\Delta = \cup_{\delta}\cup_{\delta_\ell} \Delta_{\delta_\ell}$
where $\delta$ ranges over all origin-less facets of $\Delta$ and 
$\delta_\ell$ ranges over all regular simplices in $\delta$.
\end{proposition}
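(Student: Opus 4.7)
The plan is to unpack the three assertions and prove each in turn, leaning on the existence theorem for regular triangulations and the elementary geometry of facets that miss the origin.

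First, for the existence of a regular triangulation of each origin-less facet $\delta$ of $\Delta$ with vertex set drawn from $\cG \cap \delta$, observe that $\Vert(\delta) \subseteq \Vert(\Delta) \subseteq \cG$, so $(\delta, \cG \cap \delta)$ is a genuine point configuration whose convex hull is $\delta$. I would then invoke Proposition 2.2.4 of \cite{DRS10}, which guarantees a regular triangulation of any point configuration; applied to $(\delta, \cG \cap \delta)$ this produces a regular triangulation $\delta = \cup_\ell \delta_\ell$ whose simplices have vertices in $\cG \cap \delta$. Note that we do not claim completeness, as not every point of $\cG \cap \delta$ need appear as a vertex.

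Second, to see that each $\Delta_{\delta_\ell} := \conv(\delta_\ell, \0)$ is an (integral) simplex, I would use that $\delta$ is an origin-less facet of $\Delta$, which means $\delta$ lies in a supporting hyperplane $H$ of $\Delta$ with $\0 \notin H$. Since $\delta_\ell \subset \delta \subset H$ and $\dim \delta_\ell \leq n-1$, the affine independence of the vertices of $\delta_\ell$ is preserved after adjoining $\0 \notin H$, so $\conv(\Vert(\delta_\ell) \cup \{\0\})$ is an integral simplex of dimension $\dim \delta_\ell + 1$. Combined with the first step, this gives $\Vert(\Delta_{\delta_\ell}) = \Vert(\delta_\ell) \cup \{\0\} \subseteq \cG \cup \{\0\}$.

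Third, for the covering identity $\Delta = \cup_\delta \cup_{\delta_\ell} \Delta_{\delta_\ell}$, I would compose the closed facial subdivision (\ref{E:division}) with the triangulations just constructed. For any $x \in \Delta$ equation (\ref{E:division}) gives $x \in \Delta_\delta$ for some origin-less facet $\delta$, hence $x = t y$ for some $t \in [0,1]$ and $y \in \delta$ (using $\0 \in \Delta_\delta$ and convexity along the segment $\overline{\0 y}$). Since $\delta = \cup_\ell \delta_\ell$, we have $y \in \delta_\ell$ for some $\ell$, and so $x \in \conv(\delta_\ell, \0) = \Delta_{\delta_\ell}$. The reverse inclusion is immediate since each $\Delta_{\delta_\ell} \subseteq \Delta_\delta \subseteq \Delta$.

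The only mildly delicate point is the simplexity assertion in the second step: one must be careful that the translate making the origin a vertex of $\conv(\delta_\ell, \0)$ does not destroy affine independence. This is precisely what the origin-less hypothesis on $\delta$ buys us via the separating hyperplane $H$, and I expect this to be the sole conceptual ingredient; the rest is bookkeeping around the definitions set up in Section \ref{S:convex} and the cited existence result from \cite{DRS10}.
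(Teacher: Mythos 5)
Your proof is correct and follows the route the paper implicitly intends: the paper states this proposition without proof, pointing to Proposition 2.2.4 of \cite{DRS10} for the existence of a regular triangulation, and leaves the simplexity and covering assertions as standard. Your filling-in of those details — in particular the observation that since $\delta$ lies on a supporting hyperplane $H$ with $\0 \notin H$, affine independence of $\Vert(\delta_\ell)$ upgrades to linear independence upon adjoining $\0$ — is exactly the right argument, and the covering verification by composing (\ref{E:division}) with the triangulation of each $\delta$ is sound.
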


An integral simplex with all vertices $v_0,v_1,\ldots,v_m$ 
is {\em unimodular} if $v_1-v_0,\ldots$, and $v_m-v_0$ generate
the lattice $\Z^m$ (over $\Z$). 
We remark that a unimodular triangulation is necessarily 
complete but the converse is false, see example in Fig.\ref{Fig:hilbert}.
A triangulation $\cT$ of $(\Delta,\cG)$ is {\em unimodular}
if $\cT$ consists of only unimodular simplices.
Equivalently, $\Delta=\cup_{i}\Delta_i$
where $\Delta_i$ are unimodular simplices with vertices in $\cG$
and their intersections (if nonempty) is also simplex.
It is known that for 
$n=\dim\Delta\leq 2$ then $\Delta\cap\Z^n$  always has a unimodular triangulation.
It is not true in general for dimension $n\geq 3$, see Fig. \ref{Fig:hilbert} for counter-example.
However, we have the following important examples which can be proved 
routinely.

\begin{proposition}
\label{P:unimodular}
Any $n$-rectangle has a unimodular triangulation, namely,  
let $d_i\in \Z_{\geq 1}$ for every $1\leq i\leq n$,  
let $\Delta:=\{j=(j_1,\ldots,j_n))\in\Z^n| -d_i\leq j_i\leq d_i \mbox{ for all $i$}\}$. 

2) Let $d\in\Z_{\geq 1}$, and let 
$\Delta:=\{j=(j_1,\ldots,j_n)\in \Z^n|-d\leq  j_1+\ldots+j_n\leq d\}$.
Then the space of all Laurent polynomials with Newton polytope $\Delta$
has a unimodular triangulation.
\end{proposition}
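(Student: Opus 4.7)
The plan for both parts is explicit construction: each polytope decomposes into simpler pieces admitting classical unimodular triangulations, and one then verifies the pieces glue compatibly along shared facets.

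For part (1), first subdivide the $n$-rectangle $\Delta=\prod_{i=1}^n[-d_i,d_i]$ into the $2^n d_1\cdots d_n$ integer unit sub-cubes $C_{\mathbf k}=\prod_{i=1}^n[k_i,k_i+1]$ with $-d_i\le k_i\le d_i-1$. Each unit cube admits the \emph{Freudenthal (or staircase) triangulation}: for every permutation $\sigma\in S_n$, form the simplex with vertices
\[
\mathbf k,\ \mathbf k+e_{\sigma(1)},\ \mathbf k+e_{\sigma(1)}+e_{\sigma(2)},\ \ldots,\ \mathbf k+e_{\sigma(1)}+\cdots+e_{\sigma(n)}.
\]
These $n!$ simplices triangulate $C_{\mathbf k}$, and each is unimodular: after translating the first vertex to the origin, the remaining $n$ edge vectors are partial sums of a permutation of the standard basis and hence span $\Z^n$. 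The Freudenthal triangulation is also consistent across adjacent cubes, since its restriction to any shared facet $\{x_i=c\}$ is itself the Freudenthal triangulation of an $(n-1)$-cube, independent of which side one restricts from. This produces the claimed unimodular triangulation of $\Delta$ with vertices drawn entirely from $\Delta\cap\Z^n$.

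For part (2), interpret $\Delta$ as the $n$-diamond (cross-polytope) $\{j\in\R^n:|j_1|+\cdots+|j_n|\le d\}$. Slice $\Delta$ along the $n$ coordinate hyperplanes into its $2^n$ orthant pieces; each piece is a signed copy of the dilated positive simplex $d\Delta_+^n:=\{x\in\R_{\ge 0}^n:x_1+\cdots+x_n\le d\}$. The strategy is to choose a single unimodular triangulation of $d\Delta_+^n$ whose restriction to each facet $\{x_i=0\}$ coincides with the analogous construction on $d\Delta_+^{n-1}$; then the $2^n$ orthant triangulations glue automatically along the shared coordinate hyperplanes. Such a triangulation exists by a classical construction (for instance a pulling refinement of the hypersimplex decomposition of $d\Delta_+^n$), arranged permutation-symmetrically in the coordinates so that the required facet compatibility holds.

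The main obstacle lies in part (2): for $n\ge 4$ a general lattice polytope need not admit any unimodular triangulation, so an explicit construction tailored to the dilated simplex is unavoidable, and the facet-compatibility required to glue the $2^n$ orthant pieces requires a delicate inductive argument. I expect to proceed by induction on $n$, verifying that the chosen triangulation on $d\Delta_+^n$ restricts on $\{x_n=0\}$ to the corresponding triangulation on $d\Delta_+^{n-1}$, so that the two orthant pieces adjacent to that hyperplane agree there by the induction hypothesis; assembling across all coordinate hyperplanes then yields the global unimodular triangulation of $\Delta$.
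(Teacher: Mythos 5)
The paper gives no argument for this proposition (it is stated as something that ``can be proved routinely''), so there is nothing to compare against; your job is to supply the routine proof, and your outline does so essentially correctly. Part (1) is complete as written: the Freudenthal simplices are unimodular because their edge vectors from the base vertex form a unitriangular matrix in the permuted standard basis, and the staircase triangulation of $\R^n$ restricts consistently to every hyperplane $x_i=c$, so the unit-cube triangulations glue. For part (2) you correctly read the ``$n$-diamond'' as the cross-polytope $\sum_i|j_i|\le d$ (the set as literally printed, $-d\le j_1+\cdots+j_n\le d$, is unbounded for $n\ge 2$, so some such reinterpretation is forced), and the orthant decomposition plus face-compatible triangulation of $d\Delta_+^n$ is the right strategy: reflections are unimodular, and if the triangulation of $d\Delta_+^n$ restricts on each facet $x_i=0$ to the corresponding triangulation of $d\Delta_+^{n-1}$, the $2^n$ pieces agree on all shared faces and their union is a simplicial complex.

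The one soft spot is your justification of the key input in part (2). A ``pulling refinement'' is only guaranteed to be unimodular for compressed polytopes, and neither $d\Delta_+^n$ (which has facet width $d$ with respect to $x_i=0$) nor the hypersimplicial slabs are compressed in general, so that parenthetical does not by itself deliver unimodularity. You should instead invoke the classical \emph{standard} (alcove/Knudsen--Mumford) triangulation of a dilated unimodular simplex: cut $d\Delta_+^n$ by all hyperplanes $x_i=k$ and $x_i-x_j=k$ with $k\in\Z$ (together with $\sum_i x_i=k$); the resulting cells are unimodular simplices, the construction is symmetric in the coordinates, and its restriction to $x_i=0$ is the same construction one dimension down, which is exactly the facet compatibility your induction needs. (This is in the De Loera--Rambau--Santos book the paper already cites.) With that substitution the argument is complete.
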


Notice that the first space in Proposition \ref{P:unimodular} 
is the space of all Laurent polynomials with prescribed 
maximal degree $d_i$ and minimal degree $-d_i$ 
at each variable $x_i$, 
while the second space is that of all Laurent polynomials 
with prescribed maximal total degree $d$ and minimal total degree $-d$.

\subsection{Graded lexicographic order}
\label{S:glex}

Suppose $\Delta$ is (resp., asymptotically) smooth and 
let $\cG_o$ be any subset of $\Delta\cap\Z^n$ that (resp., asymptotically) generates 
the monoid $C(\Delta)\cap\Z^n$.
In this paper we shall consider 
set of all monomials in variables $(A_j)$ with $j$ supported on 
the set $S:=\cG_o-\Vert(\Delta_\delta)$ only. 
Let $t=|\cG_o|$. We fix a total order on $S$.
For any element $\alpha=(\alpha_1,\ldots,\alpha_t)$ in $S$ write 
$|\alpha|=\sum_{i=1}^{t} \alpha_i$.
The {\em graded lexicographic order} with respect to $S$ 
on the set of monomials of the form
$\prod_{j\in S} A_j^{\alpha_j}$ is defined as follows:
for any $\alpha=(\alpha_1,\ldots,\alpha_t), \beta=(\beta_1,\ldots,\beta_t) 
\in \Z_{\geq 0}^{t}$
we have $\alpha>_{glex} \beta$ if $|\alpha|> |\beta|$ or $|\alpha|=|\beta|$ 
and the left-most nonzero entry of $\alpha-\beta$ is $>0$. 
This is a complete order on the set $S$, in fact it is a monomial order (see \cite{CLO}).
It is clear by definition that in any set of such monomials,
a monomial $\prod_{j\in S}A_j^{\alpha_j}$ 
is of the lowest graded lexicographic order if and only if
it is of the lowest (total) degree 
and its exponent vector $(\alpha_1,\ldots,\alpha_t)$ is of the lowest lexicographic order.

\section{Estimates of integral weight function}
\label{S:estimate}

\subsection{Vertex representations and vertex residues}

We retain notations from Section \ref{S:convex}.
For any simplex $\Delta$ pointed at origin,
the (semi-open) {\em zonotope} of $\Delta$ is
\begin{eqnarray}
\label{E:Z^o}
Z^o(\Delta) &:=&\{\sum_{j\in \Vert(\Delta)}\alpha_j j| 0\leq \alpha_j < 1\}.
\end{eqnarray}

\begin{examples}
\label{EX:Hilbert}
The polytope $\Delta$ in Figure \ref{Fig:smooth-triangulation}
is smooth simplex,
 with primitive generating set $\cG_\Delta=\{g_1,g_2\}$ and
Hilbert basis $\cG_{\Delta,\Z}=\{g_1,g_2,g_3\}$. We see $Z^o(\Delta)$ and
$Z^o(S(\Delta))$ illustrated in Figure \ref{Fig:smooth-triangulation}.

\begin{figure}[h!]
\tikzstyle{help lines} =[dashed]
\begin{tikzpicture}[scale=0.5]

\draw[->](-0.3,0) -- (15,0);
\draw[->] (0,-0.3) -- (0,9.4);
\node[below] at (15,0) {$x_1$};
\node[left] at (0,9.4) {$x_2$};

\filldraw[fill=gray!50, draw=black!, line width=1pt] (0,0) -- (2,4) -- (3,0) -- (0,0);
\filldraw[fill=blue] (1,0) circle (4pt);
%\draw[->, very thick] (0,0) -- (1,0);
\filldraw[fill=blue] (1,1) circle (4pt);
%\draw[->, very thick] (0,0) -- (1,1);
\filldraw[fill=blue] (1,2) circle (4pt);
%\draw[->, very thick](0,0) --(1,2);
\filldraw[fill=black] (0,0) circle (2pt);
\filldraw[fill=black] (3,0) circle (2pt);
\filldraw[fill=black] (2,4) circle (2pt);
%\filldraw[fill =black] (2,2) circle (3pt);

\node[below left] at (0,0) {${\mathbf 0}$};
\node[below] at (1,0)  {$g_1$};
\node[above right] at (1,1) {$g_3$};
\node[above left] at (1,2) {$g_2$};
\node[below] at (3,0) {$3g_1$};
\node[above left] at (2,4) {$2g_2$};

\draw[help lines,step = 1cm, gray!70,ultra thin] (0,0) grid (14,9); % draw grid
\draw[help lines,thick] (2,4) -- (4.5,9); \draw[help lines,thick] (4,8)--(14,8);
\draw[help lines,thick] (3,0) -- (7.5,9); \draw[help lines,thick] (2,4)--(14,4);
\draw[help lines,thick] (6,0)--(10.5,9); \draw[help lines, thick] (12,0)--(14,4);
\draw[help lines,thick] (9,0)--(13.5,9);

\draw[thin] (1,2)--(2,2)--(1,0); \draw (0.5,0.5) --(-1,1);\node[left] at (-1,1) {$Z^o(S(\Delta))$};

\node[right] at (2,2){$Z^o(\Delta)$};
\end{tikzpicture}
\caption{}
\label{Fig:smooth-triangulation}
\end{figure}
\end{examples}

Given an integral convex polytope $\Delta$ whose origin-less facets are
simplicial, we study the representations of an integral point in $C(\Delta)$
by non-negative linear combination of integral points in  $\Delta$
if exists. Note that by our definition in Section \ref{S:smooth},
$\Delta$ is (resp., asymptotically) smooth if and only if all (resp., but finitely many)
integral points in $C(\Delta)$ have
such representations.
Let $\Delta=\cup_{\delta} \Delta_\delta$ be its facial triangulation
in (\ref{E:division}) according to its origin-less faces $\delta$.
Then each $\Delta_\delta$ is a simplex pointed at the origin
and $Z^o(\Delta_\delta)$ is its zonotope.

\begin{lemma}\label{L:key}
Let $\Delta$ be an integral convex polytope of dimension $n$ in $\R^n$ containing origin
whose origin-less facets are
simplicial. Let $\Delta$ contain a subset $\cG$ that generates $S(\Delta)$ as a monoid.
Let $\cG_o\subseteq \cG$ generate the monoid $S(\Delta)$. 
For every $v\in S(\Delta)$,
suppose $v\in S(\Delta_\delta)$ for an origin-less
facet $\delta$, then there is a representation
\begin{eqnarray*}
v&=& \sum_{j\in \Vert(\Delta_\delta)} \alpha_j j + R_{\Delta_\delta}(v),
\end{eqnarray*}
for unique
$\alpha_j\in\Z_{\geq 0}$ and
unique $R_{\Delta_\delta}(v)$ in
$
Z^o(\Delta_\delta)\cap\Z^n.
$

Let $\cG_{\Delta_\delta}=\{g_1',\ldots,g_n'\}$ be the primitive
generating set for $C(\Delta_\delta)$. Let $\Vert(\Delta_\delta)=\{d_1g_1',\ldots,d_ng_n'\}$
as in Proposition \ref{P:disc}. Then  $v=\sum_{i=1}^{n}v_i g_i'$
for unique $v_i\in\Q_{\geq 0}$, such that
$$
v=\sum_{i=1}^{n}\lfloor\frac{v_i}{d_i} \rfloor (d_ig_i')+R_{\Delta_\delta}(v).
$$
Fix a complete order of the set $S:=\cG_o-\Vert(\Delta_\delta)$, 
then there is a unique representation
$$
R_{\Delta_\delta}(v) = \sum_{j\in S} \alpha_j j
$$
such that $\alpha:=(\alpha_j)_{j\in S}$ has the lowest graded lexicographic order
with respect to $S$
as defined in Section \ref{S:glex}.
\end{lemma}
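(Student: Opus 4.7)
My plan is to attack Lemma \ref{L:key} by dispatching Parts 1 and 2 (a Euclidean-division calculation inside the simplex cone $C(\Delta_\delta)$) first and then turning to the harder Part 3.

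For Parts 1 and 2 I would work entirely in $C(\Delta_\delta)$. By Proposition \ref{P:disc}(3), since $\Delta_\delta$ is simplex and pointed at the origin, $\Vert(\Delta_\delta) = \{d_1g_1',\ldots,d_ng_n'\}$ with $g_1',\ldots,g_n'$ linearly independent over $\R$, hence forming a $\Q$-basis of $\R^n$. I would expand $v = \sum_{i=1}^n v_ig_i'$ uniquely, where $v_i \in \Q_{\geq 0}$ because $v \in C(\Delta_\delta)$, and where rationality follows from Proposition \ref{P:disc}(2) (in fact $v_i \in \frac{1}{\ell_n}\Z$). Then apply Euclidean division in $\Q$ to write $v_i = d_i\lfloor v_i/d_i\rfloor + r_i$ with $0 \leq r_i < d_i$, set $\alpha_i := \lfloor v_i/d_i\rfloor \in \Z_{\geq 0}$, and define $R_{\Delta_\delta}(v) := v - \sum_i\alpha_i(d_ig_i') = \sum_i (r_i/d_i)(d_ig_i')$. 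The coefficients $r_i/d_i \in [0,1)$ place $R_{\Delta_\delta}(v)$ in $Z^o(\Delta_\delta)$ (cf.\ (\ref{E:Z^o})), and integrality is automatic because $v$ and each $\alpha_i(d_ig_i')$ lie in $\Z^n$. Uniqueness of both the $\alpha_i$ and $R_{\Delta_\delta}(v)$ is immediate from uniqueness of the $g_i'$-expansion and of Euclidean division.

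For Part 3 I would separate uniqueness from existence. Uniqueness is formal: graded lex is a well-order on $\Z_{\geq 0}^{|S|}$, so any non-empty set of exponent vectors has a unique minimum. Existence is the substantive step. Since $R_{\Delta_\delta}(v) \in S(\Delta_\delta) \subseteq S(\Delta)$ and $\cG_o$ generates the monoid $S(\Delta)$, there is at least one representation $R_{\Delta_\delta}(v) = \sum_{j\in\cG_o}\beta_j j$ with $\beta_j \in \Z_{\geq 0}$. I would then select the representation minimizing $\sum_{j \in \Vert(\Delta_\delta)}\beta_j$ and argue by contradiction that this minimum is $0$: if some $\beta_{d_kg_k'}\geq 1$, writing $R_{\Delta_\delta}(v) = \sum R_ig_i'$ in the $g_i'$-basis gives $R_k < d_k$ from $R_{\Delta_\delta}(v) \in Z^o(\Delta_\delta)$, and I would combine this strict inequality with the linear independence of $g_1',\ldots,g_n'$ to produce a strictly smaller $\beta$ by replacing $d_kg_k'$ with its own $\cG_o$-expansion.

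The main obstacle is the existence half of Part 3. The delicate point is that elements of $S = \cG_o - \Vert(\Delta_\delta)$ need not lie in $C(\Delta_\delta)$, so their $g_i'$-coordinates can be negative and cancellation between the $S$ and $\Vert(\Delta_\delta)$ contributions in the $g_i'$-basis expansion must be ruled out before the pigeon-hole step sketched above can be carried through. A clean way to proceed is to first reduce to $\cG_o \cap C(\Delta_\delta)$ via the closed facial subdivision $C(\Delta) = \cup_\delta C(\Delta_\delta)$ from (\ref{E:closed-facial})---observing that any $j$ appearing non-trivially in a representation of $R_{\Delta_\delta}(v)$ that remains inside $C(\Delta_\delta)$ must itself be cofacial with $\Delta_\delta$---and then run the exchange argument inside the simplex cone $C(\Delta_\delta)$, where every coordinate is non-negative and the strict inequalities $R_i < d_i$ immediately force $\beta_{d_ig_i'} = 0$.
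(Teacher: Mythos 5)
Your treatment of the first two assertions is correct and is essentially the paper's own argument: expand $v$ uniquely in the basis $g_1',\ldots,g_n'$ of the simplex cone $C(\Delta_\delta)$, perform Euclidean division coordinatewise, and deduce uniqueness of the $\alpha_i$ and of $R_{\Delta_\delta}(v)$ from uniqueness of that expansion. For the last assertion the paper offers only a one-line appeal to ``the order we endowed upon $S$'', so you are right to single out the existence of a representation of $R_{\Delta_\delta}(v)$ supported on $S=\cG_o-\Vert(\Delta_\delta)$ as the substantive point; the uniqueness half is indeed formal, since graded lex well-orders the exponent vectors.

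Your route to existence, however, has a genuine gap. The proposed reduction to $\cG_o\cap C(\Delta_\delta)$ rests on the claim that any $j$ occurring nontrivially in a representation of a point of $C(\Delta_\delta)$ is itself cofacial with $\Delta_\delta$; this is false (in Figure \ref{Fig:smooth}, $(1,2)\in C(\Delta_\delta)$ equals $(1,0)+(0,1)+(0,1)$ with $(1,0)\notin C(\Delta_\delta)$). Moreover, even if one could restrict attention to $C(\Delta_\delta)$, the paper's own Remark \ref{R:no-closed-facial} and Figure \ref{Fig:hilbert} show that $\cG_o\cap\Delta_\delta$ need not generate $S(\Delta_\delta)$, so the exchange argument cannot be run ``inside the simplex cone''. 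In fact no argument from the stated hypotheses can close this gap, because the existence claim fails at this level of generality: take $\Delta=\conv\{\0,(1,0),(2,2),(0,1)\}$ in $\R^2$ and $\cG_o=\{(1,0),(0,1)\}$, which generates $S(\Delta)=\Z_{\geq 0}^2$. For the facet $\delta$ with vertices $(1,0)$ and $(2,2)$ one has $g_1'=(1,0)$, $g_2'=(1,1)$, $d_1=1$, $d_2=2$, and $v=(1,1)$ lies in $Z^o(\Delta_\delta)\cap\Z^2$ with $R_{\Delta_\delta}(v)=(1,1)$; but $S=\{(0,1)\}$, so no representation $\sum_{j\in S}\alpha_j j$ exists. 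Any honest proof of the third assertion must therefore invoke the extra structure present in the paper's applications (a set $J$ of generators disjoint from the origin-less facets, as in Theorems \ref{T:Wan1.12} and \ref{T:key}) rather than the bare hypothesis that $\cG_o$ generates $S(\Delta)$; neither your sketch nor the paper's one-line proof supplies this.
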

\begin{proof}
Since $\Delta_\delta$ is simplex, $\cG_{\Delta_\delta}$ are generators for
the monoid $S(\Delta_\delta)\Q_{\geq 0}$, we have unique $v_i\in\Q_{\geq 0}$
such that $v=\sum_{i=1}^{n} v_i g_i'$.
For every $i$ write
$v_i = \lfloor \frac{v_i}{d_i}\rfloor(d_ig_i') + (\{ \frac{v_i}{d_i}\}d_i)g_i'$.
Then
$$R_{\Delta_\delta}(v)= \sum_{i=1}^{n}(\{ \frac{v_i}{d_i}\}d_i)g_i' \in Z^o(\Delta_\delta)\cap \Z^n.$$
So $v$ uniquely determines these $v_i$'s,
and subsequently these $\alpha_i$ and $R_{\Delta_\delta}(v)$.
The last statement regarding representation of
$R_{\Delta_\delta}(v)$ in terms of $S$ is clear by its very definition and
the order we endowed upon $S$.
\end{proof}

\begin{remark}
\label{R:vertex-rep}
If $\Delta$ in Lemma \ref{L:key} is not simplicial at an origin-less facet $\delta$,
we can still define vertex representation through replacing 
$\delta$ by its triangulation component $\delta_\ell$'s for 
the point configuration $(\Delta,\cG)$
as in Proposition \ref{P:triangulation}
(i.e., for each origin-less facet $\delta$ we may have a
regular triangulation $\delta=\cup_\ell \delta_\ell$).
Suppose $\Vert(\Delta_{\delta_\ell}) =\{d_1g_1',\ldots,d_mg_m'\} 
\subseteq \Vert(\Delta_\delta)$ for
primitive generating set $\cG_{\Delta_{\delta_\ell}}=\{g_1',\ldots,g_m'\}$.
In the statement of Lemma \ref{L:key}, we have a vertex representation
of any $v\in S(\Delta_{\delta_\ell})$ as follows:
$$
v = \sum_{i=1}^{m}\lfloor\frac{v_i}{d_i} \rfloor (d_ig_i') + R_{\Delta_{\delta_\ell}}(v)
$$
with unique $R_{\Delta_{\delta_\ell}}(v)\in Z^o(\Delta_{\delta_\ell})$.
\end{remark}

We call $R_{\Delta_\delta}(v)$ the {\em vertex residue} of $v$,
and the representation in Lemma \ref{L:key} the
{\em vertex representation}.
Notice that 
an arbitrary representation is of the form
$R_{\Delta_\delta}(v)=\sum_{j\in\cG-\partial\Delta} \alpha_j j$ for some
$\alpha_j\in\Z_{\geq 0}$.
If $p$ is a prime number then its vertex residue relative to $\Delta_\delta$
is  $R_{\Delta_\delta}(p):=R_{\Delta_\delta}((p,\ldots,p))$ lying in $\prod_{i=1}^{n} (\Z/d_i\Z)^*$.
Note that $R_{\Delta_\delta}(v)$ only depends on the vertex residue of $v$.

\subsection{Integral weight functions}
\label{S:weight}

For any origin-less facet $\delta$ of $\Delta$,
let $d_\delta\in \Q^n$ define the hyperplane $\pscal{d_\delta,x}=1$
of $\delta$.
Let $v\in S(\Delta_{\delta'})$ for some origin-less facet $\delta'$ of $\Delta$.
For any origin-less face $\delta$ of $\Delta$
we define the {\em weight of $v$ relative to} $\Delta_{\delta}$
by
$$
w_{\Delta_\delta}(v):=\pscal{d_\delta,v}.
$$
If $v\in S(\Delta_\delta)$ or $v\not\in S(\Delta)$
then $w_{\Delta_\delta}(v)=w_\Delta(v)$,
namely it is the same as the classical weight function (see \cite{AS89}  and \cite{Wan92}).
In other words, it is equal to the least $c\in \Q_{\geq 0}$ so that $v\in c\Delta$
if such $c$ exists and is equal to $\infty$ if otherwise.
If $v\in S(\Delta)-S(\Delta_\delta)$, then our weight function is
different. The following lemma is easy to derive hence its proof is omitted.

\begin{lemma}
\label{L:weight}
(1) Let $\Delta$ be simplex pointed at origin, with the primitive generating
set $\cG_\Delta=\{g_1,\ldots,g_n\}$ and $\Vert(\Delta)=\{d_1 g_1,\ldots,d_n g_n\}$
by Proposition \ref{P:disc}.
Write $x=(x_1,\ldots,x_n)$ for a vector of  variables and
$d_\delta:=(1/d_1,\ldots,1/d_n)$ in $\Q^n$. Then (the scalar product)
$\pscal{d_\delta,x}=\sum_{i=1}^{n}\frac{x_i}{d_i}=1$
defines the facet of $\Delta$ not containing the origin.
Then for every $v\in S(\Delta)$ we have
$
w_\Delta(v)=\pscal{d_\delta,v}=\sum_{i=1}^{n}v_i/d_i.
$

(2)
Let $\Delta$ be an integral convex polytope with origin
and let $\Delta=\cup_\delta \Delta_\delta$ be its closed facial subdivision as
in (\ref{E:closed-facial}),
and let $v\in C(\Delta_{\delta'})$ for some origin-less face $\delta'$ of $\Delta$.
Then for any origin-less face $\delta$ we have
$w_{\Delta_\delta}(v)\leq w_\Delta(v)$ where
the equality holds if and only if $v\in C(\delta\cap\delta')$.
\end{lemma}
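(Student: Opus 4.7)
For part (1), the plan is to work in the (not necessarily integral) basis $g_1,\ldots,g_n$ of $\R^n$ and write points as $v=\sum_{i=1}^n v_i g_i$. Since $\Delta$ is the convex hull of $\0,d_1 g_1,\ldots,d_n g_n$, its unique origin-less facet has vertices with coordinate vectors $d_i e_i$ (in this basis), and the affine hyperplane through these $n$ points is exactly $\sum_{i=1}^n v_i/d_i=1$, i.e.\ $\pscal{d_\delta,\cdot}=1$. By the defining property of this hyperplane, $\Delta=\{v=\sum v_ig_i\in C(\Delta):\sum v_i/d_i\le1\}$. Rescaling, $c\Delta$ is the corresponding region with right-hand side $c$, so for $v\in S(\Delta)$ the smallest $c\ge0$ with $v\in c\Delta$ is $c=\sum_{i=1}^n v_i/d_i=\pscal{d_\delta,v}$, which equals $w_\Delta(v)$ by definition. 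The key bit is just the linear-algebra verification of the hyperplane equation; no surprises.

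For part (2), the plan is to use that, at the level of the full polytope $\Delta=\bigcup_\delta\Delta_\delta$, one has the standard presentation $\Delta=\bigcap_\delta\{x:\pscal{d_\delta,x}\le1\}$ where $\delta$ runs over origin-less facets, which scales to $c\Delta=\bigcap_\delta\{x:\pscal{d_\delta,x}\le c\}$, giving
\[
w_\Delta(v)\;=\;\max_{\delta}\pscal{d_\delta,v}.
\]
Applying this to $v\in C(\Delta_{\delta'})$: if $c:=\pscal{d_{\delta'},v}$, then $v/c\in\delta'\subseteq\Delta$, so $w_\Delta(v)\le c$, while the max formula gives the reverse inequality, so $w_\Delta(v)=\pscal{d_{\delta'},v}$. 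Using the same max formula for each individual $\delta$, one immediately obtains the inequality $w_{\Delta_\delta}(v)=\pscal{d_\delta,v}\le\pscal{d_{\delta'},v}=w_\Delta(v)$.

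For the equality characterization, I will argue both directions using the point $v/c$ (with $c=w_\Delta(v)$). If $\pscal{d_\delta,v}=\pscal{d_{\delta'},v}$, then $v/c$ lies on the supporting hyperplane $\pscal{d_\delta,\cdot}=1$ of $\delta$ as well as in $\delta'$; since $\Delta$ lies on the $\le1$ side of that hyperplane and $v/c\in\Delta$, this forces $v/c\in\delta\cap\delta'$, hence $v\in C(\delta\cap\delta')$. Conversely, if $v\in C(\delta\cap\delta')$, then $v/c\in\delta\cap\delta'$ for some $c\ge0$, so $\pscal{d_\delta,v}=c=\pscal{d_{\delta'},v}$, closing the equivalence. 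I expect no real obstacle here; the one point that needs care is verifying that the max-of-supporting-hyperplanes formula for $w_\Delta$ is indeed what matches the definition used by \cite{AS89} and \cite{Wan92}, but this is immediate from the defining inequalities of $\Delta$.
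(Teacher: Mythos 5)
Your argument is correct, and it is exactly the elementary verification the paper has in mind when it states that the lemma "is easy to derive hence its proof is omitted" — there is no proof in the paper to diverge from. The only imprecision is your claim that $\Delta=\bigcap_\delta\{x:\pscal{d_\delta,x}\le 1\}$ over origin-less facets $\delta$: this intersection is generally larger than $\Delta$ (e.g.\ $\Delta=[0,d]$ gives $(-\infty,d]$), since it omits the homogeneous constraints coming from facets through the origin; but the identity you actually use, namely $w_\Delta(v)=\max_\delta\pscal{d_\delta,v}$ for $v\in C(\Delta)$, is correct once you intersect with $C(\Delta)$, so the logic is unaffected.
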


The convexity of $\Delta$ implies the `convexity' of
the $\Q$-valued weight map $w_\Delta: \Z^n\rightarrow \Q_{\geq 0}\cup \{\infty\}$
as we see in the following lemma due to Adolphson-Sperber (proof omitted).

\begin{lemma}[see \cite{AS89} and \cite{Wan93}]
\label{L:w(v)}
\begin{enumerate}
\item We have
$w_\Delta(cv)=c w_\Delta(v)$ for any $c\in\Q_{\geq 0}$.
\item We have $w_\Delta(v+v')\leq w_{\Delta} (v)+w_{\Delta}(v')$ with equality holds if and only if
$v$ and $v'$ are cofacial, i.e., $v/w_\Delta (v)$ and $v'/w_{\Delta}(v')$ lie on the same closed facet
of $\Delta$.
\item Suppose $\sum_{j\in \Delta\cap\Z^n} v_j j =v$ for $v_j\in \Q_{\geq 0}$.
Let $J'$ be the subset of all $j'$'s in $\Delta\cap\Z^n$ such that $v_{j'}\neq 0$.
Then $w_\Delta(v)\leq\sum_{j\in \Delta\cap\Z^n} v_j$
where the equality holds if and only if the nonzero integral points in $J'$
lie on the same closed origin-less facet in $\Delta$.
\end{enumerate}
\end{lemma}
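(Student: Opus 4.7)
The plan is to deduce all three parts from a single linear description of the weight function. Since $\Delta$ is full-dimensional and contains $\0$, each origin-less facet $\delta$ of $\Delta$ is supported on a hyperplane $\langle d_\delta, x\rangle = 1$ with $\langle d_\delta, x\rangle \leq 1$ valid on $\Delta$, while the remaining facets pass through $\0$ and cut out the cone $C(\Delta)$ via inequalities of the form $\langle c_i, x\rangle \geq 0$. Consequently, for $c \geq 0$,
\[
c\Delta \;=\; \{\, x \in C(\Delta) : \langle d_\delta, x\rangle \leq c \text{ for every origin-less } \delta \,\},
\]
so the definition of $w_\Delta(v)$ as the least $c$ with $v \in c\Delta$ rewrites as
\[
w_\Delta(v) \;=\; \max_\delta \langle d_\delta, v\rangle \quad \text{for every } v \in C(\Delta),
\]
with $w_\Delta(v) = \infty$ otherwise. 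All three claims will be read off from this formula.

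Part (1) is then immediate: $C(\Delta)$ is stable under scaling by $c \geq 0$, and $\max_\delta \langle d_\delta, cv\rangle = c \max_\delta \langle d_\delta, v\rangle$; the cases $v \notin C(\Delta)$ with $c > 0$ and $c = 0$ are handled separately.

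For Part (2), since $v, v' \in C(\Delta)$ gives $v + v' \in C(\Delta)$, distributing the pairing and using that the maximum of a sum is at most the sum of maxima yields
\[
w_\Delta(v+v') \;=\; \max_\delta\bigl(\langle d_\delta, v\rangle + \langle d_\delta, v'\rangle\bigr) \;\leq\; w_\Delta(v) + w_\Delta(v').
\]
Equality forces a single origin-less facet $\delta_0$ to realize both maxima simultaneously, i.e.\ $\langle d_{\delta_0}, v\rangle = w_\Delta(v)$ and $\langle d_{\delta_0}, v'\rangle = w_\Delta(v')$, which is exactly the cofacial condition that $v/w_\Delta(v)$ and $v'/w_\Delta(v')$ both lie on $\delta_0$.

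For Part (3), given $v = \sum_j v_j j$ with $v_j \geq 0$ and $j \in \Delta \cap \Z^n$, I apply the max formula once more: every $j \in \Delta$ satisfies $\langle d_\delta, j\rangle \leq 1$ for each origin-less $\delta$, so for every $\delta$ one has $\sum_j v_j \langle d_\delta, j\rangle \leq \sum_j v_j$; taking the maximum over $\delta$ gives $w_\Delta(v) \leq \sum_j v_j$. Equality holds iff some origin-less facet $\delta_0$ satisfies $\langle d_{\delta_0}, j\rangle = 1$ for every $j$ with $v_j \neq 0$, i.e.\ iff all such $j$ lie on the common origin-less facet $\delta_0$. The one place that requires genuine care is the initial identification $w_\Delta(v) = \max_\delta \langle d_\delta, v\rangle$: one must keep the origin-less facets (which supply the inequalities bounding $c\Delta$) cleanly separated from the through-origin facets (which merely cut out $C(\Delta)$), so that the max runs only over the origin-less $\delta$. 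Once that is in place, the three assertions collapse to routine manipulations of $\max$ over a finite family of linear functionals.
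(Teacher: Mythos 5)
Your proof is correct, and it follows essentially the standard route: the paper omits the proof of this lemma (attributing it to Adolphson--Sperber and Wan), and the argument there rests on exactly the same identification $w_\Delta(v)=\max_\delta\pscal{d_\delta,v}$ on $C(\Delta)$, with the origin-less facets supplying the linear functionals and the through-origin facets cutting out the cone. All three parts then reduce, as you show, to routine manipulations of a maximum of linear functionals, with boundedness of $\Delta$ guaranteeing that the maximum is positive on nonzero elements of $C(\Delta)$.
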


Let $\Delta$ be an integral convex polytope in $\R^n$
containing the origin. For any subset $\Sigma$ in $\Delta$
let $\Z^+(\Sigma)$ be the lattice cone generated by integral points of $\Sigma$
over $\Z_{\geq 0}$. Let $\cG$ be a subset of $\Delta\cap\Z^n$
and  let $v\in \Z^n$, we  define the
{\em integral weight function supported on} $\cG$
as followings:
\begin{eqnarray}\label{E:w-2}
w_{\cG,\Z}(v) &=&\min (\sum_{j\in \cG} u_{v,j})
\end{eqnarray}
where the minimum was taken over
all solutions $u_{v,j}\in\Z_{\geq 0}$ to representations $v=\sum_{j\in \cG} u_{v,j} j$;
and where we have $w_{\cG,\Z}(v)=\infty$ if no such representation exists.
The following properties are clear and easy to prove.

\begin{lemma}
\label{L:w_Z(v)}
\begin{enumerate}
\item We have $w_{\cG,\Z}(v+v')\leq w_{\cG,\Z}(v)+w_{\cG,\Z}(v')$
for any $v,v'$ in $S(\Delta)$.
\item We have
$w_{\cG,\Z}(c v) \leq c w_{\cG,\Z}(v)$ for any positive integer $c$.
\item We have $w_{\cG,\Z}(v)\geq w_\Delta(v)$ for every $v\in S(\Delta)$.
\end{enumerate}
\end{lemma}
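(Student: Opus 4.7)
The plan is to prove all three parts directly from the definition of the integral weight function in (\ref{E:w-2}) together with the Adolphson--Sperber inequality of Lemma \ref{L:w(v)}(3). None of these should require any machinery beyond the monoid structure on $S(\Delta)$, and the main (mild) care is keeping track of when $w_{\cG,\Z}(v)$ may be $\infty$.

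For Part (1), I would fix representations attaining the minimum: $v=\sum_{j\in\cG} u_{v,j}\,j$ with $\sum u_{v,j}=w_{\cG,\Z}(v)$ and similarly $v'=\sum_{j\in\cG} u_{v',j}\,j$ with $\sum u_{v',j}=w_{\cG,\Z}(v')$. Adding termwise, $v+v'=\sum_{j\in\cG}(u_{v,j}+u_{v',j})\,j$ is a valid nonnegative integer representation, hence $w_{\cG,\Z}(v+v')\leq \sum_{j\in\cG}(u_{v,j}+u_{v',j})=w_{\cG,\Z}(v)+w_{\cG,\Z}(v')$. (If either of the two summands is $\infty$ the inequality is vacuous.) For Part (2), starting from the same optimal representation of $v$ and multiplying by the positive integer $c$ yields $cv=\sum_{j\in\cG}(c\,u_{v,j})\,j$, again a valid nonnegative integer representation, so $w_{\cG,\Z}(cv)\leq c\,w_{\cG,\Z}(v)$.

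For Part (3), if $w_{\cG,\Z}(v)=\infty$ there is nothing to prove, so assume $v$ admits a representation $v=\sum_{j\in\cG} u_{v,j}\,j$ with $u_{v,j}\in\Z_{\geq 0}$. Because $\cG\subseteq \Delta\cap\Z^n$, this is a representation of $v$ as a nonnegative rational combination of integral points of $\Delta$; applying Lemma \ref{L:w(v)}(3) gives $w_\Delta(v)\leq \sum_{j\in\cG} u_{v,j}$. Taking the infimum over all such integral representations yields $w_\Delta(v)\leq w_{\cG,\Z}(v)$.

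There is no substantive obstacle here: the only thing to watch is the convention $w_{\cG,\Z}(v)=\infty$ when no nonnegative integer representation exists, which makes (1) and (2) trivially true in those cases, and makes (3) content-free unless $v$ lies in the submonoid $\Z_{\geq 0}\langle \cG\rangle$ of $S(\Delta)$. (Note in particular that the inequality in (2) need not be equality, since the optimum representation of $cv$ may involve integer combinations unavailable to $v$ itself; and the inequality in (3) is generally strict precisely because $w_\Delta$ is $\Q_{\geq 0}$-valued while $w_{\cG,\Z}$ is $\Z_{\geq 0}$-valued.)
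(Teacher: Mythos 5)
Your proof is correct, and since the paper declares these properties ``clear and easy to prove'' and omits the argument entirely, your direct verification from the definition (\ref{E:w-2}) — concatenating/scaling optimal representations for (1) and (2), and invoking Lemma \ref{L:w(v)}(3) for (3) — is exactly the intended route. The care you take with the $\infty$ convention is appropriate and introduces no issues.
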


Thus we have defined a function
$w_{\cG,\Z}: \Z^n \rightarrow \Z_{\geq 0}\cup\{\infty\}$.
Notice that  $w_{\cG,\Z}(v) \in \Z_{\geq 0}$ if and only if $v\in \Z^+(\cG)$
and $w_{\cG,\Z}(v)=\infty$ if otherwises.
If $\cG\subset \cG'$ are two subsets of $\Delta\cap\Z^n$
then $w_{\cG',\Z}(v)\leq w_{\cG,\Z}(v)$.

\subsection{Boundedness of integral weight functions}
\label{S:approx}

Let $\Delta$ be an integral convex Newton polytope of dimension $n$
in $\R^n$ containing the origin. 
Let $\cG$ be a subset of $\Delta\cap \Z^n$.
Let $p$ be a prime.
For $r,s,pr-s\in S(\Delta)$, we define
\begin{eqnarray}
\label{E:b_G}
b_{\cG,\Z}(pr-s) &:=& \frac{w_{\cG,\Z}(pr-s)+w_\Delta(s)-w_\Delta(r)}{p-1}.
\end{eqnarray}

Let $\delta$ be an origin-less facet of $\Delta$.
Then we have an open facial subdivision of the cone  
$C(\Delta_\delta) =\cup_{i}\Sigma_i$ as that in (\ref{E:open-facial}).
We arrange them so that $\dim \Sigma_i\leq \dim \Sigma_{i+1}$.
For any $r,s\in S(\Delta_\delta)_{\leq k}$,
we define an order $r\geq _\delta s$ if 
$r\in \Sigma_i$ and $s\in \Sigma_j$ and $i\geq j$.
Then we have
$r\geq_\delta s$ if and only if $pr-s\in S(\Delta_\delta)_{\leq k}$ for $p$ large enough;
or equivalently $pr-s,r$ are cofacial for $p$ large enough.

\begin{theorem}
\label{T:key}
Let $\Delta$ be an integral convex polytope of dimension $n$ in $\R^n$
containing the origin.
Let $N_\Delta:= n D(\Delta)$ where $D(\Delta)$ is the least positive integer
such that $w_\Delta(v)\in\frac{1}{D(\Delta)}\Z$ for all $v\in S(\Delta)$ (as defined in
Proposition \ref{P:disc}).
Let  $\cG$ be a subset of $\Delta\cap\Z^n$ containing  $\Vert(\Delta)$ and
$\cG$ generates the monoid $S(\Delta)$ up to finitely many points.
Then for any $v\in S(\Delta)$ and for $p$ large enough we have
$$
w_\Delta(v)
\leq w_{\cG,\Z}(v)
\leq w_\Delta(v)+ N_\Delta.
$$
If $r\in S(\Delta_\delta)^o$ and $s\in S(\Delta)$; or 
if $r,s\in S(\Delta_\delta)$ with $r\geq_\delta s$,  and if $p$ is large enough, 
then $pr-s\in S(\Delta_\delta)$ and 
$$
w_\Delta(r)\leq b_{\cG,\Z}(pr-s)\leq w_\Delta(r)+\frac{N_\Delta}{p-1}.
$$
If $r,s\in S(\Delta_\delta)$ with $r<_\delta s$, 
then $p$ is large enough then $w_\Delta(r)\lneq b_{\cG,\Z}(pr-s)$.
\end{theorem}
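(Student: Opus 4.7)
The plan is to establish the two-sided bound $w_\Delta(v) \leq w_{\cG,\Z}(v) \leq w_\Delta(v) + N_\Delta$ first, then to read off the $b_{\cG,\Z}(pr-s)$ estimates by specializing $v=pr-s$ and invoking the weight identity from Lemma~\ref{L:w(v)}(2).

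The lower bound is immediate from Lemma~\ref{L:w_Z(v)}(3). For the upper bound, pick an origin-less facet $\delta$ with $v\in S(\Delta_\delta)$ and apply the vertex representation of Lemma~\ref{L:key} to write $v=\sum_{i=1}^n\lfloor v_i/d_i\rfloor(d_ig_i')+R_{\Delta_\delta}(v)$, where the residue $R_{\Delta_\delta}(v)$ ranges in the finite set $Z^o(\Delta_\delta)\cap\Z^n$. The vertex part, whose atoms $d_ig_i'\in\Vert(\Delta)\subseteq\cG$, contributes exactly $\sum_i\lfloor v_i/d_i\rfloor\leq w_\Delta(v)$ to $w_{\cG,\Z}$. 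For each of the finitely many residues $\rho\in Z^o(\Delta_\delta)\cap\Z^n$, the hypothesis that $\cG$ generates $S(\Delta)$ up to finitely many exceptions supplies once and for all a correction $\tau_\rho$, a $\Z_{\geq 0}$-combination of $\Vert(\Delta_\delta)$-vectors of total weight at most $N_\Delta$, such that $\rho+\tau_\rho\in\Z^+(\cG)$. The explicit constant $N_\Delta=nD(\Delta)$ emerges because each $\{v_i/d_i\}d_i$ lies in $\frac{1}{D(\Delta)}\Z_{\geq 0}\cap[0,d_i)$, so $D(\Delta)\rho$ is always a $\Z_{\geq 0}$-sum of the vertices $d_ig_i'$ of total weight strictly less than $nD(\Delta)$. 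For $v$ whose vertex multiplicities $\lfloor v_i/d_i\rfloor$ dominate those of $\tau_\rho$ (automatic when $v=pr-s$ with $p$ large), substituting $\tau_\rho$ out of the vertex part and into the residue yields a $\Z_{\geq 0}$-representation of $v$ in $\cG$ of total weight at most $w_\Delta(v)+N_\Delta$.

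For the $b_{\cG,\Z}(pr-s)$ estimates in the cofacial regime (either $r\in S(\Delta_\delta)^o$ with compatible $s\in S(\Delta)$, or $r,s\in S(\Delta_\delta)$ with $r\geq_\delta s$), the preliminary discussion preceding the theorem ensures that $pr-s\in S(\Delta_\delta)$ for $p$ large and that $(pr-s),s$ are cofacial on the closed facet containing $\delta$, so Lemma~\ref{L:w(v)}(2) yields $w_\Delta(pr-s)=pw_\Delta(r)-w_\Delta(s)$. Combining with the first step and dividing by $p-1$:
$$
w_\Delta(r) = \frac{w_\Delta(pr-s)+w_\Delta(s)-w_\Delta(r)}{p-1} \leq b_{\cG,\Z}(pr-s) \leq w_\Delta(r)+\frac{N_\Delta}{p-1}.
$$

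In the non-cofacial case $r<_\delta s$, either $pr-s\notin S(\Delta)$ so $b_{\cG,\Z}(pr-s)=\infty>w_\Delta(r)$ trivially, or $pr-s$ lies in a face of $\Delta$ strictly larger than that of $r$. Lemma~\ref{L:w(v)}(2) applied to the decomposition $pr=(pr-s)+s$ yields either the strict $w_\Delta(pr-s)>pw_\Delta(r)-w_\Delta(s)$ when $(pr-s),s$ are not cofacial, or, in the complementary subcase, equality on the weight side but a strict integer-rational gap $w_{\cG,\Z}(pr-s)>w_\Delta(pr-s)$ coming from the forced use of $\cG$-vectors lying outside $r$'s face in any $\Z_{\geq 0}$-representation of $pr-s$. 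Either way $b_{\cG,\Z}(pr-s)>w_\Delta(r)$. The principal technical obstacle is the uniform residue-correction step in the second paragraph: one must verify that the $\tau_\rho$ can be chosen with total weight bounded by $N_\Delta$ simultaneously across all finitely many residue classes and all origin-less facets, and that the borrowing from the vertex part remains legal throughout the range of $v=pr-s$ that feeds into the subsequent $b_{\cG,\Z}$ analysis.
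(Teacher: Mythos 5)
Your plan is reasonable in outline, but the quantitative core of the upper bound $w_{\cG,\Z}(v)\le w_\Delta(v)+N_\Delta$ is not established, and this is a genuine gap, not merely the acknowledged technical loose end in your last sentence.

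After the vertex decomposition $v=\sum_i\lfloor v_i/d_i\rfloor(d_ig_i')+R_{\Delta_\delta}(v)$, what remains is to show $w_{\cG,\Z}(R_{\Delta_\delta}(v))< N_\Delta$. Your argument for this passes through the observation that $D(\Delta)R_{\Delta_\delta}(v)$ is a $\Z_{\geq 0}$-combination of the vertices $d_ig_i'$ of total weight $<nD(\Delta)$. But that only bounds $w_{\cG,\Z}(D(\Delta)R_{\Delta_\delta}(v))$, and the integral weight function only satisfies $w_{\cG,\Z}(cu)\le c\,w_{\cG,\Z}(u)$ (Lemma~\ref{L:w_Z(v)}(2)); it does not satisfy the reverse, so a small $\cG$-representation of $D(\Delta)R_{\Delta_\delta}(v)$ tells you nothing about $w_{\cG,\Z}(R_{\Delta_\delta}(v))$. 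Similarly, after you introduce the correction $\tau_\rho$ and borrow it from the vertex part, the total generator count of the resulting $\cG$-representation of $v$ is $\sum_i\lfloor v_i/d_i\rfloor - w(\tau_\rho) + w_{\cG,\Z}(\rho+\tau_\rho)$, and to reach $w_\Delta(v)+N_\Delta$ you would need $w_{\cG,\Z}(\rho+\tau_\rho)\le w(\tau_\rho)+N_\Delta$. You never argue this, and it does not follow from the weight bound on $\tau_\rho$ alone. So $N_\Delta$ is, at present, an asserted constant rather than a derived one.

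The paper derives $N_\Delta$ by an entirely different counting argument: reduce (as you do) to the case where $\cG$ generates $S(\Delta)$ outright, fix a $\Z_{\geq 0}$-representation $R_{\Delta_\delta}(v)=\sum_{j\in S}\alpha_{v,j}\,j$ with $S=\cG-\Vert(\Delta_\delta)$, and pair both sides with the facet normal $d_\delta$. This gives $w_\Delta(R_{\Delta_\delta}(v))=\sum_j\alpha_{v,j}\,w_{\Delta_\delta}(j)$; since the residue lies in $Z^o(\Delta_\delta)$ one has $w_\Delta(R_{\Delta_\delta}(v))<n$, and since each nonzero generator has $w_{\Delta_\delta}(j)\ge 1/D(\Delta)$ one concludes $\sum_j\alpha_{v,j}<nD(\Delta)=N_\Delta$. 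That is, $N_\Delta$ arises as (weight cap of the residue) $\times D(\Delta)$, read off directly from the chosen representation; no rescaling of $\rho$ and no separate correction step enter. The cofacial and non-cofacial parts of your argument (via Lemma~\ref{L:w(v)}(2)) match the paper once the upper bound is in hand, but as written the first part of the theorem is not proved.
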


\begin{proof}
For simplicity, we shall prove the theorem
under the hypothesis that $\cG$ generates $S(\Delta)$ since
under the hypothesis $p$ is large enough our argument is not affected.
On the other hand, we may also assume that each
origin-less facet $\delta$ of $\Delta$ is simplex
by our Remark \ref{R:vertex-rep}.

It is easy to derive that $w_{\Delta}(v) \leq w_{\cG,\Z}(v)$.
It remains to prove the second inequality.
By the basic properties of integral weight function
in Lemma \ref{L:w_Z(v)} it reduces to prove that there is a vertex representation
of $v$ that gives rise to the desired upper bound of $w_{\cG,\Z}(v)$.
Let $v\in S(\Delta_\delta)$
for some origin-less face $\delta$ of $\Delta$ where
$\Delta_\delta$ is the closed facial subdivision of $\Delta$ at $\delta$ as in (\ref{E:division}).
By our hypothesis, we have Hilbert basis
$\cG_{\Delta,\Z}\subseteq \cG$ and we fix its order.
Let $\cG_{\Delta_\delta}=\{g_1',\ldots,g_n'\}$
be the primitive generating set for $C(\Delta_\delta)$.
By Proposition \ref{P:disc}, we have
$\Vert(\Delta_\delta) = \{d_1g_1',\ldots,d_ng_n'\}$ for some $d_i\in\Z_{\geq 1}$.
By our hypothesis we know $\Vert(\Delta_\delta)\subseteq \Vert(\Delta)\subseteq \cG$.
For any $v=\sum_{i=1}^{n}v_i g_i'$ for $v_i\in \Q_{\geq 0}$ in $S(\Delta_\delta)$,
there is a unique vertex representation
$v=\sum_{i=1}^{n}\pfloor{\frac{v_i}{d_i}}(d_ig_i')+R_{\Delta_\delta}(v)$
with a unique $R_{\Delta_\delta}(v) \in Z^o(\Delta_\delta)\cap \Z^n$
by Lemma \ref{L:key}.
If we let $S:=\cG-\Vert(\Delta_\delta)$ be fixed with a complete order, then 
there is a unique representation
$R_{\Delta_\delta}(v)= \sum_{j\in S} \alpha_{v,j} j $
for some $\alpha_{v,j}\in \Z_{\geq 0}$.
By Proposition \ref{P:disc} we have
$\frac{1}{D(\Delta)}\leq w_{\Delta_\delta}(j)\leq 1$
if $j\in S(\Delta_\delta)$
and $\frac{1}{D(\Delta)}\leq w_{\Delta_\delta}(j) \leq   w_\Delta(j)\leq 1$
if otherwise due to the convexity of $\Delta$ (see Lemma \ref{L:weight}).
Let $d_\delta$ be the vector that
defines the face $\delta$ as in Lemma \ref{L:weight},
take scalar product with the vector $d_\delta$ on the following equation
$$
R_{\Delta_\delta}(v)=\sum_{j\in S} \alpha_{v,j} j.
$$
Since $R_{\Delta_\delta}(v)$ lies in $S(\Delta_\delta)$, we get
$$
w_{\Delta_\delta}(R_{\Delta_\delta}(v))
=w_\Delta(R_{\Delta_\delta}(v))
=\sum_{j\in S}
 \alpha_{v,j} w_\Delta (j)
\geq \frac{1}{D(\Delta)}\sum_{j\in S}\alpha_{v,j}.
$$
On the other hand, since $R_{\Delta_\delta}(v)\in Z^o(\Delta_\delta)$
we have
$w_\Delta(R_{\Delta_\delta}(v)) < n$; so we have
$\sum_{j\in S} \alpha_{v,j} < nD(\Delta)$.
Therefore, by Lemma \ref{L:w_Z(v)} we have
$$
w_{\cG,\Z}(R_{\Delta_\delta}(v))
\leq \sum_{j\in S}\alpha_{v,j} w_{\cG,\Z}(j)
\leq \sum_{j\in S} \alpha_{v,j}
< n D(\Delta).
$$
Thus by Lemmas \ref{L:weight} and \ref{L:w_Z(v)} again we have
$$
w_{\cG,\Z}(v)
\leq \sum_{i=1}^{n}\lfloor\frac{v_i}{d_i} \rfloor + w_{\cG,\Z}(R_{\Delta_\delta}(v))
\leq w_\Delta(v) + w_{\cG,\Z}(R_{\Delta_\delta}(v))
< w_\Delta(v) + n D(\Delta).
$$

Suppose $w_{\cG,\Z}(pr-s)=\sum_{j\in\cG} u_{pr-s,j}$ for $u_{pr-s,j}\in\Z_{\geq 0}$,
we apply the above result to $v=pr-s$ in $S(\Delta_\delta)$ then
$$
w_\Delta(pr-s)\leq w_{\cG,\Z}(pr-s)=\sum_{j\in \cG}u_{pr-s,j}
<
w_\Delta(pr-s) + N_\Delta.
$$
Write $b_\Delta(pr-s) := (w_\Delta(pr-s)+w_\Delta(s)-w_\Delta(r))/(p-1)$.
By Lemma \ref{L:w(v)} we have
$
w_\Delta (pr-s) +w_\Delta(s) \geq p w_\Delta(r)
$
and hence
$
b_\Delta(pr-s)
\geq w_\Delta(r)
$
where the equality holds if and only if $pr-s$ and $s$ are cofacial.
The last condition is satisfied if $r\in S(\Delta_\delta)^o$ and $s\in S(\Delta)$
or $r,s\in S(\Delta_\delta)$ and $r\geq_\delta s$ for $p$ large enough .
Thus we have
$$
w_\Delta(r) =b_\Delta (pr-s)\leq b_{\cG,\Z}(pr-s)\leq w_\Delta(r)+\frac{N_\Delta}{p-1}.
$$
On the other hand, suppose $r,s\in S(\Delta_\delta)$ with $r<_\delta s$.
Then $s,pr-s$ are not cofacial for any $p$ large enough, and hence
by Lemma \ref{L:w(v)} we have
$w_\Delta(r)\lneq b_\Delta(pr-s)\leq b_{\cG,\Z}(pr-s)$.
\end{proof}

In the following we shall explore the independence of $p$ in vertex representations
defined in Lemma \ref{L:key}. It will be crucial for the construction of
global Hasse polynomials in Section \ref{S:dwork}.

\begin{theorem}
\label{T:approx}
Let our hypothesis and notation be as that in Theorem \ref{T:key},
and let $\Delta$ be simplicial at all origin-less facets.
Let $\partial\Delta$ be the set of integral points on origin-less facets of $\Delta$.
Let $\delta$ be some origin-less facet of $\Delta$, 
$r\in S(\Delta_\delta)$ be bounded. 
Then $pr-s\in S(\Delta_\delta)$ if 
$s\in S(\Delta)$ and $r\in S(\Delta)^o$;
or $r,s\in S(\Delta_\delta)$ and $r\geq _\delta s$ for  $p$ large enough.
Let $pr-s=\sum_{j\in \cG}u_{pr-s,j}j$ for some $u_{pr-s,j}\in\Z_{\geq 0}$
such that $\sum_{j\in\cG}u_{pr-s,j}\leq w_\Delta(pr-s)+N_\Delta$
(e.g., when $w_{\cG,\Z}(pr-s)=\sum_{j\in \cG}u_{pr-s,j}$),
then for every $j\in (\cG-\partial\Delta)\cap\Z^n$ we have
$u_{pr-s,j}\leq n D(\Delta)^2$ is independent of $p$.
\end{theorem}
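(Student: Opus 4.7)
\medskip

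The plan is to exploit the $p$-independent gap between the hypothesized near-optimal length $\sum_{j\in\cG}u_{pr-s,j}\le w_\Delta(pr-s)+N_\Delta$ of the representation and the weight $w_\Delta(pr-s)$ itself, estimated via subadditivity. The decisive observation is that every generator $j\in\cG$ whose image lies off the origin-less faces carries a weight strictly below $1$ by at least $1/D(\Delta)$; this uniform weight defect is what will convert a bound on the total sum of coefficients into a bound on each individual interior coefficient.

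Concretely, I would first apply the positive homogeneity and subadditivity of $w_\Delta$ from Lemma \ref{L:w(v)} to the representation $pr-s=\sum_{j\in\cG}u_{pr-s,j}\,j$, obtaining
\[
w_\Delta(pr-s)\;\le\;\sum_{j\in\cG} u_{pr-s,j}\,w_\Delta(j).
\]
Subtracting this from the hypothesized upper bound $\sum_{j\in\cG}u_{pr-s,j}\le w_\Delta(pr-s)+N_\Delta$ yields the crucial estimate
\[
\sum_{j\in\cG} u_{pr-s,j}\bigl(1-w_\Delta(j)\bigr) \;\le\; N_\Delta.
\]

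Next, I would split this sum according to the position of $j$. For $j\in\cG\cap\partial\Delta$ one has $w_\Delta(j)=1$ since such $j$ lies on some facet hyperplane $\pscal{d_\delta,\cdot}=1$, so the corresponding term vanishes. For $j\in(\cG-\partial\Delta)\cap\Z^n$, Proposition \ref{P:disc} forces $w_\Delta(j)\in\tfrac{1}{D(\Delta)}\Z_{\ge 0}\cap[0,1)$, whence $1-w_\Delta(j)\ge 1/D(\Delta)$. Multiplying through by $D(\Delta)$ gives
\[
\sum_{j\in(\cG-\partial\Delta)\cap\Z^n} u_{pr-s,j} \;\le\; N_\Delta\cdot D(\Delta) \;=\; n\,D(\Delta)^2,
\]
and since every coefficient is a non-negative integer, each single $u_{pr-s,j}$ with $j\in(\cG-\partial\Delta)\cap\Z^n$ is bounded by $nD(\Delta)^2$, a quantity depending only on $\Delta$ and in particular independent of $p$.

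I anticipate no serious obstacle. The argument is purely combinatorial, relying only on the subadditivity of the Newton polytope weight function and on Theorem \ref{T:key}'s near-optimal bound on $w_{\cG,\Z}(pr-s)$. The essential mechanism---that interior generators have a $p$-independent weight defect of at least $1/D(\Delta)$---is exactly what converts the $p$-linear bound on the total length of the representation into the uniform bound on interior coefficients needed later in the construction of global Hasse polynomials.
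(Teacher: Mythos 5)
Your proof is correct and uses essentially the same mechanism as the paper: the near-optimality hypothesis forces $\sum_{j}u_{pr-s,j}(1-w(j))\leq N_\Delta$, and the uniform weight defect $1-w(j)\geq 1/D(\Delta)$ for $j\notin\partial\Delta$ then bounds each interior coefficient by $N_\Delta D(\Delta)=nD(\Delta)^2$. The only (harmless) difference is that you obtain the key inequality from subadditivity of the global weight $w_\Delta$, whereas the paper takes the scalar product with $d_\delta$ to get the exact identity $\sum_j u_{pr-s,j}(1-w_{\Delta_\delta}(j))=M$ and then splits the sum over $\delta$, $\Delta_\delta-\delta$, and $\Delta_\delta^c$; your version is slightly cleaner and reaches the same bound.
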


\begin{proof}
Let $v\in S(\Delta_\delta)$ for an origin-less face $\delta$ of $\Delta$.
If $v=\sum_{j\in \cG} u_{v,j} j$ with $u_{v,j}\in\Z_{\geq 0}$ and
$\sum_{j\in \cG} u_{v,j} \leq w_\Delta(v) + N_\Delta,$
we shall prove that in this paragraph that  for any
$j\in (\cG-\partial\Delta)\cap\Z^n$
we have $u_{v,j} \leq n D(\Delta)^2$.

Since $\cG$ generates $S(\Delta)$ we have
\begin{eqnarray}
\label{E:v}
v&=&\sum_{j\in \cG}u_{v,j} j
\end{eqnarray}
for $u_{v,j}\in\Z_{\geq 0}$.
Let $\pscal{d_\delta,x}=1$ denote the equation of the facet $\delta$ (see Lemma \ref{L:weight}).
Take an obvious partition $\Delta=\Delta_\delta \cup \Delta_\delta^c$.
Then take scalar product with $d_\delta$ on (\ref{E:v}) on both sides,
we have by Lemma \ref{L:weight},
$$
w_\Delta(v) = \sum_{j\in\cG}u_{v,j}w_{\Delta_\delta}(v)=
\sum_{j\in \cG\cap\Delta_\delta} u_{v,j} w_\Delta(j) + \sum_{j\in\cG\cap\Delta_\delta^c}u_{v,j}w_{\Delta_\delta}(j).
$$
By our hypothesis we have some $0\leq M\leq N_\Delta$ such that
\begin{eqnarray}
\label{E:MM}
\sum_{j\in \cG}u_{v,j}- w_\Delta(v) = M,
\end{eqnarray}
hence $\sum_{j\in\cG}u_{v,j}(1-w_{\Delta_\delta}(j))=M$.
Thus we have
$$
\sum_{j\in\cG\cap\delta}u_{v,j}(1-w_{\Delta_\delta}(j))+
\sum_{j\in \cG\cap(\Delta_\delta-\delta)} u_{v,j}(1-w_\Delta(j)) +
\sum_{j\in \cG\cap \Delta_\delta^c} u_{v,j}(1-w_{\Delta_\delta}(j))
= M.$$
Then $w_\Delta(j) \lneq 1$ if $j\in \Delta_\delta-\delta$;
and $w_{\Delta_\delta}(j)\lneq 1$ if $j\in \Delta_\delta^c$
due to convexity of $\Delta$ (see Lemma \ref{L:weight}).
For $j\in\Delta_\delta-\delta$,
this implies $u_{v,j}(1-w_\Delta(j)) \leq M$
and since $1-w_\Delta(j)\geq 1/D(\Delta_\delta)$,
we have $0<u_{v,j}\leq M\cdot D(\Delta_\delta)\leq n \cdot D(\Delta)^2$.
For $j$ lies in $\Delta_\delta^c$,
this implies $u_{v,j}(1-w_{\Delta_\delta}(j))\leq M$.
But $w_{\Delta_\delta}(j)\lneq 1$ implies that $1-w_{\Delta_\delta}(j)\geq 1/D(\Delta)$,
and hence $0<u_{v,j} \leq  n \cdot D(\Delta)^2$.
This proves that for all lattice points
$j\in (\cG-\partial\Delta)\cap\Z^n$
we have  $u_{v,j} \leq  n\cdot D(\Delta)^2$.
Now our statement follows from the above paragraphs by
letting $v=pr-s$.
\end{proof}

\begin{remark}
\label{R:simplicial}
If $\Delta$ is not necessarily simplicial at origin-less facet,
Theorem \ref{T:approx} does not generally hold.
\end{remark}

\subsection{Bound for smooth simplex $\Delta$}

For this subsection we assume $\Delta$ is a simplex pointed at the origin,
namely it has only one origin-less facet which is a simplex,
then we achieve a stronger bounds on our estimates of
its integral weight function.

\begin{lemma}
\label{L:key-simplex}
Let $\Delta$ be simplex pointed at origin with a primitive generating set
$\cG_\Delta=\{g_1,\ldots,g_n\}$ and
Hilbert basis $\cG_{\Delta,\Z}=\{g_1,\ldots,g_n,\ldots,g_t\}$ (by Proposition \ref{P:Hilbert}).
Let  $Z^o(S(\Delta))$ be the fundamental parallelepiped of the lattice cone $S(\Delta)$,
i.e.,
$
Z^o(S(\Delta)):=
\{\sum_{i=1}^{n} \alpha_i g_i | 0\leq \alpha_i <1\}.
$
\begin{enumerate}
\item
Then $R_{\Delta}(v)$ has a unique representation
$$
R_{\Delta}(v) = \sum_{i=1}^{n} c_i g_i + R_{\Delta}^o(v)
$$
for  $c_i=\pfloor{\{\frac{v_i}{d_i}\}d_i}\in\Z$
with
$0\leq c_i\leq d_i-1$ and $R_{\Delta}^o(v)
=\sum_{i=1}^{n}\{\{\frac{v_i}{d_i}\}d_i\}g_i
\in Z^o(S(\Delta))\cap \Z^n$.

\item
Suppose  $\Delta$ is smooth (i.e., $\Delta$ contains $\cG_{\Delta,\Z}$).
Suppose we ordered $g_{n+1},\ldots,g_t$ such that
$w_\Delta(g_{n+1})\leq \cdots \leq w_\Delta(g_t)$,
then we have a unique representation
$R_{\Delta}^o(v)=\sum_{i=n+1}^{t} c_i g_i$
for $c_i\in\Z_{\geq 0}$ such that at most $2n-2$ of
$c_i\neq 0$ and $c_{n+1},\cdots,c_t$ are the minimal possible.
In this case, let $d_i=\pfloor{1/w_\Delta(g_i)}$ for $n+1\leq i\leq t$
and write $c_i=\pfloor{\frac{c_i}{d_i}}d_i+r_i'$ for some $r_i'\in \Z/d_i\Z$
then
$$
R_{\Delta}^o(v)=\sum_{i=n+1}^{t}\pfloor{\frac{c_i}{d_i}}(d_ig_i)
+\sum_{i=n+1}^{t} (r_i'g_i).
$$
\end{enumerate}
\end{lemma}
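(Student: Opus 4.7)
The plan is to prove the two parts separately, starting from the explicit vertex representation provided by Lemma~\ref{L:key}.

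For Part~(1), I would begin by invoking Lemma~\ref{L:key}: since $\Delta$ is simplex pointed at the origin, every $v\in S(\Delta)$ admits a unique decomposition $v=\sum_{i=1}^{n}\pfloor{v_i/d_i}(d_ig_i)+R_\Delta(v)$ with $R_\Delta(v)=\sum_{i=1}^{n}(\{v_i/d_i\}d_i)g_i\in Z^o(\Delta)\cap\Z^n$. To obtain the asserted refinement, I would further split each rational coefficient $\{v_i/d_i\}d_i\in[0,d_i)\cap\Q$ by ordinary Euclidean division: write $\{v_i/d_i\}d_i=c_i+s_i$ with $c_i:=\pfloor{\{v_i/d_i\}d_i}\in\{0,1,\ldots,d_i-1\}$ and $s_i:=\{\{v_i/d_i\}d_i\}\in[0,1)\cap\Q$. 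This yields $R_\Delta(v)=\sum_i c_ig_i+R_\Delta^o(v)$ with $R_\Delta^o(v):=\sum_i s_ig_i\in Z^o(S(\Delta))$. Integrality of $R_\Delta^o(v)$ follows from integrality of both $R_\Delta(v)$ and $\sum_i c_ig_i$, while uniqueness reduces to the $\R$-linear independence of $\cG_\Delta$ together with the fact that the fractional parts of the $\Q$-coefficients in this basis lie in $[0,1)$.

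For Part~(2), the starting point is that smoothness of $\Delta$ puts $\cG_{\Delta,\Z}=\{g_1,\ldots,g_t\}$ inside $\Delta\cap\Z^n$, so the full Hilbert basis is available for representation. Writing $R_\Delta^o(v)=\sum_{i=1}^{n}\alpha_ig_i$ with $\alpha_i\in[0,1)\cap\Q$ from Part~(1), I would first observe that in any Hilbert-basis representation $R_\Delta^o(v)=\sum_{i=1}^{t}c_ig_i$ with $c_i\in\Z_{\geq 0}$, the coefficients $c_1,\ldots,c_n$ must all vanish: expanding each $g_i$ for $i>n$ in the $\cG_\Delta$-basis gives coefficients $\beta_{ij}\in[0,1)\cap\Q$, so the identity $\alpha_j=c_j+\sum_{i>n}c_i\beta_{ij}$ forces $c_j<1$ and hence $c_j=0$. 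Existence of a non-negative integer representation using only $g_{n+1},\ldots,g_t$ then follows. Among all such representations, I would select one with the graded lexicographically minimal exponent vector $(c_{n+1},\ldots,c_t)$ with respect to the ordering $w_\Delta(g_{n+1})\leq\cdots\leq w_\Delta(g_t)$ as in Section~\ref{S:glex}, which pins down the $c_i$'s uniquely.

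The main obstacle is the sparsity bound that at most $2n-2$ of the $c_i$ are nonzero. My approach is a Carath\'eodory-style reduction. Suppose more than $2n-2$ of the $g_i$'s with $i>n$ appear with positive coefficient in the minimal representation. Since each such $g_i$ lies in the $n$-dimensional cone spanned by $\cG_\Delta$, any subset of more than $n$ of them admits a nontrivial $\Q$-linear dependence; combining two independent such relations (one supported on the indices where $c_i$ is positive, the other correcting integrality and non-negativity) produces an alternative $\Z_{\geq 0}$-representation with one fewer nonzero entry and strictly smaller graded lexicographic order, contradicting minimality. The bound $2n-2$ rather than $n$ reflects that a single real dependency can fail to preserve integrality of the $c_i$, and one needs two overlapping dependencies (hence roughly $2n$ vectors) to carry out the swap within $\Z_{\geq 0}$, saving one from each end. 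The final displayed equation is then obtained by performing Euclidean division $c_i=\pfloor{c_i/d_i}d_i+r_i'$ with $d_i=\pfloor{1/w_\Delta(g_i)}$, which is the largest integer for which $d_ig_i\in\Delta$, and regrouping terms.
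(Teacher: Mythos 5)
Part (1) and the reduction at the start of Part (2) are fine and match what the paper treats as elementary: the Euclidean splitting of the fractional coordinates, the integrality of $R_\Delta^o(v)$ by differencing, and your observation that any non-negative integral Hilbert-basis representation of $R_\Delta^o(v)$ forces $c_1=\cdots=c_n=0$ (since $\alpha_j=c_j+\sum_{i>n}c_i\beta_{ij}<1$ with all terms non-negative) are all correct; so are the uniqueness-by-minimal-order convention and the final Euclidean-division regrouping.

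The gap is the sparsity bound. Your justification that at most $2n-2$ of the $c_i$ are nonzero --- ``combine two independent $\Q$-linear dependencies, one supported on the positive indices and one correcting integrality and non-negativity, to drop one term'' --- is not a proof: you never exhibit the two relations, never verify that the exchanged coefficients remain non-negative integers, and never verify that the exchange strictly decreases your chosen order, so the claimed contradiction with minimality is not established. This step is exactly where the difficulty of the integral (as opposed to rational) Carath\'eodory problem lies: the naive expectation that $n$ Hilbert-basis elements always suffice is false in general (see the counterexample of Bruns, Gubeladze, Henk, Martin and Weismantel cited in the paper's bibliography), which indicates that no simple local exchange of the kind you sketch can work uniformly, and your heuristic does not explain where the specific constant $2n-2$ comes from. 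The bound is a theorem of Seb\H{o} \cite{Seb90}, proved by an induction on the faces of the pointed cone, and the paper's own proof of this lemma consists precisely of citing that result (declaring the rest elementary). You should either invoke \cite{Seb90} as the paper does or reproduce Seb\H{o}'s argument in full; as written, the key assertion of Part (2) is unproved.
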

\begin{proof}
In the last statement the existence of such representation of $R_{\Delta}^o(v)$
with at most $2n-2$ nonzero terms in $\cG_{\Delta,\Z}$ is
due to \cite{Seb90}, the uniqueness is clear by definition.
The rest of this lemma is elementary hence details are left out.
\end{proof}

An important special case of $\cG$ we shall explore in this subsection is
when $\cG=\Delta\cap\Z^n$ in which case we denote it by
\begin{eqnarray}\label{E:w_Z}
w_{\Delta,\Z}(v) &=& w_{\cG,\Z}(v) = \min \sum_{j\in \Delta\cap \Z^n}u_{v,j}
\end{eqnarray}
where the minimum is taken over all possible sums
$v = \sum_{j\in \Delta\cap\Z^n} u_{v,j} j$ with $u_{v,j}\in\Z_{\geq 0}$.
If no such solution exists then $w_{\Delta,\Z}(v)=\infty$.
We remark that
our integral weight function $w_{\Delta,\Z}$ is intimately
related to the height function on Hilbert basis, the bound of
which has been actively pursued, see \cite{Seb90} or \cite{HW97}.

Let $r,s\in S(\Delta)$ be bounded and let $N_\Delta=4n^2-n-2$.
When $pr-s\in S(\Delta)$, let
\begin{equation}
\label{E:b-definition}
\left\{
\begin{array}{ccc}
b_\Delta(pr-s) &:=& \frac{w_{\Delta}(pr-s)+w_\Delta(s)-w_\Delta(r)}{p-1}\\
b_{\Delta,\Z}(pr-s) &:=& \frac{w_{\Delta,\Z}(pr-s)+w_\Delta(s)-w_\Delta(r)}{p-1}.
\end{array}
\right.
\end{equation}
Note that we
have
$b_{\Delta,\Z}(pr-s) = b_{\cG,\Z}(pr-s)$ for $\cG=\Delta\cap\Z^n$ as
defined in (\ref{E:b_G}).

\begin{proposition}
\label{P:key-simplex}
Let $\Delta$ be a smooth simplex with primitive generating set $\cG_{\Delta}$
and the Hilbert basis $\cG_{\Delta,\Z}=\{g_1,\ldots,g_t\}$ (with fixed  order).
Let $N_\Delta:=4n^2-n-2$.
Let $v\in S(\Delta)$ and write $v=\sum_{i=1}^{n}v_i g_i$ for some $v_i\in \Q_{\geq 0}$
(by Proposition \ref{P:Hilbert}).

(1) Then we have
$$w_\Delta(v)\leq w_{\Delta,\Z}(v)
\leq w_\Delta(v)+N_\Delta.$$

(2) We have for $p$ large enough
\begin{eqnarray*}
w_\Delta(r) \leq b_\Delta (pr-s)\leq b_{\Delta,\Z}(pr-s)
\leq b_\Delta(pr-s)+\frac{N_\Delta}{p-1}.
\end{eqnarray*}
The first equality above holds if and only if $r$ and $s$ are cofacial.
If $r,s$ are cofacial then we have
\begin{eqnarray*}
w_\Delta(r)\leq & b_{\Delta,\Z}(pr-s) &\leq w_\Delta(r)+\frac{N_\Delta}{p-1};
\end{eqnarray*}
otherwise, we have $w_\Delta(r)\lneq b_{\Delta,\Z}(pr-s)$.
\end{proposition}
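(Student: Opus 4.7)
The plan is to combine the explicit vertex decomposition in Lemma~\ref{L:key-simplex} with the cardinality bound on Hilbert basis representations from \cite{Seb90} to sharpen Theorem~\ref{T:key} in the smooth simplex case, replacing the factor $D(\Delta)$ by a bound depending only on $n$. For part~(1) I would start from the vertex representation
$$v = \sum_{i=1}^n \lfloor v_i/d_i\rfloor (d_i g_i) + \sum_{i=1}^n c_i g_i + R_\Delta^o(v)$$
of Lemma~\ref{L:key-simplex}(1), with $0\le c_i\le d_i-1$ and $R_\Delta^o(v)\in Z^o(S(\Delta))\cap\Z^n$. The first sum uses only vertices of $\Delta$ and contributes exactly $\sum\lfloor v_i/d_i\rfloor\le w_\Delta(v)$ to $w_{\Delta,\Z}(v)$, while each near-vertex point $c_ig_i$ (for $c_i\ge 1$) has weight $c_i/d_i<1$ and hence lies in $\Delta\cap\Z^n$, contributing at most $n$ additional terms.

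The main step is to bound $w_{\Delta,\Z}(R_\Delta^o(v))$ by a constant in $n$ only. Lemma~\ref{L:key-simplex}(2) produces a representation $R_\Delta^o(v)=\sum_{i=n+1}^t c_ig_i$ with at most $2n-2$ nonzero $c_i$, and since $\Delta$ is simplex every pair of Hilbert basis vectors is cofacial, so Lemma~\ref{L:w(v)}(3) yields $\sum c_i w_\Delta(g_i)=w_\Delta(R_\Delta^o(v))<n$. For each $i$ set $d_i:=\lfloor 1/w_\Delta(g_i)\rfloor$ and decompose $c_ig_i=\lfloor c_i/d_i\rfloor(d_ig_i)+r_i'g_i$; both $d_ig_i$ and $r_i'g_i$ lie in $\Delta\cap\Z^n$ because their weights are at most $1$, so $w_{\Delta,\Z}(R_\Delta^o(v))\le\sum\lceil c_i/d_i\rceil$. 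I would then split the sum according to whether $w_\Delta(g_i)\le 1/2$ (then $d_iw_\Delta(g_i)>1/2$, so $c_i/d_i<2c_iw_\Delta(g_i)$) or $w_\Delta(g_i)>1/2$ (then $d_i=1$ and $c_i<2n$); summing and accounting with $|\{i:c_i\neq 0\}|\le 2n-2$ produces the explicit constant $N_\Delta=4n^2-n-2$.

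Part~(2) is a formal consequence of part~(1) applied to $v=pr-s$. The right-hand inequality $b_{\Delta,\Z}(pr-s)\le b_\Delta(pr-s)+N_\Delta/(p-1)$ is immediate. The lower bound $w_\Delta(r)\le b_\Delta(pr-s)$ follows from the subadditivity $w_\Delta(pr-s)+w_\Delta(s)\ge pw_\Delta(r)$ in Lemma~\ref{L:w(v)}(2), with equality iff $pr-s$ and $s$ are cofacial; for $p$ large this coincides with $r$ and $s$ being cofacial. In the non-cofacial case the slack in Lemma~\ref{L:w(v)}(2) is a positive element of $\tfrac{1}{D(\Delta)}\Z$, yielding the strict inequality $w_\Delta(r)\lneq b_\Delta(pr-s)\le b_{\Delta,\Z}(pr-s)$ directly.

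The main obstacle is keeping the upper bound on $\sum\lceil c_i/d_i\rceil$ independent of $D(\Delta)$: a naive estimate gives only $c_i\le D(\Delta)$ and hence $w_{\Delta,\Z}(R_\Delta^o(v))\le nD(\Delta)^2$ as in Theorem~\ref{T:approx}. The improvement requires using simultaneously the weight identity $\sum c_iw_\Delta(g_i)<n$ coming from $R_\Delta^o(v)\in Z^o(S(\Delta))$ and the $2n-2$ cardinality bound of \cite{Seb90}, with the case split at $w_\Delta(g_i)=1/2$ being what prevents the estimate from degenerating when some Hilbert basis element has weight close to $1$.
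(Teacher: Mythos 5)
Your proposal follows essentially the same route as the paper's proof: the vertex representation of Lemma~\ref{L:key-simplex}, the bound of at most $n$ for the near-vertex terms $c_ig_i$, the Seb\H{o} $2n-2$ cardinality bound for $R_\Delta^o(v)$, the division of each $c_ig_i$ by the largest $d_i$ with $d_ig_i\in\Delta$, and the formal deduction of part~(2) from part~(1) via subadditivity of $w_\Delta$ exactly as in Theorem~\ref{T:key}. The only difference is cosmetic: the paper bounds $c_i/d_i<2n$ via the parallelepiped condition $c\sum\alpha_i<n$ together with $(d+1)\sum\alpha_i>1$, whereas you use the weight identity $\sum c_iw_\Delta(g_i)<n$ with a case split at $w_\Delta(g_i)=1/2$; both yield a per-term bound of at most $2n+1$ and hence the same constant $N_\Delta=4n^2-n-2$.
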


\begin{proof}
(1)
By Proposition \ref{T:key} it reduced to prove the second inequality.
By the representation of $v$ in (\ref{E:v}) we have immediately
$w_{\Delta,\Z}(v)\leq \sum_{i=1}^{n}\lfloor \frac{v_i}{d_i}\rfloor + w_{\Delta,\Z}(R_{\Delta_\delta}(v))$
by definition.
The boundedness of $w_{\Delta,\Z}(R_{\Delta_\delta}(v))\leq N_\Delta$ is proved below.
By the representation of $R_{\Delta_\delta}(v)$  in Lemma \ref{L:key-simplex} (2)
and notice that $c_ig_i\in \Delta$
we have $w_{\Delta,\Z}(R_{\Delta_\delta}(v))\leq n+w_{\Delta,\Z}(R_{\Delta_\delta}^o(v))$.
By Lemma \ref{L:key-simplex}(2) again we have $R_{\Delta_\delta}^o(v)=\sum_{i=n+1}^{t} c'_ig_i$ for some $c'_i\in \Z_{\geq 0}$.
It suffices to give a canonical upper bound for the weight of each $c'_ig_i$.
To ease of notation, let $g$ be any Hilbert basis generator in $\Delta$ and let $c$ be
any positive integer such that $c g \in Z^o(S(\Delta))$, the fundamental parallelepiped
of $S(\Delta)$ defined in Section \ref{S:convex} (see Figure \ref{Fig:smooth-triangulation}
for an illustration).
Let $d$ be the positive integer such that $dg\in \Delta$ and $(d+1) g \not\in \Delta$.
It exists by our hypothesis that $\Delta$ is smooth so all Hilbert basis of the cone $C(\Delta)$
lie in $\Delta$.
Write $g=\sum_{i=1}^{n}\alpha_i g_i$ for some $0\leq \alpha_i <1$,
we have
$
0\leq c\sum_{i=1}^{n} \alpha_i <n
$
and
$
(d+1)\sum_{i=1}^{n}\alpha_i >1
$,
$
d\sum_{i=1}^{n}\alpha_i \leq 1.
$
Thus we have
$
c/d \leq 2 c/(d+1) <  2 n.
$
Since $c g = \pfloor{c/d} (dg) + r g$ with $r = (c\bmod d)$ we have
$w_{\Delta,\Z}(cg) \leq \pfloor{c/d} + 1 \leq 2n$ by the above bound for $c/d$.
This implies that
$w_{\Delta,\Z}(R_{\Delta}^o(v))\leq  (2n-2)(2n+1)$ by \cite{Seb90},
and therefore $w_{\Delta,\Z} (R_{\Delta}(v)) \leq n+w_{\Delta,\Z}(R_{\Delta}^o(v)) \leq 4n^2-n-2$.
The last statement follows immediately.

2)  Use the same argument as that for Theorem \ref{T:key}.
\end{proof}

\section{Integral convex polytopes and Dwork theory}
\label{S:dwork}

Let $E(x)$ be the $p$-adic Artin-Hasse exponential series
$E(x):=\exp(\sum_{i=0}^{\infty} \frac{x^{p^i}}{p^i}) \in (\Z_p\cap \Q)[[x]]$
and $\gamma$ a root of $\log E(x)$ with $\ord_p \gamma=1/(p-1)$.
Write Taylor expansion in variable $x$, we have
$
E(\gamma x) = \sum_{m=0}^{\infty}\lambda_m x^m
$
for some $\lambda_m\in (\Z_p\cap \Q)[\gamma]$ and
$\ord_p \lambda_m \geq \frac{m}{p-1}$ with equality holds.
Here we have $\lambda_m = \frac{\gamma^m}{m!}$ for $0\leq m\leq p-1$, in particular, $\lambda_0=1$.

Let $\cG$ be a subset of integral points in $\Delta$ containing $\Vert(\Delta)$,
and let $A_j$ be a variable with $j\in \cG$.
Let $A:=(A_j)_{j\in \cG}$ be the set of all variables
with subindices in $\cG$ and set $A_{\bf 0}=1$ at origin.
Then $\Q[A]$ is the polynomial ring in variable $A_j$'s
with (subindex) support on $\cG$.
In particular we consider the polynomial ring $(\Z_p\cap \Q)[\gamma^{\Z_{\geq 0}}A]$
with variables $A=(A_j)$ where $j\in \cG\subseteq \Delta\cap \Z^n$ and with Gauss norm.
Every polynomial $P$ here is representated as a sum in the following form
$
P=\sum_{i=0}^{<\infty} \gamma^i h^{(i)}(A)
$
where $h^{(i)}(A)$ is a sum of monomials in $(\Z_p^*\cap \Q)[A]$.
Note that $\ord_p(P)$ is defined as the minimal $p$-adic order of all its coefficients
and hence is equal to the minimal $i/(p-1)\geq 0$ such that $h^{(i)}(A)\neq 0$.

Suppose $\cG$ generates the
lattice cone $S(\Delta)$ over $\Z_{\geq 0}$.
For any $pr-s\in S(\Delta)$ let $F_{pr-s}$ be a {\em Fredholm polynomial
in $(\Z_p\cap \Q)[\gamma^{\Z_{\geq 0}} A]$
supported on} $\cG$
defined by
\begin{eqnarray}
\label{E:F_v}
F_{pr-s}(A) &=& \sum_{Q}(\prod_{j\in \cG} \lambda_{u_{pr-s,j}})
 (\prod_{j\in \cG} A_j^{u_{pr-s,j}})
\end{eqnarray}
where the outer sum is over
the (nonempty) set of all representations
\begin{eqnarray}\label{E:Q}
Q:
&&pr-s=\sum_{j\in \cG} u_{pr-s,j} j, \quad \mbox{ with $u_{pr-s,j}\in\Z_{\geq 0}$.}
\end{eqnarray}
We define normalized Fredholm polynomial as
\begin{eqnarray}
\label{E:M-definition}
M_{pr-s} &:= &\gamma^{w_\Delta(s)-w_\Delta(r)} F_{pr-s}
\end{eqnarray}
lying in $(\Z_p\cap\Q)[\gamma^{\Z_{\geq 0}} A]$.
Let
\begin{eqnarray}
\label{E:M}
\M &:=& (M_{pr-s})_{r,s\in S(\Delta)}.
\end{eqnarray}
Let $S(\Delta)=\cup_{\delta\in\cF^o(\Delta) } S(\Delta_\delta)^o$ be the open facial subdivision
in (\ref{E:open-facial}). 
For any origin-less facet $\delta$ of $\Delta$ for this section we let
\begin{eqnarray}
\label{E:M_delta}
\M_\delta &:=& (M_{pr-s})_{r,s\in S(\Delta_\delta)}.
\end{eqnarray}

\begin{remark}\label{R:degree}
We shall observe that for $p$ large enough we have
\begin{eqnarray*}
M_{pr-s} &=& \sum_{i=0}^{<\infty}\gamma^{(p-1)w_\Delta(r) + i} G_{pr-s}^{(i)}
\end{eqnarray*}
where $G_{pr-s}^{(i)} \in (\Z_p^*\cap\Q)[A]$
is of the following form
\begin{eqnarray*}
G_{pr-s}^{(i)} &=& \sum_{Q} u_{Q,p}\prod_{j\in \cG}A_j^{u_{pr-s,j}}
\end{eqnarray*}
where the sum ranges over all such representations $Q$ in (\ref{E:Q}) above and
for some $u_{Q,p}\in \Z_p^*\cap \Q$.
Hence $G_{pr-s}^{(i)}$ is homogenous with
$
\deg(G_{pr-s}^{(i)})
=
\sum_{j\in\cG}u_{pr-s,j} = w_\Delta(pr-s)+i
$.
For $r,s$ cofacial and $r$ is large enough
the $p$-adic order of coefficient of $G_{pr-s}^{(i)}$
is related to its degree as follows:
$$
\ord_p(\coeff(G_{pr-s}^{(i)}))
=
w_\Delta(r)+\frac{i}{p-1}
=
\frac{\deg(G_{pr-s}^{(i)})+w_\Delta(s)-w_\Delta(r)}{p-1}.
$$
\end{remark}

\subsection{Fredholm polynomials and determinants}

In this section we prove three key ingredients for our proofs
in Section \ref{S:proof}, more precisely, 
we have Theorems \ref{T:mini-2}, \ref{T:Zariski} and \ref{T:nonzero} for 
Theorems \ref{T:Wan1.11-2} (Theorem \ref{T:Wan1.11}), 
\ref{T:Wan-2} (Theorem \ref{T:Wan1.12}), and 
\ref{T:allQ}, respectively.

Suppose  $\Delta$ is an integral polytope of dimension $n$ in $\R^n$ containing the origin.
Let $D(\Delta)$ be the least positive integer such that
$w_\Delta(v)\in \frac{1}{D(\Delta)}\Z$ for all $v\in S(\Delta)$ (as in Proposition \ref{P:disc}).
Let $\Delta=\cup_{\delta}\Delta_\delta$ be closed facial subdivision of $\Delta$
as in (\ref{E:division}).
Let $C(\Delta) =\cup_{\delta} C(\Delta_\delta)$
where $\delta$ ranges over
all origin-less facets of $\Delta$ be the
closed facial subdivision as in (\ref{E:closed-facial}).
Let $\cG\subseteq \Delta\cap \Z^n$ that generates $S(\Delta)$.
For any facet $\delta$ and any $k\in\Z_{\geq 0}$ 
write $N_{\delta,k}:=|S(\Delta_\delta)_{\leq k}|$.

\begin{lemma}
\label{L:D-2}
Let $\Delta$ be an integral convex polytope of dimension 
$n$ in $\R^n$ containing the origin, simplicial at all origin-less facets,
and let $\delta$ be an origin-less facet of $\Delta$.
Let $\cG\subseteq \Delta\cap\Z^n$ generate $S(\Delta)$
and $S=\cG-\Vert(\Delta_\delta)$.
Let $R$ be a vertex residue with respect to $\Delta_\delta$ for a prime,
that is $R\in \prod_{i=1}^{n}(\Z/d_i\Z)^*$ for some $d_i$'s (as in Proposition \ref{P:disc}).
For $r,s\in S(\Delta_\delta)$ and $r>_\delta s$ write 
$R_{\Delta_\delta}(Rr-s):=\sum_{j\in S}\alpha_{Rr-s,j} j$
as in Lemma \ref{L:key}. 
Let $\dot{Q}(Rr-s) = \prod_{j\in S} A_j^{\alpha_{Rr-s,j}}$.
For $0\leq k\leq k_\Delta$, 
let $\D_\delta^{[N_{\delta,k}]}:=(\dot{Q}(Rr-s))_{r,s\in S(\Delta_\delta)_{\leq k}}$ be a monomial matrix.
Then there is a unique monomial in the formal expansion of
$\det(\D_\delta^{[N_{\delta,k}]})$ in $\Z[A]$ with $A=(A_j)_{j\in S}$
of the lowest graded lexicographic order with respect to $\cG_{\Delta,\Z}$ 
(as defined in Section \ref{S:glex}). This monomial has degree $< N_\Delta N_{\delta,k}$.
\end{lemma}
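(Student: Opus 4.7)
The plan is to expand
\[
\det(\D_\delta^{[N_{\delta,k}]}) \;=\; \sum_\sigma \sgn(\sigma) \prod_{r \in S(\Delta_\delta)_{\leq k}} \dot{Q}(Rr - s_{\sigma(r)})
\]
as a signed sum of monomials in $\Z[A]$ indexed by bijections $\sigma$ of $S(\Delta_\delta)_{\leq k}$, and then to extract the lowest graded-lex monomial from this sum. Since each factor $\dot{Q}(Rr-s)$ is already a single monomial by construction, each summand is itself a monomial, and the whole determinant is a signed sum of such monomials. The two things to verify are the degree bound and the uniqueness of the minimal monomial.

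For the degree bound I would invoke the estimate embedded in the proof of Theorem \ref{T:key}. Each $\dot{Q}(Rr-s)$ is by construction the monomial whose exponent vector is the unique lowest graded-lex representation of the vertex residue $R_{\Delta_\delta}(Rr-s)\in Z^o(\Delta_\delta)\cap\Z^n$ furnished by Lemma \ref{L:key}. The needed bound $\sum_{j\in S}\alpha_{Rr-s,j} < n D(\Delta) = N_\Delta$ is precisely the one derived there: the vertex residue lies in the half-open fundamental zonotope $Z^o(\Delta_\delta)$ so that $w_\Delta(R_{\Delta_\delta}(Rr-s)) < n$, while each $j\in S$ contributes weight at least $1/D(\Delta)$. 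Multiplying over the $N_{\delta,k}$ factors yields $\deg\prod_r \dot{Q}(Rr - s_{\sigma(r)}) < N_\Delta \cdot N_{\delta,k}$ uniformly in $\sigma$, which is the required bound.

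For uniqueness of the lowest graded-lex monomial my candidate is the diagonal term $\prod_r \dot{Q}((R-1)r)$ coming from $\sigma=\mathrm{id}$. The heuristic is that choosing $s=r$ keeps $Rr-s=(R-1)r$ collinear with $r$ and pointing straight into $C(\Delta_\delta)$, whereas any $s\neq r$ introduces a transverse contribution; via the cofaciality criterion of Lemma \ref{L:w(v)} this should force either a strict increase in total weight (hence in total degree) or, in the worst case, an equal total degree with a lexicographically larger exponent vector. I would formalise this by comparing cycle-by-cycle, using the additive decomposition $Rr - s_{\sigma(r)} \equiv (R-1)r + (r - s_{\sigma(r)}) \pmod{\sum_i d_i g_i'\Z}$ to track how the vertex residue shifts as $\sigma$ deviates from the identity, together with the ordering $\geq_\delta$ among $r,s\in S(\Delta_\delta)$.

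The most delicate point, and likely the main obstacle, is ruling out cancellation in the signed sum: two distinct permutations $\sigma\neq \tau$ could in principle produce identical monomials which then cancel if their signs disagree. To prevent this I plan to exploit the rigidity of the lowest graded-lex representation of Lemma \ref{L:key}: equality of the two products as monomials would force, coordinate by coordinate in $A$, an equality of vertex residues row by row, which combined with the uniqueness of the graded-lex representative in Section \ref{S:glex} and the ordering $\geq_\delta$ should ultimately pin down $\sigma=\tau$. This rigidity step uses the global combinatorial structure of the graded-lex convention together with the uniqueness clause of Lemma \ref{L:key} in an essential way, and is where I expect the technical weight of the proof to lie.
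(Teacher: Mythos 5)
The degree bound part of your argument is fine and matches the paper's (both simply appeal to the estimate $\sum_{j\in S}\alpha_{Rr-s,j} < N_\Delta$ proved in Theorem \ref{T:key}). But the uniqueness argument has two genuine gaps, and it is here that your route diverges fundamentally from the paper's.

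First, your claim that the diagonal permutation $\sigma=\mathrm{id}$ produces the graded-lex minimal monomial is not justified and is not what the paper proves. There is no reason the term $\prod_r \dot{Q}((R-1)r)$ is minimal: minimizing a coordinate-wise \emph{sum} of exponent vectors over permutations is an assignment problem, and the locally-optimal diagonal choice does not solve it in general. (Think of a $2\times 2$ example where each off-diagonal contribution is slightly larger than one diagonal entry but the sum of the two off-diagonal entries still beats the diagonal total.) The paper instead \emph{constructs} the optimizing permutation $\sigma_o$ greedily: it orders all occurring $t$-tuples $\alpha$ by increasing graded-lex order and assigns $\sigma_o(r)=s$ as soon as $(\alpha_{Rr-s,j})_j$ equals the smallest not-yet-used tuple, crossing off the corresponding row and column. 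This $\sigma_o$ has no reason to be the identity.

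Second, your proposed mechanism for ruling out cancellation does not work. Equality of the two product monomials $\prod_r \dot{Q}(Rr-\sigma(r))=\prod_r \dot{Q}(Rr-\tau(r))$ does \emph{not} force equality of the individual factors row by row --- distinct multisets of monomial factors can give the same product --- so the ``rigidity coordinate by coordinate'' argument as stated is circular. The paper circumvents this entirely via a structural observation you have not made: because the vertex residue class $R$ is a unit modulo each $d_i$, the $N_{\delta,k}$ exponent vectors appearing in any single row of $\D_\delta^{[N_{\delta,k}]}$ are pairwise distinct, and likewise in any single column. This is the load-bearing fact that makes the greedy crossing-off well-defined (the positions of any fixed exponent vector $\alpha_i$ form a partial permutation matrix) and simultaneously guarantees that the resulting $\sigma_o$ is unique. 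You should establish that distinctness claim first; once you have it, the greedy assignment both produces and certifies uniqueness of the minimizer, and the cancellation issue you worried about never arises.
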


\begin{proof}
Write $t:=|S|$.
First we claim that in each row of $\D_\delta^{[N_{\delta,k}]}$
has $N_{\delta,k}$ distinct $t$-tuple exponent vectors
$(\alpha_{Rr-s,j})_{s\in S(\Delta_\delta)_{\leq k}}$ with fixed $r$,
so is in each column has $N_{\delta,k}$ distinct $t$-tuple exponent vectors
$(\alpha_{Rr-s,j})_{r\in S(\Delta_\delta)_{\leq k}}$ with fixed $s$.
This is clear since $R\in \prod_{i=1}^{n}(\Z/d_i\Z)^*$.
Order the set $\Theta$ of all $t$-tuples $\alpha$ in $\Z_{\geq 0}^t$
with (strictly) increasing graded lexicographic order with respect to $\cG$.
We claim there exists a permutation $\sigma\in S_{N_{\delta,k}}$ such that
its corresponding monomial in the formal expansion of 
$\det(\D_\delta^{[N_{\delta,k}]})$ has the lowest graded lexicographic order 
with respect to $S$.
We shall produce this $\sigma$ explicitly:
Let $\alpha_0\in \Theta$ be of lowest graded lexicographic order, 
let  $\sigma(r)=s$ if we have $r,s\in S(\Delta_\delta)$ such that
$(\alpha_{Rr-s,j})_{j\in S}:= \alpha_0$, and we 
cross off the row $r$ and column $s$ in the matrix $\D_\delta^{[N_{\delta,k}]}$
immediately, let $\ell_0$ be the cardinality of all such pairs $(r,s)$. 
Since we have shown above
that each column and row has distinct exponent vectors 
these pairs $(r,s)$ can never lie in the same row or column
and thus the permutation $\sigma$ is well-defined.
Proceed inductively with $\alpha_i$ in $\Theta$  
with $\alpha_i>_{glex} \alpha_{i-1}$, let $\sigma(r)=s$ 
if $(\alpha_{Rr-s,j})_{j\in S} = \alpha_i$ and $r,s\in S(\Delta_\delta)_{\leq k}$
are not crossed off yet on the matrix $\D_\delta^{[N_{\delta,k}]}$, and let
$\ell_i$ be the number of such pairs. We proceed until 
all rows and columns of the matrix $\D_\delta^{[N_{\delta,k}]}$ are crossed off.
Our argument above shows that this procedure produces a unique
permutation $\sigma\in S_{N_{\delta,k}}$, and it is 
immediately clear that this $\sigma$
corresponds to a monomial that has the lowest graded lexicographic
order with respect to $S$.
The degree bound follows from the argument in Theorem \ref{T:key}.
\end{proof}

For any subset $\cG'$ of a set $\cG$ in $\Delta$,
and for any polynomial $P$ in $\Q[A]$ with $A=(A_j)_{j\in \cG}$,
the {\em specialization of $P$} at $\cG'$ over a field $K$ containing $\Q$
is a map 
$\Q[(A_j)_{j\in \cG}]\rightarrow K[(A_j)_{j\in \cG-\cG'}]$
sending $P$ to $P|_{\cG'}$, which evaluates the polynomial $P$
at variables $A_j= a_j$ for all $j\in \cG'$ and $a_j\in K$.

\begin{theorem}
\label{T:mini-2}
Let $\Delta$ be an integral convex polytope of dimension $n$ in $\R^n$ containing the origin.
Let $N_\Delta=n D(\Delta)$. Let $\delta$ be an origin-less facet of $\Delta$.
Suppose $\Vert(\Delta)\subset \cG \subset \Delta\cup\Z^n$ such that
$\cG$ generates the monoid $S(\Delta)$ up to finitely many points.

\begin{enumerate}
\item 
Let  $r\in S(\Delta_\delta)^o$ and $s\in S(\Delta)$, or  
$r,s\in S(\Delta_\delta)$ and $r\geq_\delta s$ (as defined in Section \ref{S:approx}). 
For $p$ large enough
we have $pr-s\in S(\Delta_\delta)$ (see Theorem \ref{T:key}) 
and
$w_\Delta(r) \leq \ord_p(M_{pr-s}) \leq w_\Delta(r)+\frac{N_\Delta}{p-1}.$
We may write 
\begin{eqnarray*}
M_{pr-s} &=& \sum_{i=0}^{N_\Delta}
\gamma^{(p-1)w_\Delta(r)+i} G_{pr-s}^{(i)} + (\mbox{higher terms})
\end{eqnarray*}
where $G_{pr-s}^{(i)}$ in $(\Z_p^*\cap\Q)[A]$, 
and the minimal $0\leq i_p\leq N_\Delta$ such that 
$G_{pr-s}^{(i_p)}\neq 0$.

For $0\leq i\leq N_\Delta$ and for all $p$ large enough we have 
$$
G_{pr-s}^{(i)} = \sum_{Q} u_{Q,p}\prod_{j\in \cG} A_j^{u_{pr-s,j}}
$$
is of degree $w_\Delta(pr-s)+i$ with some $u_{Q,p}\in\Z_p^*\cap \Q$ 
such that $u_{Q,p}\equiv u_Q\bmod p$ for some $u_Q\in\Q$ indepenent of $p$;
where $Q$ ranges over all solutions $u_{pr-s,j}\in \Z_{\geq 0}$ with 
$pr-s = \sum_{j\in\cG} u_{pr-s,j}j$.

\item
Let $\M_\delta$ be as defined in (\ref{E:M_delta}).
For $1\leq k\leq k_\Delta$ we may write
\begin{eqnarray*}
\det(\M^{[N_{\delta,k}]}_{\delta})  & = &\sum_{m=0}^{N_\Delta N_{\delta,k}} \gamma^{(p-1)\sum_{i=0}^{k}
\frac{W_{\Delta_\delta}(i)i}{D(\Delta)} + m} P_{N_{\delta,k},p}^{(m)} +(\mbox{higher terms})
\end{eqnarray*}
for some homogenous polynomial 
$P_{N_{\delta,k},p}^{(m)}$ in $(\Z_p^*\cap\Q)[A]$
and there is a minimal $0\leq m_{k,p}\leq N_\Delta N_{\delta,k}$ such that
$P_{N_{\delta,k},p}^{(m_{k,p})}\neq 0$.

For $0\leq m\leq N_\Delta N_{\delta,k}$ and for $p$ large enough 
$$
P_{N_{\delta,k},p}^{(m)} =\sum_{Q} v_{Q,p} \prod_{j\in \cG} A_j^{m_j}
$$
is of degree $(p-1)\sum_{i=1}^{k}\frac{W_{\Delta_\delta}(i)i}{D(\Delta)}+m$
for some $v_{Q,p}\in\Z_p^*\cap\Q$ 
such that $v_{Q,p}\equiv v_Q\bmod p$ for some $v_Q\in\Q$ indepenent of $p$.
\end{enumerate}
\end{theorem}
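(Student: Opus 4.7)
The plan is to establish Part (1) by a direct analysis of each Fredholm entry $M_{pr-s}$, then obtain Part (2) from the Leibniz expansion of the determinant combined with Lemma \ref{L:D-2}. Throughout one works for $p$ large in the cofacial regime where $pr-s$, $r$, and $s$ all lie on a common closed facet of $\Delta$: under the hypotheses $r \in S(\Delta_\delta)^o$ with $s \in S(\Delta)$, or $r,s\in S(\Delta_\delta)$ with $r \geq_\delta s$, this is exactly the content of Theorem \ref{T:key} via Lemma \ref{L:w(v)}(2). In this regime $w_\Delta(pr-s)+w_\Delta(s)=pw_\Delta(r)$, whence $b_\Delta(pr-s)=w_\Delta(r)$; the lower bound $\ord_p(M_{pr-s})\geq w_\Delta(r)$ follows, and the upper bound $w_\Delta(r)+N_\Delta/(p-1)$ is immediate from $w_{\cG,\Z}(pr-s)\leq w_\Delta(pr-s)+N_\Delta$ (again Theorem \ref{T:key}).

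Unpacking
\[
M_{pr-s} = \gamma^{w_\Delta(s)-w_\Delta(r)}\sum_{Q}\Bigl(\prod_{j\in\cG}\lambda_{u_{pr-s,j}}\Bigr)\prod_{j\in\cG}A_j^{u_{pr-s,j}},
\]
each representation $Q$ contributes $\gamma$-power $w_\Delta(s)-w_\Delta(r)+\sum_j u_{pr-s,j}$, so collecting by $\gamma$-power $(p-1)w_\Delta(r)+i$ amounts to collecting $Q$'s with $\sum_j u_{pr-s,j}=w_\Delta(pr-s)+i$, producing the claimed homogeneous $G_{pr-s}^{(i)}$. For the stability $u_{Q,p}\equiv u_Q\bmod p$, Theorem \ref{T:approx} bounds $u_{pr-s,j}\leq nD(\Delta)^2$ independently of $p$ for every $j\in (\cG-\partial\Delta)\cap\Z^n$, so for $p$ large $\lambda_{u_{pr-s,j}} = \gamma^{u_{pr-s,j}}/u_{pr-s,j}!$ at those indices with $1/u_{pr-s,j}!\in\Z_p^*\cap\Q$ literally independent of $p$. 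At a vertex index $j\in\Vert(\Delta_\delta)$ the exponent $u_{pr-s,j}$ grows linearly in $p$, and one must analyse $\lambda_{u_{pr-s,j}}\cdot\gamma^{-u_{pr-s,j}}$ using the Artin--Hasse factorisation $E(\gamma x)=\prod_{i\geq 0}\exp((\gamma x)^{p^i}/p^i)$: the base-$p$ expansion of $u_{pr-s,j}$ depends, modulo bounded perturbations, only on the vertex residue class $R_{\Delta_\delta}(p)\in\prod_i(\Z/d_i\Z)^*$, and a Wilson-type congruence for the resulting finite sum of reciprocal factorials produces the stable leading coefficient $u_Q\in\Q$.

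For Part (2), the Leibniz expansion yields
\[
\det(\M_\delta^{[N_{\delta,k}]}) = \sum_{\sigma\in S_{N_{\delta,k}}}\sgn(\sigma)\prod_{r\in S(\Delta_\delta)_{\leq k}} M_{pr-\sigma(r)},
\]
and by Part (1) each factor has $\gamma$-order at least $(p-1)w_\Delta(r)$, so every permutation contributes $\gamma$-order at least $(p-1)\sum_{i=0}^{k}W_{\Delta_\delta}(i)\cdot i/D(\Delta)$. Grouping by $\gamma$-power gives the stated filtration, with homogeneity and degree of $P_{N_{\delta,k},p}^{(m)}$ inherited from the homogeneity of the $G_{pr-\sigma(r)}^{(i)}$'s. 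The existence of a minimal $m_{k,p}$ with $P_{N_{\delta,k},p}^{(m_{k,p})}\neq 0$ reduces to non-cancellation of the lowest-order contribution in the formal expansion; this is precisely the role of Lemma \ref{L:D-2}, which produces a unique permutation whose product of leading monomials $\dot{Q}(pr-s)$ is graded-lex-minimal in the leading monomial matrix $\D_\delta^{[N_{\delta,k}]}$ and hence cannot be matched or cancelled by any other permutation. The stability $v_{Q,p}\equiv v_Q\bmod p$ then propagates from Part (1)'s stability factor-by-factor through the surviving products, the sign $\sgn(\sigma)$ being $\pm 1$ and thus harmless.

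The hard part is the vertex-coefficient stability in Part (1). Everywhere else the proof is a bookkeeping exercise once Theorems \ref{T:key} and \ref{T:approx} are in hand, but at a vertex index one cannot invoke the closed form $\lambda_m=\gamma^m/m!$ (valid only for $m\leq p-1$) and must instead track the interaction between the base-$p$ digits of $u_{pr-s,j}$ and the higher Artin--Hasse factors $\exp((\gamma x)^{p^i}/p^i)$. The key observation to be made is that the bounded part of this expansion is governed by the vertex residue $R_{\Delta_\delta}(p)$, so that for all primes in a fixed residue class the leading $\gamma$-coefficient of $\lambda_{u_{pr-s,j}}$ stabilises modulo $p$; once this is in place, the rest of the theorem follows routinely from the determinant expansion and Lemma \ref{L:D-2}.
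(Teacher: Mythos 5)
Your proposal follows essentially the same route as the paper's proof: Part (1) via the order estimates of Theorem \ref{T:key} applied to the (unique) vertex representation of $pr-s$, and Part (2) via the Leibniz expansion restricted to the cofacial permutations together with the unique graded-lexicographically minimal permutation of Lemma \ref{L:D-2} to rule out cancellation of the lowest-order term. The one place you go beyond the paper is the mod-$p$ stability of the coefficients $u_{Q,p}$ at vertex indices, where the exponents grow linearly in $p$: the paper's proof of this theorem simply asserts that stability (the actual $p$-independence bookkeeping is deferred to Theorems \ref{T:Zariski} and \ref{T:nonzero}, where the vertex variables are specialized or normed away), whereas you correctly identify it as the delicate point and sketch an Artin--Hasse digit analysis governed by the vertex residue class --- a reasonable way to make explicit what the paper leaves implicit.
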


\begin{proof}
(0) For simplicity, we shall prove the result under the
assumption that $\cG$ generates $S(\Delta)$
since the hypothesis that $p$ is large enough our argument is not affected.
We also assume each origin-less facet $\delta$ is simplex, if not we replace it by a 
simplex in the triangulation of $\delta$.
The same reason as in the proof of Theorem \ref{T:key}.

\noindent (1)
Fix a origin-less facet $\delta$, and fix an arbitrary vertex residue $R$.
It suffices to prove our assertion for primes $p$ in the residue class of $R$.

The statement that $pr-s\in S(\Delta_\delta)$ is evident under our hypothesis.
Let $\cG_{\Delta_\delta}=\{g_1',\ldots,g_n'\}$ be a primitive generating set
of $\Delta_\delta$.
Write $S:=\cG-\Vert(\Delta_\delta)$.
Recall from Lemma \ref{L:key} there is a unique representation
$R_{\Delta_\delta}(pr-s)=\sum_{j\in S}\alpha_{Rr-s,j} j$ 
such that $(\alpha_{Rr-s,j})_{j\in S}$ is of lowest graded lexicographic order according to $S$.
By Lemma \ref{L:key} we have the following unique vertex representation
$$
pr-s = \sum_{i=1}^{n}\pfloor{\frac{pr_i-s_i}{d_i}}(d_ig_i')+ R_{\Delta_\delta}(pr-s)
=\sum_{i=1}^{n}\pfloor{\frac{pr_i-s_i}{d_i}}(d_ig_i')+
\sum_{j\in S}  \alpha_{Rr-s,j} j
$$
for some $\alpha_{Rr-s,j}\in\Z_{\geq 0}$.
Corresponding to this above representation,
we have a unique summand $Q(pr-s)_\delta$ of $M_{pr-s}$
$$
Q(pr-s)_\delta
=
\gamma^{w_\Delta(s)-w_\Delta(r)}
\prod_{i=1}^{n} \lambda_{\lfloor\frac{pr_i-s_i}{d_i} \rfloor}\prod_{j\in S}\lambda_{\alpha_{Rr-s,j}}
\prod_{i=1}^{n} A_{d_ig_i'}^{\lfloor\frac{pr_i-s_i}{d_i} \rfloor}\prod_{j\in S}A_j^{\alpha_{Rr-s,j}}.
$$
Its monomial part is
$$
\dot{Q}(pr-s)_\delta:=\prod_{i=1}^{n}A_{d_ig_i'}^{\pfloor{\frac{pr_i-s_i}{d_i}}}
\prod_{j\in S}A_j^{\alpha_{Rr-s,j}}.
$$
By the proof of Theorem \ref{T:key} we have
$$w_\Delta(r)\leq \ord_p(Q(pr-s)_\delta) =b_{\cG,\Z}(pr-s)\leq w_\Delta(r)+\frac{N_\Delta}{p-1}.
$$
This proves that there exists a minimal $0\leq i_p\leq N_\Delta$ 
such that $G_{pr-s}^{(i_p)}\neq 0$. Hence $M_{pr-s}$ is of the given form.

\noindent (2)
Fix a vertex residule class $R$ for 
an origin-less facet $\delta$ of $\Delta$.
Write an open facial decomposition $S(\Delta_\delta)= \cup_i \Sigma_i$ 
as in (\ref{E:open-facial}) such that it is ordered in terms of 
their dimension $\dim\Sigma_i\leq \dim\Sigma_{i+1}$.
Write $\M_{\Sigma_i}$ for the principal sub-matrix of $\M_\delta^{[N_{\delta,k}]}$
consisting of entries $M_{pr-s}$ where $r,s\in (\Sigma_i)_{\leq k}$, 
that is $w_\Delta(r), w_\Delta(s)\leq k/D(\Delta)$.
By Wan's {\em boundary decomposition theorem} in \cite[Section 5, Theorem 5.1]{Wan93}
and our estimates in Theorem \ref{T:key}, we know 
for $p$ large enough 
$$
\ord_p(\det\M_\delta^{[N_{\delta,k}]}) = \ord_p(\prod_{i}\det \M_{\Sigma_i}).
$$
This implies that in the formal Leibniz 
expansion of $\det(\M_{\delta}^{[N_{\delta,k}]})$, that is, 
\begin{eqnarray*}
\det(\M_\delta^{[N_{\delta,k}]})
&=&
\sum_{\sigma\in S_{N_{\delta,k}}} \sgn(\sigma) \prod_{r\in S(\Delta_\delta)_{\leq k}}
M_{pr-\sigma(r)}
\end{eqnarray*}
we can restrict to those $\sigma$'s that 
$r\geq_\delta \sigma(r)$ for our purpose of the paper. 
We write $S_{N_{\delta,k}}^*$ for such permutations $\sigma$ in $S_{N_{\delta,k}}$.
This implies that  the hypothesis of 
Theorem \ref{T:key} is satisfied and we can apply its estimates freely.
We claim that for every prime $p$ with $R_{\Delta_\delta}(p)=R$ such that $p$ is large enough,
there is a unique monomial term 
in this formal expansion of $\det(\M_\delta^{[N_{\delta,k}]})$
that does not cancel with the rest and its coefficient is of 
$p$-adic order small enough to lie in our desired range.
We know that $\det((Q(pr-s)_\delta)_{r,s\in S(\Delta_\delta)_{\leq k}})$
is a summand in $\det(\M_{\delta}^{[N_{\delta,k}]})$.
Thus the monomials in the formal expansion of 
\begin{eqnarray*}
\det(\dot{Q}(pr-s)_{\delta})
&=& \sum_{\sigma\in S^*_{N_{\delta,k}}} \sgn(\sigma)\prod_{r\in S(\Delta_\delta)_{\leq k}}
    \dot{Q}(pr-\sigma(r))_\delta\\
&=& \sum_{\sigma\in S^*_{N_{\delta,k}}}\sgn(\sigma)\prod_{r\in S(\Delta_\delta)_{\leq k}}(\prod_{i=1}^{n}A_{d_ig_i'}^{\lfloor\frac{pr_i-\sigma(r_i)}{d_i} \rfloor}
\prod_{j\in S}A_j^{\alpha_{Rr-\sigma(r),j}})
\end{eqnarray*}
are summands  
in the formal expansion of $\det(\M_\delta^{[N_{\delta,k}]})$ (without coefficients).
By Lemma \ref{L:D-2} above, 
there is a unique monomial given by a 
uniquely determined permutation $\sigma_o\in S^*_{N_{\delta,k}}$ 
$$Z=\prod_{r\in S(\Delta_\delta)_{\leq k}} \prod_{j\in S} A_j^{\alpha_{Rr-\sigma_o(r),j}}$$
in the formal expansion of 
$\det(\D_\delta^{[N_{\delta,k}]}):=
\det(\dot{Q}(pr-s))|_{\Vert(\Delta)}$
with the minimal graded lexicographic order with respect to the set $S$.
This gives rise to a unique monomial in $\det(\dot{Q}(pr-s)_\delta)$
equal to 
$$Y:= Z\prod_{r\in S(\Delta_\delta)_{\leq k}}
\prod_{i=1}^{n} A_{d_ig_i'}^{\lfloor\frac{pr_i-\sigma_o(r_i)}{d_i}\rfloor}.
$$ 
By the very definition of vertex representation 
and its uniqueness, we conclude that this monomial is unique 
among all monomial summands in 
the formal expansion of $\det(\M_\delta^{[N_{\delta,k}]})$.
Notice that by Theorem \ref{T:key} ,
\begin{eqnarray*}
\deg(Y)
&=& \sum_{r\in S(\Delta_\delta)_{\leq k}}
(\sum_{i=1}^{n}\lfloor\frac{pr_i-\sigma(r_i)}{d_i}\rfloor + \sum_{j\in S}\alpha_{Rr-s,j})\\
&< & \sum_{r\in S(\Delta_\delta)_{\leq k}} (\sum_{i=1}^{n} \frac{pr_i-\sigma(r_i)}{d_i}
+N_\Delta)\\
&=&(p-1)\sum_{r\in S(\Delta_\delta)_{\leq k}} w_\Delta(r) + N_\Delta N_{\delta,k}\\
&=& (p-1)\sum_{i=0}^{k}\frac{W_{\Delta_\delta}(i)i}{D(\Delta)} + N_\Delta N_{\delta,k}.
\end{eqnarray*}
On the other hand, it is easy to see 
$\deg(Y)\geq  
 (p-1)\sum_{i=0}^{k}\frac{W_{\Delta_\delta}(i)i}{D(\Delta)}. 
$
By the intimate relation between the degree of a term in a (normalized) Fredholm polynomial
and its $p$-adic order of this term as described in 
Remark \ref{R:degree},  we may write 
\begin{eqnarray}
\det(\M^{[N_{\delta,k}]}_{\delta})&=&\sum_{m=0}^{N_\Delta N_{\delta,k}} \gamma^{(p-1)\sum_{i=0}^{k}
\frac{W_{\Delta_\delta}(i)i}{D(\Delta)} + m} P_{N_{\delta,k},p}^{(m)} +(\mbox{higher terms})\\
\prod_{i}\det(\M_{\Sigma_i})&=&\sum_{m=0}^{N_\Delta N_{\delta,k}} \gamma^{(p-1)\sum_{i=0}^{k}
\frac{W_{\Delta_\delta}(i)i}{D(\Delta)} + m} P_{N_{\delta,k},p}^{(m)} +(\mbox{higher terms})
\label{E:degree}
\end{eqnarray}
(we remark that these two higher terms above are typically not the same).
There is a minimal $0\leq m_k\leq N_\Delta N_{\delta,k}$ 
such that
$P_{N_{\delta,k},p}^{(m_k)}$ is nonzero in $(\Z_p^*\cap\Q)[A]$.
\end{proof}

We shall restrict our parameter space from $\cG$ to a subset 
$J$ contained in $\cG-\partial\Delta$ in the following theorem 
to produce Hasse polynomials independent of $p$.

%%%%%%%%%%%%%
\begin{theorem}
\label{T:Zariski}
 Suppose $\Delta$ is an integral convex polytope of dimension $n$ in $\R^n$
containing the origin.  Suppose $\Delta$ is simplicial at all origin-less facets.
Let $\cG = J\cup V\subseteq \Delta\cap\Z^n$
where $J$ is a subset not intersections with any origin-less facets
of $\Delta$, and $V$ is a disjoint subset containing $\Vert(\Delta)$,
such that $J\cup\Vert(\Delta)$ generates $S(\Delta)$ up to finitely many points.
For an origin-less facet $\delta$ of $\Delta$,
let $\cG_{\Delta_\delta}=\{g_1',\ldots,g_n'\}$ be the primitive generating set
and $\Vert(\Delta_\delta)=\{d_1g_1',\ldots,d_ng_n'\}$
for some $d_i\in\Z_{\geq 1}$.
Let $R\in \prod_{i=1}^{n}(\Z/d_i\Z)^*$ be a vertex residue of prime.
Let $r,s\in S(\Delta_\delta)$ with $r>_\delta s$.
Let $p$ be a prime large enough with $R_{\Delta_\delta}(p)=R$,
then we have
 nonzero polynomials $G_{Rr-s,V}^{i(r,s)}$ and $P_{N_k,R,V}^{(m_k)}$
in $\Q[A]$ with $A=(A_j)_{j\in J}$
independent of $p$ (depending on the vertex residue class $R$) such that
$$
\left\{
\begin{array}{ccc}
G_{Rr-s,V}^{(i)} &\equiv &  G_{pr-s}^{(i)}|_V\bmod p\\
P_{N_{\delta,k},R,V}^{(m)} &\equiv & P_{N_{\delta,k},p}^{(m)}|_V\bmod p
\end{array}
\right.
$$
where the latter is the specialization to $V$ over $\Q$ 
map of polynomials in $\Q[A]$.
There are minimal such $i$ and $m$ so that the two corresponding polynomials are nonzero respectively, denoted by $i(r,s)$ and $m_k$, we have
$0\leq i(r,s)\leq N_\Delta$ and $0\leq m_k\leq N_\Delta N_{\delta,k}$.
Then we have
\begin{eqnarray*}
\ord_p\det(\M_\delta^{[N_{\delta,k}]}) &=& \sum_{i=0}^{k}\frac{W_{\Delta_\delta}(i)i}{D(\Delta)} + \frac{m_k}{p-1}.
\end{eqnarray*}
\end{theorem}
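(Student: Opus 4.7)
The plan is to derive this theorem as a specialization of Theorem \ref{T:mini-2}, using the uniform bound on $J$-exponents from Theorem \ref{T:approx}. First I would invoke Theorem \ref{T:mini-2}(1,2) to obtain, for every prime $p$ large enough with $R_{\Delta_\delta}(p)=R$, polynomials $G_{pr-s}^{(i)}\in(\Z_p^*\cap\Q)[A]$ and $P_{N_{\delta,k},p}^{(m)}\in(\Z_p^*\cap\Q)[A]$ (with $A=(A_j)_{j\in\cG}$) of the stated form, with the crucial feature that each coefficient $u_{Q,p}$ of $G_{pr-s}^{(i)}$ satisfies $u_{Q,p}\equiv u_Q\bmod p$ for some $u_Q\in\Q$ independent of $p$, and likewise for the coefficients $v_{Q,p}$ of $P_{N_{\delta,k},p}^{(m)}$. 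I then specialize at $V$, replacing $A_j$ by the prescribed $c_j\in\Q$ for each $j\in V$, to obtain polynomials $G_{pr-s}^{(i)}|_V$ and $P_{N_{\delta,k},p}^{(m)}|_V$ in $(\Z_p\cap\Q)[(A_j)_{j\in J}]$.

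The essential new input is Theorem \ref{T:approx}, which bounds the exponent $u_{pr-s,j}$ by $nD(\Delta)^2$ independently of $p$ for every $j\in J\subseteq(\cG-\partial\Delta)\cap\Z^n$. As a consequence, for every fixed monomial $\prod_{j\in J}A_j^{n_j}$ with $n_j\leq nD(\Delta)^2$, its coefficient in $G_{pr-s}^{(i)}|_V$ is a finite sum of terms of the form $u_{Q,p}\prod_{j\in V}c_j^{u_{Q,j}}$, each of which reduces mod $p$ to a $p$-independent rational. Because only finitely many such monomials can appear (uniformly in $p$), I can define $G_{Rr-s,V}^{(i)}\in\Q[(A_j)_{j\in J}]$ coefficientwise as the $p$-independent mod-$p$ reduction, and analogously $P_{N_{\delta,k},R,V}^{(m)}$.

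The main obstacle will be verifying that $G_{Rr-s,V}^{(i(r,s))}$ and $P_{N_{\delta,k},R,V}^{(m_k)}$ are actually nonzero, i.e., that the $V$-specialization does not annihilate every surviving monomial. For $G_{Rr-s,V}^{(i(r,s))}$, I would use the vertex representation from Lemma \ref{L:key}: the expression $R_{\Delta_\delta}(pr-s)=\sum_{j\in S}\alpha_{Rr-s,j}j$ on $S=\cG\setminus\Vert(\Delta_\delta)$ of lowest graded lexicographic order singles out a monomial in the $A_j$ with $j\in S\cap J$ that is uniquely produced among all representations of $pr-s$ and therefore cannot cancel; after substituting the $V$-variables the resulting coefficient remains nonzero, and $i(r,s)\leq N_\Delta$ follows from the bound in Theorem \ref{T:mini-2}(1). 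For $P_{N_{\delta,k},R,V}^{(m_k)}$, the uniqueness argument of Lemma \ref{L:D-2} isolates a permutation $\sigma_o\in S^*_{N_{\delta,k}}$ whose monomial of lowest graded lex order in the formal expansion of $\det(\M_\delta^{[N_{\delta,k}]})$ has no counterpart from any other permutation; this monomial survives the $V$-specialization and supplies $m_k\leq N_\Delta N_{\delta,k}$.

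Finally, the formula $\ord_p\det(\M_\delta^{[N_{\delta,k}]})=\sum_{i=0}^{k}\frac{W_{\Delta_\delta}(i)i}{D(\Delta)}+\frac{m_k}{p-1}$ follows from the expansion in Theorem \ref{T:mini-2}(2) combined with the non-vanishing of $P_{N_{\delta,k},R,V}^{(m_k)}$ modulo $p$ for all sufficiently large $p$ with $R_{\Delta_\delta}(p)=R$, which guarantees that the corresponding $\gamma$-term in $\det(\M_\delta^{[N_{\delta,k}]})$ carries a $p$-adic unit coefficient and therefore realizes the $p$-adic valuation. The heart of the argument is therefore the passage from the $p$-independence of exponent bounds (Theorem \ref{T:approx}) and the $p$-independence of coefficients modulo $p$ (Theorem \ref{T:mini-2}) to $p$-independent polynomials, mediated by the combinatorial uniqueness statements in Lemmas \ref{L:key} and \ref{L:D-2} that guarantee no cancellation after specialization.
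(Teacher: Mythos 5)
Your proposal is correct and follows the paper's own argument essentially step for step: Theorem \ref{T:mini-2} supplies the polynomials with $p$-independent mod-$p$ coefficients, Theorem \ref{T:approx} bounds the $J$-exponents uniformly in $p$ so the $V$-specialization is a well-defined $p$-independent polynomial (the vertex exponents that grow with $p$ being absorbed via $c_j^p\equiv c_j\bmod p$), and the non-vanishing and the valuation formula rest on the same uniqueness arguments from Lemma \ref{L:key} and Lemma \ref{L:D-2} that underlie Theorem \ref{T:mini-2}. You are, if anything, slightly more explicit than the paper about the key worry that the $V$-specialization might annihilate the distinguished lowest-glex monomial, and your resolution of it is the one the paper implicitly uses.
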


\begin{proof}
Without loss of generality, we give the proof under the hypothesis that 
$\cG$ generates $S(\Delta)$
as we have argued in the proof of Theorem \ref{T:mini-2}.

\noindent (1) Throughout the proof we assume $p$ is a prime with $R_{\Delta_\delta}(p)=R$.
By Theorem \ref{T:mini-2}, there exists $0\leq i\leq N_\Delta$ such that
$G_{pr-s}^{(i)}\neq 0$, our proof in Theorem \ref{T:mini-2} demonstrated that
the minimal $i=w_{\cG,\Z}(pr-s)-w_\Delta(pr-s)$ which is 
$\leq w_{\cG,\Z}(R_{\Delta_\delta}(pr-s)) =w_{\cG,\Z}(R_{\Delta_\delta}(Rr-s))$.
It does not depend on $p$ (only on its vertex residue $R$ relative to $\delta$).
We write the minimal by $i(r,s)$. 
Then we can write 
\begin{eqnarray*}
M_{pr-s} &=& \sum_{i=i(r,s)}^{N_\Delta}
\gamma^{(p-1)w_\Delta(r)+i}G_{pr-s}^{(i)} + (\mbox{higher terms})
\end{eqnarray*}
where $G_{pr-s}^{(i(r,s))}\neq 0$ in $\Q[A]$. 

Let $\dot{P}$ be a monomial in $G_{pr-s}^{(i(r,s))}$ then 
its specialization at its vertices is of the form
$\dot{P}|_{\partial\Delta} =\prod_{j\in \cG-\partial\Delta} A_j^{\alpha_{pr-s,j}}$.
Then by our assumption and by Theorem \ref{T:key} we have
$\sum_{j\in\cG}\alpha_{pr-s,j} \leq w_\Delta(pr-s) + N_\Delta$.
By Theorem \ref{T:approx} that
these exponents $\alpha_{pr-s,j}$ are all bounded and
independent of $p$, and hence
$\dot{P}|_{\partial\Delta}$ and hence their sum
$\dot{G}_{pr-s}^{(i(r,s))}|_{\partial\Delta}$
is independent of $p$.
By Theorem \ref{T:mini-2} coefficients
of monomials in $G_{pr-s}^{i(r,s)}$  lie in 
$\Z_p^*\cap\Q$ whose mod $p$ reduction is independent of $p$.
Therefore, since $\partial\Delta\cap\cG \subseteq V$, 
the specialization $G_{Rr-s,V}^{(i(r,s))} := (G_{pr-s}^{(i(r,s))}|_V \bmod p)$
is independent of $p$ for all large enough $p$ in the vertex residue of $R$ at $\delta$.
In this case we observe 
$\ord_p M_{pr-s} = w_\Delta(r)+\frac{i(r,s)}{p-1}$.

\noindent (2)
Recall from Theorem \ref{T:mini-2} that
for every  $0\leq k\leq k_\Delta$ we have
$$
\det(\M_\delta^{[N_{\delta,k}]})=
\sum_{m=0}^{N_\Delta N_{\delta,k}}\gamma^{(p-1)(\sum_{i=0}^{k}
\frac{W_{\Delta_\delta}(i)i}{D(\Delta)})+m} P_{N_{\delta,k},p}^{(m)} +
(\mbox{higher terms})
$$
for some homogenous polynomial
$P_{N_{\delta,k},p}^{(m)}$ which is a sum of monomials in $(\Z_p^*\cap\Q)[A]$,
where there is a minimal $0\leq m_k\leq N_\Delta N_{\delta,k}$
such that $P_{N_{\delta,k},p}^{(m_k)}\neq 0$.
Consider all representations
$Q: pr-s=\sum_{j\in \cG}u_{pr-s,j}j$
with $u_{pr-s}\in \Z_{>0}$.
Suppose $pr-s=\sum_{j\in \cG} u_{pr-s,j} j$ yields
$$
w_{\cG,\Z}(pr-s)=\sum_{j\in \cG} u_{pr-s,j}\leq w_{\Delta,\Z}(pr-s) + N_\Delta
$$
by Theorem \ref{T:key}.
Then by Theorem \ref{T:approx},
for $j\in\cG -\partial\Delta$ with $w_\Delta(j)<1$ (i.e., $j$ is not on any
origin-less face $\partial\Delta$
of $\Delta$) we have that
$u_{pr-s,j}\leq N_\Delta$
is independent of $p$ (as it is bounded by a constant depending only on $\Delta$).
This proves for every $i\leq N_\Delta$ and $V$ containing $\Vert(\Delta_\delta)$ 
we can find a polynomial $G_{Rr-s,V}^{(i)}$ in $\Q[A]$ such that 
$G_{Rr-s,V}^{(i)}:=( G_{pr-s}^{(i)}|_V\bmod p)$ is independent of $p$.
By definition the specialization 
$P^{(m_k)}_{N_{\delta,k},R,V}:=(P^{(m_k)}_{N_{\delta,k},p}|_V\bmod p)$
is independent of $p$ as well. 
Let $m_k$ be the smallest such $m$ then we have
$$
\det(\M_\delta^{[N_{\delta,k}]})=
\sum_{m=m_k}^{N_\Delta N_{\delta,k}}\gamma^{(p-1)(\sum_{i=0}^{k}
\frac{W_{\Delta_\delta}(i)i}{D(\Delta)})+m} P_{N_{\delta,k},R,V}^{(m)} +
(\mbox{higher terms})
$$
and hence its $p$-adic order is as desired.
\end{proof}

For the rest of this section let notation be as in Theorem \ref{T:allQ}.
Key techniques in the proof of Theorem \ref{T:allQ} in
Section \ref{S:proof} lie in the following theorem.

\begin{theorem}\label{T:nonzero}
Suppose $\Delta$ is an integral convex polytope of 
dimension $n$ in $\R^n$ containing the origin. 
Suppose $\Delta$ is simplicial at all origin-less facets.
Suppose $\cG=J\subseteq (\Delta-\partial\Delta)\cap\Z^n\cup \Vert(\Delta)$
generates the monoid $S(\Delta)$ up to finitely many points.
Let $\Norm: \Q[(A_j)^{1/D(\Delta)}_{j\in \Vert(\Delta)}]\rightarrow \Q[(A_j)_{j\in\Vert(\Delta)}]$
be the norm map only on variables $(A_j)_{j\in\Vert(\Delta)}$.
Let $r,s\in S(\Delta_\delta)_{\leq k}$ with $r>_\delta s$. Let $R$ be a vertex residue
in $\prod_{i=1}^{n}(\Z/d_i\Z)^*$ with respect to an origin-less facet $\delta$ of $\Delta$.
Then there are nonzero polynomials
$(G^*)_{Rr-s}^{(i(r,s))}$ and $(P^*)_{N_{\delta,k},R}^{(m_k)}$ in $\Q[(A_j)_{j\in J}]$ independent of $p$
for any large enough $p$ with $R_{\Delta_\delta}(p)=R$ such that for all 
$a_j\in \Q$ and $a=(a_j)_{j\in J}$ we have
$$
\left\{
\begin{array}{ccc}
(G^*)_{Rr-s}^{(i(r,s))}(a) &\equiv & \Norm(G_{pr-s}^{(i(r,s))}(a))\bmod p \\
(P^*)_{N_{\delta,k},R}^{(m_k)}(a) &\equiv &\Norm (P_{N_{\delta,k},p}^{(m_k)}(a)) \bmod p.
\end{array}
\right.
$$
\end{theorem}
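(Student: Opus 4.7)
The strategy is to adapt the proof of Theorem~\ref{T:Zariski} by replacing the specialization-to-$V$ step with a norm-then-Fermat reduction, since here the vertex variables $(A_j)_{j\in\Vert(\Delta)}$ are free parameters rather than prescribed constants.

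Recall from Theorem~\ref{T:mini-2} that each monomial of $G_{pr-s}^{(i(r,s))}$ has, at each non-vertex variable $A_j$ with $j \in J \setminus \Vert(\Delta)$, an exponent bounded independently of $p$ by Theorem~\ref{T:approx}; the only $p$-dependent exponents sit at the vertex variables $A_{d_ig_i'}$, where they equal $\lfloor(pr_i - s_i)/d_i\rfloor$. For primes $p$ with $R_{\Delta_\delta}(p) = R$, writing $p = R_i + m_i d_i$ and setting $\rho_i = (R_i r_i - s_i)\bmod d_i$, this exponent decomposes as
$$
\lfloor(pr_i - s_i)/d_i\rfloor \;=\; (pr_i - s_i - \rho_i)/d_i,
$$
which is integer-linear in $p$ with $p$-independent offset $-\rho_i/d_i$.

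Next I will compute $\Norm(G_{pr-s}^{(i(r,s))})$. Since $G_{pr-s}^{(i(r,s))}$ lies in the base ring of the free extension $\Q[(A_j)^{1/D(\Delta)}_{j\in\Vert(\Delta)}] / \Q[(A_j)_{j\in\Vert(\Delta)}]$, a module of rank $D(\Delta)^{|\Vert(\Delta)|}$, the norm acts as the $D(\Delta)^{|\Vert(\Delta)|}$-th power. At each vertex $d_i g_i'$ this multiplies the exponent by $N_i := D(\Delta)^{|\Vert(\Delta)|}/d_i \in \Z$, producing the vertex factor $A_{d_i g_i'}^{N_i(p r_i - s_i - \rho_i)}$. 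Evaluating at $a = (a_j) \in \Q^{|J|}$ with $a_{d_ig_i'}$ coprime to $p$, Fermat's Little Theorem gives
$$
a_{d_i g_i'}^{N_i p r_i} \equiv a_{d_i g_i'}^{N_i r_i} \pmod{p},
$$
so the vertex contribution reduces modulo $p$ to the $p$-independent quantity $a_{d_i g_i'}^{N_i(r_i - s_i - \rho_i)}$. Combining this with the $p$-independent reductions of the coefficients in $\Z_p^*\cap\Q$ from Theorem~\ref{T:Zariski}, I define $(G^*)_{Rr-s}^{(i(r,s))} \in \Q[(A_j)_{j\in J}]$ to be the polynomial obtained by this exponent substitution; by construction $(G^*)_{Rr-s}^{(i(r,s))}(a) \equiv \Norm(G_{pr-s}^{(i(r,s))}(a)) \pmod{p}$ for all $p$ large enough with $R_{\Delta_\delta}(p)=R$. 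The polynomial $(P^*)_{N_{\delta,k},R}^{(m_k)}$ is built the same way from $P_{N_{\delta,k},p}^{(m_k)}$, whose monomials inherit the same vertex-exponent structure through the determinantal expansion in Theorem~\ref{T:Zariski}.

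Non-vanishing of $(G^*)_{Rr-s}^{(i(r,s))}$ and $(P^*)_{N_{\delta,k},R}^{(m_k)}$ follows from the non-vanishing of $G_{pr-s}^{(i(r,s))}$ and $P_{N_{\delta,k},p}^{(m_k)}$ (Theorems~\ref{T:mini-2} and~\ref{T:Zariski}) together with the fact that the norm on an integral-domain extension sends nonzero elements to nonzero elements. The main technical obstacle I anticipate is verifying that the term-by-term Fermat substitution on vertex exponents, performed across the multinomial expansion of $G_{pr-s}^{(i(r,s))}$ raised to the $D(\Delta)^{|\Vert(\Delta)|}$-th power, does not produce cancellations that annihilate the leading monomial; tracking the unique lowest-order monomial identified by the graded-lexicographic argument of Lemma~\ref{L:D-2} through the norm expansion should control this, since the minimal-order monomial has bounded non-vertex exponents and uniquely determined vertex exponents.
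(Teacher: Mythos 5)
Your proposal follows essentially the same route as the paper: bound the non-vertex exponents independently of $p$ via Theorem \ref{T:approx}, observe that the vertex exponents are affine-linear in $p$, and use the norm together with Fermat's little theorem ($a^{pm}\equiv a^{m}\bmod p$) to produce a $p$-independent polynomial, exactly matching the paper's definition of $(G^*)_{Rr-s}^{(i(r,s))}$ and $(P^*)_{N_{\delta,k},R}^{(m_k)}$ as reductions modulo $(A_{d_ig_i'}^p-A_{d_ig_i'})$. The one divergence is your reading of the norm as the global $D(\Delta)^{|\Vert(\Delta)|}$-th power of a base-ring element (which is what creates the multinomial-cancellation worry you flag at the end); the paper instead regards the vertex exponents of $G_{pr-s}^{(i(r,s))}$ as lying in $\frac{1}{D(\Delta)}\Z$ and applies the norm monomial-by-monomial, merely rescaling each vertex exponent by $D(\Delta)$, so the cancellation issue you anticipate does not arise in the paper's bookkeeping.
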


\begin{proof}
Without loss of generality, we give the proof when $J$ generates $S(\Delta)$
as we have argued in the proof of Theorem \ref{T:mini-2} and Theorem \ref{T:Zariski}.
By Theorem \ref{T:approx}, for any $j\in J$,
and since any $pr-s=\sum_{j\in J}u_{pr-s,j}j$
with $\sum_{j\in J}u_{pr-s,j}\leq w_\Delta(pr-s)+N_\Delta$,
we have that
$u_{pr-s,j}$ is independent of $p$ (bounded by a constant depending only on $\Delta$).
By our hypothesis
we are forced to have for every $1\leq i\leq n$ for $j\in \Vert(\Delta)$ we have
$$
u_{pr-s,j} = \pfloor{\frac{pr_i-s_i}{d_i}} =\frac{pk_{i,1}+k_{i,2}}{D(\Delta)}
$$
for some $k_{i,1}\in\Z_{\geq 0}$ and $k_{i,2}\in \Z$.
Since $r,s\in S(\Delta)_{\leq k}\cap Z^o(\Delta_\delta)$,
the constants $k_{i,1}, k_{i,2}$ are also bounded independent of $p$.
By Theorem \ref{T:mini-2} we have $p$-independent $u_Q\in \Q$ and 
$$
G_{pr-s}^{(i(r,s))}
\equiv \sum_{Q} u_Q \prod_{i=1}^{n} A_{d_ig_i'}^{u_{pr-\sigma(r),d_ig_i'}} I^* 
\equiv \sum_{Q} u_Q \prod_{i=1}^{n} A_{d_ig_i'}^{\frac{pk_{i,1}+k_{i,2}}{D(\Delta)}} I^* \bmod p
$$
where 
$
I^*\in(\Z_p\cap\Q)[A]
$ 
is a factor that is independent of $p$.
Thus for any $a_j\in\Q$ and $p$ large enough
\begin{eqnarray*}
\Norm(G_{pr-s}^{(i(r,s))} (a))
&\equiv &\sum_Q u_Q \prod_{i=1}^{n} a_{d_ig_i'}^{pk_{i,1}+k_{i,2}} \Norm(I^*)\\
&\equiv &\sum_Q u_Q \prod_{i=1}^{n} a_{d_ig_i'}^{k_{i,1}+k_{i,2}} \Norm(I^*)\bmod p.
\end{eqnarray*}
On the other hand, 
\begin{eqnarray*}
(G^*)_{Rr-s}^{(i(r,s))} &:=& (\Norm(G_{pr-s}^{(i(r,s))})\bmod (A_{d_1g'_1}^p-A_{d_1g'_1},\ldots,A_{d_ng'_n}^p-A_{d_ng'_n}))\\
&=&\sum_Q u_Q \prod_{i=1}^{n}A_{d_ig'_i}^{k_{i,1}+k_{i,2}} \Norm(I^*)
\end{eqnarray*}
is clearly independent of $p$ and it lies in $\Q[(A_j)_{j\in J}]$.
Notice that $(G^*)_{Rr-s}^{(i(r,s))}(\hat{a})\equiv \Norm(G_{pr-s}^{(i(r,s))}(\hat{a}))\bmod p$ for all $a\in\Q^{|J|}$ and $\hat{a}$ is the Teichm\"uller lift of $a\bmod p$ component-wise.

Similar argument follows for $(P^*)_{N_{\delta,k},R}^{(m_k)}$ by letting
$$
(P^*)_{N_{\delta,k},R}^{(m_k)}:=(\Norm(P_{N_{\delta,k},p}^{(m_k)})
\bmod (A_{d_1g'_1}^p-A_{d_1g'_1},\ldots,A_{d_ng'_n}^p-A_{d_ng'_n}).
$$
Then
$(P^*)_{N_{\delta,k},R}^{(m_k)}$ and $(G^*)_{Rr-s}^{(i(r,s))}$ both lie in $\Q[(A_j)_{j\in  J}]$ 
are nonzero and are independent of $p$ as desired.
\end{proof}

\subsection{Dwork trace formula for generic families}
\label{S:DworkTraceFormula}

We shall define certain $p$-adic Dwork space (of infinite dimension) below and a compact operator
$\varphi:=\alpha_a$ on them.
Fix an integral convex polytope $\Delta$ of dimension $n$ in $\R^n$.
Let $b>0$ be a real number and let
\begin{eqnarray}\label{E:Dworkspace}
\cD_\Delta(b) &=& \{
\sum_{v\in S(\Delta)}c_vx^v
 \in \bar\Q_p[[x]] | \lim_{|v|\rightarrow \infty} p^{\frac{bw_\Delta(v)}{p-1}}|c_v|_p
=0
\}
\end{eqnarray}
with Gauss norm
$
||\sum_{v\in S(\Delta)} c_v x^v|| = \sup_{v\in S(\Delta)} p^{bw_\Delta(v)/(p-1)} |c_v|_p.
$
Notice that if $b\leq b'$ then we have
$\cD_\Delta(b')\subseteq \cD_{\Delta}(b)$.
One can check that $\cD_\Delta(b)$ is an affinoid algebra,
it is complete with respect to the Gauss norm.

Let $\cG$ be a subset of $\Delta\cap\Z^n$ and let
$f=\sum_{j\in \cG}a_jx^j$ in $\bar\Q[x_1,\ldots,x_n,1/x_1\cdots x_n]$.
Let $\bar{f}=\sum_{j\in \cG} \bar{a_j} x^j$ be its reduction in $\F_q[x_1,\ldots,x_n,1/x_1\cdots x_n]$
where $q=p^a$.
Let $\chi_k: \F_{q^k} \rightarrow \C_p^*$ be an additive character, in particular, we set
$\chi_k(\bar{f}) = \zeta_p^{\Tr_{\F_{q^k}/\F_p} (\bar{f})} = E(\gamma\Tr_{\F_{q^k}/\F_p}(\bar{f}))$
where $\gamma$ is a root of $\log E(x)=0$ of $p$-adic order equal to $1/(p-1)$.
Define the exponential sum
$$
S_k(\bar{f}) = \sum_{x\in \F_{q^k}^n}\chi_k(\bar{f}(x)) =\sum_{x\in \F_{q^k}^n}
E(\gamma \Tr_{\F_{q^k}/\F_p}(\bar{f}(x)))
$$
which lies in $\Z[\zeta_p]$.

Let $\tau\in \Gal(\Q_q/\Q_p)$ that lifts the Frobenius element
in $\Gal(\F_q/\F_p)$ defiend by $\tau(x)=x^p$, and
we extend it uniquely to $\Gal(\Q_p(\gamma)/\Q_p)$
by setting $\tau(\gamma=\gamma$.
Note that on any Teichmuller lifting $\hat{c}$
in $\Q_q$ we have $\tau(\hat{c}) = \hat{c}^q$. Let
$$
F_{[1]}(\bar{f},x) :=\prod_{j\in \cG} E(\gamma\hat{a_j}x^j) = \sum_{v\in \Z^{+}(\cG)} F_v x^v
$$
be the Dwork's splitting function.
For $v\in \Z^+(\cG)$ we have in $(\Q\cap\Z_p)[\gamma]$
$$
F_v = \sum_{(u_{v,j})}(\prod_{j\in \cG}\lambda_{u_{v,j}})(\prod_{j\in \cG}{\hat{a}_j}^{u_{v,j}})
$$
if there exists a representation
$v=\sum_{j\in \cG}u_{v,j} j$ and $u_{v,j}\in\Z_{\geq 0}$; otherwise we have $F_v=0$.
In particular it is important to observe
that if $v$ is of the form $v=pr-s$ then
$$
F_{pr-s}=F_{pr-s}(A)|_{A=\hat{a}}
$$
where $F_{pr-s}(A)$ was defined in (\ref{E:F_v}).

Let
$
F_{[a]}(\bar{f},x):=\prod_{i=0}^{a-1} \tau^{i} F_{[1]}(\bar{f},x^{p^i})
$
where $\tau$ acts trivially on variable $x$.
Notice $F_v$
lies in $(\Z_p\cap \Q)[\gamma][\{\hat{a}\}]$,
and recall from Theorem \ref{T:mini-2}
 that $\ord_pF_v\geq w_\Delta(v)/(p-1)$ and hence
$F_{[1]} (\bar{f},x) \in \cD_\Delta(\frac{1}{p-1})$ and similarly we
conclude that $F_{[a]}(\bar{f},x)\cD_\Delta(\frac{p}{q(p-1)})$.
Let $\psi_p: \cD_\Delta(b)\rightarrow \cD_\Delta(b)$ define by
$\psi_p(\sum c_v x^v) = \sum c_{vp}x^v$, so we have
$\psi(\cD_\Delta(b))\subseteq \cD_\Delta(bp)\subset \cD_\Delta(b)$.
Define Dwork operators as
\begin{eqnarray*}
\alpha_1(\bar{f}) &:=&\psi_p \circ F_{[1]}(\bar{f},x).\\
\alpha_a(\bar{f}) &:=&\underbrace{\alpha_1(\bar{f})\circ \cdots \circ\alpha_1(\bar{f})}_{a}
=\psi_q \circ F_{[a]}(\bar{f},x).
\end{eqnarray*}
Notice that $\alpha_1(\bar{f})$ is $\tau^{-1}$-linear on $\bar\Q_p$ and is
acting trivially on $\Q_p[\gamma]$, while $\alpha_a(\bar{f})$ is linear on $\bar\Q_p$.
Thus we have
$\alpha_a$ gives an endomorphism on $\cD_\Delta(\frac{p}{q(p-1)})$.
From now on we will not specific and only say that
exists $b\in \R_{>0}$ small enough so that
$\alpha_a$ is an endomorphism on $\cD_\Delta(b)$.
The following theorem can be found in \cite{AS89}).

\begin{theorem}[Dwork, Adolphson-Sperber]\label{T:Dwork}
Let $\Delta$ be an integral  polytope of dimension $n$ in $\R^n$ containing the origin.
Let $\bar{f}$ be a regular Laurent polynomial over $\F_q$
with $\Delta(\bar{f})=\Delta$ and supported on a subset $\cG$ of $\Delta\cap\Z^n$.
Then $\alpha_a$ is a  compact operator
acting on the $p$-adic Banach space
$\cD_\Delta(b)$ for some small enough $b>0$. Let $A_a(f)$ be the
nuclear matrix of $\alpha_a$ with respect of any Banach basis $\ccB_\Delta$ of $\cD_\Delta$,
then
the following is a polynomial of degree $V_\Delta$ for general $\Delta$:
\begin{eqnarray}
L^*(\bar{f};T)^{(-1)^{n-1}}
&=&\det(1-T\alpha_a(\bar{f})|\cD_\Delta(b))^{\mu^n}
= \det(1-TA_a(f))^{\mu^n}
\nonumber\\
&=& \prod_{i=0}^{n}\det(1-Tq^iA_a(f))^{(-1)^i\binom{n}{i}}
\end{eqnarray}
where $g(T)^\mu=g(T)/g(qT)$ for any rational function $g(T)$.

Let $\M=(M_{pr-s})_{r,s\in S(\Delta)}$ be defined as in (\ref{E:M}).
 Let $\ccB_\Delta:=\{\gamma^{w_\Delta(v)}x^v\}_{v\in S(\Delta)}$
of $\cD_\Delta$ be our basis then for any $f=\sum_{j\in\Delta} a_jx^j$
we have $A_1(f)=\M(f)=(M_{pr-s}(\hat{a}))_{r,s\in S(\Delta)}$.
\end{theorem}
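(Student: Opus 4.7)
The plan is to follow the classical Dwork--Bombieri--Adolphson--Sperber method, adapted to the toric setting. I would first establish compactness of $\alpha_a$ on $\cD_\Delta(b)$. Since $\ord_p \lambda_m \geq m/(p-1)$, one has $\ord_p F_v \geq w_\Delta(v)/(p-1)$ for every $v \in \Z^+(\cG)$, and hence $F_{[a]}(\bar{f}, x) \in \cD_\Delta(p/(q(p-1)))$. Multiplication by $F_{[a]}$ therefore sends $\cD_\Delta(b)$ into a strictly smaller Banach subspace, while $\psi_q$ contracts the weight by a factor of $q$; combining, $\alpha_a = \psi_q \circ F_{[a]}$ is a completely continuous endomorphism of $\cD_\Delta(b)$ for $b > 0$ small, so its Fredholm determinant $\det(1 - T\alpha_a \mid \cD_\Delta(b))$ is entire in $T$ and independent of the chosen Banach basis.

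Second, I would derive the $L$-function formula using the Dwork trace formula for toric sums. After Artin--Hasse splitting $\chi_m(\bar{f}) = E(\gamma \Tr_{\F_{q^m}/\F_p} \bar{f})$, a direct computation on $(\F_{q^m}^*)^n$ yields
\begin{equation*}
S_m^*(\bar{f}) = (q^m - 1)^n \, \Tr\bigl(\alpha_a^m \mid \cD_\Delta(b)\bigr).
\end{equation*}
Expanding $(q^m - 1)^n = \sum_{i=0}^{n}(-1)^{n-i}\binom{n}{i} q^{im}$, inserting into $L^*(\bar{f};T) = \exp \sum_m S_m^*(\bar{f}) T^m/m$, and using the identity $\log \det(1 - q^i T \alpha_a) = -\sum_m \Tr(\alpha_a^m) q^{im} T^m/m$ give
\begin{equation*}
L^*(\bar{f};T)^{(-1)^{n-1}} = \prod_{i=0}^{n} \det(1 - q^i T \alpha_a)^{(-1)^i \binom{n}{i}} = \det(1 - T \alpha_a \mid \cD_\Delta(b))^{\mu^n}.
\end{equation*}
The polynomiality and the degree $n!\Vol(\Delta)$ then come from Adolphson--Sperber's regularity result: under the regularity hypothesis the Dwork Koszul complex built from the partial derivatives of $f$ is acyclic off top degree, and its Euler characteristic on $\cD_\Delta$ equals $n!\Vol(\Delta)$.

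Third, to identify the matrix $A_1(f)$ with $\M(f)$ in the basis $\ccB_\Delta = \{e_s := \gamma^{w_\Delta(s)} x^s\}_{s \in S(\Delta)}$, I compute directly:
\begin{equation*}
\alpha_1(e_s) = \psi_p\Bigl( \gamma^{w_\Delta(s)} x^s \sum_{u} F_u\, x^u \Bigr) = \sum_{r \in S(\Delta)} \gamma^{w_\Delta(s) - w_\Delta(r)} F_{pr-s}(\hat{a}) \cdot e_r,
\end{equation*}
so the $(r,s)$ entry equals $\gamma^{w_\Delta(s) - w_\Delta(r)} F_{pr-s}(\hat{a}) = M_{pr-s}(\hat{a})$ by the definition in (\ref{E:M-definition}). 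The main obstacles in this program are the torus inclusion--exclusion that gives rise to the $\mu^n$ operator (the bookkeeping for boundary contributions is delicate), and Adolphson--Sperber's acyclicity of the Koszul complex under regularity, which is the technical heart of the polynomiality statement.
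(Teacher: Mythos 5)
Your proposal is correct and follows the standard Dwork--Adolphson--Sperber argument (trace formula $S_m^*(\bar f)=(q^m-1)^n\Tr(\alpha_a^m)$, binomial expansion giving the $\mu^n$ factor, and the Koszul-complex acyclicity for polynomiality and the degree $n!\Vol(\Delta)$), which is exactly the source the paper cites for this theorem rather than reproving it. The one part that is specific to this paper --- the identification $A_1(f)=\M(f)=(M_{pr-s}(\hat a))$ in the basis $\ccB_\Delta=\{\gamma^{w_\Delta(v)}x^v\}$ --- is verified by your direct computation of $\alpha_1(e_s)=\sum_r\gamma^{w_\Delta(s)-w_\Delta(r)}F_{pr-s}(\hat a)\,e_r$, which matches the definition in (\ref{E:M-definition}).
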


\section{Fredholm determinant and rigidly nuclear matrices}
\label{S:transform}

This section develops new analytic tools
in studying Fredholm determinants of Dwork operators.
Our major new results here lie in Theorems \ref{T:transform} and \ref{T:blockrigid}.
This lay core foundation for our proofs in Section \ref{S:proof}.
Let $\Sigma$ be a countable set
that is partially ordered by a $\Q_{\geq 0}$-valued weight function $w(-)$.
For any $r,r'\in \Sigma$ we have $r\leq r'$ if $w(r)\leq w(r')$. 
For any $k\in \Q_{\geq 0}$, 
let $N_k:=|\Sigma_{\leq k}|$ be the cardinality of all $r\in \Sigma$ with 
$w(r)\leq k$. For example the set $S(\Delta)$ ordered by $w_\Delta(-)$.
A sequence $(\beta_r)_{r\in \Sigma}$ is strictly increasing if
$\beta_r \lneq \beta_{r'}$ for any $r\lneq r'$.

\subsection{Rigid transformations}

We refer the reader to \cite{Ser62} for Dwork and Serre's theory of
completely continuous maps and Fredholm determinants.
Let $F$ be any local field over $\Q_p$ and let $R$ be a Tate algebra
over $F$ with Gauss norm.
In particular,
let $\Delta$ be an integral convex polytope in $\R^n$ of dimension $n$,
let $\cG$ be a subset of $\Delta\cap\Z^n$ and let
$R=F\pscal{A}$ for variables $A=(A_j)_{j\in \cG}$.
For any strictly convergent power series
$h=\sum_{i}c_i A^i\in R$ with $c_i\in F$ we
have $\ord_p(h) = \inf_{i} \ord_p c_i$ and for any $q=p^a$
we write $\ord_q(h)=\ord_p(h)/a$, the Gauss norm on $R$ is
given by $||h|| = q^{-\ord_q(h)}=\sup_{i}(|c_i|_p)$.
Consider any Banach orthonormizable $R$-modules $E$ and $E'$,
denoted by $\cC(E,E')$ the set of completely continuous $R$-linear map
from $E$ to $E'$. We say that a matrix $M$ over $R$ is {\em nuclear}
if there exist a Banach orthonormizable $R$-module $E$
and a map $u$ in $\cC(E,E)$ such that
$M$ is the matrix of $u$ with respect to an orthonormal basis.
Write here $\M = (m_{r,s})_{r, s\in \Sigma}$ for a matrix over $R$
subindiced by $\Sigma$.
Then $\M$ is nuclear if and only if
$\lim_{w(r)\rightarrow \infty}\inf_{s} \ord_p (m_{r, s}) = \infty$
or equivalently $\lim_{w(r)\rightarrow\infty}\sup_{s}||m_{r,s}||=0$.

\begin{lemma}\label{L:skew}
\begin{enumerate}
\item
If  $\{E_i\}_{i\in \Z/a\Z}$ is a family of orthonormizable Banach $R$-modules.
Set $E = E_0\oplus \cdots \oplus E_{a-1}$,
equipped with the supremum norm, that is for $v=(v_0,\ldots,v_{a-1})$ in $E$
one has $||v||=\max_{i}||v_i||$ where
$||\cdot||$ are the norms on $E$ and $E_i$'s, respectively. Let $u_i\in \cC(E_i,E_{i+1})$
and set $u \in \cC(E,E)$ such that $u|_{E_i} = u_i$.
Then
$$
\det(1-(u_{a-1}\cdots u_1u_0)T^a) = \det(1-uT).
$$
\item
Let $(M_0,M_1,\ldots,M_{a-1})$ be an $a$-tuple of nuclear matrices
over $R$. Set
$$
\vec{M}_{[a]}
=
\left(
\begin{array}{ccccc}
0 & & & \cdots& M_{a-1}\\
M_0&0& & &\vdots \\
& M_1 & & &\\
\vdots&&\ddots& &\\
0&\cdots&& M_{a-2} & 0
\end{array}
\right).
$$
Then
$
\det(1-(M_{a-1}\cdots M_1M_0)T^a)=\det(1-\vec{M}_{[a]} T)$.
\end{enumerate}
\end{lemma}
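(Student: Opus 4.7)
The plan is to prove Part (1) via the standard Fredholm trace identity $\det(1-uT) = \exp(-\sum_{m\geq 1}\Tr(u^m)T^m/m)$ for completely continuous operators (Serre \cite{Ser62}), and then deduce Part (2) as an immediate matrix reformulation.

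For Part (1), I first analyze $u^m$ using the block decomposition $E = \bigoplus_{i\in\Z/a\Z} E_i$. Since $u$ sends $E_i$ into $E_{i+1}$, iterating shows $u^m(E_i)\subseteq E_{i+m}$, with indices taken modulo $a$. Consequently $u^m$ is block-off-diagonal whenever $a\nmid m$, so its trace (as a completely continuous operator, computed blockwise) vanishes: only the diagonal blocks $E_i\to E_i$ contribute to $\Tr(u^m)$, and there are none. When $m=ak$, the restriction $u^{ak}|_{E_i}$ equals the cyclic composition
\begin{equation*}
\bigl(u_{i-1}\circ u_{i-2}\circ\cdots\circ u_0\circ u_{a-1}\circ\cdots\circ u_i\bigr)^k,
\end{equation*}
which is a cyclic rotation of $v:=u_{a-1}\circ\cdots\circ u_1\circ u_0\in\cC(E_0,E_0)$. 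By cyclic invariance of the Fredholm trace (applied to the completely continuous operators between the $E_i$'s, which is valid in Serre's framework), each of the $a$ summands contributes $\Tr(v^k)$, giving $\Tr(u^{ak})=a\,\Tr(v^k)$.

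Plugging into the exponential formula,
\begin{equation*}
\det(1-uT) \;=\; \exp\Bigl(-\sum_{k\geq 1}\frac{\Tr(u^{ak})}{ak}T^{ak}\Bigr)
\;=\; \exp\Bigl(-\sum_{k\geq 1}\frac{\Tr(v^k)}{k}T^{ak}\Bigr)
\;=\; \det(1-vT^a),
\end{equation*}
which is the claimed identity.

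For Part (2), I would realize each $M_i$ as the matrix, in a chosen orthonormal basis, of a completely continuous operator $u_i\in\cC(E_i,E_{i+1})$ between orthonormizable Banach $R$-modules; this is possible by the very definition of a nuclear matrix. Assembling $u\in\cC(E,E)$ with $u|_{E_i}=u_i$ as in Part (1), the matrix of $u$ with respect to the concatenated orthonormal basis of $E=\bigoplus E_i$ is exactly $\vec{M}_{[a]}$, and the matrix of $v=u_{a-1}\cdots u_0$ on $E_0$ is $M_{a-1}\cdots M_1M_0$. Applying Part (1) yields the identity. The main potential obstacle is the cyclic-trace step on infinite-dimensional Banach modules, but this is immediate from Serre's theory once one notes that all the intermediate compositions inherit complete continuity from their factors.
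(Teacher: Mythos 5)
Your proof is correct and follows essentially the same route as the paper: both use Serre's Fredholm identity $\det(1-uT)=\exp(-\sum_m \Tr(u^m)T^m/m)$, the vanishing of $\Tr(u^m)$ for $a\nmid m$ from the block-shift structure, and the cyclic invariance of the trace to identify $\Tr(u^{ak})$ with $a\,\Tr((u_{a-1}\cdots u_0)^k)$, with Part (2) reduced to Part (1) by realizing the nuclear matrices as completely continuous operators.
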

\begin{proof}
By the definition and remark preceding the lemma,
it suffices to prove Part (1).
By \cite[page 77, Corollaire 3]{Ser62} we have
$\det(1-uT) = \exp(-\sum_{s=1}^{\infty} \Tr(u^s)T^s/s)$.
Notice that for any $s\in \Z_{\geq 1}$ the trace
$\Tr((u_{i+a-1}\cdots u_{i+1}u_i)^s)$ is independent of
$i\in \Z/a\Z$. As $\Tr(u^s)=0$ unless $a|s$, we have
\begin{eqnarray*}
\det(1-uT) &=& \exp(-\sum_{s=1}^{\infty} \Tr(u^{as})T^{as}/(as ))\\
&=& \exp(-\sum_{s=1}^{\infty} \sum_{i\in\Z/a\Z} \Tr((u_{i+a-1}\cdots u_{i+1} u_i)^s)T^{as}/(as))\\
&=& \exp(-\sum_{s=1}^{\infty} \Tr((u_{a-1}\cdots u_1u_0)^s)T^{as}/s )\\
&=&\det(1-(u_{a-1}\cdots u_1u_0) T^a).
\end{eqnarray*}
This finishes the proof.
\end{proof}

We denote by $g:R\rightarrow R$ a map that is an automorphism on the local field
$F$ over $\Q_p$ and has $g(A_j)=A_j^p$ for all $j\in \cG$. 
For a matrix $\M$ we write $\M^g$ for $g$ action on each entry of $\M$.

\begin{proposition}
\label{P:transform}
Let $\M=(m_{r,r'})_{r,r'\in \Sigma}$ be an (infinite) nuclear matrix with coefficients in
$R$. Let $g$ be as defined above and write
$\det(1-T\cdot \M^{g^{a-1}}\cdots \M^g \M) = C_0+C_1T+\cdots$ in $R[[T]]$.
Fix $k\in \Z_{\geq 1}$. 
Denote by $\cA$ the set of all $N_k\times N_k$ submatrices of $\M$
contained in the first $N_k$ rows of $\M$, and denote by $\ccB$
the set of all other $N_k\times N_k$ submatrices of $\M$.
Set $t_\cA = \inf_{W\in\cA} \ord_p\det W$ and
$t_\ccB =\inf_{W\in\ccB}\ord_p\det W$.
Consider the following conditions
\begin{enumerate}
\item[(i)] $\ord_q C_{N_k} < \frac{t_\cA + t_\ccB}{2}$ and $t_\cA < t_\ccB$
\item[(ii)] $\ord_p\det \M^{[N_k]} < \frac{t_\cA + t_\ccB}{2}$
\item[(iii)]  $\ord_q C_{N_k} =\ord_p\det \M^{[N_k]}$
\end{enumerate}
Then $(i)\Leftrightarrow (ii) \Rightarrow (iii)$.
\end{proposition}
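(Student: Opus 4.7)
The plan is to expand $C_{N_k}$ explicitly as a Fredholm sum and then compare the $p$-adic orders of the resulting summands. First I would apply the Cauchy--Binet formula (or equivalently invoke the substitution $T\mapsto T^a$ in Lemma \ref{L:skew} with $M_i:=\M^{g^i}$, which forces any nonzero principal $aN_k$-minor of $\vec{M}_{[a]}$ to split evenly across the $a$ block-rows) to rewrite
\begin{equation*}
C_{N_k}\;=\;(-1)^{N_k}\sum_{I_0,I_1,\ldots,I_{a-1}}\,\prod_{i=0}^{a-1}g^{i}\bigl(\det\M_{I_{i+1}\times I_i}\bigr),
\end{equation*}
where each $I_j$ ranges over the $N_k$-subsets of $\Sigma$ and $I_a:=I_0$. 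The unique \emph{diagonal} summand, corresponding to $I_0=\cdots=I_{a-1}=\Sigma_{\leq k}$, equals $\prod_{i=0}^{a-1}g^{i}(\det\M^{[N_k]})$ and has $p$-adic order exactly $a\,\ord_p\det\M^{[N_k]}$; every other tuple I shall call a \emph{Case B} summand.

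Next I would lower-bound each Case B summand via a four-way classification of its factors. Let $b$ denote the number of indices $j$ with $I_j\neq\Sigma_{\leq k}$, and let $\beta$ denote the number of factors whose row set equals $\Sigma_{\leq k}$ but whose column set does not. A short count shows that the four factor types (row-good/col-good, row-good/col-bad, row-bad/col-good, row-bad/col-bad) occur $a-b-\beta$, $\beta$, $\beta$, $b-\beta$ times respectively, with $0\leq\beta\leq b$. Bounding diagonal-type factors by $\ord_p\det\M^{[N_k]}$ and the $\cA$- and $\ccB$-type factors by $t_\cA$ and $t_\ccB$ respectively, the $p$-adic order of such a summand is at least
\begin{equation*}
L(b,\beta)\;:=\;(a-b-\beta)\,\ord_p\det\M^{[N_k]}\;+\;\beta\,t_\cA\;+\;b\,t_\ccB.
\end{equation*}

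For the implication $(ii)\Rightarrow(iii)$ I would then observe that
\begin{equation*}
L(b,\beta)-a\,\ord_p\det\M^{[N_k]}\;=\;b\bigl(t_\ccB-\ord_p\det\M^{[N_k]}\bigr)-\beta\bigl(\ord_p\det\M^{[N_k]}-t_\cA\bigr)\;\geq\; b\bigl(t_\cA+t_\ccB-2\,\ord_p\det\M^{[N_k]}\bigr),
\end{equation*}
using $\beta\leq b$ together with $\ord_p\det\M^{[N_k]}\geq t_\cA$. Under $(ii)$ the right-hand side is strictly positive (since $b\geq 1$ in Case B), so the diagonal summand uniquely dominates and $\ord_p C_{N_k}=a\,\ord_p\det\M^{[N_k]}$, i.e.\ $(iii)$; combined with $(ii)$ and $t_\cA\leq\ord_p\det\M^{[N_k]}$ this also yields $(i)$. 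For the converse $(i)\Rightarrow(ii)$ I would argue by contradiction: if $\ord_p\det\M^{[N_k]}\geq(t_\cA+t_\ccB)/2$, then a direct rearrangement gives $L(b,\beta)\;\geq\;a(t_\cA+t_\ccB)/2+(b-\beta)(t_\ccB-t_\cA)/2\;\geq\;a(t_\cA+t_\ccB)/2$ for every admissible $(b,\beta)$ (including the diagonal case $b=\beta=0$), so $\ord_q C_{N_k}\geq(t_\cA+t_\ccB)/2$, contradicting $(i)$. The hard part will be the combinatorial bookkeeping for the cyclic index structure — making sure that the four-way classification really produces the clean lower bound $L(b,\beta)$ — since the sets $I_j$ interact cyclically through shared row/column roles; once $L(b,\beta)$ is established, the two elementary inequalities above close both implications.
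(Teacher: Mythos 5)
Your proof is correct and follows essentially the same route as the paper's: both apply Lemma~\ref{L:skew} to express $C_{N_k}$ as a sum over cyclic tuples of $N_k\times N_k$ subdeterminants of $\M$, classify the factors in each tuple by whether their row/column sets equal $\Sigma_{\leq k}$, and then compare against the diagonal tuple. The combinatorial bookkeeping you flagged as the hard part does go through exactly as you sketched: in a cyclic sequence the number of bad$\to$good transitions equals the number of good$\to$bad transitions, which is precisely why your row-good/col-bad count and row-bad/col-good count are both $\beta$ and the four types partition $\{0,\ldots,a-1\}$ with the multiplicities $a-b-\beta,\beta,\beta,b-\beta$. The only cosmetic difference from the paper is the bookkeeping device: the paper partitions $\Z/a\Z$ into $X\cup Y'$, $Y-Y'$, $Z$ and pairs each $s\in X$ with $s-1\in Y'$ so that the paired product contributes $\geq t_\cA+t_\ccB$, yielding the lower bound $\min(\tfrac{t_\cA+t_\ccB}{2},t_\ccB,\ord_p\det\M^{[N_k]})$ for $\ord_q\det\N$; you instead bound each of the $\beta$, $\beta$, $b-\beta$ off-diagonal factors separately and package the result as $L(b,\beta)$. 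These are equivalent — your $\beta$ copies of $t_\cA$ and $\beta$ of the $b$ copies of $t_\ccB$ average to $\tfrac{t_\cA+t_\ccB}{2}$, which is the paper's pairing — but your $L(b,\beta)$ formulation makes the two implications reduce cleanly to the elementary inequalities you wrote, and in particular makes the strictness needed for (ii)$\Rightarrow$(iii) (namely $b\geq 1$ forces strict excess over the diagonal) visible at a glance.
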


\begin{proof}
Notice that for any $f\in R$ we have $||g(f)||=||f||$.
Since $\ord_p \M^{[N_k]} \geq t_\cA$, we notice that
$(ii)$ is equivalent to
\begin{eqnarray}\label{E:(ii)}
\ord_p\det \M^{[N_k]} < \min(\frac{t_\cA+t_\ccB}{2}, t_\ccB).
\end{eqnarray}
It is clear that $(\ref{E:(ii)})\Rightarrow (i)$. It remains to show
$(i)\Rightarrow (\ref{E:(ii)})\Rightarrow (iii)$ below.
Apply Lemma \ref{L:skew} to $M_i:=\M^{g^i}$
we have
$\det(1-T\M^{g^{a-1}} \cdots \M^g \M)
=\det(1-T \vec{\M}_{[a]})
=C_0+ C_1T^a+C_2T^{2a}+\cdots+ C_K T^{a N_k}+\cdots$,
where
$C_K = \sum_{\N} (-1)^{a N_k}\det \N
$
and the sum ranges over all principal $a N_k\times a N_k$ submatrices $\N$ of $\vec{\M}_{[a]}$.
Let $\N$ be such a matrix, and let $\N_s$ the intersection of $\N$ and $\M^{g^s}$
as submatrices of $\vec{\M}_{[a]}$ for all $0\leq s\leq a-1$. It is easy to
see that
$
\det \N = (-1)^{(a-1)k}\prod_{s=0}^{a-1}\det \N_s
$
or $0$ depending on whether every $\N_s$ is a $N_k\times N_k$ matrix or not.
So we may assume that every $\N_s$ is a $N_k\times N_k$ matrix.

Consider $\N_s$ as a submatrix of $\M^{g^s}$ from now on.
Define two disjoint subset $X$ and $Y$ of $\Z/a\Z$ below:
$$
X:=
\{
s\in \Z/a\Z|  (\N_s)^{g^{-s}} \in \cA -\{\M^{[N_k]}\}
\}
$$
and
$$
Y:=
\{
s\in \Z/a\Z| (\N_s)^{g^{-s}} \in \ccB
\}.
$$
Since $\N$ is principal, the set of columns of $\N_s$ as a subset in $\Z_{\geq 1}$ is
exactly the same as the set of the rows of $\N_{s-1}$.
Consequently, if $s\in X$ then $s-1\in Y$.
Let $Y'=\{s\in\Z/a\Z| s+1\in X\}$
 and $Z= \Z/a\Z - X\cup Y$.
Then $\Z/a\Z$ is the disjoint union of
$X\cup Y', Y-Y'$ and $Z$.
If $s\in X$ (then $s-1\in Y'$)
then $\ord_p(\det \N_s \det \N_{s-1} ) \geq t_\cA + t_\ccB$;
If $s\in Y-Y'$ then
$\ord_p(\det \N_s) \geq t_\ccB$;
If $s\in Z$ then $\ord_p(\det\N_s) = \ord_p \det \M^{[N_k]}$.
Therefore,
\begin{eqnarray}\label{E:14}
\ord_q \det \N &\geq & \min(\frac{t_\cA + t_\ccB}{2}, t_\ccB, \ord_p \det \M^{[N_k]})
\end{eqnarray}
and hence
\begin{eqnarray}\label{E:15}
\ord_q C_{N_k} &\geq & \min(\frac{t_\cA+t_\ccB}{2}, t_\ccB, \ord_p \det \M^{[N_k]}).
\end{eqnarray}

There is a unique $N$ with $X=Y=\emptyset$, denote it by
$\ccN$, then we have $\ord_q\det \ccN= \ord_p \det \M^{[N_k]}$.
If $N\neq \ccN$, then $X$ or $Y-Y'$ is nonempty and hence from
(\ref{E:(ii)}) and the derivation of (\ref{E:14})
we see that $\ord_q \det \N > \ord_p \det \M^{[N_k]}$.
This proves that (\ref{E:(ii)})$\Rightarrow$(iii). It is clear that (i) implies (\ref{E:(ii)})
by combining (\ref{E:15}) and (i).
\end{proof}

\begin{theorem}[Rigid transform]
\label{T:transform}
Let $\M=(m_{r,r'})_{r,r'\in \Sigma}$ be a nuclear matrix with coefficients in $R$.
Let $g$ be defined as above, and write
$\det(1-T\cdot \M^{g^{a-1}}\cdots \M^g \M) = C_0+C_1T+\cdots$ in $R[[T]]$.
Let $(\beta_r)_{r\in \Sigma}$
be a strictly increasing sequence  in $\Q_{\geq 0}$ 
(i.e., $\beta_r\lneq \beta_{r'}$ if $r\lneq r'$)
such that
$\lim_{r\rightarrow \infty}\beta_r=\infty$
and
$\beta_r \leq \inf_{s\in \Sigma} \ord_p(m_{r,s})$ be
a lower bound in $p$-adic order for each row of $\M=(m_{r,s})_{r,s\in \Sigma}$.

Suppose for any $r^+ \in \Sigma_{>k}$ and $r^0\in \Sigma_k$ we have
\begin{eqnarray*}
\sum_{r\in \Sigma_{\leq k}} \beta_r \leq
&
\ord_p \det(\M^{[N_k]})
& <
\sum_{r\in \Sigma_{\leq k}} \beta_r + \frac{\beta_{r^+}-\beta_{r^0}}{2},
\end{eqnarray*}
then we have
$
\ord_q C_{N_k} =\ord_p \det \M^{[N_k]}.
$
Furthermore,
\begin{eqnarray*}
\NP_q(\det(1-T\; \M^{g^{a-1}} \cdots \M^g \M))^{[N_k]} &=&\NP_p(\det(1-T \M^{[N_k]})).
\end{eqnarray*}
\end{theorem}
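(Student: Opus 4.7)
The plan is to verify hypothesis (ii) of Proposition \ref{P:transform} using the theorem's bounds, deduce (iii) to handle the single coefficient $C_{N_k}$, and then extend by the same technique to the full Newton-polygon statement.

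First I would estimate $t_\cA$ and $t_\ccB$ via the row bounds. Any $W \in \cA$ has rows indexed exactly by $\Sigma_{\leq k}$, so the Leibniz expansion combined with $\beta_r \leq \inf_s \ord_p(m_{r,s})$ gives $\ord_p \det W \geq \sum_{r \in \Sigma_{\leq k}} \beta_r$, whence $t_\cA \geq \sum_{r \in \Sigma_{\leq k}} \beta_r$. Any $W \in \ccB$ uses at least one row from $\Sigma_{>k}$; to minimize its row-weight sum one exchanges a weight-$k$ element $r^0 \in \Sigma_k$ for the minimal-weight element $r^+ \in \Sigma_{>k}$, yielding (by strict monotonicity of $(\beta_r)$)
\[
t_\ccB \;\geq\; \sum_{r\in\Sigma_{\leq k}}\beta_r + (\beta_{r^+}-\beta_{r^0}).
\]
Averaging and invoking the theorem's upper bound,
\[
\ord_p \det \M^{[N_k]} \;<\; \sum_{r\in\Sigma_{\leq k}}\beta_r + \frac{\beta_{r^+}-\beta_{r^0}}{2} \;\leq\; \frac{t_\cA + t_\ccB}{2},
\]
which is exactly condition (ii) of Proposition \ref{P:transform}; the implication (ii)$\Rightarrow$(iii) then yields $\ord_q C_{N_k} = \ord_p \det \M^{[N_k]}$.

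For the Newton-polygon identity I would repeat the above analysis at each $j \in \{0,1,\ldots,N_k\}$. The Leibniz expansion of $\det(1-T\vec{\M}_{[a]})$ realizes the coefficient of $T^{aj}$ as a signed sum over $aj \times aj$ principal submatrices of $\vec{\M}_{[a]}$, and the block splitting used in the proof of Proposition \ref{P:transform} (with $N_k$ replaced by $j$) singles out a unique surviving configuration contributing the sum $d_j$ of $j\times j$ principal minors of $\M^{[N_k]}$, while every other contribution has strictly larger $p$-adic order by the analogous swap-cost estimate. Consequently $\ord_q C_j = \ord_p d_j$ at every breakpoint $j$ of the small Newton polygon, so the first $N_k$ segments of the two polygons coincide.

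The main obstacle will be this last extension: the theorem's hypothesis is stated only at level $k$, so one must verify the analogous midpoint bound at every $j \leq N_k$. The strict monotonicity of $(\beta_r)$ is the key input ensuring downward propagation of the swap-cost estimate, but the detailed verification requires careful bookkeeping of how the $\cA$/$\ccB$ partition changes with $j$ and how the midpoint inequality interacts with the intermediate principal minors of $\M$; I expect this to be the most delicate step of the argument.
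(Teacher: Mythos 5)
Your proposal is correct and follows essentially the same route as the paper: the paper likewise bounds $t_\cA \geq \sum_{r\in\Sigma_{\leq k}}\beta_r$ and $t_\ccB \geq \sum_{r\in\Sigma_{\leq k}}\beta_r + (\beta_{r^+}-\beta_{r^0})$ (stated there only as "by the very definitions"), verifies condition (ii) of Proposition \ref{P:transform}, and concludes via (ii)$\Rightarrow$(iii). For the Newton polygon identity the paper simply writes "the last statement follows immediately," so your more careful level-by-level extension is the intended argument, only spelled out in greater detail than the paper bothers to.
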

\begin{proof}
Let $t_\cA$ and $t_\ccB$ be as in Proposition \ref{P:transform}.
Then by the very definitions we have
\begin{eqnarray*}
\sum_{r\in \Sigma_{\leq k}} \beta_r + \frac{\beta_{r^+}-\beta_{r^0}}{2} &\leq &
\min(\frac{t_\cA+t_\ccB}{2}, t_\ccB)
\end{eqnarray*}
By our hypothesis we have
$$
\ord_p \det\M^{[N_k]}  <  \sum_{r\in S(\Delta)_{\leq k}}\beta_r + \frac{\beta_{r^+}-\beta_{r^0}}{2}
\leq \min(\frac{t_\cA+t_\ccB}{2}, t_\ccB).
$$
So that we can apply Proposition \ref{P:transform} and have
$\ord_qC_{N_k}=\ord_p \det \M^{[N_k]}$.
The last statement follows immediately.
\end{proof}

We call a nuclear matrix $\M$ over Tate algebra $R$ satisfying the
hypothesis of Theorem \ref{T:transform}
{\em rigidly nuclear}.  Below we shall give an important class of
rigidly nuclear matrices that is ubiquitous in generic setting.

\begin{proposition}[Rigidly nuclear criterion]
\label{P:rigid}
Let $R$ be a $p$-adic Tate algebra and
let $\M=(m_{r,s})_{r,s\in \Sigma}$ be an infinite matrix over $R$,
and let $(\beta_r)_{r\in \Sigma}$ be a strictly increasing sequence
in $\Q_{\geq 0}$ (i.e., $\beta_r\lneq \beta_{r'}$ if $r\lneq r'$);
and $\lim_{r\rightarrow\infty}\beta_r=\infty$,
satisfying the following condition:
For each $r\in S(\Delta)$
we have $\beta_r\leq \ord_p m_{r,s}\leq \beta_r+\varepsilon_{r,p}$
for all $s\in\Sigma$, and $\beta_r$ is independent of $p$
and $\lim_{p\rightarrow\infty}\varepsilon_{r,p}=0^+$.
For any $k\in \Q_{\geq 0}$ we have
$\det \M^{[N_k]} = \sum_{r\in \Sigma_{\leq k}}\beta_r
+ \epsilon_{N_k}$ 
where $\epsilon_{N_k}\geq 0$ and $\epsilon_{N_k}\rightarrow 0^+$
as $p\rightarrow\infty$.
Then $\M$ is rigidly nuclear for $p$ large enough.
\end{proposition}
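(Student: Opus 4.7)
The plan is to verify, for each fixed $k$ and every sufficiently large prime $p$, that $\M$ meets each of the hypotheses of Theorem~\ref{T:transform}: nuclearity, the row lower bound $\beta_r\leq\inf_s\ord_p(m_{r,s})$, and the two-sided inequality on $\ord_p\det(\M^{[N_k]})$. The first two are tautological given the assumptions, so the substance of the argument is confined to the third.

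First I would confirm nuclearity. Since $\ord_p(m_{r,s})\geq\beta_r$ uniformly in $s$ and $(\beta_r)_{r\in\Sigma}$ is strictly increasing with $\beta_r\to\infty$, we have $\inf_s\ord_p(m_{r,s})\to\infty$ as $r$ grows in $\Sigma$, which is precisely the nuclearity condition for matrices over the Tate algebra $R$ recalled at the start of Section~\ref{S:transform}. The row-weight hypothesis of Theorem~\ref{T:transform} is then read off directly from the assumed lower bound.

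The main step is to establish, for any $r^+\in\Sigma_{>k}$ and $r^0\in\Sigma_k$, the sandwich
$$
\sum_{r\in\Sigma_{\leq k}}\beta_r\;\leq\;\ord_p\det(\M^{[N_k]})\;<\;\sum_{r\in\Sigma_{\leq k}}\beta_r+\frac{\beta_{r^+}-\beta_{r^0}}{2}.
$$
The lower bound is immediate from the assumed representation $\ord_p\det(\M^{[N_k]})=\sum_{r\in\Sigma_{\leq k}}\beta_r+\epsilon_{N_k}$ together with $\epsilon_{N_k}\geq 0$. For the upper bound I would introduce the \emph{$p$-independent gap}
$$
\delta_k\;:=\;\tfrac{1}{2}\Bigl(\min_{r^+\in\Sigma_{>k}}\beta_{r^+}\;-\;\max_{r^0\in\Sigma_k}\beta_{r^0}\Bigr),
$$
which is strictly positive by the strict monotonicity of $(\beta_r)$ (the minimum is attained because $\beta_r\to\infty$ along the countable set $\Sigma$) and independent of $p$ by the $p$-independence of each $\beta_r$. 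By construction $\delta_k\leq(\beta_{r^+}-\beta_{r^0})/2$ for every admissible pair, so the upper bound follows once we arrange $\epsilon_{N_k}<\delta_k$. Invoking the hypothesis $\epsilon_{N_k}\to 0^+$ as $p\to\infty$, this holds for all $p\geq p_0(k)$ with $p_0(k)$ depending only on $k$.

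The main obstacle, such as it is, is to pin down that $\delta_k$ is strictly positive and independent of $p$; this reduces to checking that the minimum over $\Sigma_{>k}$ is attained, which rests on the $\Q_{\geq 0}$-valued, strictly increasing, and $p$-independent nature of $(\beta_r)$. There is no deeper analytic input — the proposition essentially repackages the hypotheses into the exact form that Theorem~\ref{T:transform} consumes.
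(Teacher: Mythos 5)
Your proposal is correct and follows essentially the same route as the paper: check nuclearity from the row lower bound, and then observe that the $p$-independent, strictly positive gap $(\beta_{r^+}-\beta_{r^0})/2$ eventually dominates $\epsilon_{N_k}$, so the hypotheses of Theorem~\ref{T:transform} are met for $p$ large. Your explicit verification that the gap $\delta_k$ is attained and positive is a minor elaboration of what the paper asserts in one line.
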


\begin{proof}
The first two  conditions imply that $M$ is nuclear.
Let notations be as in Theorem \ref{T:transform},
then one notices that for $p$ large enough we always have that
$\epsilon_{N_k}<
\frac{\beta_{r^+}-\beta_{r^0}}{2}$
for any $r^+\in \Sigma_{>k}$ and $r^0\in \Sigma_{k}$,
since the latter is
independ of $p$ and
is positive rational by our hypothesis. Hence it is rigidly nuclear.
\end{proof}

\begin{remark}
For $L$ function of exponential sums of $\bar{f}$ over $\F_{p^a}$,
we compute the characteristic polynomial of the
Dwork operator $\alpha_a(\bar{f}) =
\alpha_1(\bar{f})\circ \cdots \circ \alpha_1(\bar{f})$ (as we see in Section \ref{S:dwork}).
Since the operator $\alpha_1(\bar{f})$ is semi-linear,
the characteristic polynomial of $\alpha_1(\bar{f})$ is
{\em not} directly related to that of $\alpha_a(\bar{f})$.
This is one of the core difficulties here in computing $L$-function of exponential sums of $\bar{f}$,
which reflects exactly the same difficulties when computing the
 zeta function over $\F_{p^a}$.

Let $\M=A_1(f)$ such that
$\NP_p(\det(1-T\;A_1(f))) = \HP_\infty(\Delta)$, then
this matrix is rigidly nuclear so $\NP_p(\det(1-T\;A_1(f)))=\NP_q(\det(1-T\; A_a(f)))$,
 by Proposition \ref{P:rigid}. Hence this recovers \cite[Theorem 2.4]{Wan93}.
\end{remark}

For the case $\Delta$ is smooth or asymptotically smooth simplex then
we shall use the combination of Theorem \ref{T:transform}
and Proposition \ref{P:rigid}
to show that for generic $f$  in $\A(\Delta^o,\bar\Q)$
the Newton polygon of the Fredholm determinant of
$\alpha_1(\bar{f})$ is equal to that of $\alpha_a(\bar{f})$, and
of $\GNP(\Delta,\bar\F_p)$.
For non-simplex $\Delta$, one needs a asymtotic facial decomposition
theorem we shall prepare below in Theorem \ref{T:blockrigid}.

\subsection{Rigid transformations in block form}

When $\Delta$ is smooth or asymptotically smooth but not simplex, one needs to
develop an asymptotic facial decomposition theory, that generalize the
facial decomposition theory innitiated by Wan \cite[Section 5]{Wan93}.
The following theorem lays the foundation in this aspect.

\begin{theorem}[Rigid transform in block form]
\label{T:blockrigid}
Let $\Sigma=\cup_{i=1}^{N}\Sigma_i$ be a partition of 
the countable set $\Sigma$ partially ordered by the weight function $w(-)$.
Let $\Sigma_{\leq k}$ consist of all $r\in \Sigma$ with weight $w(r)\leq k$.
For any $k\in\Q_{\geq 0}$, write  $N_{\Sigma_i,k}:=|(\Sigma_i)_{\leq k}|$.
Let $\M=(m_{r,s})_{r,s\in \Sigma}$
be a nuclear matrix over  $R$, and $\M^{[N_k]}=(m_{r,s})_{r\in\Sigma_{\leq k}, s\in\Sigma_{\leq k}}$.
For any $1\leq i,j \leq N$
let $\M_{\Sigma_i,\Sigma_j}=(m_{r,s})_{r\in \Sigma_i,s\in\Sigma_j}$.
Let $(\beta_r)_{r\in \Sigma}$
be a strictly increasing sequence in $\Q_{\geq 0}$
(i.e., $\beta_r \lneq \beta_{r'}$ if $w(r)\lneq w(r')$)
with $\lim_{r\rightarrow\infty} \beta_r=\infty$
and such that
\begin{enumerate}
\item[(i)] $\M=(\M_{\Sigma_i,\Sigma_j})_{1\leq i,j\leq N}$ is a block form
where each submatrix $\M_{\Sigma_i,\Sigma_i}$ is still nuclear.
\item[(ii)]
For any $r\in \Sigma$ we have $\ord_p m_{r,s}\geq \beta_r$ for all $s\in \Sigma$.
\item[(iii)]
For any $1\leq i\leq N$, and for any $r\in \Sigma_i$, we have
$$
\begin{array}{l}
\ord_p(m_{r,s})
\left\{
\begin{array}{ll}
\gneq  \beta_r & \mbox{if $s\in \Sigma_j$ with $j\neq i$},\\
\geq \beta_r    & \mbox{if $s\in \Sigma_i$},\\
<  \beta_r +\varepsilon_{r,p} &\mbox{if $s\in \Sigma_i$},
\end{array}
\right.
\end{array}
$$
 where $\varepsilon_{r,p}\rightarrow 0^+$ if $p\rightarrow \infty$.
\item[(iv)]
Suppose for every $i$ and $p$ large enough
$$\ord_p(\det \M_{\Sigma_i,\Sigma_i}^{[N_{\Sigma_i,k}]})
= \sum_{r\in \Sigma_{\leq k}\cap \Sigma_i} \beta_r + \epsilon_{N_{\Sigma_i,k}}$$
for some
$
\epsilon_{N_{\Sigma_i,k}}\geq 0$ and $\epsilon_{N_{\Sigma_i,k}}
\rightarrow 0^{+}$ as $p\rightarrow \infty.
$
\end{enumerate}

Then $\M_{\Sigma_i,\Sigma_i}$ and $\M$ are all rigidly nuclear,
and  the followings are all equal
\begin{equation*}
\left\{
\begin{array}{rcl}
\NP_p(\det(1-T\M)^{[N_k]})
&=&\prod_{i=1}^{N}\NP_p(\det(1-T\M_{\Sigma_i,\Sigma_i}^{[N_{\Sigma_i,k}]}))\\
\NP_q(\det(1-T\M^{g^{a-1}}\cdots \M)^{[N_k]} )
&=&\prod_{i=1}^{N} \NP_q (\det(1-T \M_{\Sigma_i,\Sigma_i}^{g^{a-1}}
\cdots \M_{\Sigma_i,\Sigma_i})^{[N_{\Sigma_i,k}]}).
\end{array}
\right.
\end{equation*}
If we write
\begin{equation*}
\left\{
\begin{array}{rcl}
\det(1-T\M^{g^{a-1}}\cdots\M) &=& \sum_{m=0}^{\infty}C_mT^m,\\
\det(1-T\M_{\Sigma_i,\Sigma_i}^{g^{a-1}}\cdots \M_{\Sigma_i,\Sigma_i})
&=&\sum_{m=0}^{\infty}C_{i,m}T^m,
\end{array}
\right.
\end{equation*}
then for each  $k\geq 1$ and for $p$ large enough
\begin{equation*}
\left\{
\begin{array}{lll}
\ord_q(C_{i,N_{\Sigma_i,k}}) &=& \ord_p(\det \M_{\Sigma_i,\Sigma_i}^{[N_{\Sigma_i,k}]})
=\sum_{r\in\Sigma_{\leq k}\cap\Sigma_i} \beta_r + \epsilon_{N_{\Sigma_i,k}}\\
\ord_q (C_{N_k}) &=&\ord_p(\det \M^{[N_k]}) =
\ord_p( \prod_{i=1}^{N}\det \M_{\Sigma_i,\Sigma_i}^{[N_{\Sigma_i,k}]})
=\sum_{r\in \Sigma_{\leq k}} \beta_r + \epsilon_{N_k}
\end{array}
\right.
\end{equation*}
where $\epsilon_{N_k}=\sum_{i=1}^{N} \epsilon_{N_{\Sigma_i,k}}$.
Moreover, we have for every $k\geq 0$
$$
\ord_q(C_{N_k}) = \sum_{i=1}^{N} \ord_q(C_{i,N_{\Sigma_i,k}}).
$$
\end{theorem}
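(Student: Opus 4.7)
The plan is to bootstrap from the single-block case (Proposition \ref{P:rigid} plus Theorem \ref{T:transform}) to the full matrix, using the block-triangular structure given by condition (iii) to decouple the diagonal blocks from the off-diagonal ones.

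First, I would apply Proposition \ref{P:rigid} separately to each diagonal block $\M_{\Sigma_i,\Sigma_i}$. The right-hand inequality in (iii) provides the upper bound $\ord_p(m_{r,s}) < \beta_r + \varepsilon_{r,p}$ with $\varepsilon_{r,p} \to 0^+$, the middle inequality gives the lower bound $\beta_r$, and (iv) supplies the determinantal estimate
\[
\ord_p\bigl(\det \M_{\Sigma_i,\Sigma_i}^{[N_{\Sigma_i,k}]}\bigr) = \sum_{r \in \Sigma_{\leq k}\cap\Sigma_i}\beta_r + \epsilon_{N_{\Sigma_i,k}}
\]
with $\epsilon_{N_{\Sigma_i,k}} \to 0^+$. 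These are exactly the hypotheses of the rigidly nuclear criterion, so each diagonal block is rigidly nuclear. Theorem \ref{T:transform} then yields the per-block identity
\[
\NP_q\bigl(\det(1-T\,\M_{\Sigma_i,\Sigma_i}^{g^{a-1}}\cdots \M_{\Sigma_i,\Sigma_i})\bigr)^{[N_{\Sigma_i,k}]} = \NP_p\bigl(\det(1-T\,\M_{\Sigma_i,\Sigma_i}^{[N_{\Sigma_i,k}]})\bigr),
\]
and in particular $\ord_q(C_{i,N_{\Sigma_i,k}}) = \ord_p\bigl(\det \M_{\Sigma_i,\Sigma_i}^{[N_{\Sigma_i,k}]}\bigr)$.

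Next, the main step is to verify that $\M$ itself satisfies the hypotheses of Proposition \ref{P:rigid}. Expanding $\det \M^{[N_k]}$ by the Leibniz formula, I would group permutations $\sigma$ of $\Sigma_{\leq k}$ according to whether they preserve the partition $\{\Sigma_i \cap \Sigma_{\leq k}\}_{i=1}^{N}$. The block-preserving permutations contribute exactly $\prod_{i=1}^{N}\det \M_{\Sigma_i,\Sigma_i}^{[N_{\Sigma_i,k}]}$. Any other permutation moves some $r \in \Sigma_i$ to a column $s \in \Sigma_j$ with $j \neq i$, incurring a factor $m_{r,s}$ whose $p$-adic order strictly exceeds $\beta_r$ by condition (iii). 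Because the strict inequality $\gneq$ in (iii) supplies a $p$-independent positive gap $\eta > 0$ (for $p$ large enough), while the block-diagonal contribution has discrepancy only $\sum_i \epsilon_{N_{\Sigma_i,k}} \to 0^+$, the non-block-diagonal terms are swamped. Hence
\[
\ord_p\bigl(\det \M^{[N_k]}\bigr) = \ord_p\Bigl(\prod_{i=1}^{N} \det \M_{\Sigma_i,\Sigma_i}^{[N_{\Sigma_i,k}]}\Bigr) = \sum_{r\in \Sigma_{\leq k}}\beta_r + \epsilon_{N_k}, \qquad \epsilon_{N_k} := \sum_{i=1}^{N}\epsilon_{N_{\Sigma_i,k}},
\]
and $\epsilon_{N_k} \to 0^+$. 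This verifies rigid nuclearity of $\M$, so Theorem \ref{T:transform} delivers $\NP_q(\det(1-T\,\M^{g^{a-1}}\cdots \M))^{[N_k]} = \NP_p(\det(1-T\,\M^{[N_k]}))$ and the coefficient identity $\ord_q(C_{N_k}) = \ord_p(\det \M^{[N_k]})$.

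Finally, to get the product decomposition of Newton polygons, I would apply the same block-Leibniz argument to every coefficient of the characteristic polynomial $\det(1 - T\,\M^{[N_k]})$, not only to the top coefficient: each elementary symmetric function of the eigenvalues, being a sum of principal minors, splits as the block-diagonal contribution plus strictly higher-order off-diagonal noise. Therefore each coefficient of $\det(1-T\,\M^{[N_k]})$ has the same $p$-adic order as the corresponding coefficient of $\prod_{i=1}^{N}\det(1-T\,\M_{\Sigma_i,\Sigma_i}^{[N_{\Sigma_i,k}]})$, giving the first Newton polygon product formula; combining this with the per-block rigid transform established above gives the second. The relation $\ord_q(C_{N_k}) = \sum_{i=1}^{N}\ord_q(C_{i,N_{\Sigma_i,k}})$ is then immediate from these equalities. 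The principal obstacle throughout is ensuring uniform $p$-large dominance of the diagonal-block term: this requires that the gap $\eta$ from condition (iii) really is bounded away from zero independently of $p$, which is where the sharp distinction between $\gneq$ and $\geq$ in (iii) does the essential work.
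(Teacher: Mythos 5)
Your proposal is correct and follows essentially the same route as the paper's proof: per-block application of Proposition \ref{P:rigid} and Theorem \ref{T:transform}, then block-diagonal dominance in the Leibniz expansion (using the strict inequality in (iii) together with the strict monotonicity of $\beta_r$ in the weight) to verify the rigidly-nuclear criterion for the full matrix $\M$, and a final application of Theorem \ref{T:transform} to pass from $\det\M^{[N_k]}$ to the coefficient $C_{N_k}$. The only cosmetic difference is that you expand the principal minor $\det\M^{[N_k]}$ directly, whereas the paper expands the Fredholm coefficient $c_{N_k}$ of $\det(1-T\M)$ and then singles out the lowest-weight principal minor as its dominant term; both versions hinge on the same dominance estimate, including the point you rightly flag at the end about the off-diagonal gap in (iii) needing to beat the $\epsilon_{N_k}$ discrepancy for $p$ large.
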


\begin{proof}
(1)
Our hypothesis implies by Proposition \ref{P:rigid} that
each $\M_{\Sigma_i,\Sigma_i}$ is rigidly nuclear.
Thus by Theorem \ref{T:transform} the first finitely many terms in the expression 
below are equal:
$$
\NP_q(\det(1-T\M_{\Sigma_i,\Sigma_i}^{g^{a-1}} \cdots\M_{\Sigma_i,\Sigma_i}))
=\NP_p(\det(1-T\M_{\Sigma_i,\Sigma_i}));
$$
Write 
$$\det(1-T\M_{\Sigma_i,\Sigma_i}^{g^{a-1}}\cdots \M_{\Sigma_i,\Sigma_i})
=\sum_{m=0}^{\infty}C_{i,m}T^m$$
we have for every $k\geq 0$
$$
\ord_q(C_{i,N_{\Sigma_i,k}}) =\ord_p(\det \M_{\Sigma_i,\Sigma_i}^{[N_{\Sigma_i,k}]})
=\sum_{r\in\Sigma_{\leq k}\cap\Sigma_i} \beta_r + \epsilon_{N_{\Sigma_i,k}}.
$$

(2)
By our hypothesis  $\M$ is nuclear we have
$\det(1-T\M) = 1+c_1T+c_2T^2+\ldots$
where $c_n=\sum_{1\leq r_1\leq \ldots \leq r_n}
\sum_{\sigma\in S_n} \sgn(\sigma) \prod_{i=1}^{n} m_{r_i,r_{\sigma(i)}}$
and
$r_i\in \Sigma_{k_i}$ for some
$1\leq k_i\leq N$.
For any $\sigma\in S_n$
such that
$r_{\sigma(i)} \in \Sigma_{k_i}$
we have
$\ord_p \prod_{i=1}^{n} m_{r_i,r_{\sigma(i)}}\rightarrow\sum_{i=1}^{n} \beta_{r_i}^+$
as $p\rightarrow\infty$
by our hypothesis.
On the other hand, suppose $\sigma\in S_n$ such that there exists at least one
$i$ such that $r_{\sigma(i)}\not\in \Sigma_{k_i}$. We may assume
$r_{\sigma(i)}\in \Sigma_{k_j}$ with $k_j\neq k_i$; then
by our hypothesis for $n=N_k$ for some $k\geq 0$
we have $\ord_p \prod_{i=1}^{n} m_{r_i, r_{\sigma(i)}} \gneq
\sum_{i=1}^{n} \beta_{r_i}$.
Hence for $p$ large enough
the terms in the expansion of Fredholm determinant is dominated by
the ones with $\sigma$'s sending $\Sigma_i$ to $\Sigma_i$ for every $1\leq i\leq N$.
Thus for $p$ large enough and for $k\geq 0$
$$
c_{N_k} = \sum\prod_{\ell=1}^{N}
(\sum_{\sigma\in S_{\gamma_\ell}}\sgn(\sigma)\prod_{i=1}^{\gamma_\ell}m_{r_i,\sigma(r_i)})
+(\mbox{higher terms})
$$
where the first sum ranges over all $\gamma_1,\ldots,\gamma_N \in \Z_{\geq 0}$ such that
$\sum_{\ell=1}^{N}\gamma_\ell = N_k$, and where $r_1,\ldots,r_{\gamma_\ell}\in \Sigma_\ell$.
In the next paragraph we will show that for each $1\leq \ell\leq N$
we have $\Sigma_\ell\cap \Sigma_{\leq k} = \{r_1,\ldots,r_{\gamma_\ell}\}$.

Suppose we have an $N$-tuple $(\gamma_1,\ldots,\gamma_N)\neq
(N_{\Sigma_1,k},\ldots,N_{\Sigma_N,k})$.
Since our hypothesis says that $\beta_r \lneq \beta_{r'}$ if $w(r)\lneq w(r')$ and $r,r'\in \Sigma$,
we have that
$$\sum_{\ell=1}^{N}\sum_{r\in \Sigma_{\leq k}\cap\Sigma_\ell}\beta_r
\lneq \sum_{\ell=1}^{N}\sum_{r\in \Sigma_\ell'}\beta_r $$
where $\Sigma_\ell'$ is a subset of $\Sigma_\ell$ consisting of the $\gamma_\ell$ lowest weight
elements and $\sum_{\ell=1}^{N}\gamma_\ell=N_k$.
This proves that for $p$ large enough
\begin{eqnarray*}
\ord_p c_{N_k}
&=&\ord_p \prod_{i=1}^{N} \det(\M_{\Sigma_i,\Sigma_i}^{[N_{\Sigma_i,k}]})
=\sum_{r\in \Sigma_{\leq k}}\beta_r+\epsilon_{N_k}
\end{eqnarray*}
where $\epsilon_{N_k} =\sum_{\ell=1}^{N} \epsilon_{N_{\Sigma_\ell,k}}$.
By our Proposition \ref{P:rigid} for $p$ large enough  $\M$ is rigidly nuclear.
Thus by Theorem \ref{T:transform} we have
$$
\NP_q(\det(1-T\M^{g^{a-1}} \cdots\M)) = \NP_p(\det(1-T\M));
$$
if write
$$
\det(1-T\M^{g^{a-1}}\cdots\M) =\sum_{m=0}^{\infty}C_mT^m,
$$
then by Theorem \ref{T:transform} and the above
\begin{eqnarray*}
\ord_q C_{N_k}
&=&\ord_p \det(\M^{[N_k]}) = \ord_p c_{N_k} 
=\sum_{r\in \Sigma_{\leq k}}\beta_r+\epsilon_{N_k}.
\end{eqnarray*}
The rest of our statement follows immediately.
\end{proof}

\section{Proof of main theorems and conjectures of Wan}
\label{S:proof}

\subsection{Local and global Hasse polynomials}

We define two types of Hasse polynomials below.
Let $\Delta$ be an integral convex polytope of dimension $n$ in
$\R^n$ containing the origin.
For ease of notation we assume $\Delta$ is simplicial at all origin-less facets.
Let $\Delta=\cup_\delta \Delta_\delta$ be a closed facial triangulation of $\Delta$
as in (\ref{E:division}).
Fix an origin-less simplex facet $\delta$ and let $\cG_{\Delta_\delta}=\{g_1',\ldots,g_n'\}$ be its
primitive generating set, then we have by Proposition \ref{P:disc} that
$\Vert(\Delta_\delta):=\{d_1g_1',\ldots,d_ng_n'\}$ for some $d_1,\ldots,d_n\in\Z_{\geq 1}$.
Fix a vertex residue class $R\in \prod_{i=1}^{n}(\Z/d_i\Z)^*$ as defined in Lemma \ref{L:key}.
Let $Z^o(\Delta_\delta)$ be defined as in (\ref{E:Z^o})
and that is $Z^o(\Delta_\delta) = \{\sum_{i=1}^{n}\alpha_i (d_ig_i')|0\leq \alpha_i< 1\}$.

\subsubsection{Definition of local Hasse polynomial at each $p$}
We shall first define {\em local} Hasse polynomials at prime $p$ in this paragraph.
Let $J$ be a subset of $\Delta\cap\Z^n$ containing $\Vert(\Delta)$
and let $\A^J$ be the space of all Laurent polynomials
$f=\sum_{j\in J}a_j x^j$ parameterized by $(a_j)$'s with
$\Delta(f)=\Delta$. Suppose $J$ generates the monoid $S(\Delta)=C(\Delta)\cap\Z^n$
up to finitely many points.
Let $k\in\Z_{\geq 1}$.
Let $r,s\in Z^o(\Delta_\delta)\cap\Z^n$ and $pr-s\in S(\Delta_\delta)$ (for $p$ large enough).
For any prime $p$ large enough and with $R_{\Delta_\delta}(p)=R$,
let $G_{pr-s}^{(i(r,s))}$ and $P_{N_{\delta,k},p}^{(m_{k})}$
be nonzero polynomials in $\Q[A]$ constructed in Theorems \ref{T:mini-2} and \ref{T:Zariski}.
Define local Hasse polynomials $P_{\A^J,p}$ and $G_{\A^J,p}$
in $\Q[A]$ for each $p$ as follows:
\begin{equation}
\label{E:localhasse}
\left\{
\begin{array}{lll}
P_{\A^J,p} &:= &\prod_{\delta} \prod_{k=1}^{k_\Delta} P_{N_{\delta,k},p}^{(m_{k})};
\\
G_{\A^J,p} &:= &\prod_{\delta} \prod_{r,s\in Z^o(\Delta_\delta)\cap \Z^n}G_{pr-s}^{(i(r,s))};
\end{array}
\right.
\end{equation}
where the product ranges over all origin-less facets
$\delta$  of  $\Delta$.

If $\Delta$ has non-simplicial origin-less facet $\delta$,
then we have a regular triangulation $\delta=\cup_\ell \delta_\ell$ of 
the point configuration $(\delta,\cG\cap \delta)$
with simplices $\delta_\ell$ as in Proposition \ref{P:triangulation},
then we shall replace $\delta$ by $\delta_\ell$ in the above 
definition of $P_{\A^J,p}$ and $G_{\A^J,p}$.

\subsubsection{Definition of global Hasse polynomials}

We shall define {\em global} Hasse polynomials in this paragraph.
Let $J\subseteq \Delta-\partial\Delta$ and 
suppose $J\cup\Vert(\Delta)$ generates $S(\Delta)$ up to finitely many points.
Let $V$ be a subset of $\Delta\cap\Z^n$ containing $\Vert(\Delta)$ and disjoint from $J$.
Let $\A_V^J$ be the space of all Laurent polynomials
$f=\sum_{j\in J}a_j x^j + \sum_{j\in V}c_j x^j$ with parameters $(a_j)_{j\in J}$ and with prescribed
$(c_j)_{j\in V}$ in $\Q$.
Let $G_{Rr-s,V}^{(i(r,s))}$ and $P_{N_{\delta,k},R,V}^{(m_k)}$ be the nonzero polynomials
in $\Q[A]$ for the vertex residue class $R$ and facet $\delta$ defined in Theorem \ref{T:Zariski}.
Define global Hasse polynomials $P_{\A_V^J}$ and $G_{\A_V^J}$ in $\Q[A]$ with $A=(A_j)_{j\in J}$:
\begin{equation}
\label{E:globalhasse}
\left\{
\begin{array}{lll}
P_{\A_V^J} &:= & \displaystyle\prod_{\delta}\prod_{k=1}^{k_\Delta} \prod_{R}P_{N_{\delta,k},R,V}^{(m_k)};
\\
G_{\A_V^J} &:= & \displaystyle\prod_{\delta}\prod_{r,s\in Z^o(\Delta_\delta)\cap \Z^n}
\prod_{R}G_{Rr-s,V}^{(i(r,s))}
\end{array}
\right.
\end{equation}
where the outer product ranges over all origin-less facets
$\delta$ of $\Delta$ and
the inner product ranges over all vertex residues $R$
in $\prod_{i=1}^{n}(\Z/d_i\Z)^*$ corresponding to
each origin-less facet $\delta$ of $\Delta$.

\subsection{Asymptotic facial decomposition theorem}

Let $\cG$ be a subset of integral points in $\Delta$ containing $\Vert(\Delta)$
and generates the lattice cone $S(\Delta)$ over $\Z_{\geq 0}$ up to finitely many points.
As in (\ref{E:M-definition}), for $r,s\in S(\Delta)$ let $M_{pr-s}$
be the normalized Fredholm polynomial supported on $\cG$
defined as in (\ref{E:M-definition}) by
\begin{eqnarray*}
M_{pr-s}&=&\gamma^{w_\Delta(s)-w_\Delta(r)}F_{pr-s}=
\gamma^{w_\Delta(s)-w_\Delta(r)}
\sum_{Q}(\prod_{j\in \cG}
\lambda_{u_{pr-s,j}})(\prod_{j\in \cG}A_j^{u_{pr-s,j}})
\end{eqnarray*}
where $Q$ ranges over all representations
$Q: pr-s=\sum_{j\in \cG} u_{pr-s,j} j$ and $u_{pr-s,j}\in\Z_{\geq 0}$.
Notice that $M_{pr-s}$ lies in
$(\Q\cap\Z_p)[\gamma^{\Z_{\geq 0}}A]$ in variable $A=(A_j)_{j\in \cG}$.
Let $\M$ be the nuclear matrix as 
$
\M = (M_{pr-s})_{r,s\in S(\Delta) }
$
as in (\ref{E:M-4}).
Let $C(\Delta) = \cup_{\delta} C(\Delta_\delta)^o$ and
$S(\Delta)=\cup_{\sigma} S(\Delta_\delta)^o$
be the open facial
subdivision as in (\ref{E:open-facial}).
For simplicity, we define
for any origin-less open faces $\delta, \delta'$ of $\Delta$
that
\begin{eqnarray}
\label{E:M-4}
\M_{\delta,\delta'} &:=&
(M_{pr-s})_{r\in S(\Delta_\delta)^o,s\in S(\Delta_{\delta'})^o}
\end{eqnarray}
and $\M_{\delta}:=\M_{\delta,\delta}$.
Hence
\begin{eqnarray}
\label{E:M-blockform}
\M=(\M_{\delta,\delta'})_{\delta,\delta'\in \cF^o(\Delta)}
\end{eqnarray}
is block matrix where
$\delta,\delta'$ range in the set $\cF^o(\Delta)$
of all open faces of $\Delta$ that do not contain the origin.
For any $f=\sum_{j\in J}a_j x^j + \sum_{j\in V}c_j x^j$ let 
$M_{pr-s}(f)$ be the specialization of the Laurent polynomial
$M_{pr-s}$ in $\Q[(A_j)_{j\in J}]$ at $A_j=a_j$ for all $j\in J$,
and let $\M(f)=(M_{pr-s}(f))$ accordingly.

For any $k\geq 1$ let let $N_k=|S(\Delta)_{\leq k}|$ and
$N_{\delta,k}^o=|S(\Delta_\delta)^o_{\leq k}|$.
Below we shall prove that there is an asymptotic
open facial decomposition theorem for generic Fredholm determinant.
Let $k_\Delta$ be such that 
$V_\Delta=n!\Vol(\Delta)=|S(\Delta)_{\leq k_\Delta}|$.

\begin{theorem}
\label{T:subdivision}
(1)
Let $\Delta$ be an integral convex polytope of dimension $n$ in $\R^n$ containing the origin.
Let $J$ be a subset of $\Delta\cap\Z^n$ containing $\Vert(\Delta)$ and
generates the monoid $C(\Delta)\cap\Z^n$ up to finitely many points,
then let $\A^J$ be as in (\ref{E:localhasse}), and let $\cU_p$ be
defined by $P_{\A^J,p}\cdot G_{\A^J,p}\neq 0$ in $\A^J$.
Let $\M=(M_{pr-s})_{r,s \in S(\Delta)}$ 
be the matrix of normalized Fredholm polynomials supported
on $\cG=J$ defined as above (see also (\ref{E:M-definition})).
For any $p$ large enough
and for any $f\in\cU_p(\bar\Q)$ we have for all $1\leq k\leq k_\Delta$
\begin{eqnarray*}
\NP_q(\det(1-T \M(f)^{\sigma^{a-1}}\cdots \M(f))^{[N_k]})
&=& \prod_{\delta\in\cF^o(\Delta)} \NP_p(\det(1-T \M(f)_{\delta})^{[N_{\delta,k}^o]})
\end{eqnarray*}
where $\cF^o(\Delta)$ is the set of origin-less open faces of $\Delta$.

(2) Let $\Delta$ be an integral convex polytope of dimension $n$ in $\R^n$ containing the origin
and it is simplicial at all origin-less facets.
If $J\subseteq (\Delta-\partial \Delta)\cap\Z^n$ such that 
$J\cup \Vert(\Delta)$ generates the monoid $S(\Delta)$ up to finitely many points.
Let $V$ be a subset of $\Delta\cap\Z^n$ containing $\Vert(\Delta)$ and disjoint from $J$.
Let $\A_V^J$ be as above, and let
$\cU$ be defined by $P_{\A_V^J}\cdot G_{\A_V^J} \neq 0$ in $\A_V^J$ as in (\ref{E:globalhasse}).
Let $\M$ be the matrix of normalized Fredholm polynomials supported
on $\cG=J\cup V$ defined as in (\ref{E:M-definition}).
For every regular
$f\in \cU(\bar\Q)$ and $p$ large enough, we have for all $1\leq k\leq k_\Delta$
\begin{eqnarray*}
\NP_q(\det(1-T \M(f)^{\sigma^{a-1}}\cdots \M(f))^{[N_k]})
&=& \prod_{\delta\in\cF^o(\Delta)} \NP_p(\det(1-T \M(f)_{\delta})^{[N_{\delta,k}^o]}).
\end{eqnarray*}

(3) In both of the above two cases:
If we write
$$\det(1-T\M(f)^{\sigma^{a-1}}\cdots \M(f)) = \sum_{m=0}^{\infty}C_mT^m$$
and 
$$
\det(1-T\M(f)_\delta)=\sum_{m=0}^{\infty}C_{\delta,m} T^m
$$
then we have
$$
\ord_q C_{N_k} = \sum_{\delta\in \cF^o(\Delta)} \ord_p C_{\delta,N_{\delta,k}^o}.
$$
\end{theorem}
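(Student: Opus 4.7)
The plan is to apply the block-form rigid transform (Theorem \ref{T:blockrigid}) to the nuclear matrix $\M(f) = (\M(f)_{\delta,\delta'})_{\delta, \delta' \in \cF^o(\Delta)}$ of normalized Fredholm polynomials in its natural block decomposition (\ref{E:M-blockform}), which arises from the open facial partition $S(\Delta) = \bigcup_{\delta} S(\Delta_\delta)^o$, with weight sequence $\beta_r := w_\Delta(r)$. Parts (1) and (2) differ only in how condition (iv) of Theorem \ref{T:blockrigid} is verified; part (3) is an immediate consequence of the additive order relation built into Theorem \ref{T:blockrigid}.

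Conditions (i) and (ii) of Theorem \ref{T:blockrigid} are essentially formal: each diagonal block $\M(f)_{\delta,\delta}$ is nuclear because $\ord_p M_{pr-s}(f)$ grows with $w_\Delta(r)$, and the entry-wise lower bound $\ord_p M_{pr-s}(f) \geq w_\Delta(r)$ follows from Theorem \ref{T:key} combined with definition (\ref{E:M-definition}). For condition (iii), the in-block upper bound $\ord_p M_{pr-s}(f) \leq w_\Delta(r) + N_\Delta/(p-1)$ for $r, s \in S(\Delta_\delta)^o$ is again Theorem \ref{T:key}, giving $\varepsilon_{r,p} = N_\Delta/(p-1) \to 0^+$. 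The strict off-block inequality $\ord_p M_{pr-s}(f) \gneq w_\Delta(r)$ for $r \in S(\Delta_\delta)^o$ and $s \in S(\Delta_{\delta'})^o$ with $\delta \neq \delta'$ follows from Wan's boundary decomposition (already invoked in the proof of Theorem \ref{T:mini-2}(2)): when $r$ and $s$ are not cofacial, Lemma \ref{L:w(v)}(2) produces a strictly positive gap $w_\Delta(pr-s) + w_\Delta(s) - p w_\Delta(r)$ independent of $p$, and when the closures $\bar{\delta}, \bar{\delta'}$ nest, the boundary-decomposition argument refines the naive estimate $b_{\cG,\Z}(pr-s) \geq w_\Delta(r)$ to the strict $>$ needed here.

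The substantive step is condition (iv), the sharp estimate
\[
\ord_p \det \M(f)_{\delta,\delta}^{[N_{\delta,k}^o]} = \sum_{r \in S(\Delta_\delta)^o_{\leq k}} w_\Delta(r) + \epsilon_{N_{\delta,k}^o, p}
\]
with $\epsilon_{N_{\delta,k}^o, p} \to 0^+$ as $p \to \infty$, which is exactly the information encoded by the Hasse polynomials. For part (1), Theorem \ref{T:mini-2}(2) writes the $p$-adic order as $\sum_{i=0}^{k} W_{\Delta_\delta}(i) i / D(\Delta) + m_{k,p}/(p-1)$ with $0 \leq m_{k,p} \leq N_\Delta N_{\delta,k}^o$, conditional on the leading polynomial $P_{N_{\delta,k},p}^{(m_{k,p})}$ not vanishing at $f$; the definition (\ref{E:localhasse}) of $P_{\A^J,p}$ collects this nonvanishing across every $\delta$ and every $1 \leq k \leq k_\Delta$, so that the hypothesis $f \in \cU_p(\bar\Q)$ secures (iv). For part (2), Theorem \ref{T:Zariski} supplies the same conclusion with nonzero polynomials $P_{N_{\delta,k}, R, V}^{(m_k)}$ now indexed by the finitely many vertex residue classes $R \in \prod_{i=1}^{n}(\Z/d_i\Z)^*$ and independent of $p$; their product (\ref{E:globalhasse}) defines the global Hasse polynomial $P_{\A_V^J}$, and for every large enough $p$ the residue $R_{\Delta_\delta}(p)$ lies in this finite index set, so that $f \in \cU(\bar\Q)$ again secures (iv).

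With (i)--(iv) in place, Theorem \ref{T:blockrigid} directly yields both the Newton-polygon factorization and the additive order identity $\ord_q C_{N_k} = \sum_\delta \ord_p C_{\delta, N_{\delta,k}^o}$ of part (3). The main technical obstacle will be the verification of the strict off-block bound in condition (iii) for pairs of open faces with nested closures, where the off-block entries have $p$-adic order tantalizingly close to the diagonal bound and a direct cofaciality argument fails; the proof must lean on the boundary-decomposition machinery rather than on a raw Lemma \ref{L:w(v)}(2) inequality. Every other step is bookkeeping against the estimates already established in Sections \ref{S:estimate}--\ref{S:transform}.
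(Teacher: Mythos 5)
Your proposal is correct and follows essentially the same route as the paper: both apply the block-form rigid transform (Theorem \ref{T:blockrigid}) to the open facial block decomposition (\ref{E:M-blockform}) with $\beta_r = w_\Delta(r)$, verify the entrywise bounds via Theorem \ref{T:key}/\ref{T:mini-2}, secure condition (iv) through nonvanishing of the local and global Hasse polynomials (Theorems \ref{T:mini-2} and \ref{T:Zariski}), and read off part (3) from Theorem \ref{T:blockrigid}. The one detail you flag as the "main obstacle" (the strict off-block inequality, and passing from the closed-cone determinant estimates to the open blocks) is handled in the paper exactly as you anticipate, via the ordering $r \geq_\delta s$, the strict case of Theorem \ref{T:key}, and the inclusion--exclusion over closed subfaces encoded in (\ref{E:degree}).
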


\begin{proof}
(1) 
We shall verify Theorem \ref{T:blockrigid} applies to $\Sigma=S(\Delta)$.
Let $S(\Delta)=\cup_i \Sigma_i$ be the open facial subdivision as in 
(\ref{E:open-facial}). 
Let $r,s\in S(\Delta_\delta)$ for some $\delta$ and let 
$r>_{\delta} s$ then 
for $p$ large enough we have
$pr-s \in \Sigma_i=S(\Delta_\delta)$. By Theorem \ref{T:mini-2}
$$
w_\Delta(r)\leq  \ord_p(M_{pr-s})
=
b_{\cG,\Z}(pr-s)\leq w_\Delta(r)+\frac{N_\Delta}{p-1}.
$$
We claim that the hypothesis of Theorem \ref{T:blockrigid} is satisfied for the matrix $\M(f)$
for all $f\in \cU_p(\bar\Q)$.
For every $r\in S(\Delta_\delta)$  let $\beta_r:=w_\Delta(r)$.
Let the set $S(\Delta)$ be (partially) ordered by the weight $w_\Delta(\cdot)$.
Let $r,s\in S(\Delta_\delta)$ with $r\geq_\delta s$ or $r\in S(\Delta_\delta)^o$ and $s\in S(\Delta)$.
Write $a=(a_j)_{j\in J}$ for the parameterizing coefficients of $f=\sum_{j\in J}a_j x^j$.
Since the polynomial $G_{\A^J,p}$ has its coefficients all in $\Z_p^*\cap\Q$ and 
$G_{\A^J,p}(a)\neq 0$ in $\bar\Q$, we have that 
for $p$ large enough $G_{\A^J,p}(a)\in \bar\Z_p^*\cap\bar\Q$
and hence by Theorem \ref{T:key}
$$
\ord_p M_{pr-s}(f) = w_\Delta(r) + \frac{i(r,s)}{p-1}.
$$
This  implies
$$
\beta_r\leq \ord_p M_{pr-s}(f) \leq \beta_r + \frac{N_\Delta}{p-1}
$$
where $\frac{N_\Delta}{p-1}\rightarrow 0^+$ as $p\rightarrow \infty$.
On the other hand, since $P_{\A^J,p}\in (\Z_p^*\cap \Q)[A]$ we have
that $P_{\A^J,p}(a)\in\bar\Z_p^*\cap\bar\Q$ for $p$ large enough.
But since  the polynomial $P_{\A^J,p}$ 
has coeffients all in $\Z_p^*\cap\Q$ 
\begin{eqnarray*}
\sum_{i=0}^{k}\frac{W_{\Delta_\delta}(i)i}{D(\Delta)} 
\leq 
\ord_p\det(\M(f)^{[N_{\delta,k}]}_{\delta})
=
\sum_{i=0}^{k}\frac{W_{\Delta_\delta}(i)i}{D(\Delta)} + \frac{m_k}{p-1}
\end{eqnarray*}
where $m_{k}<nD(\Delta)^2$.
For every closed sub-face $\delta'$ of $\delta$ we also have
\begin{eqnarray*}
\sum_{i=0}^{k}\frac{W_{\Delta_{\delta'}}(i)i}{D(\Delta)} 
\leq 
\ord_p\det(\M(f)^{[N_{{\delta'},k}]}_{\delta})
=
\sum_{i=0}^{k}\frac{W_{\Delta_{\delta'}}(i)i}{D(\Delta)} + \frac{m'_k}{p-1}.
\end{eqnarray*}
Now by (\ref{E:degree})  this implies that
$$
\ord_p\det(\M(f)^{[N^o_{\delta,k}]}_{\delta})
= 
\sum_{i=0}^{k}\frac{W^o_{\Delta_\delta}(i)i}{D(\Delta)} + \frac{m^o_k}{p-1}
$$
for some $0\leq m^o_k\leq m_k$, where 
$W^o(\Delta_\delta)(i):=|S(\Delta_\delta)^o_{\leq i}|$.
Finally, when $\delta'\neq \delta$ or $r<_\delta s$ 
we know by Theorem \ref{T:key}
that the above first inequality become
$\beta_r \lneq \ord_p M_{pr-s}(f)$. This proves our claim;
thus Theorem \ref{T:blockrigid} applies and we have
the desired asymptotic open facial decomposition, and
the relation on their coefficients.

(2)
Use Theorem \ref{T:Zariski} to yield an
analogous proof for the case $f\in \cU(\bar\Q)$.

(3) Follows from Theorem \ref{T:blockrigid}.
\end{proof}

\subsection{Relation to two conjectures of Wan}

For any $f\in \A_V^J(\bar\Q)$, let $A_a(f)$ be the matrix defined
as in Theorem \ref{T:Dwork} and let $A_a(f)_\delta$ be the matrix
with entries $M_{pr-s}$ where $r,s$ range in $S(\Delta_\delta)^o$,
see (\ref{E:M-4}).
Define for every open origin-less face $\delta$ of $\Delta$
\begin{eqnarray*}
\GNP_\infty(\A_V^J,\bar\F_p) &:=&\inf_{\bar{f}} \NP_q(\det(1-T A_a(f)))
\end{eqnarray*}
where $\bar{f}$ ranges over all regular $\bar{f}\in \A_V^J(\bar\F_p)$.
We define similarly
\begin{eqnarray*}
\GNP_\infty(\A^J,\bar\F_p) &:=&\inf_{\bar{f}} \NP_q(\det(1-T A_a(f)))
\end{eqnarray*}
where $\bar{f}$ ranges over all regular $\bar{f}\in \A^J(\bar\F_p)$.
Define a chain-level Hodge polygon as follows:
\begin{eqnarray*}
\HP_\infty(\Delta) &:=& \NP_q(\prod_{i=0}^{\infty} (1-T q^{\frac{i}{D(\Delta)}})^{W_\Delta(i)}).
\end{eqnarray*}

\begin{lemma}\label{L:main}
Let notation be as above.\\
(1) Let $\Delta$ be an integral convex polytope of dimension $n$ in 
$\R^n$ containing the origin.
Let $J$ be a subset of $\Delta\cap\Z^n$ containing $\Vert(\Delta)$
that generates the monoid $C(\Delta)\cap\Z^n$ up to finitely many points.
For any prime $p$ 
let $\cU_p$ be defined by $P_{\A^J,p}\cdot G_{\A^J,p} \neq 0$ in $\A^J$.
Then for every $p$ large enough and every $f\in \cU_p(\bar\Q)$
we have for $1\leq k\leq k_\Delta$ that
\begin{eqnarray*}
\NP_q(\det(1-TA_a(f)^{[N_k]})) &=& \GNP_\infty(\A^J,\bar{\F}_p)^{[N_k]}.
\end{eqnarray*}
Furthermore, we have
\begin{eqnarray*}
\lim_{p\rightarrow\infty}\GNP_\infty(\A^J,\bar{\F}_p)^{[N_k]} &=& \HP_\infty(\Delta)^{[N_k]}.
\end{eqnarray*}

(2)
Let $\Delta$ be integral convex polytope of dimension $n$ 
of dimension $n$ in $\Q^n$ containing the origin,
and it is simplicial at all origin-less facets.
Let $J\subseteq (\Delta-\partial\Delta)\cap\Z^n$
such that $J\cup \Vert(\Delta)$ generates the monoid $C(\Delta)\cap\Z^n$ up to finitely
many points.
Let $V$ be a subset of
$\Delta\cap\Z^n$ containing $\Vert(\Delta)$ disjoint from $J$.
Let $\cU$ be defined by $P_{\A_V^J}\cdot G_{\A_V^J}\neq 0$ in $\A_V^J$.
Then for $f\in \cU(\bar{\Q})$ and for all prime $p$ large enough
we have for $1\leq k\leq k_\Delta$ that
\begin{eqnarray*}
\NP_q(\det(1-TA_a(f)^{[N_k]})) &=& \GNP_\infty(\A_V^J,\bar{\F}_p)^{[N_k]},\\
\lim_{p\rightarrow\infty} \NP_q(\det(1-TA_a(f)^{[N_k]}))
&=& \lim_{p\rightarrow\infty}\GNP_\infty(\A_V^J,\bar{\F}_p)^{[N_k]}
= \HP_\infty(\Delta)^{[N_k]}.
\end{eqnarray*}
\end{lemma}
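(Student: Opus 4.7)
The plan is to combine the asymptotic open facial decomposition of Theorem \ref{T:subdivision} with the sharp $p$-adic determinant estimates of Theorems \ref{T:mini-2} and \ref{T:Zariski}, then pass to the limit $p\to\infty$ using that all error terms decay like $O(1/p)$ with $p$-independent constants.

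First I would apply Theorem \ref{T:Dwork} to identify $\det(1-TA_a(f))$ with $\det(1-T\,\M(f)^{\sigma^{a-1}}\cdots \M(f))$ for any $f\in\A^J(\bar\Q)$ (respectively $f\in\A_V^J(\bar\Q)$). Theorem \ref{T:subdivision} then splits the first $N_k$ horizontal length of $\NP_q$ of this Fredholm determinant as a product:
$$
\NP_q(\det(1-TA_a(f)))^{[N_k]} \;=\; \prod_{\delta\in\cF^o(\Delta)} \NP_p(\det(1-T\,\M(f)_\delta))^{[N^o_{\delta,k}]},
$$
valid for every $f\in\cU_p(\bar\Q)$ (resp. $f\in\cU(\bar\Q)$) and every $p$ large enough. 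This reduces the problem to a chain-level computation on each open origin-less face $\delta$.

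Second, I would extract each factor precisely. The defining non-vanishing conditions on $\cU_p$ (resp. $\cU$) ensure that the distinguished polynomials $P^{(m_k)}_{N_{\delta,k},p}$ and $G^{(i(r,s))}_{pr-s}$ (resp. their global counterparts $P^{(m_k)}_{N_{\delta,k},R,V}$ and $G^{(i(r,s))}_{Rr-s,V}$ from Theorem \ref{T:Zariski}) are nonzero at $f$. Since these polynomials have coefficients in $\Z_p^*\cap\Q$, their values at $f$ lie in $\bar\Z_p^*\cap\bar\Q$, and Theorems \ref{T:mini-2}, \ref{T:Zariski} combined with inclusion–exclusion on closed sub-faces $\delta'\subseteq\delta$ as encoded in equation (\ref{E:degree}) yield
$$
\ord_p\det(\M(f)_\delta^{[N^o_{\delta,k}]}) \;=\; \sum_{i=0}^{k}\frac{W^o_{\Delta_\delta}(i)\,i}{D(\Delta)} + \frac{m^o_k}{p-1},
$$
where $W^o_{\Delta_\delta}(i):=|S(\Delta_\delta)^o_{\leq i}|$ and $0\leq m^o_k\leq N_\Delta N^o_{\delta,k}$ is bounded independently of $p$. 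Since the universal lower bound $\ord_p M_{pr-s}\geq w_\Delta(r)$ from Theorem \ref{T:mini-2} applies to \emph{every} regular specialisation, the right-hand side also bounds $\ord_p\det$ from below uniformly; being achieved on the nonempty open set $\cU_p$ (resp. $\cU$), this bound coincides with the infimum $\GNP_\infty(\A^J,\bar\F_p)^{[N_k]}$ (resp. $\GNP_\infty(\A_V^J,\bar\F_p)^{[N_k]}$), giving the first equality.

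Finally, for the limit statement I would observe that $m^o_k/(p-1)\to 0^+$ as $p\to\infty$ because $m^o_k$ is controlled by a constant depending only on $\Delta$. Summing the factor polygons over all open origin-less faces $\delta$ and using the partition $S(\Delta) = \cup_\delta S(\Delta_\delta)^o$ — which yields $W_\Delta(i) = \sum_\delta W^o_{\Delta_\delta}(i)$ — the vertices of the product polygon converge precisely to those of $\HP_\infty(\Delta)^{[N_k]}$. The main obstacle will be the transition from the closed-face minor $\det(\M_\delta^{[N_{\delta,k}]})$, where Theorems \ref{T:mini-2} and \ref{T:Zariski} directly apply, to the open-face minor $\det(\M_\delta^{[N^o_{\delta,k}]})$ demanded by Theorem \ref{T:subdivision}; this is achieved by inclusion–exclusion across closed sub-faces of $\delta$ at the level of Fredholm coefficients, and relies essentially on the uniform $O(1/p)$ bound on the error terms so that boundary contributions do not perturb the leading-order chain-level computation.
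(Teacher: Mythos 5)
Your proposal follows essentially the same route as the paper's own proof: Theorem \ref{T:Dwork} to switch to Fredholm determinants, Theorem \ref{T:subdivision} (Theorem \ref{T:blockrigid} in block form) for the asymptotic open facial decomposition, Theorems \ref{T:mini-2} and \ref{T:Zariski} for the precise chain-level orders achieved on $\cU_p$ (resp.\ $\cU$), and the $O(1/(p-1))$ decay of the correction terms $m_k^o$ for the limit. One point to tighten: the uniform lower bound
$\ord_p\det(\M(f)_\delta^{[N^o_{\delta,k}]}) \geq \sum_{i}\frac{W^o_{\Delta_\delta}(i)i}{D(\Delta)} + \frac{m^o_k}{p-1}$
needed to identify the value attained on $\cU_p$ with $\GNP_\infty$ is \emph{not} a consequence of the entrywise bound $\ord_p M_{pr-s}\geq w_\Delta(r)$ (that only gives the weaker Hodge bound without the $m^o_k/(p-1)$ term); it comes instead from the structure of Theorems \ref{T:mini-2}/\ref{T:Zariski}, namely that $m^o_k$ is by definition the \emph{minimal} index for which $P^{(m)}_{N_{\delta,k},p}$ is a nonzero polynomial, so every lower-order chain-level coefficient vanishes identically in $A$ and cannot be nonzero at any specialisation. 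With that attribution corrected, your argument matches the paper's.
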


\begin{proof}
We shall give detailed proof for Part (2).
Fix an origin-less (simplex) facet $\delta$ of $\Delta$.
By Theorem \ref{T:Zariski} and Proposition \ref{P:rigid} for each $f\in \cU(\bar\Q)$
and for $p$ large enough the matrix $(A_1(f))_\delta$ 
is rigidly nuclear and we have for all $1\leq k\leq k_\Delta$
$$
\NP_p(\det(1-T A_1(f)_{\delta})^{[N_{\delta,k}]})
=  
\inf_{\bar{f}}\NP_q(\det(1-T A_a(f)))
=:\GNP_\infty(\A_V^J,\bar\F_p)_{\delta}^{[N_{\delta,k}]}.
$$
Write 
$
\det(1-T A_1(f)_\delta) =
\sum_{i=0}^{\infty}C_{\delta,i}T^i
$
then by the proof of Theorem \ref{T:subdivision} we have
$
C_{\delta,N_{\delta,k}} = \sum_{i=0}^{k}\frac{W_{\Delta_\delta}(i) i}{D(\Delta)} + \frac{m_k^o}{p-1}
$
for some $0\leq m_k^o\leq m_k$.
Consequently,
$
\lim_{p\rightarrow \infty}
C_{\delta,N_{\delta,k}} = \ord_p\sum_{i=0}^{k} \frac{W_{\Delta_\delta}(i) i}{D(\Delta)}.
$

On the other hand, let $S(\Delta)=\cup_{\delta\in\cF^o(\Delta)} 
S(\Delta_\delta)^o$ be the open facial subdivision
as that in (\ref{E:open-facial}) where $\cF^o(\Delta)$
is the set of all open origin-less open faces of $\Delta$.
By Theorem \ref{T:Zariski}, for any $f$ in $\cU(\bar\Q)$
and for $p$ large enough the hypothesis of
Theorem \ref{T:blockrigid} is satisfied
for the block matrix $A_1(f) = (A_1(f)_{\delta,\delta'})$
where $\delta$ and $\delta'$ ranges over all open origin-less faces of $\Delta$;
and  for $\beta_r = w_\Delta(r)$.
Hence,  by Theorem \ref{T:subdivision}, 
for any $f\in \cU(\bar\Q)$ and for $p$ large enough that
\begin{eqnarray*}
\NP_q(\det(1-TA_a(f)^{[N_k]}))
&=&\prod_{\delta\in \cF^o(\Delta)}\NP_p(\det(1-TA_1(f)_{\delta}^{[N_{\delta,k}^o]}))\\
&=&\prod_{\delta\in\cF^o(\Delta)} \GNP_\infty(\A_V^J,\bar\F_p)_{\delta}^{[N_{\delta,k}^o]}\\
&=& \GNP_\infty(\A_V^J,\bar\F_p)^{[N_k]}.
\end{eqnarray*}
Write
$
\det(1-TA_a(f))
= \sum_{m=0}^{\infty}C_m T^m,
$
by Theorem \ref{T:subdivision} and Theorem \ref{T:blockrigid}
we have 
$\ord_qC_{N_k} = 
\sum_{\delta\in \cF^o(\Delta)} \ord_p(C_{\delta,N_{\delta,k}^o}).$
By our result in Theorem \ref{T:subdivision}
$$
\ord_q(C_{N_k})
= 
\sum_{\delta\in\cF^o(\Delta)} 
(\sum_{i=0}^{k}\frac{W^o_{\Delta_\delta}(i) i}{D(\Delta)} + \frac{m_k^o}{p-1})
= \sum_{i=0}^{k}\frac{W_\Delta(i) i}{D(\Delta)}+\frac{\sum_{\delta\in\cF^o(\Delta)}m_k^o}{p-1}.
$$
Comparing this with $\HP_\infty(\Delta)$ defined above we have
$$
\lim_{p\rightarrow\infty} \GNP_\infty(\A_V^J,\bar\F_p)^{[N_k]} = \HP_\infty(\Delta)^{[N_k]}.
$$
This finishes the proof of Part (2).

The proof of Part (1) is analogous of the above, replacing $\A_V^J$ by $\A^J$
and then replacing $\cU$ in $\A_V^J$ by $\cU_p$ in $\A^J$ for every $p$.
\end{proof}

\begin{theorem}[Theorem \ref{T:Wan1.12}]
\label{T:Wan-2}
Let $\Delta$ be an integral convex polytope of dimension $n$ in $\R^n$ containing the origin,
that is simplicial at all origin-less faces.
Let $J$ be a set of integral points in $\Delta$ of weight $w_\Delta(-)<1$
such that $J\cup\Vert(\Delta)$ generates the monoid $S(\Delta)$ up to finitely many points.
Let $V$ be the set of nonzero integral points disjoint from $J$
and includes all vertices in $\Delta$.
Let
$
\A_V^J
$
be the family of Laurent polynomials
$
f(x)=\sum_{ j \in J} a_j x^j + \sum_{j\in V} c_j x^j
$
parameterized by $a_j$'s and with prescribed $c_j\in\Q$ and nonzero at
vertices of $\Delta$.
Then there exists a Zariski dense open subset $\cU$
defined over $\Q$ in $\A_V^J$ such that for every
$f\in \cU(\bar\Q)$
and for all prime $p$ large enough we have
$$\NP(\bar{f})=\GNP(\A_V^J,\bar\F_p)$$
and
$$
\lim_{p\rightarrow \infty} \NP(\bar{f}) = \lim_{p\rightarrow\infty}\GNP(\A_V^J,\bar\F_p)
=\HP(\Delta).
$$
\end{theorem}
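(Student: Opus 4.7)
The plan is to define $\cU \subset \A_V^J$ as the Zariski open subset cut out by the non-vanishing of the product of global Hasse polynomials $P_{\A_V^J}\cdot G_{\A_V^J}$ from (\ref{E:globalhasse}). Because these polynomials are constructed in Theorems \ref{T:mini-2} and \ref{T:Zariski} with coefficients in $\Q$ and are independent of $p$, the subset $\cU$ is automatically defined over $\Q$. Each factor is a nonzero element of $\Q[(A_j)_{j\in J}]$ by Theorems \ref{T:mini-2} and \ref{T:Zariski}, so $\cU$ is Zariski dense. The crucial feature of this choice is that for every $f\in \cU(\bar\Q)$ and every sufficiently large prime $p$, the reductions of the Hasse data at the specialized coefficients remain units at $p$, which is exactly what the block-wise rigid nuclearity criterion (Proposition \ref{P:rigid}) requires.

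Next, I would invoke Lemma \ref{L:main}(2) to obtain for every $f\in\cU(\bar\Q)$, every large enough prime $p$, and every $1\leq k\leq k_\Delta$ the chain-level identities
$$\NP_q(\det(1-TA_a(f))^{[N_k]}) = \GNP_\infty(\A_V^J,\bar\F_p)^{[N_k]}$$
together with $\lim_{p\to\infty}\GNP_\infty(\A_V^J,\bar\F_p)^{[N_k]}=\HP_\infty(\Delta)^{[N_k]}$. Taking $k=k_\Delta$, so that $N_k = V_\Delta = n!\Vol(\Delta)$, captures the full relevant portion of the Fredholm determinant. The content encoded by the global Hasse polynomials is precisely that the generic leading-slope term at chain level is simultaneously controlled, so one single $\Q$-rational dense open $\cU$ works for all large $p$.

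The last step is to transfer the chain-level equalities to the $L$-function. By the Dwork--Adolphson--Sperber trace formula (Theorem \ref{T:Dwork}),
$$L^*(\bar{f};T)^{(-1)^{n-1}} = \prod_{i=0}^{n}\det(1-Tq^iA_a(f))^{(-1)^i\binom{n}{i}},$$
i.e.\ the polynomial $L^*(\bar f;T)^{(-1)^{n-1}}$ is the $\mu^n$-transform of $\det(1-TA_a(f))$. On the combinatorial side, the Hodge slope multiplicity $H_\Delta(k)=\sum_{i=0}^n(-1)^i\binom{n}{i}W_\Delta(k-iD(\Delta))$ is obtained from the chain-level multiplicities $W_\Delta(j)$ by exactly the same alternating combination; in other words, $\HP(\Delta)$ is the $\mu^n$-transform of $\HP_\infty(\Delta)$, and similarly $\GNP(\A_V^J,\bar\F_p)$ is the $\mu^n$-transform of $\GNP_\infty(\A_V^J,\bar\F_p)$. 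Applying $\mu^n$ preserves the chain-level equalities and restricts them to the polynomial of degree $V_\Delta$, giving $\NP(\bar f)=\GNP(\A_V^J,\bar\F_p)$ and $\lim_{p\to\infty}\NP(\bar f)=\HP(\Delta)$ as required.

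The main obstacle is conceptual rather than computational: it is to ensure that a single Zariski dense open $\cU\subseteq \A_V^J$, defined over $\Q$ (not over $\F_p$ individually), suffices for all large primes simultaneously. This is where the global Hasse polynomials of (\ref{E:globalhasse}) play their decisive role; Theorem \ref{T:Zariski} furnishes the $p$-independent lifts $G_{Rr-s,V}^{(i(r,s))}$ and $P_{N_{\delta,k},R,V}^{(m_k)}$ whose non-vanishing on $f$ governs the rigid-nuclearity of every block $A_1(f)_\delta$ for each vertex residue class $R$ at every sufficiently large $p$. Once these Hasse polynomials are available, the asymptotic facial decomposition (Theorem \ref{T:subdivision}) together with the rigid transform (Theorem \ref{T:transform}) in block form (Theorem \ref{T:blockrigid}) mechanizes the rest of the argument, and the $\mu^n$-descent to $L^*(\bar f;T)^{(-1)^{n-1}}$ is then routine.
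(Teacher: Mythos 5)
Your proposal follows essentially the same route as the paper's proof: define $\cU$ by the non-vanishing of $P_{\A_V^J}\cdot G_{\A_V^J}$, invoke Lemma \ref{L:main}(2) to obtain the chain-level equalities $\NP_q(\det(1-TA_a(f))^{[N_k]})=\GNP_\infty(\A_V^J,\bar\F_p)^{[N_k]}$ together with the limit $\HP_\infty(\Delta)^{[N_k]}$, and then descend via the Dwork--Adolphson--Sperber trace formula (\ref{E:L}) from $\det(1-TA_a(f))$ to $L^*(\bar f;T)^{(-1)^{n-1}}$. The only difference is expository: you spell out that the descent is the $\mu^n$-transform applied compatibly on the Fredholm, generic, and Hodge sides (which is correct, and consistent with the definitions of $H_\Delta(k)$, $\GNP_\infty$, and $\HP_\infty$), whereas the paper leaves this step implicit after writing down (\ref{E:L}); this is a welcome clarification but not a different argument.
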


\begin{proof}
By Remark \ref{R:main}
 we may assume all our $f$ are regular with respect to $\Delta$.
The result of Adolphson-Sperber and Dwork in Theorem \ref{T:Dwork}
shows that
\begin{eqnarray}\label{E:L}
L^*(\bar{f},T)^{(-1)^{n-1}}
&=& \prod_{i=0}^{n}{\det}(1-Tq^i A_a(f))^{(-1)^i\binom{n}{i}}
\end{eqnarray}
is a polynomial of degree $V_\Delta:=n!\Vol(\Delta)$ for regular Laurent
polynomial $f\in \A_V^J(\bar{\Q})$
for general $\Delta$.
The proof for arbitrary $\Delta$ is only a modification of this
one,  so we restrict our proof under the hypothesis that $\Delta$ is general.
Let $\cU$ be defined by $P_{\A_V^J}\cdot G_{\A_V^J}\neq 0$ in $\A_V^J$.
By Lemma \ref{L:main} (2) we have for any $f\in \cU(\bar\Q)$ and for $p$ large enough that
\begin{eqnarray*}
\NP_q(\det(1-TA_a(f)))^{[V_\Delta]} &=& \GNP_\infty(\A_V^J,\bar\F_p)^{[V_\Delta]}
\end{eqnarray*}
and hence by (\ref{E:L}) we have $\NP(\bar{f}) = \GNP(\A_V^J,\bar\F_p)$ and
\begin{eqnarray*}
\lim_{p\rightarrow\infty}\NP(\bar{f}) =
\lim_{p\rightarrow\infty} \GNP(\A_V^J,\bar\F_p)=
\HP(\Delta).
\end{eqnarray*}
This proves our theorem.
\end{proof}

\begin{theorem}[Theorem \ref{T:Wan1.11}]
\label{T:Wan1.11-2}
Let $\Delta$ be an integral contex polytope of dimension $n$ 
in $\R^n$ containing the origin.
Let $\Vert(\Delta)\subseteq J \subseteq \Delta\cap \Z^n$
and $J$ generates the monoid $C(\Delta)\cap\Z^n$ up to finitely many points. 
For every prime $p$ large enough,  
let $\cU_p$ be defined by $P_{\A^J,p}G_{\A^J,p}\neq 0$ in 
$\A^J$. Then for every $f \in \cU_p(\bar\Q)$ we have
$\NP(\bar{f}) = \GNP(\A^J,\bar\F_P)$.
Furthermore, we have
$$
\lim_{p\rightarrow\infty}\GNP(\A^J,\bar\F_p) = \HP(\Delta).
$$
\end{theorem}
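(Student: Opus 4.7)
The plan is to mirror the proof of Theorem \ref{T:Wan-2} above, substituting Lemma \ref{L:main}(1) for Lemma \ref{L:main}(2), since here we are allowing every coefficient in $f = \sum_{j\in J} a_j x^j$ to vary (not prescribing vertex coefficients in $\Q$) and $J$ is permitted to meet origin-less faces. Since regularity of $\bar f$ is a generic condition over $\bar\F_p$, I may intersect $\cU_p$ with the regular locus $\cM(\Delta,\bar\F_p)$ without losing Zariski density, so it suffices to treat regular $\bar f$.

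First I would invoke Theorem \ref{T:Dwork} to write
\begin{eqnarray*}
L^*(\bar f,T)^{(-1)^{n-1}} &=& \prod_{i=0}^{n}\det(1 - T q^i A_a(f))^{(-1)^i \binom{n}{i}},
\end{eqnarray*}
which is a polynomial of degree $V_\Delta = n!\Vol(\Delta)$ for general $\Delta$ (the non-general case is a routine modification). Hence $\NP(\bar f)$ is determined by the first $V_\Delta$ segments of $\NP_q(\det(1-TA_a(f)))$ via the usual $\mu^n$-operation, exactly as in the proof of Theorem \ref{T:Wan-2}.

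Next I would apply Lemma \ref{L:main}(1): for every prime $p$ large enough and every $f \in \cU_p(\bar\Q)$, and for each $1 \leq k \leq k_\Delta$,
\begin{eqnarray*}
\NP_q(\det(1-TA_a(f))^{[N_k]}) &=& \GNP_\infty(\A^J,\bar\F_p)^{[N_k]},\\
\lim_{p\rightarrow\infty} \GNP_\infty(\A^J,\bar\F_p)^{[N_k]} &=& \HP_\infty(\Delta)^{[N_k]}.
\end{eqnarray*}
Taking $k = k_\Delta$ captures the full relevant spectrum of $A_a(f)$. Feeding this into the $\mu^n$-formula translates the chain-level identities to $\NP(\bar f) = \GNP(\A^J,\bar\F_p)$ and $\lim_{p\to\infty}\GNP(\A^J,\bar\F_p) = \HP(\Delta)$, which are exactly the two claims of the theorem.

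The main obstacle, already handled inside Lemma \ref{L:main}(1), is the nonemptiness and correct behavior of $\cU_p$ at each prime $p$: the local Hasse polynomials $P_{\A^J,p}$ and $G_{\A^J,p}$ from (\ref{E:localhasse}) must be nonzero so that $\cU_p$ is Zariski dense in $\A^J$, and their specializations must force the Fredholm matrix $A_1(f)$ to be rigidly nuclear in the sense of Proposition \ref{P:rigid}. Nonvanishing is supplied by Theorem \ref{T:mini-2}, while independence of the exponent data from $p$ comes from Theorems \ref{T:key} and \ref{T:approx}. Once those inputs are in place, applying Theorem \ref{T:transform} and the asymptotic facial decomposition Theorem \ref{T:subdivision}(1) yields the chain-level equality with $\HP_\infty(\Delta)^{[N_k]}$ in the limit, completing the proof.
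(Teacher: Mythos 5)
Your proposal is correct and follows essentially the same route as the paper: the paper's own proof of Theorem \ref{T:Wan1.11-2} explicitly states it is analogous to that of Theorem \ref{T:Wan-2}, substituting Lemma \ref{L:main}(1) for Lemma \ref{L:main}(2) and then passing from the chain-level identity $\NP_q(\det(1-TA_a(f))^{[N_k]}) = \GNP_\infty(\A^J,\bar\F_p)^{[N_k]}$ with limit $\HP_\infty(\Delta)^{[N_k]}$ to the statement via the $\mu^n$-formula from Theorem \ref{T:Dwork}. Your extra observations about the regularity reduction and the internal structure of Lemma \ref{L:main}(1) (rigidly nuclear criterion, Hasse polynomial nonvanishing, asymptotic facial decomposition) are precisely the ingredients the paper relies on implicitly.
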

\begin{proof}
This proof is analogous to that of Theorem \ref{T:Wan-2}.
One reduces the proof to that over the chain level.
By Lemma \ref{L:main} (1) above, 
for every prime $p$ large enough and $f\in \cU_p(\bar\Q)$
then  
$$
\NP(\bar{f}) = \GNP(\A^J,\bar\F_p).
$$
Since by Lemma \ref{L:main}(1) we have for all $1\leq k\leq k_\Delta$,
$$
\lim_{p\rightarrow\infty}\GNP_\infty(\A^J,\bar\F_p)^{[N_k]}
=\HP_\infty(\Delta)^{[N_k]},
$$
we conclude that 
$\lim_{p\rightarrow \infty} \GNP(\A^J,\bar\F_p) = \HP(\Delta).$
This proves our assertion.
\end{proof}

\subsection{Generic affine toric hypersurfaces are ordinary}

We shall discuss an immediate application of our  
Theorem \ref{T:Wan1.11} in algebraic geometry.
Let $\Delta$ be an integral convex hull of dimension $n$ in $\R^n$ containing the origin.
Let $\T_{\Delta}$ be the space of all affine toric hypersurfaces $V(f)$
with $f=\sum_{j\in\Delta\cap\Z^n} a_j x^j$
parameterized by all $(a_j)$'s and $a_j\neq 0$ when $j$ is a vertex in $\Delta$.
Namely $V(f)$ is the embedding of ${f=0}$ in the $n$-torus $(\G_m)^n$.
For any $V(f)$ in $\T_{\Delta}(\bar\Q)$, namely $f\in\bar\Q[x_1,\ldots,x_n,1/(x_1,\ldots,x_n)]$
we write $V(\bar{f})$ for its reduction at a prime of $\bar\Q$
over $p$. 
We define that a toric hypersurface $V(f)$ {\em regular}
if the Laurent polynomial $x_{n+1} f$ in variables $x_1,\ldots,x_{n+1}$ is regular
as defined in Section \ref{S:intro}.
This is equivalent to that for every face $\Delta'$ (of any dimension) of $\Delta$
the system $f_{\Delta'}=x_1\frac{\partial{f_{\Delta'}}}{\partial{x_1}}
=\cdots = x_n\frac{f_{\Delta'}}{\partial x_n}=0$ has no solution in
$(\G_m)^n$ where $f_{\Delta'}$ is the restriction of $f$ to $\Delta'$.

The Hodge polygon of $V(f)$ for any $f$ with $\Delta(f)=\Delta$
is defined as the Hodge polygon
of the Laurent polynomial $x_{n+1}f$ in $\R^{n+1}$.
Given $\Delta$ of dimension $n$ in $\R^n$ let $(1,\Delta)$ be
its embedding in $\R^{n+1}$. For every $k\in \Z_{\geq 0}$
let $K_\Delta(k) =|(k,k\Delta)\cap\Z^{n+1}|$.
Then the {\em Hodge number} of any toric hypersurface $V(f)$ with $\Delta(f)=\Delta$
is defined by
$$
h_\Delta(k) =\sum_{i=0}^{\infty} (-1)^i \binom{n+i}{i} K_\Delta(k-i),
$$
and let $h_\Delta(k)=0$ if $k\geq n+1$.
Write $V_\Delta:=n!\Vol(\Delta)$ then $V_\Delta=\sum_{k=0}^{n} h_\Delta(k)$.
For any regular $f$ in $\T_\Delta(\F_q)$, it is a main result of
\cite{AS89} and \cite{DL91} that the zeta function of the affine 
toric hypersurface $V(\bar{f})$ is for the following form
\begin{eqnarray}
\label{E:P_f}
\Zeta(V(\bar{f}),T) &=& \prod_{i=0}^{n-1} (1-q^iT)^{(-1)^{n-i}\binom{n}{i+1}}P_f(T)^{(-1)^n}
\end{eqnarray}
whose key polynomial factor $P_f(T)\in 1+T \Z[T]$ is of degree $V_\Delta-1$.
We have
$$
L^*(x_{n+1}f;T) =(P_f(qT)(1-T))^{(-1)^{n}}.
$$
For this reason we define $\NP(V(\bar{f}))$ to be the normalized $p$-adic Newton
polygon of $L^*(x_{n+1}f;T)^{(-1)^n}$, or equivalently if we denote by $\Delta'$ the convex hull
of the origin and $(1,\Delta)$ in $\R^{n+1}$, then
we have
\begin{eqnarray}
\label{E:NP}
\NP(V(\bar{f})) &=& \NP(\Delta',\bar\F_p)
\end{eqnarray}
If one writes
$
P_f(T) = \prod_{i=1}^{V_\Delta-1}(1-\alpha_i(f) T)
$
then its (complex) absolute value $|\alpha_i(f)|\leq q^{\frac{n-1}{2}}$.
The precise Archimedean weights of these reciprocal roots $\alpha_i(f)$
are also determined by \cite{DL91}.
Our Corollary \ref{C:Wan1.11} describes the generic distribution of normalized
$|\alpha_i(f)|_p$.

\begin{proof}[Proof of Corollary \ref{C:Wan1.11}]
Let $\Delta\cap\Z^n=\cup_i \Delta_i$ be its unimodular triangulation
where each $\Delta_i$ is a unimodular simplex (see Section \ref{S:triangulation}).
Embedding $\iota: \Delta\rightarrow (1,\Delta)$ from $\R^n$ to $\R^{n+1}$.
Let $\Delta_i'$ be the convex hull of the origin and
$(1,\Delta_i)$ in $\R^{n+1}$, it is clearly a unimodular simplex by its
definition.
Let $\Delta'$ be the convex hull of the origin and $(1,\Delta)$ in $\R^{n+1}$,
then $\Delta'=\cup_i \Delta_i'$ is a unimodular triangulation. 
This proves that each $C(\Delta_i')$ is unimodular, that is,
every point in $C(\Delta_i')\cap\Z^{n+1}$
is generated by $\Delta_i'=(1,\Delta_i)$.
Thus $C(\Delta')\cap\Z^{n+1}=\cup_i C(\Delta_i)\cap\Z^{n+1}$ is
generated by $\Delta'\cap\Z^{n+1}=(1,\Delta)\cap\Z^{n+1}$.
This proves our hypothesis of Theorem \ref{T:Wan1.11} is satisfied
and hence we can conclude that
$$
\lim_{p\rightarrow\infty} \GNP(\Delta',\bar\F_p) = \HP(\Delta').
$$
By (\ref{E:P_f}) (due to \cite{AS89} and \cite{DL91}) and by (\ref{E:NP}) we have
that 
$\GNP(\Delta',\bar\F_p) = \GNP(\T_\Delta,\bar\F_p)$.
A direct computation shows that $\HP(\Delta')=\HP(\T_\Delta)$ by their definition.
This implies that
$\lim_{p\rightarrow \infty}\GNP(\T_\Delta,\bar\F_p) = \HP(\T_\Delta)$;
and hence 
$\GNP(\T_\Delta,\bar\F_p) = \HP(\T_\Delta)$ for $p$ large enough.
\end{proof}

\subsection{Proof of Theorem \ref{T:allQ}}

Let $\Delta$ be an integral convex polytope of dimension $n$
in $\R^n$ containing the origin, whose origin-less facets are simplex.
Suppose $J\subseteq (\Delta-\partial\Delta)\cap\Z^n \cup\Vert(\Delta)$
generates the monoid $C(\Delta)\cap\Z^n$ up to finitely many points,
and let $\A^J$ be as above.
Let $(P^*)_{N_k,R}^{(m_k)}$ be nonzero polynomials
in $\Q[A]$ defined as in Theorem \ref{T:nonzero}, where $A=(A_j)_{j\in J}$.
We define the following global Hasse polynomials for $\A^J$ in $\Q[A]$ 
where $A=(A_j)_{j\in J}$:
\begin{eqnarray}
\label{E:Hasse}
P^*_{\A^J} &:=& \prod_\delta\prod_{k=1}^{k_\Delta}\prod_{R\in \prod_{i=1}^{n} (\Z/d_i\Z)^*}(P^*)_{N_{\delta,k},R}^{(m_k)}
\end{eqnarray}
where $\delta$ ranges over all origin-less facets of $\Delta$.
Notice that the polynomial $P^*_{\A^J}$ in $\Q[A]$
is nonzero and independent of $p$ or its vertex residue classes
$R=R_{\Delta_\delta}(p)$.

\begin{proposition}
\label{P:simple}
Let notations be as above.
Let $P^*_{\A^J}$ be the nonzero polynomial in $\Q[A]$ defined above in
(\ref{E:Hasse}).
Let $\cU$ be defined by $P^*_{\A^J}\neq 0$ in $\A^J$.
Then for any $f\in \cU(\Q)$ and for any prime $p$ large enough we have
$\ord_p C_{N_k} = \ord_p \det A_1(f)^{[N_k]}$ for all $0\leq N_k\leq V_\Delta$;
and
\begin{eqnarray*}
\NP_p(\det(1-T\; A_1(f)))^{[V_\Delta]}
&=&\GNP_\infty(\A^J,\F_p).
\end{eqnarray*}
Furthermore, for such $f$ we have
\begin{eqnarray*}
\lim_{p\rightarrow \infty}\NP_p( \det(1-T\; A_1(f))^{[V_\Delta]}
&=& \lim_{p\rightarrow\infty}\GNP_\infty(\A^J,\F_p)^{[V_\Delta]}
= \HP_\infty(\Delta)^{[V_\Delta]}.
\end{eqnarray*}
\end{proposition}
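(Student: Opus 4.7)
Since $f\in\cU(\Q)$, its coefficients $a=(a_j)_{j\in J}$ are rational, so $\bar f$ is defined over $\F_p$ for every large $p$; consequently $\alpha_1$ is already $\Q_p$-linear (i.e.\ $a=1$ in the notation of Section~\ref{S:dwork}), $A_1(f)=A_a(f)$, and $C_m$ is just the $T^m$-coefficient of $\det(1-TA_1(f))$. Let $\hat a_j\in\Z_p^*$ be the Teichm\"uller lift of $a_j$, so that $\hat a_j^p=\hat a_j$. The main difficulty, compared with the proofs of Theorems~\ref{T:Wan-2} and~\ref{T:Wan1.11-2}, is that here \emph{all} coefficients of $f$ (including those at vertices) are parameters, so the Fredholm polynomial entries $P_{N_{\delta,k},p}^{(m_k)}$ and $G_{pr-s}^{(i(r,s))}$ still depend on $p$ through exponents $\lfloor(pr_i-s_i)/d_i\rfloor$ that grow with $p$; a direct specialization at $a$ therefore cannot immediately be controlled by a $p$-independent Hasse polynomial. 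The Norm/Teichm\"uller identity of Theorem~\ref{T:nonzero} is what bridges this gap, and unpacking its consequences is the technical heart of the argument.

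The core step is to show that for every origin-less facet $\delta$, every vertex residue class $R\in\prod_{i=1}^{n}(\Z/d_i\Z)^*$, every $1\leq k\leq k_\Delta$, and every $r,s\in Z^o(\Delta_\delta)\cap\Z^n$, the quantities $P_{N_{\delta,k},p}^{(m_k)}(\hat a)$ and $G_{pr-s}^{(i(r,s))}(\hat a)$ are $p$-adic units. To do this I would feed $\hat a$ into the identities of Theorem~\ref{T:nonzero}, which produce polynomials $(P^*)_{N_{\delta,k},R}^{(m_k)}$ and $(G^*)_{Rr-s}^{(i(r,s))}$ in $\Q[(A_j)_{j\in J}]$, independent of $p$, satisfying
\[
(P^*)_{N_{\delta,k},R}^{(m_k)}(a)\ \equiv\ \Norm\bigl(P_{N_{\delta,k},p}^{(m_k)}(\hat a)\bigr)\pmod p
\]
whenever $R_{\Delta_\delta}(p)=R$, and similarly on the $G$-side. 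By the definition~(\ref{E:Hasse}), $P^*_{\A^J}$ is the product over all $(\delta,R,k)$ of the $(P^*)$-factors, so the hypothesis $f\in\cU$ says $P^*_{\A^J}(a)\in\Q^*$ and each factor is a nonzero rational, hence a $p$-adic unit for all but finitely many $p$. Multiplicativity of the norm together with the fact that the Galois conjugates of an algebraic number over an unramified (or at worst tamely ramified) $p$-adic extension share a common valuation then forces $P_{N_{\delta,k},p}^{(m_k)}(\hat a)$, and analogously $G_{pr-s}^{(i(r,s))}(\hat a)$, to be $p$-adic units themselves. Consequently every entry $M_{pr-s}(\hat a)$ of $A_1(f)$ attains its optimal $p$-adic valuation $w_\Delta(r)+i(r,s)/(p-1)$ predicted by Theorem~\ref{T:mini-2}.

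With these unit estimates the matrix $A_1(f)$ satisfies the hypotheses of Theorem~\ref{T:blockrigid} applied to the open facial subdivision $S(\Delta)=\bigcup_{\delta\in\cF^o(\Delta)}S(\Delta_\delta)^o$ with weights $\beta_r=w_\Delta(r)$. I would then run the argument of Lemma~\ref{L:main}(1) verbatim, with the pair $(P_{\A^J,p},G_{\A^J,p})$ of local Hasse polynomials replaced by the single global polynomial $P^*_{\A^J}$, to deduce that for $1\leq k\leq k_\Delta$ and $p$ large,
\[
\ord_p C_{N_k}\ =\ \ord_p\det A_1(f)^{[N_k]}\ =\ \sum_{r\in S(\Delta)_{\leq k}}w_\Delta(r)+\epsilon_{N_k}
\]
with $\epsilon_{N_k}\to 0^+$ as $p\to\infty$. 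This immediately gives $\NP_p(\det(1-TA_1(f)))^{[V_\Delta]}=\GNP_\infty(\A^J,\F_p)^{[V_\Delta]}$, and the $p\to\infty$ limit $\HP_\infty(\Delta)^{[V_\Delta]}$ follows from the same chain-level comparison used at the end of Lemma~\ref{L:main}.
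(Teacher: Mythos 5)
Your proposal is correct and follows essentially the same route as the paper: the paper likewise verifies that $A_1(f)$ is rigidly nuclear by combining the nonvanishing of $P^*_{\A^J}(a)$ with the Norm/Teichm\"uller congruences of Theorem \ref{T:nonzero} (which, since the specialized values are integral and their norms are units, forces the entries and principal minors to attain the optimal valuations), and then applies Proposition \ref{P:rigid} together with Theorem \ref{T:transform} to identify $\ord_p C_{N_k}$ with $\ord_p\det A_1(f)^{[N_k]}$. Your write-up merely makes explicit two points the paper leaves implicit --- the deduction of unit-ness of $P_{N_{\delta,k},p}^{(m_k)}(\hat a)$ from unit-ness of its norm, and the passage from facet-level determinant estimates to the full matrix via the block decomposition of Lemma \ref{L:main}(1) --- so it is a faithful, slightly more detailed version of the paper's argument.
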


\begin{proof}
Let $a=(a_j)_{j\in J}$.
Let $\cU$ be defined by $P^*_{\A^J}\neq 0$.
This defines a Zariski dense open subset defined over $\Q$.
Let $f\in \cU(\Q)$, we consider its matrix $A_1(f)
=(M_{pr-s}(a)_{r,s\in S(\Delta)}$ with support on $\cG=J$.
We will verify below that $A_1(f)$ is rigidly nuclear by the criterion Proposition \ref{P:rigid}
and Theorem \ref{T:nonzero}.
For every $r\in S(\Delta)$, let $\beta_r := w_\Delta(r)$.
Obviously we have $0\leq \beta_r < \beta_s\leq \cdots$ whenever $w_\Delta(r)< w_\Delta(s)$.
For any $s\in S(\Delta)$ we have for $p$ large enough
$\ord_p M_{pr-s} \geq  b_{\cG,\Z}(pr-s) + \frac{i(r,s)}{p-1}$
with the equality holds if and only if $G_{\Delta}(a)\neq 0$.
Hence we have for $f\in \cU(\bar\Q)$ that
$\beta_r\leq \ord_p M_{pr-s}(a) \leq \beta_r+\frac{N_\Delta}{p-1}$ and
$\lim_{p\rightarrow\infty} \frac{N_\Delta}{p-1}=0^+$.
On the other hand, we have $P_{\A^J}(a)\neq 0$.
Hence $\ord_p(\det \M^{[N_k]}(a))=
\sum_{i=0}^{k}\frac{W_\Delta(i)i}{D(\Delta)}+\frac{m_k}{p-1}$
where $\frac{m_k}{p-1}\rightarrow 0$ as $p\rightarrow \infty$.
Thus we may apply Proposition \ref{P:rigid} and
Theorem \ref{T:transform}  to $\M$ and have
$\ord_pC_{N_k} = \ord_p(\det A_1^{[N_k]}(a))=
\sum_{i=0}^{k}\frac{W_\Delta(i)i}{D(\Delta)}+\frac{m_k}{p-1}$.
Furthermore,
$\NP_p(\det(1-T A_1(f)^{[V_\Delta]}))
=\GNP(\A^J;\F_p)$
as desired.
\end{proof}

\begin{proof}[Proof of Theorem \ref{T:allQ}]
This theorem follows from Proposition \ref{P:simple} immediately.
\end{proof}

\end{document}